\def\bfB{\mathbf{B}}
\def\bfC{\mathbf{C}}
\def\loc{\text{loc}}
\def\rc{\text{rc}}
\newcommand{\Mat}{\operatorname{M}}
\newcommand{\Mats}{\operatorname{S}}
\newcommand{\Mata}{\operatorname{A}}
\newcommand{\id}{\operatorname{id}}
\newcommand{\GL}{\operatorname{GL}}
\newcommand{\Ker}{\operatorname{Ker}}
\newcommand{\Vect}{\operatorname{span}}
\newcommand{\im}{\operatorname{Im}}
\newcommand{\urk}{\operatorname{urk}}
\newcommand{\lrk}{\operatorname{lrk}}
\newcommand{\tr}{\operatorname{tr}}
\newcommand{\rk}{\operatorname{rk}}
\newcommand{\codim}{\operatorname{codim}}
\renewcommand{\setminus}{\smallsetminus}
\newcommand{\modu}{\operatorname{mod}}
\def\F{\mathbb{F}}
\def\K{\mathbb{K}}
\def\calA{\mathcal{A}}
\def\calB{\mathcal{B}}
\def\calD{\mathcal{D}}
\def\calE{\mathcal{E}}
\def\calF{\mathcal{F}}
\def\calG{\mathcal{G}}
\def\calH{\mathcal{H}}
\def\calI{\mathcal{I}}
\def\calJ{\mathcal{J}}
\def\calL{\mathcal{L}}
\def\calM{\mathcal{M}}
\def\calN{\mathcal{N}}
\def\calP{\mathcal{P}}
\def\calR{\mathcal{R}}
\def\calS{\mathcal{S}}
\def\calT{\mathcal{T}}
\def\calU{\mathcal{U}}
\def\calV{\mathcal{V}}
\def\calW{\mathcal{W}}
\def\calX{\mathcal{X}}
\def\lcro{\mathopen{[\![}}
\def\rcro{\mathclose{]\!]}}
\theoremstyle{definition}
\newtheorem{Def}{Definition}
\newtheorem{Not}[Def]{Notation}
\theoremstyle{plain}
\newtheorem{theo}{Theorem}[section]
\newtheorem{prop}[theo]{Proposition}
\newtheorem{lemma}[theo]{Lemma}
\newtheorem{claim}{Claim}
\theoremstyle{plain}
\theoremstyle{remark}
\newtheorem{Rems}{Remarks}
\newtheorem{Rem}[Rems]{Remark}
\title{Range-compatible homomorphisms over fields with two elements}
\author{Cl\'ement de Seguins Pazzis\footnote{Universit\'e de Versailles Saint-Quentin-en-Yvelines, Laboratoire de Math\'ematiques
de Versailles, 45 avenue des Etats-Unis, 78035 Versailles cedex, France}
\footnote{e-mail address: dsp.prof@gmail.com}}
\begin{document}

\thispagestyle{plain}

\maketitle

\begin{abstract}

Let $U$ and $V$ be finite-dimensional vector spaces over a (commutative) field $\K$, and
$\calS$ be a linear subspace of the space $\calL(U,V)$ of all linear operators from $U$ to $V$.
A map $F : \calS \rightarrow V$ is called range-compatible when $F(s) \in \im s$ for all $s \in \calS$.

In a previous work, we have classified all the range-compatible group homomorphisms provided that
$\codim_{\calL(U,V)} \calS \leq 2\dim V-3$, except in the special case when
$\K$ has only two elements and $\codim_{\calL(U,V)} \calS = 2\dim V-3$. In this article,
we give a thorough treatment of that special case. Our results are partly based upon the recent classification of
vector spaces of matrices with rank at most $2$ over $\F_2$.

As an application, we classify the $2$-dimensional non-reflexive operator spaces over any field,
and the affine subspaces of $\Mat_{n,p}(\K)$ with lower-rank $2$ and codimension $3$.
\end{abstract}

\vskip 2mm
\noindent
\emph{AMS Classification:} 15A04, 15A30, 15A03.

\vskip 2mm
\noindent
\emph{Keywords:} range-compatibility, fields with $2$ elements, evaluation map, algebraic reflexivity.

\tableofcontents

\section{Introduction}

\subsection{Main definitions and goals}

Let $\K$ be an arbitrary (commutative) field.
We denote by $\Mat_{n,p}(\K)$ the set of matrices with $n$ rows, $p$ columns and entries in $\K$.
Throughout the article, $U$ and $V$ denote finite-dimensional vector spaces over $\K$.
We denote by $\calL(U,V)$ the space of linear operators from $U$ to $V$.
Given a linear subspace $\calS$ of $\calL(U,V)$, the codimension of $\calS$ in $\calL(U,V)$ is simply denoted by
$\codim \calS$.

\begin{Def}
Let $\calS$ be a linear subspace of $\calL(U,V)$, and $F : \calS \rightarrow V$ be a map. \\
We say that $F$ is \textbf{range-compatible} when it satisfies
$$\forall s \in \calS, \quad F(s) \in \im s.$$
We say that $F$ is \textbf{local} when there is a vector
$x \in U$ such that
$$\forall s \in \calS, \quad F(s)=s(x),$$
i.e.\ when $F$ is an evaluation map. In that case, we note that $F$ is linear and range-compatible.
\end{Def}

We adopt similar definitions for maps from a linear subspace of $\Mat_{n,p}(\K)$ to $\K^n$ by
using standard bases to identify $\Mat_{n,p}(\K)$ with $\calL(\K^p,\K^n)$.

\begin{Not}
Let $\calS$ be a linear subspace of $\calL(U,V)$.
The set of all range-compatible linear maps on $\calS$ is a linear subspace of $\calL(\calS,V)$
which we denote by $\calL_\rc(\calS)$;
the subset of all local maps on $\calS$ is a linear subspace of $\calL_\rc(\calS)$ which we denote by
$\calL_\loc(\calS)$.
\end{Not}

\vskip 3mm
Although several authors have independently noticed that every range-compatible linear map on the full space $\calL(U,V)$ is local (this is implicit in \cite{Dieudonne}, for example), the concept of a range-compatible map has only emerged recently.
In \cite{dSPclass}, it was studied as a means to decipher the structure of large vector spaces of matrices with an upper-bound on the rank.
There, the following result was a major key in the generalization to all fields of Atkinson and Lloyd's classification
of such spaces:

\begin{theo}[Lemma 8 of \cite{dSPclass}]\label{classtheo1}
Let $\calS$ be a linear subspace of $\calL(U,V)$
with $\codim \calS \leq \dim V-2$. Then, every range-compatible linear map on $\calS$ is local.
\end{theo}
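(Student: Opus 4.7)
The plan is to proceed by induction on $\dim V$. The base case $\dim V = 2$ is immediate: the codimension bound forces $\calS = \calL(U,V)$, and the conclusion reduces to the classical fact cited just before the statement---every range-compatible linear map on a full operator space is an evaluation map.

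For the inductive step ($\dim V \geq 3$), the natural move is to slice by a hyperplane of $V$. Given a nonzero $\varphi \in V^*$, set $H := \Ker \varphi$ and
$$\calS_\varphi := \{s \in \calS : \varphi \circ s = 0\} = \calS \cap \calL(U,H).$$
By range-compatibility, $F(\calS_\varphi) \subset H$, so $F|_{\calS_\varphi}$ is a range-compatible linear map into the smaller target $H$. A direct count gives
$$\codim_{\calL(U,H)} \calS_\varphi = \codim_{\calL(U,V)} \calS - \bigl(\dim U - \dim \im \alpha_\varphi\bigr),$$
where $\alpha_\varphi : \calS \to U^*$ sends $s \mapsto \varphi \circ s$. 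Hence the induction hypothesis applies to $F|_{\calS_\varphi}$ whenever $\alpha_\varphi$ fails to be surjective---equivalently, whenever there exists a nonzero $x \in U$ with $\{s(x) : s \in \calS\} \subset H$. In that regime, induction yields $x \in U$ with $F(s) = s(x)$ on $\calS_\varphi$, and varying $\varphi$ over a suitable family of hyperplanes---using $\dim V \geq 3$ so that two such hyperplanes still intersect in a subspace of codimension $2$---pins down a single $x$ representing $F$ on all of $\calS$.

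The main obstacle I anticipate is the ``nondegenerate'' case in which $\alpha_\varphi$ is surjective for \emph{every} nonzero $\varphi \in V^*$, equivalently, in which for every nonzero $x \in U$ the evaluation map $s \mapsto s(x)$ from $\calS$ to $V$ is surjective. Here the induction does not apply directly. The cleanest route I see is to pass to the augmented operator space $\widetilde{\calS} \subset \calL(U \oplus \K, V)$ defined by $\widetilde{s}(u,\lambda) := s(u) + \lambda F(s)$: range-compatibility forces $\rk \widetilde{s} = \rk s$, and $F$ is local precisely when the common kernel $\bigcap_{s \in \calS} \Ker \widetilde{s}$ contains a vector of the form $(-x, 1)$. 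Dually, this amounts to showing that $(0,1) \in U^* \oplus \K$ does not lie in $\Vect\bigl\{(\varphi \circ s,\, \varphi(F(s))) : s \in \calS,\ \varphi \in V^*\bigr\}$; a relation expressing the contrary should, under the nondegeneracy assumption, be rearrangeable into a nontrivial linear functional on $\calL(U,V)$ that vanishes on $\calS$, contradicting the bound $\codim \calS \leq \dim V - 2$.
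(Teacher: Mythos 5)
Your induction on $\dim V$, the base case, and the codimension count for $\calS_\varphi = \calS \cap \calL(U,\Ker\varphi)$ are all correct, but the argument has a genuine gap that your closing paragraph essentially concedes: the ``nondegenerate'' case is not handled, and it is not a marginal case but a typical one. Having $\alpha_\varphi$ surjective for every nonzero $\varphi \in V^*$ is equivalent to $\calS^\bot$ containing no rank-one operator (since $\tr\bigl((x\otimes\varphi)\circ s\bigr)=\varphi(s(x))$), which happens, e.g., for $U=V=\K^3$ and $\calS=\{s : \tr s=0\}$, where $\codim\calS=1=\dim V-2$ and $\calS x=V$ for every $x\neq 0$; there your slicing yields nothing. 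The proposed escape does not work as stated: a relation $\sum_i \varphi_i\circ s_i=0$ with $\sum_i\varphi_i(F(s_i))=1$ is merely the dual restatement of ``$F$ is not local,'' and even if it could be rearranged into a nontrivial element of $\calS^\bot$, that only shows $\codim\calS\geq 1$, which contradicts nothing once $\dim V\geq 3$ (and when $\calS=\calL(U,V)$ no such element exists, yet that case still falls under nondegeneracy). There is a secondary gap in the degenerate regime as well: localness of $F$ on the various $\calS_\varphi$ only controls $F$ on $\sum_\varphi \calS_\varphi$, and there may be exactly one admissible $\varphi$ up to scaling (one rank-one operator in $\calS^\bot$), so these restrictions need not determine $F$ on $\calS$, nor need their local witnesses agree a priori.

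The repair is to slice the other way, as the cited source \cite{dSPclass} and this paper do throughout: instead of restricting the target to a hyperplane, project it onto $V/\K y$ and consider $F \modu y$ on $\calS \modu y$ (the Projection Lemma). The analogue of your count is $\codim(\calS\modu y)=\codim\calS-\dim\calS^\bot y$, so the induction hypothesis applies to $F\modu y$ for every $y$ outside the common kernel of the operators in $\calS^\bot$; that exceptional set is all of $V$ only when $\calS=\calL(U,V)$, which is exactly the classical base case (Lemma \ref{espacetotal}). One then picks a basis $y_1,\dots,y_n$ of $V$ avoiding this proper subspace, gets $F\modu y_i$ local for each $i$, and glues the witnesses together; this gluing is itself not automatic (it is the content of Lemma \ref{3vectorslemma} and uses the codimension bound a second time, since $\dim\calS x\geq \dim V-\codim\calS\geq 2$ for $x \ne 0$), but it is a tractable finite argument, unlike the residual case that restriction-based slicing leaves open.
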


Theorem \ref{classtheo1} was also used in a sweeping generalization of Dieudonn\'e's theorem on linear bijections
that preserve non-singularity \cite{dSPlargelinpres}.

Besides their links with the theory of spaces of matrices with bounded rank and with linear preservers problems,
range-compatible linear maps are deeply connected, through duality, to the fashionable notion of algebraic reflexivity.
Recall that the \textbf{reflexive closure} of $\calS$, denoted by $\calR(\calS)$, is defined as the space of all linear operators $g : U \rightarrow V$
such that $g(x) \in \calS x$ for all $x \in U$. We say that $\calS$ is (algebraically) reflexive when $\calR(\calS)=\calS$.
In general, the \textbf{reflexivity defect} of $\calS$ is defined as $\dim \calR(\calS)-\dim \calS$.
For $x \in U$, set
$$\widehat{x} : s \in \calS \longmapsto s(x),$$
so that
$$\widehat{\calS}:=\bigl\{\widehat{x} \mid x \in U\bigr\}$$
is a linear subspace of $\calL(\calS,V)$.
Note that $\im \widehat{x}=\calS x$ for all $x \in U$.
From there, the link between the
reflexive closure of $\calS$ and the space of all range-compatible linear maps on $\widehat{\calS}$ is easy to see:
\begin{itemize}
\item Let $F : \widehat{\calS} \rightarrow V$ be a range-compatible linear map. As
$\im \widehat{x}=\calS x$ for all $x \in U$, we see that
the linear map $\check{F} : x \in U \mapsto F(\widehat{x})$ belongs to the reflexive closure of $\calS$.
\item Conversely, let $g \in \calR(\calS)$. For all $x \in U$ such that $\calS x=\{0\}$, we deduce that $g(x)=0$;
therefore, one can find a linear map $G : \widehat{\calS} \rightarrow V$ such that $G(\widehat{x})=g(x)$ for all $x \in U$;
as $g(x) \in \calS x=\im \widehat{x}$ for all $x \in U$, we find that $G$ is range-compatible.
\end{itemize}
With the above, one sees that $F \mapsto \check{F}$ defines an isomorphism from $\calL_\rc(\widehat{\calS})$
to $\calR(\calS)$, and this isomorphism maps $\calL_\loc(\widehat{\calS})$ onto $\calS$.
We deduce the following result:

\begin{prop}
Let $\calS$ be a linear subspace of $\calL(U,V)$.
Then, the quotient spaces $\calL_\rc(\widehat{\calS})/\calL_\loc(\widehat{\calS})$ and $\calR(\calS)/\calS$ are isomorphic through
$F \mapsto \check{F}$. \\
In particular, $\calS$ is reflexive if and only if every range-compatible linear map on $\widehat{\calS}$ is local.
\end{prop}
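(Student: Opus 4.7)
The proof essentially amounts to formalizing the correspondence already sketched in the two bullet points preceding the statement; my plan is to turn that sketch into a precise two-sided isomorphism and then check how it restricts to the subspaces of local maps.

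First I would set up the forward map. Given $F \in \calL_\rc(\widehat{\calS})$, define $\check F : U \to V$ by $\check F(x) = F(\widehat{x})$. Since $x \mapsto \widehat{x}$ is linear from $U$ to $\widehat{\calS}$ and $F$ is linear, $\check F$ is linear. Range-compatibility of $F$ together with $\im \widehat x = \calS x$ gives $\check F(x) \in \calS x$ for every $x \in U$, so $\check F \in \calR(\calS)$. Linearity of $F \mapsto \check F$ is immediate.

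Next I would construct the inverse. Given $g \in \calR(\calS)$, the crucial point is well-definedness: I want a linear map $G : \widehat{\calS} \to V$ such that $G(\widehat x) = g(x)$ for all $x \in U$. This amounts to showing that $\widehat x = 0$ forces $g(x) = 0$, which is the only genuine verification: if $\widehat x = 0$ then $\calS x = \{0\}$, hence $g(x) \in \calS x = \{0\}$ because $g \in \calR(\calS)$. Since $x \mapsto \widehat x$ is a surjection from $U$ onto $\widehat{\calS}$, this kernel containment allows $G$ to be factored through $\widehat{\calS}$, and linearity of $G$ follows from that of $g$ and the universal property of the quotient. Range-compatibility of $G$ is precisely the condition $g(x) \in \calS x = \im \widehat x$, which holds because $g \in \calR(\calS)$. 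Setting $g \mapsto G$ yields a linear map $\calR(\calS) \to \calL_\rc(\widehat{\calS})$ that is obviously inverse to $F \mapsto \check F$, establishing the isomorphism $\calL_\rc(\widehat{\calS}) \simeq \calR(\calS)$.

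Finally I would identify what $\calL_\loc(\widehat{\calS})$ corresponds to under this isomorphism. By definition, $F \in \calL_\loc(\widehat{\calS})$ iff there exists $s_0 \in \calS$ such that $F(\widehat{x}) = \widehat{x}(s_0) = s_0(x)$ for all $x \in U$; that is, $\check F = s_0 \in \calS$. Conversely, any $s_0 \in \calS$, viewed as an element of $\calR(\calS)$, corresponds to the evaluation-at-$s_0$ map on $\widehat{\calS}$, which is local. So the isomorphism $F \mapsto \check F$ sends $\calL_\loc(\widehat{\calS})$ bijectively onto $\calS$, and passing to quotients gives $\calL_\rc(\widehat{\calS})/\calL_\loc(\widehat{\calS}) \simeq \calR(\calS)/\calS$. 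The ``in particular'' statement is the instance where both quotients vanish.

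I do not expect any real obstacle: the only nontrivial verification is that the reflexive-closure condition exactly provides the well-definedness of $G$, and this is precisely the mirror image of the fact that range-compatibility of $F$ is precisely what places $\check F$ in $\calR(\calS)$.
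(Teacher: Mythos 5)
Your proposal is correct and follows essentially the same route as the paper, which establishes the isomorphism via exactly the two constructions you describe (the key well-definedness point being that $\widehat{x}=0$ forces $\calS x=\{0\}$ and hence $g(x)=0$) and then observes that local maps correspond precisely to elements of $\calS$. Your write-up merely fills in the routine verifications the paper leaves implicit.
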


\begin{Rem}
Similarly, it is easy to demonstrate that the quotient spaces
 $\calR(\widehat{\calS})/\widehat{\calS}$ and $\calL_\rc(\calS)/\calL_\loc(\calS)$ are isomorphic,
 whence $\widehat{\calS}$ is reflexive if and only if every range-compatible linear map on $\calS$ is local.
Besides, the reflexivity defect of $\calS$ equals the codimension of $\calL_\loc(\widehat{\calS})$ in $\calL_\rc(\widehat{\calS})$.
\end{Rem}

In particular, Theorem \ref{classtheo1} yields a sufficient condition for algebraic reflexivity that is based upon the
dimension of the source space of the operator space under consideration (see Theorem 9 of \cite{dSPclass}).

\vskip 3mm
In the first systematic study of range-compatible homomorphisms to date \cite{dSPRC1},
the upper-bound $\dim V-2$ from Theorem \ref{classtheo1} was shown to be non-optimal.
There, the following optimal result was proved:

\begin{theo}\label{maintheolin}
Let $U$ and $V$ be finite-dimensional vector spaces, and $\calS$ be a linear subspace of $\calL(U,V)$
with either $\codim \calS \leq 2 \dim V-3$ if $\# \K>2$, or $\codim \calS \leq 2 \dim V-4$ if $\# \K=2$. \\
Then, every range-compatible linear map on $\calS$ is local.
\end{theo}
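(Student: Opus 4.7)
The plan is to prove Theorem \ref{maintheolin} by induction on $n := \dim V$. Base cases with $n \leq 2$ are essentially covered by Theorem \ref{classtheo1}, with at most a small direct check in the marginal sub-case $n = 2$, $\# \K > 2$, $\codim \calS = 1$.

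For the inductive step, let $F : \calS \rightarrow V$ be range-compatible and linear; the goal is to exhibit $x \in U$ with $F = \widehat{x}$. The main strategy is to drop the target dimension by one via a quotient. Fix a line $D = \K y \subseteq V$ and let $\pi : V \rightarrow V/D$ be the canonical projection; the induced map $\pi_* : \calL(U,V) \rightarrow \calL(U, V/D)$ has kernel $\calL(U,D)$, and a direct dimension count gives
\[
\codim_{\calL(U, V/D)} \pi_*(\calS) \;=\; \codim \calS \;-\; \codim_{\calL(U, D)}\bigl(\calS \cap \calL(U, D)\bigr).
\]
If I can find $y$ for which $\calS \cap \calL(U, \K y)$ has codimension at least $2$ in $\calL(U, \K y) \cong U^*$, then $\pi_*(\calS)$ fits the inductive hypothesis, which I apply to the induced map $\bar F : \pi_*(\calS) \rightarrow V/D$. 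This induced map is well-defined and range-compatible because $\pi(\im s) = \im (\pi \circ s)$, and it produces some $x_0 \in U$ with $(F - \widehat{x_0})(\calS) \subseteq D$.

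Next I would write $(F - \widehat{x_0})(s) = f(s)\,y$ for a linear form $f : \calS \rightarrow \K$. The surviving range-compatibility $f(s)\,y \in \im s$ is equivalent to $f$ vanishing on every $\calS \cap \calL(U, H)$ for $H$ a hyperplane of $V$ not containing $y$. These subspaces, summed over such $H$, fill up most of $\calS$; a dimension count against the codimension assumption should force $F - \widehat{x_0}$ itself to be local, of the form $\widehat{x_1}$ for some $x_1 \in U$ satisfying $\calS x_1 \subseteq D$, and then $F = \widehat{x_0 + x_1}$.

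The principal obstacle is twofold. First, one must secure a line $D$ that enables the quotient reduction: if no $y$ achieves the required codimension drop, then $\calS$ contains an overwhelming proportion of the rank-one operators with image in every line, and this extremely rigid configuration has to be disposed of by a direct ad hoc argument. Second, converting $f$ into an evaluation map in Step~3 hinges on there being enough hyperplanes of $V$ avoiding $y$ to reconstruct $\calS$ from its intersections with them; when $\#\K = 2$ these hyperplanes are too scarce, the argument degrades, and the bound is pushed down to $2\dim V - 4$, which is precisely what makes the borderline case $\#\K = 2$, $\codim \calS = 2\dim V - 3$ the subject of the present paper.
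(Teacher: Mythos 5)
First, a point of reference: Theorem \ref{maintheolin} is not proved in this paper at all; it is imported from \cite{dSPRC1}, so your sketch can only be compared with the strategy of that proof, which the present paper summarizes in its introduction and whose key lemmas it quotes in Section \ref{toolssection}. Your framework --- induction on $\dim V$, quotienting by a line $\K y$ along which the codimension drops by at least $2$ (i.e.\ $y$ is $\calS$-adapted), via the identity $\codim(\calS \modu y)=\codim\calS-\dim\calS^\bot y$ --- is indeed the engine of the real proof. The gap lies in how you close the induction. You propose to use a \emph{single} adapted vector, write $F=\widehat{x_0}+f(\cdot)\,y$ for a linear form $f$, and kill $f$ by a dimension count on $\sum_H\bigl(\calS\cap\calL(U,H)\bigr)$ over hyperplanes $H$ avoiding $y$. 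That mechanism is not valid: this sum can be a proper hyperplane of $\calS$ under the stated hypotheses, and then nothing in your argument shows the surviving form is an evaluation. Concretely, for $\calS=\Mats_2(\F_2)$ and $F:M\mapsto(m_{1,1},m_{2,2})^T$, quotienting by $e_2$ gives $x_0=e_1$ and the residual form $f(M)=m_{1,2}+m_{2,2}$ with $y=e_2$; the two relevant subspaces $\calS\cap\calL(U,\K e_1)$ and $\calS\cap\calL(U,\K(e_1+e_2))$ sum to a hyperplane of $\calS$ on which $f$ vanishes, yet $f(\cdot)\,e_2$ is range-compatible and non-local. Your sketch offers no quantitative mechanism separating $\#\K>2$ from $\#\K=2$ at this point (hyperplanes avoiding $y$ are never ``scarce''; there are $(\#\K)^{\dim V-1}$ of them), whereas the true obstruction is structural: the exceptional spaces $\Mats_r(\K)\vee\Mat_{n,p}(\K)$ and their root-linear range-compatible maps, which over a field with more than two elements are non-linear and hence harmless, but which must be identified and analysed head-on. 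Your base case $\dim V=2$, $\codim\calS=1$, $\#\K>2$ already contains this difficulty in miniature and is not ``a small direct check''.

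The actual proof replaces your residual-form step by Lemma \ref{3vectorslemma}: one seeks \emph{three linearly independent} adapted vectors $y_1,y_2,y_3$ with $F\modu y_i$ local, and it is the comparison of the three resulting evaluation vectors (via relations of the form $s(x_i-x_j)\in\K y_i+\K y_j$, played against the codimension bound) that yields localness of $F$. Lemma \ref{adaptedvectorslemma} confines the non-adapted vectors to a hyperplane, and a covering argument (Lemma \ref{coveringlemma}) produces the three independent vectors; when the quotient maps fail to be local, the proof does not resort to an ``ad hoc'' disposal but to a structural analysis of $\widehat{\calS^\bot}$ as a space of operators of rank at most $2$, using the classification of such spaces. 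That is precisely the part of the argument which degenerates at $\codim\calS=2\dim V-3$ over $\F_2$ and which the present paper is devoted to completing. So your proposal captures the correct inductive skeleton but omits, or misidentifies as routine, both of the steps where the hypotheses are actually consumed.
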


In \cite{dSPRC1}, we went as far as to classify, for an arbitrary field with more than $2$ elements, all the
range-compatible \emph{group homomorphisms} on a linear subspace $\calS$ of $\calL(U,V)$ with $\codim \calS \leq 2 \dim V-3$
(see Theorem 1.6 of \cite{dSPRC1}). The main aim of the present article is to examine the case of fields
with two elements. Note already that only the critical case when $\dim \calS=2\dim V-3$ needs to be considered
and that the difficulty does not come from the generalization to group homomorphisms but from the linear maps themselves:
indeed, over $\F_2$ (as is the case over any prime field), a map between vector spaces is a group homomorphism if and only if it is linear.
In that situation, Theorem 1.6 of \cite{dSPRC1} suggests two main special cases when there are non-local range-compatible linear maps on $\calS$
(in the terminology of \cite{dSPRC1}, this happens whenever $\calS$ has Type 2 or Type 3).
A natural question to ask is whether those are the only non-standard cases, or if there are other ones.
Our result is that there is a limited number of other special cases, up to equivalence (five of them, precisely).
The detailed classification in given in Section \ref{maintheoremsection}.

As an application to part of these results and to those of \cite{dSPRC1}, we shall classify all the $2$-dimensional non-reflexive
operator spaces, up to equivalence. In \cite{BracicKuzma}, such a classification was given for all fields with more than $4$ elements;
however, a closer examination of the arguments given there shows that this assumption is mainly there to apply a classification theorem
of Chebotar and \v Semrl \cite{ChebotarSemrl} for locally linearly dependent triples of linear operators, a theorem which is now known
to hold for all fields with more than $2$ elements \cite{dSPLLD2}. However, our own strategy will not be based upon that classification; rather, we
will directly use our own classification of range-compatible linear maps over large operator spaces.

Before we can state our main classification theorem, it is necessary to go through a bit of additional notation.

\subsection{Additional definitions and notation}

In this work, linear hyperplanes are simply called hyperplanes unless specified otherwise.
The entries of matrices are always denoted by small letters, e.g.\ the entry of a matrix $A$ at the $(i,j)$-spot
is denoted by $a_{i,j}$.

We denote by $\Mat_n(\K)$ the algebra of $n$ by $n$ square matrices with entries in $\K$, by
$\GL_n(\K)$ its group of invertible elements, by $\Mats_n(\K)$ its subspace of symmetric matrices, and by
$\Mata_n(\K)$ its subspace of alternating matrices (i.e. skew-symmetric matrices with all diagonal entries zero).
The rank of $M \in \Mat_{n,p}(\K)$ is denoted by $\rk M$. The trace of an endomorphism $u$ of a finite-dimensional vector space
is denoted by $\tr(u)$.

We make the group $\GL_n(\K) \times \GL_p(\K)$ act on the set of linear subspaces of $\Mat_{n,p}(\K)$
by
$$(P,Q).\calV:=P\,\calV\,Q^{-1}.$$
Two linear subspaces of the same orbit will be called \textbf{equivalent}
(this means that they represent, in a change of bases, the same set of linear transformations from a $p$-dimensional vector space to an $n$-dimensional vector space).

We shall consider the bilinear form
$$(u,v) \in \calL(U,V) \times \calL(V,U) \longmapsto \tr(v \circ u).$$
It is non-degenerate on both sides. Throughout the article, orthogonality will always refer to this bilinear form, to the effect that,
given a subset $\calS$ of $\calL(U,V)$, one has
$$\calS^\bot:=\bigl\{v \in \calL(V,U) : \; \forall u \in \calL(U,V), \; \tr(v \circ u)=0\bigr\}.$$
Recall that $(\calS^\bot)^\bot=\calS$ whenever $\calS$ is a linear subspace of $\calL(U,V)$.

Given non-negative integers $m,n,p,q$ and respective subsets $\calA$ and $\calB$ of $\Mat_{m,p}(\K)$ and $\Mat_{n,q}(\K)$,
one sets
$$\calA \vee \calB:=\biggl\{\begin{bmatrix}
A & C \\
[0]_{n \times p} & B
\end{bmatrix} \mid A \in \calA,\; B \in \calB,\; C \in \Mat_{m,q}(\K)\biggr\} \subset \Mat_{m+n,p+q}(\K).$$

Given non-negative integers $n,p,q$ and respective subsets $\calA$ and $\calB$ of $\Mat_{n,p}(\K)$ and $\Mat_{n,q}(\K)$,
one sets
$$\calA \coprod \calB :=\biggl\{\begin{bmatrix}
A & B
\end{bmatrix} \mid A \in \calA, \; B \in \calB\biggr\}.$$

A subspace $\calS$ of $\calL(U,V)$ is called \textbf{reduced} when it satisfies the following conditions:
\begin{enumerate}[(i)]
\item No non-zero vector of $U$ is annihilated by all the operators in $\calS$.
\item The sum of the ranges of the operators in $\calS$ equals $V$.
\end{enumerate}
In the general case, one sets $U_0:= \underset{f \in \calS}{\bigcap} \Ker f$ and
$V_0:=\underset{f \in \calS}{\sum} \im f$, and one sees that every operator $f \in \calS$ induces a linear operator
$$\overline{f} : U/U_0 \rightarrow V_0,$$
and that $\overline{\calS}:=\{\overline{f} \mid f \in \calS\}$ is a reduced subspace of $\calL(U/U_0,V_0)$ called the \textbf{reduced operator space
associated with $\calS$.}

\subsection{The main classification theorem}\label{maintheoremsection}

We have already seen that, on a subspace $\calS$ of $\calL(U,V)$ with $\codim \calS \leq 2\dim V-4$,
every range-compatible linear map is local (see Theorem \ref{maintheolin}). What goes wrong then with fields with two elements and
$\codim \calS=2\dim V-3$? First of all, in \cite{dSPRC1}, the following general result
was proved in which, given vector spaces $V_1$ and $V_2$, a \textbf{root-linear} map
is defined as a group homomorphism $f : V_1 \rightarrow V_2$ such that
$$\forall (\lambda,x)\in \K \times V_1, \; f(\lambda^2 x)=\lambda f(x).$$

\begin{theo}[Corollary 3.4 of \cite{dSPRC1}]
Let $\K$ be a field of characteristic $2$.
Let $r,n,p$ be non-negative integers with $r \geq 2$. Set $\calS:=\Mats_r(\K) \vee \Mat_{n,p}(\K)$.
Then, the group of all range-compatible homomorphisms on $\calS$ is generated by the local maps together with the maps of the form
$$M \longmapsto \begin{bmatrix}
\alpha(m_{1,1}) & \alpha(m_{2,2}) & \cdots & \alpha(m_{r,r}) & 0 & \cdots & 0
\end{bmatrix}^T$$
where $\alpha : \K \rightarrow \K$ is a root-linear form.
\end{theo}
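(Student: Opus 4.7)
The plan is to reduce the classification on $\calS$ to the analogous classification of range-compatible group homomorphisms on $\Mats_r(\K)$ alone, by peeling off a local summand that absorbs everything outside the top-left symmetric block. Write $M = \begin{bmatrix} A & C \\ 0 & B \end{bmatrix}$ for a generic element of $\calS$, and let $F : \calS \rightarrow \K^{r+n}$ be a range-compatible group homomorphism.

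First, I would restrict $F$ to the subspace $\calS_0 := \{M \in \calS \mid A=0\}$. Through the linear isomorphism $\begin{bmatrix} 0 & C \\ 0 & B \end{bmatrix} \mapsto \begin{bmatrix} C \\ B \end{bmatrix}$, the space $\calS_0$ is identified with the full matrix space $\Mat_{r+n,p}(\K)$, and $\im M = \im \begin{bmatrix} C \\ B \end{bmatrix}$ for every $M \in \calS_0$. Hence the restriction of $F$ to $\calS_0$ corresponds to a range-compatible group homomorphism on $\Mat_{r+n,p}(\K)$, and the classical result that every range-compatible group homomorphism on a full matrix space is local yields a vector $z_0 \in \K^p$ such that $F(M) = \begin{bmatrix} Cz_0 \\ Bz_0 \end{bmatrix}$ for all $M \in \calS_0$. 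Subtracting the local map $M \mapsto M \begin{bmatrix} 0 \\ z_0 \end{bmatrix}$ from $F$, one may henceforth assume that $F$ vanishes on $\calS_0$.

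By additivity, this forces $F(M) = F\bigl(\begin{bmatrix} A & 0 \\ 0 & 0 \end{bmatrix}\bigr)$ for every $M \in \calS$, so $F$ factors through the projection $M \mapsto A$. Setting $G(A) := F\bigl(\begin{bmatrix} A & 0 \\ 0 & 0 \end{bmatrix}\bigr)$, one gets a group homomorphism $G : \Mats_r(\K) \rightarrow \K^{r+n}$. Range-compatibility of $F$ at $\begin{bmatrix} A & 0 \\ 0 & 0 \end{bmatrix}$ forces $G(A)$ to lie in $\im A \times \{0\}^n$; conversely, the inclusion $\im A \times \{0\}^n \subset \im M$ for every $M \in \calS$ with top-left block $A$ ensures that range-compatibility at arbitrary $M$ is automatic. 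Writing $G = (G_1, 0)$, one is thus reduced to classifying the range-compatible group homomorphism $G_1 : \Mats_r(\K) \rightarrow \K^r$.

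At this last step I would invoke the companion classification of range-compatible group homomorphisms on $\Mats_r(\K)$ in characteristic $2$, which is the central result of Section~3 of \cite{dSPRC1}: it supplies $x_0 \in \K^r$ and a root-linear form $\alpha : \K \rightarrow \K$ such that $G_1(A) = A x_0 + \bigl[\alpha(a_{1,1}), \ldots, \alpha(a_{r,r})\bigr]^T$. Recombining yields
$$F(M) = M \begin{bmatrix} x_0 \\ z_0 \end{bmatrix} + \bigl[\alpha(m_{1,1}), \ldots, \alpha(m_{r,r}), 0, \ldots, 0 \bigr]^T,$$
as desired. The reduction is formal; the main obstacle is the classification on $\Mats_r(\K)$ itself, where the root-linear forms enter for genuinely geometric reasons tied to the characteristic-$2$ identity $y^T A y = \sum_i a_{i,i}\, y_i^2$, which links the diagonal of a symmetric matrix to quadratic-form-type invariants modulo its image.
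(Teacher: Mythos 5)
Your reduction is sound, but note first that this paper does not prove the statement at all: it is imported verbatim as Corollary 3.4 of \cite{dSPRC1}, so there is no in-paper proof to compare against. What you have written is essentially the derivation one would expect that corollary to have in the source: peel off the subgroup $\calS_0$ of elements with vanishing upper-left block, identify it with the full space $\Mat_{r+n,p}(\K)$, use localness of range-compatible homomorphisms on a full operator space to normalize $F$ to vanish there, observe that additivity then makes $F$ factor through the symmetric block with values constrained to $\im A \times \{0\}^n$, and finally invoke the classification on $\Mats_r(\K)$ itself. All the individual steps check out, including the recombination $F(M)=M\bigl[\begin{smallmatrix} x_0 \\ z_0\end{smallmatrix}\bigr]+\bigl[\alpha(m_{1,1}),\dots,\alpha(m_{r,r}),0,\dots,0\bigr]^T$ and the converse inclusion $\im A\times\{0\}^n\subset \im M$ needed to see that the generators are indeed range-compatible. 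Two caveats. First, the full-space localness result is quoted in this paper (Lemma \ref{espacetotal}) only for \emph{linear} maps, whereas you need it for group homomorphisms over an arbitrary field of characteristic $2$; the homomorphism version does hold for target dimension at least $2$ (which is guaranteed here by $r\geq 2$) and is available in \cite{dSPRC1}, but you should cite it in that form rather than silently upgrading the linear statement. Second, as you acknowledge, the entire substance of the theorem --- the appearance of root-linear forms --- lives in the cited classification of range-compatible homomorphisms on $\Mats_r(\K)$, which your argument takes as a black box; since the present paper takes the whole statement as a black box, that is a fair division of labour, but it means your proof is a reduction, not a self-contained argument.
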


Over $\F_2$, root-linearity is equivalent to linearity, which leads to:

\begin{theo}\label{symmetricF2}
Let $r,n,p$ be non-negative integers with $r \geq 2$. Set $\calS:=\Mats_r(\F_2) \vee \Mat_{n,p}(\F_2)$.
Then, the vector space of all range-compatible linear maps on $\calS$ is generated by the local maps together with
$$M \longmapsto \begin{bmatrix}
m_{1,1} & m_{2,2} & \cdots & m_{r,r} & 0 & \cdots & 0
\end{bmatrix}^T.$$
\end{theo}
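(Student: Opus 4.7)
The plan is to deduce Theorem \ref{symmetricF2} as a direct specialization of the preceding theorem (Corollary 3.4 of \cite{dSPRC1}), which already gives the full classification of range-compatible \emph{group homomorphisms} on $\calS = \Mats_r(\K) \vee \Mat_{n,p}(\K)$ for every field $\K$ of characteristic $2$. Thus no new geometric or combinatorial argument is required; the work consists entirely in recognizing how the structure of that theorem collapses when $\K=\F_2$.

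First I would observe that any range-compatible linear map on $\calS$ is, a fortiori, a range-compatible group homomorphism. Applying the stated theorem, such a map is a sum of a local map together with a map of the form
$$M \longmapsto \begin{bmatrix} \alpha(m_{1,1}) & \alpha(m_{2,2}) & \cdots & \alpha(m_{r,r}) & 0 & \cdots & 0 \end{bmatrix}^T$$
for some root-linear form $\alpha : \F_2 \to \F_2$.

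The key observation is then that over $\F_2$ the root-linearity condition $\alpha(\lambda^2 x) = \lambda\, \alpha(x)$ degenerates: since $\lambda^2 = \lambda$ for every $\lambda \in \F_2$, this condition becomes $\alpha(\lambda x)=\lambda\,\alpha(x)$, which together with additivity says exactly that $\alpha$ is $\F_2$-linear. Moreover, an $\F_2$-linear form $\F_2 \to \F_2$ is either $0$ or the identity. Consequently, the family of ``extra'' maps appearing in Corollary 3.4 of \cite{dSPRC1} reduces, over $\F_2$, to the single map $M \mapsto (m_{1,1}, \ldots, m_{r,r}, 0, \ldots, 0)^T$.

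The main and only obstacle is really bookkeeping: one must confirm that the vector space generated in the sense of Theorem \ref{symmetricF2} coincides with the group generated by these homomorphisms, but this is automatic because over $\F_2$ group homomorphisms between $\F_2$-vector spaces are the same as $\F_2$-linear maps, and $\F_2$-linear combinations of the listed generators coincide with their $\Z$-linear combinations. This completes the reduction and hence the proof.
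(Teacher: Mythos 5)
Your proposal is correct and matches the paper's approach exactly: the paper derives Theorem \ref{symmetricF2} from Corollary 3.4 of \cite{dSPRC1} by noting that over $\F_2$ root-linearity coincides with linearity, so the only non-trivial root-linear form is the identity and group homomorphisms are the same as linear maps. No further comment is needed.
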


To see that the above special case of a range-compatible linear map on $\Mats_r(\F_2) \vee \Mat_{n,p}(\F_2)$
is non-local, note that if there is a vector $X \in \K^{r+p}$ such that
$$\forall M \in \Mats_r(\F_2) \vee \Mat_{n,p}(\F_2), \; MX=\begin{bmatrix}
m_{1,1} & m_{2,2} & \cdots & m_{r,r} & 0 & \cdots & 0
\end{bmatrix}^T,$$
then we find the last $p$ entries of $X$ to be zero by applying the above formula to the matrices of
$\Mats_r(\F_2) \vee \Mat_{n,p}(\F_2)$ with all first $r$ columns zero; then, we show that the
first $r$ entries of $X$ are zero by considering all the matrices of the form
$M=\begin{bmatrix}
A & [0]_{r \times p} \\
[0]_{n \times r} & [0]_{n \times p}
\end{bmatrix}$ with $A \in \Mata_r(\K)$; thus $X=0$, which is absurd.

In particular, if a linear subspace $\calS$ of $\calL(U,V)$ is represented by $\Mats_2(\F_2) \vee \Mat_{n,p}(\F_2)$ or by
$\Mats_3(\F_2) \coprod \Mat_{3,p}(\K)$ for some pair $(n,p)$ of non-negative integers, then
there is a non-local range-compatible linear map on it.

\vskip 3mm
There are other examples. In order to discuss them, some additional notation is necessary:

\begin{Not}
We define:
\begin{itemize}
\item $\calV_2:=\Biggl\{
\begin{bmatrix}
a & b \\
b & c \\
c & 0
\end{bmatrix}\mid (a,b,c)\in (\F_2)^3\Biggr\}$;
\item $\calG_3:=\Biggl\{
\begin{bmatrix}
a & c & b \\
0 & b+c & e \\
b & d & f
\end{bmatrix} \mid (a,b,c,d,e,f)\in (\F_2)^6\Biggr\}$;
\item $\calH_3:=\Biggl\{
\begin{bmatrix}
a & b & c \\
b & d & f \\
c & e & b+c+d
\end{bmatrix}\mid (a,b,c,d,e,f)\in (\F_2)^6\Biggr\}$;
\item $\calI_3:=\Biggl\{
\begin{bmatrix}
a & d & e \\
b & c & f \\
c & a & a+c+e+f
\end{bmatrix}\mid (a,b,c,d,e,f)\in (\F_2)^6\Biggr\}$;
\item $\calH_4:=\Biggl\{
\begin{bmatrix}
a & b+c & f & h \\
b & d & a+c & i \\
c & e & g & a+b
\end{bmatrix}\mid (a,b,c,d,e,f,g,h,i)\in (\F_2)^9\Biggr\}$.
\end{itemize}
\end{Not}

We note that each of those spaces has codimension $3$ in the full matrix space it is naturally included in.

\begin{Def}
Let $\calS$ be a linear subspace of $\calL(U,V)$.
We say that $\calS$ has \textbf{Type $i$} when, in well-chosen bases of
$U$ and $V$, it is represented by the matrix space featured in the corresponding line of the following array.

\begin{center}
\begin{tabular}{| c | l |}
\hline
Type & Matrix space representing $\calS$ in well-chosen bases of $U$ and $V$ \\
\hline
\hline
$1$ & $\Mats_2(\F_2) \vee \Mat_{n,p}(\F_2)$, with $n \geq 0$ and $p \geq 0$. \\
\hline
$2$ & $\Mats_3(\F_2) \coprod \Mat_{3,p}(\F_2)$, with $p \geq 0$. \\
\hline
$3$ & $\calV_2 \vee \Mat_{n,p}(\F_2)$, with $n \geq 0$ and $p \geq 0$. \\
\hline
$4$  & $\calG_3 \coprod \Mat_{3,p}(\F_2)$, with $p \geq 0$. \\
\hline
$5$ & $\calH_3 \coprod \Mat_{3,p}(\F_2)$, with $p \geq 0$. \\
\hline
$6$ & $\calI_3 \coprod \Mat_{3,p}(\F_2)$, with $p \geq 0$. \\
\hline
$7$ & $\calH_4 \coprod \Mat_{3,p}(\F_2)$, with $p \geq 0$. \\
\hline
\end{tabular}
\end{center}
\end{Def}

Note that in the above cases $\calS$ has codimension $2\dim V-3$ in $\calL(U,V)$.
Spaces of Type 1 and 2 in the above classification correspond, respectively, to spaces
of Type 2 and 3 from \cite{dSPRC1}.

\begin{theo}\label{classRCF2}
Assume that $\K=\F_2$.
Let $\calS$ be a linear subspace of $\calL(U,V)$ with codimension $2\dim V-3$, and which has none of Types 1 to 7.
Then, every range-compatible linear map on $\calS$ is local.
\end{theo}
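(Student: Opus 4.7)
The plan is to prove the contrapositive: assuming a non-local range-compatible linear map $F : \calS \rightarrow V$ exists on a subspace $\calS \subseteq \calL(U,V)$ of codimension $2\dim V-3$ over $\F_2$, I show that $\calS$ has one of Types 1 to 7.

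First I would reduce to the case that $\calS$ is reduced. Setting $U_0 := \bigcap_{f \in \calS} \Ker f$ and $V_0 := \sum_{f \in \calS}\im f$, the range-compatibility of $F$ yields $F(\calS) \subseteq V_0$, so $F$ descends to a range-compatible linear map $\overline{F}$ on the reduced $\overline{\calS} \subseteq \calL(U/U_0,V_0)$. A direct codimension computation gives
$$\codim_{\calL(U/U_0,V_0)} \overline{\calS} \;=\; \dim V_0\,(2-\dim U_0) + (\dim V-\dim V_0)\,(2-\dim U) - 3,$$
which falls to at most $2\dim V_0 - 4$ in every case except the borderline $U_0=0$, $V_0 \subsetneq V$, $\dim U =2$. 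Outside this borderline, Theorem \ref{maintheolin} forces $\overline{F}$, and hence $F$, to be local, contradicting the hypothesis; the borderline case reduces inductively to a smaller instance of the same problem.

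With $\calS$ reduced, the heart of the proof is a structural analysis along the lines of Theorem 1.6 of \cite{dSPRC1}, which handled the case $\#\K>2$. Following that template, I would induct on $\dim V$: at each step, one picks a well-chosen hyperplane $H \subseteq V$ and examines the pair $(\calS_H, F|_{\calS_H})$ where $\calS_H := \{s \in \calS : \im s \subseteq H\}$. The restriction $F|_{\calS_H}$ is a range-compatible linear map into $H$, which is either local (allowing propagation back to the full $F$) or non-local (triggering the inductive hypothesis on the smaller $H$). The essential new ingredient over $\F_2$, as announced in the abstract, is the recent classification of vector subspaces of matrices of rank at most $2$ over $\F_2$: from a non-local $F$ one extracts an auxiliary matrix space whose elements, constrained by the range-compatibility pairing between $\calS$ and $F$, must all have rank at most $2$. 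The $\F_2$ classification then yields a short list of possible normal forms, each of which, after unwinding the reductions, corresponds to a specific basis representation of $\calS$.

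The main obstacle is matching these normal forms to the seven listed types. Types $1$ and $2$ mirror Types $2$ and $3$ of \cite{dSPRC1}, corresponding to non-local configurations already present over larger fields of characteristic $2$, and are relatively routine. The exotic Types $3$ through $7$, featuring the highly specific matrix spaces $\calV_2$, $\calG_3$, $\calH_3$, $\calI_3$, $\calH_4$, arise only in low dimensions (essentially $\dim V \leq 4$) and are specific to $\F_2$. Establishing that the $\F_2$ rank-$2$ classification produces precisely these and no others requires a meticulous case-by-case check together with careful bookkeeping of the base changes involved in each reduction step; this is where the bulk of the work would lie.
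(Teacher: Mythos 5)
Your plan has the right silhouette (contrapositive, induction on $\dim V$, eventual appeal to the $\F_2$ classification of spaces of upper-rank at most $2$, case matching against the seven types), but the inductive mechanism you describe would not work, and the step you defer as a ``meticulous case-by-case check'' is in fact the entire proof. Concretely: you propose to restrict $F$ to $\calS_H=\{s\in\calS : \im s\subseteq H\}$ for a hyperplane $H$. There is no control on $\codim_{\calL(U,H)}\calS_H$ (this subspace can be far too small, even zero), and, more importantly, localness of $F|_{\calS_H}$ gives no information about $F$ on the rest of $\calS$, so the advertised ``propagation back to the full $F$'' has no mechanism behind it. The paper's induction runs through the dual operation: the Projection Lemma produces $F\modu y:\calS\modu y\to V/\K y$ defined on \emph{all} of $\calS$, the notion of $\calS$-adapted vector (equivalently $\dim\calS^\bot y\geq 2$) controls $\codim(\calS\modu y)$, and Lemma \ref{3vectorslemma} is precisely the propagation tool: three linearly independent $y_i$ with $F\modu y_i$ local force $F$ local. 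Without this apparatus (adapted and super-adapted vectors, the covering lemma, Claim \ref{3vectorsclaim}) the induction cannot be set up.

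The second gap is in how the rank-$\leq 2$ space is supposed to appear. The space $\bigl\{[\,M\mid F(M)\,]\bigr\}$ does have upper-rank $\leq 2$, but only because its rank equals $\rk M$; this is useful only after one has already reduced to $\dim U=2$, which the paper achieves (for $n\geq 4$) by a separate argument showing that the ranges of all matrices of $\calS^\bot$ lie in a common plane. In the decisive case $\dim V=3$, the rank-$\leq 2$ space is $\calS^\bot$ itself (or $\widehat{\calS^\bot}$), and proving it has upper-rank $\leq 2$ requires the counting identity $3m_1+m_2=3n_1+n_2$ of Claim \ref{countingclaim} combined with a four-way case split on the rank-$1$ matrices of $\calS^\bot$ and the existence of a super-$\calS$-adapted vector; it is there, and only there, that $\calG_3$, $\calH_3$, $\calI_3$ and $\calH_4$ emerge (via Propositions \ref{primitivetypeprop1}--\ref{primitivetypeprop3}). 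None of this is present or derivable from what you wrote, so the proposal is a plausible table of contents rather than a proof.
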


We have already described the range-compatible linear maps on spaces of Type 1 or 2. In the following theorem,
we recall these results and describe the remaining five cases:

\begin{theo}\label{specialRCF2}
Assume that $\K=\F_2$.
Let $\calS$ be a linear subspace of $\calL(U,V)$ that has one of Types 1 to 7.
Then, $\calL_\loc(\calS,V)$ has codimension $1$ in $\calL_\rc(\calS,V)$.
In the following array, we give a non-local range-compatible linear map from each special type of space:
\begin{center}
\begin{tabular}{| c | c | l |}
\hline
Type & Matrix space & Example of a non-local range-compatible linear map \\
\hline
\hline
$1$ & $\Mats_2(\F_2) \vee \Mat_{n,p}(\F_2)$ & $M \mapsto \begin{bmatrix}
m_{1,1} \\
m_{2,2} \\
[0]_{n \times 1}
\end{bmatrix}$ \\
\hline
$2$ & $\Mats_3(\F_2) \coprod \Mat_{3,p}(\F_2)$ & $M \mapsto \begin{bmatrix}
m_{1,1} \\
m_{2,2} \\
m_{3,3}
\end{bmatrix}$ \\
\hline
$3$ & $\calV_2 \vee \Mat_{n,p}(\F_2)$ & $M \mapsto \begin{bmatrix}
0 \\
m_{2,1}+m_{2,2} \\
0 \\
[0]_{n \times 1}
\end{bmatrix}$ \\
\hline
$4$ & $\calG_3 \coprod \Mat_{3,p}(\F_2)$ & $M \mapsto \begin{bmatrix}
m_{1,1}+m_{1,3} \\
0 \\
0
\end{bmatrix}$ \\
\hline
$5$ & $\calH_3 \coprod \Mat_{3,p}(\F_2)$ & $M \mapsto \begin{bmatrix}
m_{1,1} \\
m_{2,2} \\
m_{3,3}
\end{bmatrix}$ \\
\hline
$6$ & $\calI_3 \coprod \Mat_{3,p}(\F_2)$ &
$M \mapsto \begin{bmatrix}
0 \\
0 \\
m_{1,1}+m_{3,1}
\end{bmatrix}$ \\
\hline
$7$ & $\calH_4 \coprod \Mat_{3,p}(\F_2)$ &
$M \mapsto \bigl(m_{1,1}+m_{2,1}+m_{3,1}\bigr)\begin{bmatrix}
1 \\
1 \\
1
\end{bmatrix}$ \\
\hline
\end{tabular}
\end{center}
\end{theo}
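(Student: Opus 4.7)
The plan is to address the seven types separately, while noting that Types 1 and 2 are already dispatched: Type 1 is exactly Theorem~\ref{symmetricF2} above, and Type 2 corresponds to Type 3 of \cite{dSPRC1}, for which an analogous statement is already known (the uniqueness-up-to-local can be extracted from Corollary~3.4 of \cite{dSPRC1}). So the real work concerns Types 3 through 7, and for each of these the task splits into three pieces: verify that the displayed formula is range-compatible; verify that it is non-local; and show that no further range-compatible linear maps survive modulo local ones.

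For range-compatibility, I would proceed by direct finite case analysis. In each type the displayed map depends only on entries of a small leading block $M_0$ (coming from $\calV_2$, $\calG_3$, $\calH_3$, $\calI_3$, or $\calH_4$), and since the $\Mat_{3,p}$ tail may be set to zero without affecting the formula, it is enough to show that the displayed vector lies in the column space of $M_0$ alone. There are only finitely many values of the defining parameters in $\F_2$, so I would enumerate precisely those values at which the right-hand side is nonzero and in each case exhibit an explicit $\F_2$-combination of the columns of $M_0$ producing it. For instance, in Type 4 one needs $(a+b,0,0)^T\in \im M_0$, so $a+b=1$ leaves four cases in $(c,d,e,f)$ to verify; a short check shows the three columns of the $\calG_3$ block span $(1,0,0)^T$ in each. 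Types 3, 5, 6, 7 are analogous.

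For non-locality, I would mimic the argument sketched immediately after Theorem~\ref{symmetricF2}. Assuming the displayed $F$ were equal to $M\mapsto MX$ for some fixed $X$, I would first test against matrices supported only on the $\Mat_{3,p}$ (or $\Mat_{n,p}$) tail to force the corresponding components of $X$ to vanish, and then test against carefully chosen matrices in the leading block in which the formula gives a nonzero output while $MX$ is constrained to land on a proper subspace, producing a contradiction. This establishes the lower bound $\dim \bigl(\calL_\rc(\calS)/\calL_\loc(\calS)\bigr)\geq 1$.

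The main obstacle is the matching upper bound. The cleanest route I would take is a reduction to the ``core'' cases with no $\Mat_{3,p}$ (or $\Mat_{n,p}$) tail: the last $p$ standard basis vectors of the source define local maps $\widehat{x}$ whose values on $\calS$ cover all of the contributions stemming from the tail, so any range-compatible linear map can be adjusted by a local map to one which is supported on the leading block; crucially, Theorem~\ref{maintheolin} applied to the tail alone rules out additional non-local contributions there. Once reduced to the core, each of $\calV_2,\calG_3,\calH_3,\calI_3,\calH_4$ is a small fixed space over $\F_2$, and I would parametrize an arbitrary linear $F:\calS\to\F_2^r$ by its matrix of coefficients, impose the range-compatibility equations matrix-by-matrix, and solve. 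The delicate point is to check that the solution space has dimension exactly $\dim\calL_\loc+1$; this is where the specific algebraic identities defining each exceptional space come into play, and where I expect the heaviest bookkeeping. A useful sanity check throughout will be that the main classification Theorem~\ref{classRCF2}, proved in parallel, already forces the upper bound as a byproduct of its inductive argument, since the proof of that theorem must isolate precisely the degrees of freedom represented by the exceptional formulas in the table.
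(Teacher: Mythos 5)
Your outline matches the paper's architecture: reduce to the five core spaces $\calV_2,\calG_3,\calH_3,\calI_3,\calH_4$ via the Splitting Lemma (together with the Embedding Lemma for the $\vee$-type tails), then for each core verify range-compatibility and non-locality of the displayed map and prove the codimension-one upper bound. The range-compatibility and non-locality checks you describe are essentially what the paper does (the paper occasionally shortcuts the enumeration with determinant identities or a symmetry of the space, e.g.\ conjugation by a permutation matrix for $\calH_3$, but the substance is the same finite case analysis). Where you genuinely diverge is the upper bound. You propose to parametrize an arbitrary linear $F$ on the core and solve the full system of range-compatibility equations by brute force; the paper instead uses the Projection Lemma: for well-chosen basis vectors $e_i$ of the target, the quotient space $\calS \modu e_i$ is either the full operator space, or a space covered by Lemma~\ref{vecteurdim1}, or a space of Type~1 already handled by Theorem~\ref{symmetricF2}, so that $F \modu e_i$ is known up to a local map and only a handful of free coefficients survive, which are then pinned down by one or two determinant identities (plus Lemma~\ref{ligneparligne} row by row). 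Your method would work --- everything is finite over $\F_2$ --- but at the cost of substantially heavier bookkeeping; the paper's quotient reductions buy a proof that is short enough to be checked by hand. One caution: your closing ``sanity check'' that Theorem~\ref{classRCF2} forces the upper bound as a byproduct must not be promoted to part of the argument, because the paper's proof of Theorem~\ref{classRCF2} explicitly invokes Theorem~\ref{specialRCF2} (for instance when normalizing $F \modu y_3$ in the $n=3$ analysis), so that dependency would be circular.
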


Finally, the above special spaces are pairwise inequivalent:

\begin{theo}\label{inequivalencetheorem}
Given distinct integers $i$ and $j$ in $\lcro 1,7\rcro$, no space can have both Types $i$ and $j$.
\end{theo}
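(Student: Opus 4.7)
My plan is to exhibit, for each pair of distinct types $(i,j)$, a $\GL(U)\times\GL(V)$-invariant of $\calS$ that separates the two. I would use four main invariants: the pair $(\dim U,\dim V)$; the dimension of $\Sigma_2(\calS):=\{x\in U:\dim\calS x\le 2\}$ (viewed as a cone in $U$); the maximum rank $\urk(\calS^\bot):=\max_{T\in\calS^\bot}\rk T$; and the codimension in $U$ of $\sum_{T\in\calS^\bot}\im T$ (equivalently, the number of trailing zero rows of $\calS^\bot$ in a standard basis).

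First I would tabulate these invariants for each parametrized family by a direct computation on the explicit generators. In the regime $\dim V=3$, the values of $\dim\Sigma_2$ would come out to $2,0,1,1,0,0,0$ for Types $1$ through $7$ respectively; $\urk(\calS^\bot)$ would equal $2$ for Types $1$--$5$ and $3$ for Types $6$--$7$ (the latter via the explicit invertible matrices visible in $\calI_3^\bot$ and $\calH_4^\bot$); and the image codimension would be precisely the free parameter $p$ of each type. A case analysis on the few dimension overlaps would then separate every pair $(i,j)\in\lcro 1,7\rcro^2$ with $i\ne j$, except for the single pair $(2,5)$.

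For the remaining pair $(2,5)$ I would use a fifth, finer invariant. Both $\calS^\bot$ would be $3$-dimensional subspaces of $\Mat_3(\F_2)$ consisting entirely of rank-$2$ matrices (plus zero), with all four previous invariants agreeing. The decisive refinement is that the seven nonzero elements of $\calS^\bot$ have seven distinct $1$-dimensional kernels, so they cover $\Pgros(V)=\Pgros^2(\F_2)$ bijectively, yielding a canonical set bijection $v\mapsto T_v$ from $V\setminus\{0\}$ to $\calS^\bot\setminus\{0\}$. For Type $2$, where $\calS^\bot=\Mata_3(\F_2)$, the standard parametrization of an alternating $3\times 3$ matrix by its three above-diagonal entries would make $v\mapsto T_v$ an $\F_2$-linear isomorphism. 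For Type $5$, a direct check on $e_1$, $e_2$, $e_1+e_2$ in $\calH_3^\bot$ would yield $T_{e_1+e_2}\ne T_{e_1}+T_{e_2}$. Under the action $\calS'=P\calS Q^{-1}$ the corresponding bijection becomes $v'\mapsto P\,T_{Q^{-1}v'}\,Q^{-1}$, so linearity of $v\mapsto T_v$ is preserved under $\GL(U)\times\GL(V)$, and hence Types $2$ and $5$ cannot coincide.

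The main obstacle is precisely this $(2,5)$ distinction, since $\Mata_3(\F_2)$ and $\calH_3^\bot$ share essentially all the obvious structural invariants (they are both spaces of rank-$2$ matrices related to their duals by a fixed projective correlation). The key extra input is that this correlation is \emph{linear} only in the alternating case, and detecting this linearity as a $\GL(U)\times\GL(V)$-invariant property is the crux of the argument.
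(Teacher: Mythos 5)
Your proposal is correct, and it takes a genuinely different route from the paper's. The paper works entirely on the dual side: it tabulates the reduced matrix space associated with $\calS^\bot$ for each type, separates most pairs by the number of rows of that reduced space, distinguishes Types 1 and 3 by the set $\bigl\{y\in V : \dim \calS^\bot y<2\bigr\}$, and disposes of the triple $\{2,4,5\}$ by invoking Proposition \ref{primitivetypeprop3} (imported from the classification of primitive upper-rank-$2$ spaces over $\F_2$), which asserts that being equivalent to a subspace of $\calJ_3(\F_2)$, to $\Mata_3(\F_2)$, or to $\calU_3(\F_2)$ are pairwise incompatible. Your four coarse invariants do the same separating work by more elementary means: I checked your table (the values $2,0,1,1,0,0,0$ for $\dim\Sigma_2$ and $2,2,2,2,2,3,3$ for $\urk(\calS^\bot)$ are correct, and together with $\dim U$, $\dim V$ and the image codimension they do isolate the single pair $(2,5)$). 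Your treatment of $(2,5)$ is the real added value: since both $\Mata_3(\F_2)$ and $\calH_3^\bot$ consist of rank-$2$ matrices (plus zero) whose pairwise distinct kernels sweep out all of $\Pgros(V)$, the correlation $v\mapsto T_v$ is well defined, its linearity is an equivalence invariant, and it is linear for $\Mata_3(\F_2)$ (the cross-product parametrization) but not for $\calH_3^\bot$ — your witness checks out, as $T_{e_1}$, $T_{e_2}$, $T_{e_1+e_2}$ correspond to the parameters $(a,b,c)=(1,0,1)$, $(0,0,1)$, $(1,1,1)$, while $T_{e_1}+T_{e_2}$ has $(a,b,c)=(1,0,0)$ and kernel $\K(e_1+e_3)$. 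This amounts to a self-contained proof that $\Mata_3(\F_2)$ and $\calU_3(\F_2)$ are inequivalent, a fact the paper only obtains by citation, so your argument is more elementary where it matters most. Two cosmetic remarks: $\Sigma_2(\calS)$ is a priori only a union of lines, so you should officially take $\dim\Vect\bigl(\Sigma_2(\calS)\bigr)$ (in all seven cases it happens to be a subspace anyway); and with the paper's convention $\calS'=P\calS Q^{-1}$ one gets $\calS'^\bot=Q\,\calS^\bot\,P^{-1}$ and hence $T'_{v'}=Q\,T_{P^{-1}v'}\,P^{-1}$ — your $P$ and $Q$ are swapped — but this does not affect the conclusion that linearity of $v\mapsto T_v$ is preserved under equivalence.
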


\subsection{Strategy of proof, and structure of the article}

Our proof of the above results is split into two independent blocks.
In the first one (Section \ref{specialtypesection}), we establish Theorems \ref{specialRCF2} and \ref{inequivalencetheorem}.
In the second one (Sections \ref{mainproofsec1} and \ref{mainproofsec2}), we prove Theorem \ref{classRCF2}.

For both proofs, we will need a lot of basic results that were developed in \cite{dSPRC1},
in particular quotient space techniques. The main idea is that if
$F : \calS \rightarrow V$ is a range-compatible linear map and $y$ is a non-zero vector of $V$,
then $F$ induces a range-compatible linear map
$$(F \modu y) : (\calS \modu \K y) \longrightarrow V/\K y,$$
where $\calS \modu \K y$ denotes the space of operators from $U$ to $V/\K y$ that is naturally associated with $\calS$.
If the codimension of $\calS \modu \K y$ is small enough, then we can use an induction on the dimension of $V$
to recover precious information on $F$. The vectors $y$ for which we can warrant that the codimension of
$\calS \modu \K y$ is small enough will be called the \emph{$\calS$-adapted vectors}.
A very important lemma (Lemma \ref{3vectorslemma}) that was proved in \cite{dSPRC1}
states that if $\codim \calS \leq 2 \dim V-3$ and if we can find three linearly independent $\calS$-adapted vectors in $V$,
then every range-compatible linear map on $\calS$ is local.
On the other hand, having too few $\calS$-adapted vectors in $V$ translates into rank properties of
the dual space $\widehat{\calS^\bot}$, and in some instances it is then possible to show that every operator in
$\widehat{\calS^\bot}$ has rank at most 2 (this was essentially the strategy in \cite{dSPRC1}).
In those situations, we shall appeal to the recent classification of spaces of matrices with rank at most $2$ over $\F_2$
\cite{dSPprimitiveF2} to uncover the structure of $\calS$.

In Section \ref{toolssection}, we shall recall all the useful technical results
on range-compatible linear maps that were already established in \cite{dSPRC1},
and then we shall gather the results from \cite{dSPprimitiveF2} that we will use in the proof of Theorem \ref{classRCF2}.

The last two sections (Sections \ref{dim2reflexive} and \ref{affinespacesection})
are devoted to applications of Theorems \ref{classRCF2} and \ref{specialRCF2},
first to the classification of non-reflexive $2$-dimensional spaces of operators, and
then to the one of large affine spaces in which no matrix has rank less than $2$.

\section{Main tools}\label{toolssection}

Here, we review some basic results that were proved in \cite{dSPRC1}.
Throughout the section, $\K$ denotes the field $\F_2$.

\subsection{Range-compatible linear maps in specific cases}

The first two lemmas are the most basic results on range-compatible linear maps.

\begin{lemma}[Corollary 2.2 in \cite{dSPRC1}]\label{dimU=1}
Assume that $\dim U=1$. Let $\calS$ be a linear subspace of $\calL(U,V)$.
Then, every range-compatible linear map on $\calS$ is local.
\end{lemma}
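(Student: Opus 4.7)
The plan is to reduce range-compatibility on $\calS$ to a familiar statement about linear maps preserving lines. Fix a nonzero vector $x_0 \in U$, so that $U = \K x_0$. The evaluation map $\varepsilon : s \in \calL(U,V) \mapsto s(x_0) \in V$ is an isomorphism, and under this identification $\calS$ corresponds to some linear subspace $W := \varepsilon(\calS) \subseteq V$. For every $s \in \calS$, one has $\im s = \K \cdot s(x_0)$, so the range-compatibility condition on $F : \calS \rightarrow V$ translates to the following statement about the induced linear map $\widetilde{F} := F \circ \varepsilon^{-1}|_W : W \rightarrow V$:
$$\forall w \in W, \quad \widetilde{F}(w) \in \K w.$$

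The next step is to show that any linear map $\widetilde{F} : W \rightarrow V$ satisfying this line-preservation property is of the form $w \mapsto \alpha w$ for a fixed $\alpha \in \K$. For each nonzero $w \in W$, there is a unique scalar $\alpha_w \in \K$ with $\widetilde{F}(w) = \alpha_w w$. If $\dim W \leq 1$, this scalar is trivially constant. Otherwise, pick any two linearly independent vectors $w_1, w_2 \in W$; applying linearity to $w_1 + w_2$ gives $\alpha_{w_1+w_2}(w_1 + w_2) = \alpha_{w_1} w_1 + \alpha_{w_2} w_2$, and linear independence forces $\alpha_{w_1} = \alpha_{w_2} = \alpha_{w_1+w_2}$. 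For any two nonzero $w_1, w_2 \in W$ that are linearly dependent, one shows $\alpha_{w_1} = \alpha_{w_2}$ either directly from linearity or by comparison with a third vector linearly independent from each. Hence there is a common scalar $\alpha$ with $\widetilde{F}(w) = \alpha w$ for all $w \in W$.

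Setting $y := \alpha x_0 \in U$, one then has $F(s) = \widetilde{F}(s(x_0)) = \alpha \, s(x_0) = s(y)$ for every $s \in \calS$, so $F$ is local. The only mild obstacle is the line-preservation lemma, which over $\F_2$ is especially painless (the only scalars are $0$ and $1$, so $\widetilde{F}$ is either the zero map on $W$ or the identity inclusion $W \hookrightarrow V$), but the argument above works verbatim over any field.
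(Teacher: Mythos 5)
Your proof is correct. The paper imports this lemma from \cite{dSPRC1} without reproving it, but your argument is precisely the expected one: identify $\calS$ with the subspace $W=\{s(x_0)\mid s\in\calS\}$ of $V$ via evaluation at a basis vector $x_0$ of $U$, observe that range-compatibility becomes the condition $\widetilde{F}(w)\in\K w$, and apply the classical fact that a linear map sending every vector to a scalar multiple of itself is a global homothety $w\mapsto \alpha w$, whence $F(s)=s(\alpha x_0)$ is local.
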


\begin{lemma}[Proposition 2.5 in \cite{dSPRC1}]\label{espacetotal}
Every range-compatible linear map on $\calL(U,V)$ is local.
\end{lemma}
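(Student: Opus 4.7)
The plan is to extract a candidate evaluation vector $x \in U$ by probing $F$ on rank-one operators, and then to verify $F(s)=s(x)$ on all of $\calL(U,V)$ by linearity.

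First I fix a basis $(e_1,\dots,e_n)$ of $U$ and, for each index $i \in \{1,\dots,n\}$ and each vector $v \in V$, I introduce the rank-one operator $s_{i,v} \in \calL(U,V)$ defined by $s_{i,v}(e_i)=v$ and $s_{i,v}(e_j)=0$ for every $j \neq i$. Every $s \in \calL(U,V)$ then decomposes uniquely as $s=\sum_{i=1}^n s_{i,s(e_i)}$, so by linearity of $F$ it will be enough to determine $F(s_{i,v})$ for all pairs $(i,v)$.

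For a fixed index $i$, the assignment $L_i : v \in V \mapsto F(s_{i,v}) \in V$ is a linear endomorphism of $V$, because $v \mapsto s_{i,v}$ is linear and $F$ is linear. Range-compatibility forces $L_i(v) \in \im s_{i,v}=\K v$ for every $v \in V$. A classical argument shows that such an endomorphism must be a scalar multiple of the identity: for any linearly independent pair $(v_1,v_2)$, write $L_i(v_1)=\alpha v_1$ and $L_i(v_2)=\beta v_2$, then expand $L_i(v_1+v_2)\in \K(v_1+v_2)$ to force $\alpha=\beta$; the case of collinear non-zero vectors then follows by scaling. Consequently there exists a scalar $\lambda_i \in \K$ such that $F(s_{i,v})=\lambda_i v$ for every $v \in V$.

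Setting $x:=\sum_{i=1}^n \lambda_i e_i \in U$, the linearity of $F$ yields
\[
F(s)=\sum_{i=1}^n F(s_{i,s(e_i)})=\sum_{i=1}^n \lambda_i s(e_i)=s(x)
\]
for every $s \in \calL(U,V)$, proving that $F$ is local. There is no serious obstacle in the argument; the only step that requires a moment's thought is the scalar-multiple-of-the-identity fact, which is standard and valid over any field. The degenerate cases $\dim U=0$ and $\dim U=1$ are handled uniformly (the latter could alternatively be quoted from Lemma \ref{dimU=1}).
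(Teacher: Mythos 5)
Your proof is correct. The paper does not reprove this lemma (it simply cites Proposition 2.5 of \cite{dSPRC1}), and your argument --- decomposing each $s\in\calL(U,V)$ along a basis of $U$ into the rank-one operators $s_{i,v}$ and showing that each induced endomorphism $L_i$ of $V$ is a homothety, with the degenerate cases $\dim V\leq 1$ being trivial --- is exactly the standard proof of that cited result.
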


\subsection{Embedding and splitting techniques}

Here, we recall two basic techniques for dealing with range-compatible linear maps on matrix spaces.
The first one is obvious. The second one is Lemma 2.4 in \cite{dSPRC1}.

\begin{lemma}[Embedding Lemma]\label{embeddinglemma}
Let $\calS$ be a linear subspace of $\Mat_{n,p}(\K)$, and let $n'$ be a non-negative integer.
Consider the space $\calS' \subset \Mat_{n+n',p}(\K)$ of all matrices of the form $\begin{bmatrix}
M \\
[0]_{n' \times p}
\end{bmatrix}$ with $M \in \calS$, and let $F' : \calS' \rightarrow \K^{n+n'}$ be a range-compatible linear map. \\
Then, there is a range-compatible linear map $F : \calS \rightarrow \K^n$ such that
$$\forall M \in \calS, \; F'\Biggl(\begin{bmatrix}
M \\
[0]_{n' \times p}
\end{bmatrix}\Biggr)=\begin{bmatrix}
F(M) \\
[0]_{n' \times 1}
\end{bmatrix}.$$
\end{lemma}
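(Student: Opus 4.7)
The plan is to exploit a very simple observation: for every $M \in \calS$, the range of the enlarged matrix
$\widetilde{M}:=\begin{bmatrix} M \\ [0]_{n' \times p} \end{bmatrix}$
is contained in the coordinate subspace $\K^n \times \{0\}^{n'} \subset \K^{n+n'}$, because appending $n'$ zero rows kills the last $n'$ entries of every column of $M$. Range-compatibility of $F'$ then forces $F'(\widetilde{M}) \in \im \widetilde{M} \subset \K^n \times \{0\}^{n'}$, which automatically means that the last $n'$ coordinates of $F'(\widetilde{M})$ are zero. This is exactly what we need.

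Concretely, I would proceed as follows. First, note that the linear map $M \in \calS \mapsto \widetilde{M} \in \calS'$ is a bijection, so we can unambiguously define
$$F(M) \in \K^n \quad \text{by} \quad F'(\widetilde{M}) = \begin{bmatrix} F(M) \\ [0]_{n' \times 1} \end{bmatrix},$$
with the zero block below being guaranteed by the previous paragraph. Linearity of $F$ then follows at once from the linearity of $F'$ combined with the linearity of $M \mapsto \widetilde{M}$. For range-compatibility, observe that $\im \widetilde{M}$ consists precisely of the vectors $\begin{bmatrix} v \\ 0 \end{bmatrix}$ with $v \in \im M$; so $F'(\widetilde{M}) \in \im \widetilde{M}$ immediately yields $F(M) \in \im M$.

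There is really no obstacle here: the lemma amounts to the tautology that a vector lying in a coordinate subspace has its complementary coordinates equal to zero. The only thing to be a little careful about is the bookkeeping between $\calS$ and $\calS'$, and the fact that $F$ is uniquely determined by the displayed formula precisely because the map $M \mapsto \widetilde{M}$ is injective.
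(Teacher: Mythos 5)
Your proof is correct and is exactly the argument intended by the paper, which declares this lemma ``obvious'' and gives no written proof: the range of the padded matrix sits inside $\K^n \times \{0\}^{n'}$, so range-compatibility forces the last $n'$ coordinates of $F'$ to vanish, and $F$ is then read off from the first $n$ coordinates, with linearity and range-compatibility inherited immediately.
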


\begin{lemma}[Splitting Lemma]\label{splittinglemma}
Let $n,p,q$ be non-negative integers, and $\calA$ and $\calB$ be linear subspaces, respectively, of $\Mat_{n,p}(\K)$ and $\Mat_{n,q}(\K)$.

Given maps $f : \calA \rightarrow \K^n$ and $g : \calB \rightarrow \K^n$,
set
$$f \coprod g : \begin{bmatrix}
A & B
\end{bmatrix} \in \calA \coprod \calB \longmapsto f(A)+g(B).$$
Then:
\begin{enumerate}[(a)]
\item The linear maps from $\calA \coprod \calB$ to $\K^n$
are the maps of the form $f \coprod g$, where $f\in \calL(\calA,\K^n)$ and $g \in \calL(\calB,\K^n)$.
Moreover, every linear map from $\calA \coprod \calB$ to $\K^n$ may be expressed in a unique fashion as $f \coprod g$.
\item Given $f \in \calL(\calA,\K^n)$ and $g \in \calL(\calB,\K^n)$, the map
$f \coprod g$ is range-compatible (respectively, local) if and only if $f$ and $g$ are range-compatible (respectively, local).
\end{enumerate}
\end{lemma}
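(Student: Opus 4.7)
The plan is to exploit the fact that $\calA \coprod \calB$ decomposes as an internal direct sum of the two subspaces $\calA \coprod \{0\}$ and $\{0\} \coprod \calB$ inside $\Mat_{n,p+q}(\K)$, since every block matrix $\begin{bmatrix} A & B \end{bmatrix}$ writes uniquely as $\begin{bmatrix} A & [0]_{n \times q} \end{bmatrix} + \begin{bmatrix} [0]_{n \times p} & B \end{bmatrix}$. For part (a), given any linear map $H : \calA \coprod \calB \to \K^n$, I would define $f(A) := H\bigl(\begin{bmatrix} A & [0]_{n \times q} \end{bmatrix}\bigr)$ and $g(B) := H\bigl(\begin{bmatrix} [0]_{n \times p} & B \end{bmatrix}\bigr)$; linearity of $H$ then gives $H = f \coprod g$, while uniqueness of such a representation follows by evaluating any expression of the form $f \coprod g$ on block matrices with one of the two blocks zero, which recovers $f$ and $g$ respectively.

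For part (b), the key observation is that $\im \begin{bmatrix} A & B \end{bmatrix} = \im A + \im B$ as subspaces of $\K^n$. If $f$ and $g$ are range-compatible, then $f(A) + g(B) \in \im A + \im B = \im \begin{bmatrix} A & B \end{bmatrix}$, so $f \coprod g$ is range-compatible; conversely, applying the range-compatibility of $f \coprod g$ to the matrix $\begin{bmatrix} A & [0]_{n \times q} \end{bmatrix}$ forces $f(A) \in \im A$, and symmetrically $g(B) \in \im B$. For the locality clause, if $f$ is local at $x \in \K^p$ and $g$ is local at $y \in \K^q$, then a direct computation yields $(f \coprod g)\bigl(\begin{bmatrix} A & B \end{bmatrix}\bigr) = Ax + By = \begin{bmatrix} A & B \end{bmatrix} \begin{bmatrix} x \\ y \end{bmatrix}$, exhibiting $f \coprod g$ as local at the concatenated vector. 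Conversely, if $f \coprod g$ is local at some $z \in \K^{p+q}$, splitting $z$ along the block decomposition and testing on matrices with a single nonzero block recovers the locality of $f$ and $g$.

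The statement is really a bookkeeping exercise rather than a deep result, so I do not foresee any serious obstacle. The only point that requires a moment of care is the elementary identity $\im \begin{bmatrix} A & B \end{bmatrix} = \im A + \im B$, which is precisely what makes both range-compatibility and locality behave compatibly with the horizontal splitting; everything else is a formal consequence of the direct-sum decomposition of $\calA \coprod \calB$.
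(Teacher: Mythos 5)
Your proof is correct. The paper does not actually prove this lemma --- it is quoted verbatim as Lemma 2.4 of \cite{dSPRC1} --- but your argument (the direct-sum decomposition of $\calA \coprod \calB$ into the blocks $\begin{bmatrix} A & 0\end{bmatrix}$ and $\begin{bmatrix} 0 & B\end{bmatrix}$, together with the identity $\im \begin{bmatrix} A & B\end{bmatrix} = \im A + \im B$ and the concatenation of evaluation vectors) is exactly the standard bookkeeping that any proof of this statement comes down to, and every step checks out.
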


\subsection{The projection lemma}

Now, we come to the projection technique: this cornerstone of the proof of Theorem 1.6 of \cite{dSPRC1}
will remain our basic tool for proving Theorem \ref{classRCF2} by induction on the dimension of $V$:

\begin{lemma}[Projection Lemma, Lemma 2.6 of \cite{dSPRC1}]
Let $\calS$ be a linear subspace of $\calL(U,V)$ and $V_0$ be a linear subspace of $V$.
Let $F : \calS \rightarrow V$ be a range-compatible linear map.
Denote by $\pi : V \twoheadrightarrow V/V_0$ the canonical projection, and
by $\calS \modu V_0$ the space of all linear maps of the form $\pi \circ s$ with $s \in \calS$.
Then, there is a unique range-compatible linear map
$$(F \modu V_0) : \calS \modu V_0 \rightarrow V/V_0$$
such that
$$\forall s \in \calS, \; (F \modu V_0)(\pi \circ s)=\pi(F(s)),$$
i.e.\ the following diagram is commutative:
$$\xymatrix{
\calS \ar[rr]^F \ar[d]_{s \mapsto \pi \circ s} & & V \ar[d]^\pi \\
\calS \modu V_0 \ar[rr]_{F \modu V_0} & & V/V_0.
}$$
In particular, given a non-zero vector $y \in V$, one denotes by $F \modu y$ the projected map $F \modu \K y$,
and by $\calS \modu y$ the operator space $\calS \modu \K y$.
\end{lemma}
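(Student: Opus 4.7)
The plan is to define $(F \modu V_0)$ by the prescription $(F \modu V_0)(\pi \circ s) := \pi(F(s))$ for $s \in \calS$, and to verify in turn that this is unambiguous, that the resulting map is linear, that it is range-compatible, and that the defining formula forces uniqueness.

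The heart of the argument, and the only place where the range-compatibility of $F$ actually enters, is well-definedness. If $s_1,s_2 \in \calS$ satisfy $\pi \circ s_1=\pi \circ s_2$, then $s_1-s_2$ belongs to $\calS$ and has image contained in $V_0$; by range-compatibility and linearity of $F$, one obtains $F(s_1)-F(s_2)=F(s_1-s_2) \in \im(s_1-s_2) \subset V_0$, whence $\pi(F(s_1))=\pi(F(s_2))$. I would highlight that this is the pivotal step, as without range-compatibility the value $\pi(F(s))$ need not depend solely on $\pi \circ s$.

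Once well-definedness is secured, I would reformulate what has just been proved in terms of the surjective linear map $\phi : s \in \calS \longmapsto \pi \circ s \in \calS \modu V_0$: its kernel $\{s \in \calS \mid \im s \subset V_0\}$ is annihilated by the linear map $\pi \circ F : \calS \rightarrow V/V_0$, so the universal property of the quotient yields a unique linear map $(F \modu V_0) : \calS \modu V_0 \rightarrow V/V_0$ satisfying $(F \modu V_0) \circ \phi = \pi \circ F$. This simultaneously delivers existence, linearity, and the uniqueness clause, since any map satisfying the commutativity relation is determined on the whole of $\calS \modu V_0$ by surjectivity of $\phi$.

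Finally, range-compatibility of $(F \modu V_0)$ is a direct check: given $t \in \calS \modu V_0$, choose $s \in \calS$ with $t=\pi \circ s$; then $(F \modu V_0)(t)=\pi(F(s))$, and since $F(s) \in \im s$, one gets $(F \modu V_0)(t) \in \pi(\im s)=\im(\pi \circ s)=\im t$. The main (indeed essentially the only) obstacle in the proof is the well-definedness step; everything else is formal manipulation with quotient spaces.
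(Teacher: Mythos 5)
Your proof is correct: the well-definedness step via $F(s_1-s_2)\in\im(s_1-s_2)\subset V_0$ is exactly the point where range-compatibility is needed, and the universal-property packaging cleanly delivers existence, linearity, and uniqueness at once. The paper itself states this lemma as a citation (Lemma 2.6 of the earlier work) without reproducing a proof, and your argument is the standard one that proof follows, so there is nothing to add.
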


In terms of matrices, the special case when $V_0$ is a linear hyperplane of $V$ has the following interpretation:

\begin{lemma}\label{ligneparligne}
Let $\calS$ be a linear subspace of $\Mat_{n,p}(\K)$, and $F$ be a range-compatible linear map on $\calS$.
For $i \in \lcro 1,n\rcro$ and $M \in \calS$, denote by $R_i(M)$ the $i$-th row of $M$.
Then, there are linear forms $F_1,\dots,F_n$, respectively, on $R_1(\calS),\dots,R_n(\calS)$, such that
$$F : \begin{bmatrix}
L_1 \\
\vdots \\
L_n
\end{bmatrix} \longmapsto \begin{bmatrix}
F_1(L_1) \\
\vdots \\
F_n(L_n)
\end{bmatrix}.$$
\end{lemma}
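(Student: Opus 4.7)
The plan is to apply the Projection Lemma once for each coordinate hyperplane of $V := \K^n$. For each index $i \in \lcro 1,n\rcro$, let $H_i \subset \K^n$ denote the hyperplane of vectors whose $i$-th coordinate vanishes, and let $\pi_i : \K^n \twoheadrightarrow \K^n/H_i \simeq \K$ be the canonical projection, which simply extracts the $i$-th coordinate.

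First I would check that, under the standard identification of $\calS$ with a space of linear maps from $\K^p$ to $\K^n$, the quotient space $\calS \modu H_i$ corresponds canonically to the space $R_i(\calS)$ of $i$-th rows. Indeed, viewing a matrix $M \in \calS$ as the operator $X \mapsto MX$, the composite $\pi_i \circ M$ is the linear form $X \mapsto R_i(M)\,X$ on $\K^p$; hence the projection map $s \mapsto \pi_i \circ s$ from $\calS$ onto $\calS \modu H_i$ is precisely $M \mapsto R_i(M)$.

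Next, applying the Projection Lemma to the hyperplane $H_i$ produces a (range-compatible) linear map $F \modu H_i : R_i(\calS) \rightarrow \K$ such that $(F \modu H_i)(R_i(M))$ equals the $i$-th coordinate of $F(M)$ for every $M \in \calS$. Setting $F_i := F \modu H_i$, this is by construction a linear form on $R_i(\calS)$; note that range-compatibility adds no constraint here since $\K$ is one-dimensional, so any linear form on $R_i(\calS)$ is automatically range-compatible. Assembling these $n$ scalar identities coordinate by coordinate yields the announced expression for $F$.

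There is no real obstacle here: this is a direct corollary of the Projection Lemma, and the only non-tautological point is the identification $\calS \modu H_i \simeq R_i(\calS)$, which is a straightforward unraveling of the matrix/operator correspondence.
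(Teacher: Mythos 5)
Your proof is correct and is exactly the argument the paper intends: the paper states this lemma as the matrix interpretation of the Projection Lemma applied to the coordinate hyperplanes, which is precisely what you carry out, including the key identification of $\calS \modu H_i$ with $R_i(\calS)$ that makes the $i$-th coordinate of $F(M)$ depend only on $R_i(M)$.
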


\subsection{Adapted vectors}

\begin{Def}
Let $\calS$ be a linear subspace of $\calL(U,V)$.
A non-zero vector $y \in V$ is called \textbf{$\calS$-adapted} whenever
$$\codim (\calS \modu y) \leq 2 (\dim V-1)-3.$$
\end{Def}

In general, by duality one finds
$$\codim (\calS \modu y)=\codim \calS-\dim \calS^\bot y.$$
Therefore, in the special case when $\codim \calS=2 \dim V-3$, the vector $y$ is $\calS$-adapted if and only if
$\dim \calS^\bot y \geq 2$.

In \cite{dSPRC1}, we have proved the following result, which helps obtain many adapted vectors
(this combines \cite[Lemma 4.1]{dSPRC1} with \cite[Lemma 6.1]{dSPRC1}):

\begin{lemma}[Adapted vectors lemma]\label{adaptedvectorslemma}
Let $\calS$ be a linear subspace of $\calL(U,V)$ with $\codim \calS \leq 2\dim V-3$.
Then, either the set of all non-$\calS$-adapted vectors is included in a hyperplane of $V$
or every range-compatible linear map on $\calS$ is local.
\end{lemma}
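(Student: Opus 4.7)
The plan is to translate adaptedness into a rank condition on $\calS^\bot$ via the duality identity $\codim(\calS \modu y) = \codim \calS - \dim \calS^\bot y$ mentioned just above the statement. Setting $n := \dim V$ and $c := \codim \calS$, a nonzero $y$ is $\calS$-adapted iff $\dim \calS^\bot y \geq c - 2n + 5$. Since $c \leq 2n-3$, this threshold is at most $2$, which splits the argument into three regimes. When $c \leq 2n-5$, every nonzero $y$ is adapted, so alternative (A) is vacuous. When $c = 2n-4$, non-adaptedness is equivalent to $\calS^\bot y = 0$, so the non-adapted set together with $\{0\}$ equals the linear subspace $\bigcap_{f \in \calS^\bot} \Ker f$: either this is proper in $V$, yielding (A); or it equals $V$, forcing $\calS^\bot = \{0\}$, so $\calS = \calL(U,V)$, and Lemma \ref{espacetotal} yields (B).

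The real work lies in the critical case $c = 2n-3$. Here the non-adapted set is $N = \{y \in V \setminus \{0\} : \dim \calS^\bot y \leq 1\}$, i.e.\ the nonzero $y$ at which the evaluation map $\calS^\bot \to U$, $f \mapsto f(y)$, has rank at most $1$. Assume $N$ is not contained in any hyperplane of $V$, so $\Vect(N) = V$. I would fix a basis $e_1, \dots, e_n$ of $V$ lying in $N$, producing lines $\K u_1, \dots, \K u_n$ of $U$ with $\calS^\bot e_i \subseteq \K u_i$. Two sub-situations can then occur. If every nonzero $\F_2$-linear combination of the $e_i$ also lies in $N$, then $N = V \setminus \{0\}$, meaning every evaluation $\calS^\bot \to U$ has rank at most $1$; over $\F_2$ such a family is very rigid (either the $u_i$ all lie on a common line, or one has a common kernel hyperplane on the $\calS^\bot$ side), and, passing back to $\calS$ via orthogonality, one decomposes $\calS$ so that the Splitting and Embedding Lemmas together with Lemmas \ref{dimU=1} and \ref{espacetotal} force every range-compatible linear map on $\calS$ to be local. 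Otherwise, some sum of basis vectors is $\calS$-adapted, and one inducts on $\dim V$ by applying the Projection Lemma to $\calS \modu y$ for a well-chosen adapted $y$.

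The main obstacle is the intermediate situation where $N$ spans $V$ without equalling $V \setminus \{0\}$: over $\F_2$ one cannot perturb continuously between adapted and non-adapted vectors, so the rank-$\leq 1$ rigidity at the $e_i$ must be combined with sparser rank-$\geq 2$ data on certain sums. The inductive step via $F \modu y$ for an adapted $y$ proceeds cleanly only when $\calS \modu y$ has codimension at most $2(n-1)-4$; when the codimension remains at the critical value $2(n-1)-3$, the cleanest route is to bypass induction and instead extract three linearly independent $\calS$-adapted vectors from the non-hyperplane assumption on $N$, then invoke Lemma \ref{3vectorslemma} directly. Glueing these two regimes together is precisely why the statement is split in \cite{dSPRC1} across two lemmas (Lemma 4.1 handling the rigid rank-$\leq 1$ branch and Lemma 6.1 handling the mixed branch).
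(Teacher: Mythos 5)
The paper does not actually prove Lemma \ref{adaptedvectorslemma}: it imports it from \cite{dSPRC1}, where it is obtained by combining Lemmas 4.1 and 6.1 of that reference, so there is no internal argument to compare yours with and your attempt has to stand on its own. Your dispatching of the codimensions at most $2\dim V-4$ via the identity $\codim(\calS\modu y)=\codim\calS-\dim\calS^\bot y$ is correct but trivial; the whole content of the statement is the critical codimension $2\dim V-3$, and there what you offer is an outline with the decisive step missing. You say so yourself: the ``mixed branch'', where the non-adapted set spans $V$ without being all of $V\setminus\{0\}$, is declared ``the main obstacle'' and is not resolved. The exit you propose for it rests on a misreading of Lemma \ref{3vectorslemma}: that lemma requires three linearly independent vectors $y_i$ for which $F\modu y_i$ is \emph{local}, not merely $\calS$-adapted, and in the critical case an adapted $y$ only guarantees $\codim(\calS\modu y)\leq 2(\dim V-1)-3$, which is precisely the regime in which non-local range-compatible maps can persist (Types 1 to 7 of this very paper). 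Besides, your hypothesis produces three linearly independent \emph{non}-adapted vectors, not adapted ones, so there is nothing to feed into that lemma.

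Two further points. First, even your ``rigid'' branch cannot conclude unconditionally: for $\calS=\Mats_2(\F_2)\coprod\Mat_{2,p-2}(\F_2)$ one has $\codim\calS=1=2\dim V-3$, every nonzero $y$ satisfies $\dim\calS^\bot y=1$, so the non-adapted set is all of $V\setminus\{0\}$ and is not contained in a hyperplane, yet $M\mapsto\begin{bmatrix} m_{1,1} & m_{2,2}\end{bmatrix}^T$ is a non-local range-compatible linear map. The statement therefore tacitly assumes $\dim V\geq 3$ (as in \cite{dSPRC1}); with that restriction your rank-one rigidity argument does close the branch where every nonzero vector is non-adapted (the common-kernel alternative is excluded by $\dim\calS^\bot=2\dim V-3\geq 3$, and the common-image alternative forces $\dim V=3$ and $\calS=\{s: s(u)=0\}$, where Lemma \ref{vecteurdim1} or the Splitting Lemma finishes). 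But the mixed branch, which is where Lemmas 4.1 and 6.1 of \cite{dSPRC1} do their real work, remains a genuine gap in your write-up.
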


\subsection{Sufficient conditions for localness}

In \cite{dSPRC1}, the following result was a major key to the proof of Theorem \ref{classtheo1}; it will also be
very important in the present study:

\begin{lemma}[Lemma 4.2 of \cite{dSPRC1}]\label{3vectorslemma}
Let $\calS$ be a linear subspace of $\calL(U,V)$ with $\codim \calS \leq 2\dim V-3$.
Let $F : \calS \rightarrow V$ be a range-compatible group homomorphism.
Assume that there are linearly independent vectors $y_1$, $y_2$ and $y_3$ of $V$
such that $F\modu y_1$, $F\modu y_2$, $F\modu y_3$ are all local.
Then, $F$ is local.
\end{lemma}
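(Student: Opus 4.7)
The plan is to unpack the three ``local modulo $y_i$'' hypotheses into explicit data, normalise $F$ so that its image lies in $\K y_1$, and then induct on $\dim V$, with the base case $\dim V=3$ requiring a tight matrix-theoretic argument.

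For each $i\in\{1,2,3\}$, the hypothesis supplies a vector $x_i\in U$ and a linear form $\phi_i:\calS\to\K$ with $F(s)=s(x_i)+\phi_i(s)\,y_i$, which immediately yields the compatibility relations $s(x_i-x_j)=\phi_j(s)y_j-\phi_i(s)y_i$, so $\widehat{x_i-x_j}(\calS)\subseteq\Vect(y_i,y_j)$. After replacing $F$ by $F-\widehat{x_1}$ (which preserves each of the three local hypotheses), I may assume $F(\calS)\subseteq\K y_1$ and write $F=\phi\otimes y_1$ for a linear form $\phi$ on $\calS$; the task becomes producing $w\in U$ with $\widehat{w}=\phi\otimes y_1$.

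For the induction step ($\dim V\ge 4$), assume $F$ is non-local. By Lemma \ref{adaptedvectorslemma}, the non-$\calS$-adapted vectors lie in a proper hyperplane $H\subsetneq V$, and a cardinality count over $\F_2$ (using $|V|\ge 16$, $|H|\le 2^{\dim V-1}$, $|\Vect(y_1,y_2,y_3)|=8$) produces an $\calS$-adapted $y_0\in V\setminus\Vect(y_1,y_2,y_3)$. Then $\overline{y_1},\overline{y_2},\overline{y_3}$ are still linearly independent in $V/\K y_0$, we have $\codim(\calS\modu y_0)\le 2(\dim V-1)-3$ by adaptedness, and each $(F\modu y_0)\modu\overline{y_i}=F\modu\Vect(y_0,y_i)$ is local because $F\modu y_i$ is. The induction hypothesis yields that $F\modu y_0$ is local, delivering $x_0\in U$ with $F(s)-s(x_0)\in\K y_0$. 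Combining this fourth local datum with the original three---via a direct compatibility argument on the additional vector $x_0$, or by invoking the three-vector configuration $(y_0,y_2,y_3)$ at a lower-dimensional quotient---forces $F$ to be local.

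For the base case $\dim V=3$, fix bases identifying $(y_1,y_2,y_3)$ with the canonical basis of $V=\K^3$; then $\calS\subseteq\Mat_{3,p}(\K)$ with $\codim\calS\le 3$, and Lemma \ref{ligneparligne} gives $F(M)=(F_1(R_1 M),\,F_2(R_2 M),\,F_3(R_3 M))^T$. The three local hypotheses translate into $F_j(R_jM)=R_j(M)\,x_i$ for $j\ne i$, and locality of $F$ itself reduces to finding $x\in U$ with $x-x_2\in K_1$ and $x-x_1\in K_2\cap K_3$ (where $K_j:=\bigcap_{M\in\calS}\Ker R_j(M)$), equivalently $x_1-x_2\in K_1+K_2\cap K_3$. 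For each individual $M$, range-compatibility furnishes $u_M\in U$ with $u_M-x_2\in\Ker R_1(M)$ and $u_M-x_1\in\Ker R_2(M)\cap\Ker R_3(M)$, so that $x_1-x_2$ admits the desired decomposition \emph{pointwise}, i.e.\ in $\Ker R_1(M)+\Ker R_2(M)\cap\Ker R_3(M)$ for every $M$.

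The main obstacle is completing the base case: passing from the pointwise decomposition (valid for each $M$ separately) to the single global one $x_1-x_2\in K_1+K_2\cap K_3$. The tight codimension bound $\codim\calS\le 3$ is what makes this possible, presumably through a rank/dimension argument on the row-subspaces $R_j(\calS)\subseteq(\K^p)^*$ via the duality $K_j=R_j(\calS)^\bot$. A secondary difficulty lies in cleanly closing the induction step, where the fourth local direction must be exploited in a way that does not merely restate the lemma at the same dimension.
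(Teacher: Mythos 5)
This lemma is quoted from \cite{dSPRC1} and is not proved in the present paper, so there is no internal proof to compare against; I can only assess your argument on its own terms, and, as you yourself acknowledge, it is a plan with two unfilled holes, both of which are genuine. The decisive one is the base case. After your (correct) reduction, the question is whether $x_1-x_2\in K_1+(K_2\cap K_3)$, given that $x_2-x_3\in K_1$, $x_1-x_3\in K_2$, $x_1-x_2\in K_3$ and $\dim K_1+\dim K_2+\dim K_3\le\codim\calS\le 3$. No ``rank/dimension argument on the row-subspaces'' will settle this: writing $u:=x_1-x_2$ and $v:=x_2-x_3$, the only configuration not disposed of by the trivial cases (some difference zero, or $u$ and $v$ proportional) is the one where $u$, $v$, $u+v$ are all non-zero, each $K_j$ is the line spanned by the corresponding difference, and $\calS$ is \emph{exactly} the codimension-$3$ space cut out by the three row conditions; if $u$ and $v$ are linearly independent, then $K_1+(K_2\cap K_3)=\K v$ and the desired membership is simply false. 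What excludes this configuration is range-compatibility of $F$ at one explicit matrix: normalising $x_3=0$ one computes $F(M)=\bigl(0,0,R_3(M)v\bigr)^T$ on all of $\calS$, and the matrix $M\in\calS$ whose rows are $\phi_1,\phi_2,\phi_1+\phi_2$ (with $\phi_1(v)=0$, $\phi_1(u)=1$, $\phi_2(v)=1$, $\phi_2(u)=-1$, extended by zero) satisfies $F(M)=e_3\notin\im M$. Your pointwise decomposition for each individual $M$ is strictly weaker and cannot be globalised; the whole content of the lemma sits in this excluded configuration, and it is absent from your write-up.

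The induction step is also not closed. A fourth local direction $y_0\notin\Vect(y_1,y_2,y_3)$ only produces a fourth vector $x_0$ with $\calS(x_i-x_j)\subseteq\Vect(y_i,y_j)$ for every pair, and this system of constraints does not force any two of the $x_i$ to coincide --- which is what ``combining the local data'' would need --- so the step restates the lemma with one more direction rather than genuinely reducing the dimension; the same range-compatibility argument as in the base case would have to be invoked again. Two further cautions: Lemma~\ref{adaptedvectorslemma} is assembled in \cite{dSPRC1} from its Lemmas 4.1 and 6.1, the latter of which appears \emph{after} the statement you are proving (Lemma 4.2 there), so invoking it carries a real risk of circularity; and the statement concerns group homomorphisms over an arbitrary field, so a cardinality count over $\F_2$ proves at best the special case used in this paper.
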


In addition, we shall use the following known result:

\begin{lemma}[Proposition 2.9 of \cite{dSPRC1}]\label{vecteurdim1}
Let $\calS$ be a linear subspace of $\calL(U,V)$ with $\codim \calS \leq 2\dim V-3$.
Assume that there is a non-zero vector $x$ of $U$ such that $\dim \calS x \leq 1$.
Then, every range-compatible linear map on $\calS$ is local.
\end{lemma}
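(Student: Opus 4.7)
I would prove the lemma by induction on $\dim V$.

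For the base cases $\dim V \leq 2$: the cases $\dim V \leq 1$ render the hypothesis $\codim \calS \leq 2\dim V-3$ vacuous (it forces $\codim \calS$ to be negative). When $\dim V = 2$ and $\codim \calS = 0$, Lemma \ref{espacetotal} applies; when $\dim U = 1$, Lemma \ref{dimU=1} applies. The remaining case $\dim V = 2$, $\codim \calS = 1$, $\dim U \geq 2$ requires work. First, a dimension count shows $\dim \calS x = 0$ would imply $\calS \subseteq \calL(U/\K x, V)$, a subspace of codimension $\dim V = 2$, contradicting $\codim \calS = 1$; hence $\dim \calS x = 1$. Picking $y_0$ generating $\calS x$, defining $\lambda : \calS \to \K$ by $s(x) = \lambda(s) y_0$, and setting $\calS_0 := \Ker \lambda$, a second dimension count forces $\calS_0 = \calL(U/\K x, V)$. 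Lemma \ref{espacetotal} then provides $x_0 \in U$ with $F(s) = s(x_0)$ on $\calS_0$; subtracting this local map reduces $F$ to the form $s \mapsto \lambda(s) w$ where $w := F(s_0)$ for some $s_0$ with $\lambda(s_0) = 1$. Choosing $s' \in \calS_0$ that agrees with $s_0$ on a complement $U'$ of $\K x$ makes $(s_0+s')|_{U'} = 0$ and $(s_0+s')(x) = y_0$, so $\im(s_0+s') = \K y_0$; range-compatibility of $F$ at $s_0+s'$ forces $w \in \K y_0$, and $F$ is local (with vector $x_0$ or $x_0 + x$, depending on $w$).

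For the inductive step $\dim V \geq 3$: Lemma \ref{adaptedvectorslemma} says that either every range-compatible linear map on $\calS$ is already local (in which case we are done) or the set of non-$\calS$-adapted vectors is contained in a hyperplane $H$ of $V$. Assume the latter. For each $y \in V \setminus H$, the formula $\codim(\calS \modu y) = \codim \calS - \dim \calS^\bot y$ together with $\calS$-adaptedness gives $\codim_{\calL(U, V/\K y)}(\calS \modu y) \leq 2(\dim V-1) - 3$; furthermore, $(\calS \modu y) x$ is the image of $\calS x$ under the projection $V \to V/\K y$, so it has dimension at most $1$. The induction hypothesis, applied to $\calS \modu y \subseteq \calL(U, V/\K y)$ with the same nonzero $x$, yields that $F \modu y$ is local.

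It remains to find three linearly independent vectors in $V \setminus H$: since $\dim H \geq 2$, I pick linearly independent $h_1, h_2 \in H$ and any $y \in V \setminus H$; then $y$, $y+h_1$, $y+h_2$ are linearly independent, because each is nonzero and every pairwise sum or triple sum belongs either to $H \setminus \{0\}$ (being one of $h_1, h_2, h_1+h_2$) or to $V \setminus H$ (being $y+h_1+h_2$, whose reduction modulo $H$ equals the class of $y$). Applying Lemma \ref{3vectorslemma} to these three adapted vectors concludes that $F$ is local. The main obstacle is the base case $\dim V = 2$, $\codim \calS = 1$, where one must exploit the rigid structure $\calS_0 = \calL(U/\K x, V)$ to construct the auxiliary $s'$ collapsing the image to $\K y_0$; once this foothold is established, the adapted-vectors machinery carries the induction through.
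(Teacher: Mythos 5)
The paper does not prove this lemma: it is imported verbatim as Proposition 2.9 of \cite{dSPRC1}, so there is no in-paper argument to compare against. Judged on its own, your proof is correct and is a clean self-contained derivation from the toolbox the paper does state. The inductive step is exactly the adapted-vectors routine: for $y$ outside the hyperplane furnished by Lemma \ref{adaptedvectorslemma} one has $\codim(\calS \modu y)\leq 2(\dim V-1)-3$ and $(\calS\modu y)x=\pi(\calS x)$ of dimension at most $1$, so the induction hypothesis localizes $F\modu y$; three linearly independent such $y$ exist because $\dim H\geq 2$, and Lemma \ref{3vectorslemma} concludes. The only place requiring real work is, as you say, the base case $\dim V=2$, $\codim\calS=1$, $\dim U\geq 2$, and your treatment is sound: the codimension count rules out $\dim\calS x=0$, forces $\calS_0=\Ker\lambda$ to be the full space of operators annihilating $x$, and the auxiliary operator $s_0+s'$ with image $\K y_0$ pins $w=F(s_0)$ down to $\K y_0$, whence $F$ is evaluation at $x_0+\mu x$. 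Two cosmetic remarks: the use of $s_0+s'$ rather than $s_0-s'$, and your independence check for $y$, $y+h_1$, $y+h_2$ via the seven nontrivial sums, are $\F_2$-specific; this is harmless here since Section 2 of the paper fixes $\K=\F_2$, but for the general-field statement of \cite{dSPRC1} one should write $s_0-s'$ and argue independence directly from $(a+b+c)y+bh_1+ch_2=0$ with $y\notin H$.
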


\subsection{A covering lemma}

The following lemma on coverings of a vector space by linear subspaces, which is proved in \cite{dSPfeweigenvalues}, will be used in a few instances.

\begin{lemma}[Lemma 2.3 of \cite{dSPfeweigenvalues}]\label{coveringlemma}
Let $p$ be a positive integer, $E$ be an $n$-dimensional vector space over a field with more than $p$ elements, and $(E_i)_{i \in I}$
be a family of $(n-1)p+1$ linear subspaces of $E$ in which exactly $p+1$ vector spaces have dimension $n-1$ and, for
all $k \in \lcro 1,n-2\rcro$, exactly $p$ vector spaces have dimension $k$.
Then, $E$ is not included in $\underset{i \in I}{\bigcup} E_i$.
\end{lemma}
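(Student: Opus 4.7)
The plan is induction on $n = \dim E$. In the base case $n = 2$, we need only show that $p+1$ lines through the origin do not cover $E$: this is immediate when $\K$ is infinite, and when $\K$ is finite with $|\K| > p$ follows from the count $(p+1)(|\K|-1)+1 \leq |\K|(|\K|-1)+1 < |\K|^2 = |E|$.

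For the induction step ($n \geq 3$), I would produce a hyperplane $H$ of $E$ with two properties: (a) $H$ differs from each of the $p+1$ hyperplanes in the family $(E_i)_{i \in I}$, and (b) $H$ contains none of the $p(n-2)$ subspaces of the family of dimension between $1$ and $n-2$. Granted such an $H$, the intersections $(E_i \cap H)_{i \in I}$ carry exactly the structure required by the induction hypothesis in dimension $n-1$: the $p+1$ hyperplane members each contribute a codimension-one subspace of $H$, and for each $d \in \lcro 2, n-2 \rcro$ the $p$ family members of dimension $d$ each contribute a subspace of $H$ of dimension $d-1$, thereby producing $p$ subspaces of each dimension $k \in \lcro 1, n-3 \rcro$; the $p$ lines of the original family intersect $H$ trivially by (b) and can be discarded. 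Any $v \in H$ lying outside every $E_i \cap H$ then automatically lies outside every $E_i$, and the induction hypothesis delivers such a $v$.

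Constructing such a hyperplane $H$ is where the assumption $|\K| > p$ enters. Writing $q = |\K|$ (the infinite case being trivial), the total number of hyperplanes of $E$ is $\frac{q^n - 1}{q-1}$, while the bad ones are at most $(p+1) + p \sum_{d=1}^{n-2} \frac{q^{n-d}-1}{q-1}$, since for each family subspace $U$ of dimension $d$ the hyperplanes of $E$ containing $U$ biject with the hyperplanes of $E/U$. A short telescoping simplification in the worst case $p = q-1$ leaves at least $n-1$ good hyperplanes, which is safely positive whenever $n \geq 3$; for smaller values of $p$ the gap is only wider, so a good $H$ always exists.

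The main obstacle I anticipate is the bookkeeping in the induction step: one must impose condition (b) uniformly on \emph{all} lower-dimensional family members, including the $p$ lines, otherwise the restricted family on $H$ would acquire stray zero subspaces or retain undiscardable lines and the induction hypothesis would no longer apply verbatim. Once (b) is set up correctly, the telescoping count is routine, and the hypothesis $|\K| > p$ is precisely what is needed to leave a good hyperplane at each stage.
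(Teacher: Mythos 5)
Your argument is correct: the base case count $(p+1)(q-1)+1<q^2$ is right, the induction step correctly turns the family into one of the required shape on a hyperplane $H$ of dimension $n-1$ (with the $p$ lines meeting $H$ trivially and hence discardable), and the telescoping estimate does leave at least $n-1$ hyperplanes that are neither equal to a family hyperplane nor contain a lower-dimensional family member, the computation $\tfrac{q^2-1}{q-1}-q+n-2=n-1$ being exactly as you claim (for infinite $\K$ one invokes the standard fact that a vector space over an infinite field is not a finite union of proper subspaces, both for the existence of $H$ and, indeed, for the whole lemma). Note that this lemma is only quoted here from \cite{dSPfeweigenvalues} and is not proved in the present paper, so there is no internal proof to compare against; your induction-on-dimension-via-a-generic-hyperplane argument is the natural one, and it is needed because a direct union bound on nonzero vectors fails once the lower-dimensional subspaces are added in.
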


\subsection{A lemma on quadratic forms over $\F_2$}

The following lemma was proved in \cite{dSPRC1}:

\begin{lemma}[Lemma 5.2 of \cite{dSPRC1}]\label{quadformlemma}
Let $q$ be a non-zero quadratic form on an $n$-dimensional vector space $E$ over $\F_2$.
Then, $q^{-1}\{1\}$ is not included in an $(n-2)$-dimensional linear subspace of $E$.
\end{lemma}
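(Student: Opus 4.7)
The plan is to argue by contradiction. Assume $q$ is non-zero and that $q^{-1}\{1\}$ is contained in some $(n-2)$-dimensional subspace $H$ of $E$; equivalently, $q$ vanishes identically on the complement $E \setminus H$. Pick vectors $e_1, e_2 \in E$ such that $E = H \oplus \F_2 e_1 \oplus \F_2 e_2$. Then $E \setminus H$ is exactly the disjoint union of the three cosets $e_1 + H$, $e_2 + H$ and $(e_1 + e_2) + H$, and $q$ vanishes on each of them. The strategy is to propagate this vanishing back to $H$ itself, which will yield $q \equiv 0$ on $E$ and contradict the non-vanishing hypothesis.

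The key tool is the associated symmetric bilinear form
$$B(x,y) := q(x+y) + q(x) + q(y)$$
with values in $\F_2$. For an arbitrary $h \in H$, both $e_1$ and $h + e_1$ lie outside $H$, so $q(e_1) = q(h+e_1) = 0$ and hence $B(h, e_1) = q(h)$. The same argument with $e_2$, and then with $e_1 + e_2$, in place of $e_1$ yields $B(h, e_2) = q(h)$ and $B(h, e_1+e_2) = q(h)$. Bilinearity then forces
$$q(h) = B(h, e_1 + e_2) = B(h, e_1) + B(h, e_2) = 2\,q(h) = 0,$$
so $q$ vanishes identically on $H$. Combined with the vanishing on $E \setminus H$, this gives $q \equiv 0$, contradicting the assumption.

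The main (minor) obstacle is just the bookkeeping: verifying that the three non-trivial cosets of $H$ exactly cover $E \setminus H$, and spotting that the characteristic-$2$ identity $2\,q(h) = 0$ is what converts the three consistent evaluations of $B(h, -)$ into the contradiction. Notably, no structural classification of quadratic forms over $\F_2$ (diagonalization, Witt decomposition, \emph{etc.}) is required; the whole argument rests solely on the definition of the polar form and the fact that $1 + 1 = 0$.
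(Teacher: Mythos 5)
Your proof is correct. Note first that the paper itself gives no argument for this lemma: it is imported verbatim as Lemma 5.2 of \cite{dSPRC1}, so there is no in-paper proof to compare against. Your polarization argument is a valid, self-contained replacement: the coset decomposition $E\setminus H=(e_1+H)\cup(e_2+H)\cup(e_1+e_2+H)$ is exactly right, each of the three evaluations $B(h,e_1)=B(h,e_2)=B(h,e_1+e_2)=q(h)$ follows from the vanishing of $q$ off $H$, and bilinearity of the polar form (which holds for any quadratic form, since the diagonal contributions to $q(x+y)+q(x)+q(y)$ cancel in characteristic $2$) then forces $q(h)=2q(h)=0$, whence $q\equiv 0$. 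The only implicit hypothesis is $n\geq 2$, which is needed for an $(n-2)$-dimensional subspace to exist at all, so nothing is lost. As you say, no diagonalization or Witt-type classification of quadratic forms over $\F_2$ is needed.
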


\subsection{Primitive spaces of matrices with upper-rank $2$ over $\F_2$}\label{primitivesection}

Here, we review some results from \cite{dSPprimitiveF2}.

The \textbf{upper-rank} of a linear subspace $\calV$ of $\Mat_{n,p}(\K)$ is defined as the maximal rank for a matrix in $\calV$:
we denote it by $\urk(\calV)$.

A linear subspace $\calV$ of $\Mat_{n,p}(\K)$ with upper-rank $r$ is called \textbf{primitive} when it
is reduced and satisfies the two extra conditions below:
\begin{enumerate}[(i)]
\item $\calV$ is not equivalent to a space $\calT$ of matrices of the form
$M=\begin{bmatrix}
H(M) & [?]_{n \times 1}
\end{bmatrix}$ where $\urk H(\calT) \leq r-1$;
\item $\calV$ is not equivalent to a space $\calT$ of matrices
of the form
$M=\begin{bmatrix}
H(M) \\
[?]_{1 \times p}
\end{bmatrix}$ where $\urk H(\calT) \leq r-1$.
\end{enumerate}
Note that this definition is invariant under replacing $\calV$ with an equivalent subspace.

The following result is a consequence of Proposition 1.1 of \cite{dSPprimitiveF2}
and of the standard classification of spaces with upper-rank $1$:

\begin{prop}\label{nonprimitiveprop}
Let $\calV$ be a non-primitive reduced linear subspace of $\Mat_{n,p}(\F_2)$ with upper-rank at most $2$.
Then, either $n=2$, or $p=2$, or $\calV$ is equivalent to a subspace of the space of all matrices of the form
$$\begin{bmatrix}
? & [?]_{1 \times (p-1)} \\
[?]_{(n-1) \times 1} & [0]_{(n-1) \times (p-1)}
\end{bmatrix}.$$
\end{prop}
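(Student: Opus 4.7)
The plan is to unfold the definition of non-primitivity and then plug in the classical classification of linear subspaces of $\Mat_{n,q}(\F_2)$ with upper-rank at most $1$. By definition, a non-primitive reduced space $\calV$ of upper-rank at most $2$ is equivalent to one of the two following kinds of space: either~(i) a space $\calT$ of matrices of the form $M=\begin{bmatrix} H(M) & c(M)\end{bmatrix}$, where $c(M)$ is the last column and $\urk H(\calT)\leq 1$, or~(ii) a space of matrices of the form $M=\begin{bmatrix} H(M) \\ r(M)\end{bmatrix}$, where $r(M)$ is the last row and again $\urk H(\calT)\leq 1$. Since transposing $\calV$ preserves both reducedness and upper-rank and merely swaps the roles of $n$ and $p$ in the target form, I would handle only case~(i) in detail and then invoke transposition for case~(ii).

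In case~(i), I would apply the classical classification of upper-rank-$1$ subspaces of $\Mat_{n,p-1}(\F_2)$ to $H(\calT)$: after left-multiplication by some $P \in \GL_n(\F_2)$ and right-multiplication by some $Q \in \GL_{p-1}(\F_2)$, the space $H(\calT)$ lies either in the subspace of matrices supported only on the first row, or in the subspace of matrices supported only on the first column. Since extending $Q$ to $\begin{bmatrix} Q & 0 \\ 0 & 1\end{bmatrix} \in \GL_p(\F_2)$ preserves the block partition into the $H$-part and the last column, this change of bases can be applied to $\calT$ itself without disturbing the block structure.

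If $H(\calT)$ ends up concentrated in its first column, then columns $2,\dots,p-1$ of every $M \in \calT$ are zero, and so long as $p\geq 3$ the vector $e_2$ lies in $\underset{M \in \calT}{\bigcap}\Ker M$, contradicting the reducedness of $\calT$; thus this subcase forces $p=2$. If instead $H(\calT)$ is concentrated in its first row, then every $M \in \calT$ has zero entries in rows $2,\dots,n$ and columns $1,\dots,p-1$, and swapping columns $1$ and $p$ by a permutation matrix in $\GL_p(\F_2)$ places $\calT$ inside the target space
$$\begin{bmatrix} ? & [?]_{1 \times (p-1)} \\ [?]_{(n-1) \times 1} & [0]_{(n-1) \times (p-1)}\end{bmatrix}.$$

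Case~(ii) is treated by the same argument applied to the transposes: the row-concentrated subsubcase forces $n=2$ because otherwise the sum of the ranges would be contained in the $2$-dimensional subspace $\F_2 e_1+\F_2 e_n$, violating reducedness; the column-concentrated subsubcase, after swapping rows $1$ and $n$, again lands $\calV$ inside the target space (the target form being self-dual up to swapping $n$ and $p$). The only point requiring genuine care is the bookkeeping of block-compatible basis changes: one must verify that the invertible transformations used to normalize $H(\calT)$ extend to invertible transformations of the ambient $\F_2^n$ and $\F_2^p$ that leave the last column (resp.\ last row) block intact. Once this observation is in place, everything else is routine.
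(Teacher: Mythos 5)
Your argument is correct and is essentially the argument the paper has in mind: the paper gives no explicit proof, deriving the statement from Proposition 1.1 of the cited reference together with the standard classification of upper-rank-$1$ spaces, and your write-up simply carries out that derivation in full (unfolding the definition of non-primitivity, normalizing the rank-$\leq 1$ block $H(\calT)$ by a block-compatible change of bases, and using reducedness to kill the degenerate subcases). The only cosmetic remark is that the first-column subcase of case (i) forces $p\leq 2$ rather than $p=2$ exactly, but the case $p=1$ lands trivially in the target form, so nothing is lost.
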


We shall also need the following two results, both of which come from Theorem 1.5 of \cite{dSPprimitiveF2}:

\begin{prop}\label{primitivedimprop}
Let $\calV$ be a primitive linear subspace of $\Mat_{n,p}(\F_2)$ with upper-rank $2$.
Then, $n=p=3$.
\end{prop}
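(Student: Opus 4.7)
The plan is to extract structural constraints on $\calV$ from the rank-$2$ bound on all matrices in $\calV$ and on their pairwise sums. Over $\F_2$ the only scalars are $0$ and $1$, so the classical ``pencil of ranks'' argument (useful over larger fields) degenerates into two simultaneous conditions $\rk M \leq 2$ and $\rk(A_0 + M) \leq 2$ for each pair $A_0, M \in \calV$, and it is precisely these two conditions together that I would exploit.

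First I would pick a rank-$2$ matrix $A_0 \in \calV$, which exists since $\urk(\calV) = 2$, and use the $\GL_n(\F_2) \times \GL_p(\F_2)$ equivalence action to normalise it as $A_0 = E_{1,1} + E_{2,2}$. Decomposing an arbitrary $M \in \calV$ in block form
$$M = \begin{pmatrix} M_1 & M_2 \\ M_3 & M_4 \end{pmatrix}, \quad M_1 \in \Mat_2(\F_2),\; M_4 \in \Mat_{n-2, p-2}(\F_2),$$
the condition $\rk(A_0 + M) \leq 2$ forces $M_4 = 0$: otherwise a nonzero entry of $M_4$, combined with the identity block coming from $A_0$ in the top-left corner, would yield a $3 \times 3$ submatrix of $A_0 + M$ of rank $3$. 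Hence every matrix of $\calV$ has the ``hook'' shape with $M_4 = 0$.

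Next I would bound $n$ and $p$ above by $3$ using primitivity. With the hook shape, the columns of the block $M_2$ (as $M$ varies in $\calV$) take values in the $2$-dimensional space $\F_2^2$, and if $p \geq 4$ a careful change of basis on the column side can isolate a single ``extra'' column, exhibiting $\calV$ as a column extension of a space of upper-rank at most $1$, contradicting condition (i) of primitivity. The symmetric argument applied to the rows of $M_3$ yields $n \leq 3$. The lower bounds $n \geq 3$ and $p \geq 3$ then come from the fact that, in the residual cases $n \leq 2$ or $p \leq 2$, reducedness together with $\urk(\calV) = 2$ forces either a row or a column to be removable without decreasing the rank properly, so condition (i) or (ii) of primitivity fails.

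The main obstacle will be the peel-off step in the second paragraph: over a larger field one would use a generic scalar to slice off one column into the rank-$1$ part, but over $\F_2$ this requires a careful combinatorial analysis of the joint behaviour of the blocks $M_1, M_2, M_3$ across all $M \in \calV$, using reducedness to ensure that $\{M_2 : M \in \calV\}$ jointly spans $\F_2^2$ on the column side and that $\{M_3 : M \in \calV\}$ jointly spans $\F_2^2$ on the row side. Once this is under control, the statement $n = p = 3$ follows by combining the upper and lower bounds.
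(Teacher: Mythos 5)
There is a genuine gap, and it sits exactly where the difficulty of the $\F_2$ case lies. (For context: the paper does not prove this proposition at all --- it imports it from Theorem 1.5 of \cite{dSPprimitiveF2}, i.e.\ from the full classification of primitive upper-rank-$2$ spaces over $\F_2$, which is the subject of a separate paper. So the statement is not something one should expect to dispatch with a short normal-form argument.)

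The fatal step is your claim that, after normalising $A_0=E_{1,1}+E_{2,2}$, the two conditions $\rk M\leq 2$ and $\rk(A_0+M)\leq 2$ force the bottom-right block $M_4$ to vanish. This is false over $\F_2$. Take $n=p=3$ and $M=E_{1,1}+E_{3,3}$: then $\rk M=2$ and $A_0+M=E_{2,2}+E_{3,3}$ also has rank $2$, yet $M_4=(1)\neq 0$. Your heuristic (``a nonzero entry of $M_4$ combined with the identity block in the top-left corner gives a rank-$3$ minor'') fails because the top-left block of $A_0+M$ is $I_2+M_1$, not $I_2$, and over $\F_2$ the addition can destroy the identity block; there is no third scalar available to separate the two effects, which is precisely why the Flanders--Atkinson-type pencil argument degenerates here. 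Worse, this is not a pathological configuration: the space $\calM_1=\bigl\{\operatorname{diag}(a,a+b,b)+cE_{1,3}\bigr\}$ listed in Proposition \ref{primitivetypeprop2} is a genuine primitive space of upper-rank $2$ containing both $E_{1,1}+E_{2,2}$ and $\operatorname{diag}(1,0,1)$, so the ``hook shape'' you derive simply does not hold in the very spaces the proposition is about. With the hook shape gone, the subsequent column/row peel-off (which you yourself flag as the main obstacle and only sketch) has no starting point. The lower-bound part of your argument ($n\leq 2$ or $p\leq 2$ contradicts primitivity by deleting a column or a row) is essentially fine, but the upper bounds $n\leq 3$, $p\leq 3$ --- the real content --- are not established.
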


\begin{prop}\label{primitivetypeprop1}
Let $\calV$ be a primitive linear subspace of $\Mat_3(\F_2)$ with upper-rank $2$.
Assume that $\dim \calV=3$ and that there is no vector $x \in (\F_2)^3$
such that $\dim \calV x=1$. Then, $\calV$ is equivalent to $\Mata_3(\F_2)$ or to the space
$$\calU_3(\F_2):=\Biggl\{\begin{bmatrix}
0 & a & a+c \\
a & 0 & b \\
a+b & c & 0
\end{bmatrix} \mid (a,b,c)\in (\F_2)^3\Biggr\}.$$
Conversely, $\Mata_3(\F_2)$ and $\calU_3(\F_2)$ are $3$-dimensional primitive subspaces of $\Mat_3(\F_2)$
in which every non-zero matrix has rank $2$.
\end{prop}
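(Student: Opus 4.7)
The plan is to deduce the statement from Theorem 1.5 of \cite{dSPprimitiveF2}, which gives the full classification of primitive subspaces of $\Mat_{n,p}(\F_2)$ of upper-rank $2$; by Proposition \ref{primitivedimprop} we already know $n=p=3$, so the task reduces to selecting from that classification those $3$-dimensional orbits in which no vector has a $1$-dimensional image.

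I would first handle the converse. For $\Mata_3(\F_2)$, the general element $\begin{bmatrix}0&a&b\\a&0&c\\b&c&0\end{bmatrix}$ has determinant $2abc=0$ in characteristic $2$, and when $(a,b,c)\ne 0$ at least one $2\times 2$ minor is visibly non-zero, so every non-zero element has rank exactly $2$. For $\calU_3(\F_2)$, a direct enumeration of the seven non-zero parameter triples confirms the same. Primitivity of either space then follows by checking that no equivalence can reduce it to matrices with a free last row (or column) above a block of upper-rank at most $1$: such a reduction would force a projected block to have every non-zero matrix of rank at most $1$, a condition easily broken by exhibiting two elements whose projected blocks jointly span rank $2$.

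For the forward direction, by Proposition \ref{primitivedimprop} the ambient size is $3\times 3$. Since $\calV$ is reduced, $\dim\calV x\ge 1$ for every non-zero $x$, so the hypothesis upgrades to $\dim\calV x\ge 2$ for every non-zero $x$. One then applies the list of $3$-dimensional primitive orbits furnished by Theorem 1.5 of \cite{dSPprimitiveF2} and retains only those in which every non-zero vector has image of dimension at least $2$. Exactly two orbits survive, namely those of $\Mata_3(\F_2)$ and $\calU_3(\F_2)$. A useful sanity check is that in either surviving orbit every non-zero matrix has rank $2$, and the maps $M\mapsto\ker M$ and $M\mapsto\im M$ set up bijections between the seven non-zero matrices of $\calV$ and the seven lines and seven planes of $(\F_2)^3$ respectively, a rigid combinatorial structure which makes any other orbit easy to rule out.

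The main obstacle is the extraction step: although Theorem 1.5 of \cite{dSPprimitiveF2} reduces the classification to a finite check, one must carefully inspect each $3$-dimensional primitive orbit in $\Mat_3(\F_2)$ and verify the vector-image hypothesis on each. The combinatorial rigidity noted above should streamline the analysis by immediately killing orbits containing a rank-$1$ matrix or exhibiting collisions between kernels or images of distinct non-zero elements.
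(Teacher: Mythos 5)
Your proposal is correct and follows essentially the same route as the paper: both reduce to the finite list in Theorem 1.5 of \cite{dSPprimitiveF2} (after invoking Proposition \ref{primitivedimprop} for the $3\times 3$ format) and then eliminate the other orbits using reducedness together with the hypothesis that no vector $x$ has $\dim\calV x=1$ -- the paper makes this filtering explicit by noting that a subspace of upper-triangular matrices would violate that hypothesis at the first basis vector, and that the remaining candidate $\calV_3(\F_2)$ has the wrong dimension. Your direct verification of the converse and the kernel/image bijection observation are fine but not needed beyond what the cited theorem already supplies.
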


Let us explain how Proposition \ref{primitivetypeprop1} is derived from Theorem 1.5 of \cite{dSPprimitiveF2}:
Combining the assumption that no vector $x \in (\F_2)^3$ satisfies $\dim \calV x=1$ and the one that
$\calV$ is reduced, we obtain that $\calV$ cannot be equivalent to a subspace of upper-triangular matrices,
and in particular $\calV$ cannot be equivalent to a subspace of the space denoted by $\calJ_3(\F_2)$ in \cite{dSPprimitiveF2}.
On the other hand, as $\calV$ has dimension $3$ it cannot be equivalent to the space denoted by $\calV_3(\F_2)$ in \cite{dSPprimitiveF2},
which only leaves open the possibility that $\calV$ is equivalent to $\Mata_3(\F_2)$ or to $\calU_3(\F_2)$.

At some point we will need the following result, which follows directly from Lemma 3.1 and Proposition 4.2 of \cite{dSPprimitiveF2}.

\begin{prop}\label{primitivetypeprop2}
Let $\calV$ be a $3$-dimensional primitive linear subspace of $\Mat_3(\F_2)$ with upper-rank $2$.
Assume that there is a vector $x \in (\F_2)^3$ such that $\dim \calV x=1$.
Then, $\calV$ is equivalent to one of the following four spaces:
$$\calM_1:=\Biggl\{\begin{bmatrix}
a & 0 & c  \\
0 & a+b & 0 \\
0 & 0 & b
\end{bmatrix} \mid (a,b,c) \in (\F_2)^3\Biggr\}, \quad
\calM_2:=\Biggl\{\begin{bmatrix}
a & c & 0 \\
0 & a+b & a \\
0 & 0 & b
\end{bmatrix}
\mid (a,b,c) \in (\F_2)^3\Biggr\}$$
$$\calM_3:=\Biggl\{\begin{bmatrix}
a & b & 0  \\
0 & a+b & c \\
0 & 0 & b
\end{bmatrix} \mid (a,b,c) \in (\F_2)^3\Biggr\}, \quad \calM_4:=\Biggl\{\begin{bmatrix}
a & c & 0  \\
0 & a+b & c \\
0 & 0 & b
\end{bmatrix} \mid (a,b,c) \in (\F_2)^3\Biggr\}.$$
Conversely, the $\calM_i$ spaces all satisfy the given conditions.
\end{prop}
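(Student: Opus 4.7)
The plan is to exploit the hypothesis that $\dim \calV x = 1$ to put $\calV$ in a block-triangular form, and then to invoke the two results from \cite{dSPprimitiveF2} that the statement advertises.

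First, since $\calV$ is reduced, no nonzero vector can lie in $\bigcap_{M \in \calV} \Ker M$, so $\calV x \neq \{0\}$ and thus $\dim \calV x = 1$. Picking a basis of $(\F_2)^3$ on the source side whose first vector is $x$ and a basis of $(\F_2)^3$ on the target side whose first vector spans $\calV x$, every $M \in \calV$ takes the form
$$M = \begin{bmatrix} * & * & * \\ 0 & * & * \\ 0 & * & * \end{bmatrix}.$$
Because $\urk \calV \leq 2$, expanding the determinant along the first column shows that the lower-right $2 \times 2$ block $H(M)$ obtained by deleting the first row and column satisfies $\rk H(M) \leq 1$ as soon as the $(1,1)$-entry of $M$ is nonzero.

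At this stage I invoke Lemma 3.1 of \cite{dSPprimitiveF2}, which in this setting yields further normalizations on the shape of $\calV$ (the key point being that $\calV$ can be reduced to a space of upper-triangular matrices with a prescribed diagonal pattern), and then Proposition 4.2 of the same paper, which enumerates all $3$-dimensional primitive subspaces of $\Mat_3(\F_2)$ with upper-rank $2$ arising in this way. Up to equivalence, only the four spaces $\calM_1, \calM_2, \calM_3, \calM_4$ survive the primitive/reduced filter.

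For the converse direction, I would verify directly that each $\calM_i$ meets all the conditions: $\dim \calM_i = 3$ is obvious since $(a,b,c)$ parameterizes the space bijectively; reducedness follows from a quick inspection of column and row spans; the upper-rank bound follows from the identity $a(a+b)b = 0$ in $\F_2$ (so the determinant of every matrix vanishes, forcing rank at most $2$); and the existence of a vector $x$ with $\dim \calM_i\, x = 1$ is immediate by taking $x = e_1$, since the first column is always of the form $(a,0,0)^T$. The main obstacle is the verification of primitivity: for each $\calM_i$ one must rule out \emph{any} equivalent representation in which a row or column is free and the remaining block has upper-rank at most $1$. This combinatorial check on the orbits of $3$-dimensional subspaces of $\Mat_3(\F_2)$ under the equivalence action is precisely what Lemma 3.1 and Proposition 4.2 of \cite{dSPprimitiveF2} are designed to handle, so once the initial normalization has been performed, the proof reduces to quoting those statements.
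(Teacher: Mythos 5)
Your proposal is correct and ultimately rests on exactly the same foundation as the paper, which offers no independent argument for this proposition but simply states that it "follows directly from Lemma 3.1 and Proposition 4.2 of \cite{dSPprimitiveF2}". Your preliminary normalization (block-triangularizing via the vector $x$ with $\dim\calV x=1$) and your direct checks for the converse (dimension, the vanishing of $\det=a(a+b)b$ over $\F_2$, and $\calM_i e_1=\F_2 e_1$) are sound, but the substantive content --- the enumeration and the primitivity verification --- is deferred to the cited results in both your write-up and the paper.
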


In order to differentiate between the above special types of spaces, the following result from \cite{dSPprimitiveF2} will also be useful.

\begin{prop}\label{primitivetypeprop3}
Let $\calV$ be a linear subspace of $\Mat_3(\F_2)$. The following conditions are pairwise incompatible:
\begin{enumerate}[(i)]
\item $\calV$ is equivalent to a linear subspace of the space $\calJ_3(\F_2)$ of all upper-triangular matrices with trace $0$;
\item $\calV$ is equivalent to $\Mata_3(\F_2)$;
\item $\calV$ is equivalent to $\calU_3(\F_2)$.
\end{enumerate}
\end{prop}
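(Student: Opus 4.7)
The plan is to treat the incompatibility of (i) with either (ii) or (iii) by a single uniform argument, then handle the more delicate separation of (ii) from (iii) with a specially tailored equivalence-invariant.

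For the first part, suppose $\calV = P\calT Q^{-1}$ with $\calT \subseteq \calJ_3(\F_2)$. Every matrix in $\calT$ is upper-triangular, so $\calT e_1 \subseteq \K e_1$; setting $x_0 := Q e_1$ gives $\dim(\calV \cdot x_0) = \dim(P\calT e_1) \leq 1$. On the other hand, a direct inspection of the seven nonzero elements of $(\F_2)^3$ shows that for both $\Mata_3(\F_2)$ and $\calU_3(\F_2)$, the set $\calV x$ is a hyperplane (two-dimensional) for every nonzero $x$. Hence no such $x_0$ can exist, ruling out the simultaneous occurrence of (i) with (ii), and of (i) with (iii).

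For the second part, I would introduce the following equivalence-invariant: call a subspace $\calV \subseteq \Mat_3(\F_2)$ satisfying $\dim(\calV x) = 2$ for every nonzero $x$ \emph{polarizable} if there exists a bilinear form $\beta$ on $(\F_2)^3$ with
$$\calV x = \bigl\{y \in (\F_2)^3 : \beta(x,y) = 0\bigr\} \quad \text{for every nonzero } x \in (\F_2)^3.$$
Polarizability is preserved by equivalence: if $\calV' = P \calV Q^{-1}$ and $\beta$ polarizes $\calV$, then $\beta'(x,y) := \beta(Q^{-1} x, P^{-1} y)$ polarizes $\calV'$. A short calculation yields $\Mata_3(\F_2) \cdot x = x^\perp$ for the standard pairing on $(\F_2)^3$, so $\Mata_3(\F_2)$ is polarized by $\beta(x,y) = x^T y$. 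By contrast, in $\calU_3(\F_2)$ the three evaluations $\calU_3(\F_2) \cdot e_i = \{y_i = 0\}$ force any polarizing form to be the standard pairing; but computing $\calU_3(\F_2) \cdot (e_1 + e_3)$ directly gives the hyperplane $\{y_2 + y_3 = 0\}$, not the predicted $\{y_1 + y_3 = 0\}$. Hence $\calU_3(\F_2)$ is not polarizable, excluding the simultaneous occurrence of (ii) and (iii).

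The main obstacle is selecting the right invariant for the (ii) versus (iii) case: both $\Mata_3(\F_2)$ and $\calU_3(\F_2)$ are three-dimensional primitive subspaces of upper-rank $2$ whose nonzero matrices all have rank $2$ and whose evaluations satisfy $\dim(\calV x) = 2$ for every nonzero $x$, so basic numerical invariants are powerless to distinguish them. Polarizability isolates the specific geometric rigidity that $\Mata_3(\F_2)$ arises from an honest bilinear pairing on $(\F_2)^3$, a feature that the more twisted space $\calU_3(\F_2)$ does not share.
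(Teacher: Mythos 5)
Your proof is correct. Note that the paper itself gives no argument for this proposition: it is quoted from the reference on primitive spaces of upper-rank two over $\F_2$, so there is nothing internal to compare against; your write-up is a valid self-contained substitute. Your first step is the natural one and is consistent with how the paper distinguishes types elsewhere: the quantity $\min_{x \neq 0} \dim(\calV x)$ is an equivalence invariant, it is at most $1$ for any space equivalent to a subspace of $\calJ_3(\F_2)$ (take $x_0 = Qe_1$), and a direct check confirms that $\dim\bigl(\Mata_3(\F_2)\,x\bigr) = \dim\bigl(\calU_3(\F_2)\,x\bigr) = 2$ for all seven nonzero $x$. Your ``polarizability'' invariant for separating (ii) from (iii) is also sound: the transport formula $\beta'(x,y) = \beta(Q^{-1}x, P^{-1}y)$ shows it is preserved under equivalence; the identity $x^T A x = 0$ for alternating $A$ in characteristic $2$ gives $\Mata_3(\F_2)\,x \subseteq x^\perp$, hence equality by dimension count, so $\Mata_3(\F_2)$ is polarized by the standard pairing; and since $\calU_3(\F_2)\,e_i = \{y_i = 0\}$ forces any polarizing form to be the standard pairing (a nonzero linear form over $\F_2$ being determined by its kernel), the computation $\calU_3(\F_2)(e_1+e_3) = \{y : y_2 + y_3 = 0\} \neq \{y : y_1 + y_3 = 0\}$ does rule out polarizability of $\calU_3(\F_2)$. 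Your closing remark is apt: the two spaces agree on all the obvious numerical invariants (dimension, rank distribution, the dimensions $\dim \calV x$), so some finer geometric invariant of the map $x \mapsto \calV x$ is genuinely needed, and the one you chose does the job cleanly.
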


\section{Spaces of special type and their range-compatible linear maps}\label{specialtypesection}

In Theorem \ref{specialRCF2}, the results on spaces of Type 1 or 2 follow directly from Theorem \ref{symmetricF2}.
In this section, we examine the remaining five cases.
In order to do so, we tackle each case separately. Using the splitting lemma, it is obvious that only
the five following matrix spaces need to be considered: $\calV_2$, $\calG_3$, $\calH_3$, $\calI_3$ and $\calH_4$.
Throughout the section, we set $\K:=\F_2$ and we denote by $(e_1,e_2,e_3)$ the standard basis of $\K^3$.

\subsection{Spaces of Type 3}

Let us describe the range-compatible linear maps on $\calV_2$.
Let $F : \calV_2 \rightarrow \K^3$ be a range-compatible linear map.
Applying Theorem \ref{symmetricF2} to $F \modu e_3$ yields that $F \modu e_3$ is the sum of a local map and,
for some $\varepsilon \in \F_2$, of the map represented in
the standard basis of $\K^2$ and in $(\overline{e_1},\overline{e_2})$ by $\begin{bmatrix}
a & b \\
b & c
\end{bmatrix} \mapsto \varepsilon\begin{bmatrix}
a \\
c
\end{bmatrix}$. Then, as we lose no generality in subtracting a local map from $F$, we see that no generality is lost in assuming that
$$F : \begin{bmatrix}
a & b \\
b & c \\
c & 0
\end{bmatrix} \longmapsto \begin{bmatrix}
\varepsilon a \\
\varepsilon c \\
?
\end{bmatrix}.$$
Applying Lemma \ref{ligneparligne} to the third row, we obtain another scalar $\eta \in \K$ such that
$$F : \begin{bmatrix}
a & b \\
b & c \\
c & 0
\end{bmatrix} \longmapsto \begin{bmatrix}
\varepsilon a \\
\varepsilon c \\
\eta c
\end{bmatrix}.$$
Then, for all $(a,b,c)\in \K^3$, we deduce that
$$0=\begin{vmatrix}
a & b & \varepsilon a \\
b & c & \varepsilon c \\
c & 0 & \eta c
\end{vmatrix}=(\eta+\varepsilon)(a+b)c.$$
It follows that $\eta=\varepsilon$. Thus, either $F$ is local or
$$F : \begin{bmatrix}
a & b \\
b & c \\
c & 0
\end{bmatrix} \longmapsto
\begin{bmatrix}
a \\
c \\
c
\end{bmatrix}.$$
In the latter case, adding the local map $M \mapsto M \times \begin{bmatrix}
1 \\
0
\end{bmatrix}$ to $F$ yields
$$G : \begin{bmatrix}
a & b \\
b & c \\
c & 0
\end{bmatrix} \longmapsto
\begin{bmatrix}
0 \\
b+c \\
0
\end{bmatrix}.$$
Thus, in any case we have proved that every non-local range-compatible linear map on $\calV_2$ is the sum of a local map
with $G$.

\vskip 3mm
Conversely, let us prove that $G$ is range-compatible and non-local.
Let $M=\begin{bmatrix}
a & b \\
b & c \\
c & 0
\end{bmatrix} \in \calV_2$. If $b=0$, then $G(M)$ is the second column of $M$. If $b=c$, then $G(M)=0$.
The last remaining case is the one when $b=1$ and $c=0$, in which $G(M)=M \times \begin{bmatrix}
1 \\
a
\end{bmatrix}$. Therefore, $G(M) \in \im M$ in any case.
On the other hand, it is easily seen from the first two rows that $G$ is non-local.

We conclude that
$$\calL_\rc(\calV_2)=\calL_\loc(\calV_2) \oplus \K G.$$
Using the Splitting Lemma, this settles the case of spaces of Type 3 in Theorem \ref{specialRCF2}.

\subsection{Spaces of Type 4}

Let $F : \calG_3 \rightarrow \K^3$ be a range-compatible linear map.
Seeing that $\calS \modu e_1$ is equivalent to $\K \vee \Mat_{1,2}(\K)$, we deduce from Lemma \ref{vecteurdim1} that
$F \modu e_1$ is local. Then, no generality is lost in assuming that $F \modu e_1=0$.
Noting that $\calS \modu e_2$ is deduced from $\Mats_2(\K) \coprod \K^2$ through a simple permutation of columns,
we use Theorem \ref{symmetricF2} to obtain scalars
$\alpha,\beta,\gamma,\delta$ such that
$$F : \begin{bmatrix}
a & c & b \\
0 & b+c & e \\
b & d & f
\end{bmatrix} \longmapsto \alpha \begin{bmatrix}
a \\
? \\
b
\end{bmatrix}+\beta \begin{bmatrix}
c \\
? \\
d
\end{bmatrix}+\gamma \begin{bmatrix}
b \\
? \\
f
\end{bmatrix}+\delta \begin{bmatrix}
a \\
? \\
f
\end{bmatrix}.$$
Since $F \modu e_1=0$, we deduce that $\alpha b+\beta d+(\gamma+\delta) f=0$ for all $(b,d,f)\in \K^3$, whence $\alpha=\beta=0$ and $\gamma=\delta$.
It follows that $F=\gamma G$ where
$$G : \begin{bmatrix}
a & c & b \\
0 & b+c & e \\
b & d & f
\end{bmatrix} \longmapsto \begin{bmatrix}
a+b \\
0 \\
0
\end{bmatrix}.$$
Conversely, let us prove that $G$ is range-compatible and non-local.
Let $M=\begin{bmatrix}
a & c & b \\
0 & b+c & e \\
b & d & f
\end{bmatrix} \in \calG_3$. If $a=b$, we have $G(M)=0$. If $a=1$ and $b=0$, then $G(M)$ is the first column of $M$. \\
Assume now that $a=0$ and $b=1$. If $c=0$, then $M$ is invertible, whence $G(M)$ belongs to its column space.
Finally if $c=1$, then one sees that $G(M)=M \times \begin{bmatrix}
d \\
1 \\
0
\end{bmatrix}$. Therefore, $G(M) \in \im M$ in any case.

If $G$ were local, then we would have $G=0$ as seen from the last row, which is obviously false.

We conclude that
$$\calL_\rc(\calG_3)=\calL_\loc(\calG_3) \oplus \K G.$$
Using the Splitting Lemma, this settles the case of spaces of Type 4 in Theorem \ref{specialRCF2}.

\subsection{Spaces of Type 5}

Let $F : \calH_3 \rightarrow \K^3$ be a range-compatible linear map.
We note that $\calH_3 \modu e_3$ has Type 1.
Thus, subtracting a local map if necessary, we see that no generality is lost in assuming that
there is some $\varepsilon \in \K$ such that
$$F : \begin{bmatrix}
a & b & c \\
b & d & f \\
c & e & b+c+d
\end{bmatrix} \longmapsto
\begin{bmatrix}
\varepsilon a \\
\varepsilon d \\
?
\end{bmatrix}.$$
Then, we find a triple $(\lambda,\mu,\nu) \in \K^3$ such that
$$F : \begin{bmatrix}
a & b & c \\
b & d & f \\
c & e & b+c+d
\end{bmatrix} \longmapsto
\begin{bmatrix}
\varepsilon a \\
\varepsilon d \\
\lambda c+\mu e+\nu (b+c+d)
\end{bmatrix}.$$
It follows that $F \modu e_2$ is represented by
$$\begin{bmatrix}
a & b & c \\
c & e & g
\end{bmatrix} \longmapsto \begin{bmatrix}
\varepsilon a \\
\lambda c+\mu e+\nu g
\end{bmatrix}.$$
With $(a,b,c,e,g)=(0,1,0,1,0)$, we obtain $\mu=0$.
On the other hand, $F \modu e_1$ is represented by
$$\begin{bmatrix}
b & d & f \\
c & e & b+c+d
\end{bmatrix} \longmapsto
\begin{bmatrix}
\varepsilon d \\
\lambda c+\nu (b+c+d)
\end{bmatrix}.$$
With $(b,c,d,e,f)=(1,1,0,0,0)$, we deduce that $\lambda=0$. Finally, with
$(b,c,d,e,f)=(1,1,1,1,1)$, we conclude that $\nu=\varepsilon$. Thus,
$F=\varepsilon G$, where
$$G : \begin{bmatrix}
a & b & c \\
b & d & f \\
c & e & b+c+d
\end{bmatrix} \longmapsto
\begin{bmatrix}
a \\
d \\
b+c+d
\end{bmatrix}.$$
Conversely, let us prove that $G$ is non-local and range-compatible. From the first row, we see that if $G$ were local,
then we would have $G : M \mapsto M e_1$, which is obviously false.
Now, let $M=\begin{bmatrix}
a & b & c \\
b & d & f \\
c & e & b+c+d
\end{bmatrix} \in \calH_3$ be with $G(M) \neq 0$.
We use a \emph{reductio ad absurdum}, by assuming that $G(M)$ is not in the column space of $M$.
In particular, $M$ must be singular. By Theorem \ref{symmetricF2}, $M$ cannot be symmetric, whence $e \neq f$.
Noting that $\calH_3$ is invariant under conjugating by $P:=\begin{bmatrix}
1 & 0 & 0 \\
0 & 0 & 1 \\
0 & 1 & 0
\end{bmatrix}$, and noting that $G(PMP^{-1})=PG(M)$, we see that no generality is lost in assuming that
$e=0$ and $f=1$.

As $G(M)$ is not the first column of $M$, we have $b \neq d$, whence $d=b+1$.
Then, $M=\begin{bmatrix}
a & b & c \\
b & b+1 & 1 \\
c & 0 & c+1
\end{bmatrix}$ and $G(M)=\begin{bmatrix}
a \\
b+1 \\
c+1
\end{bmatrix}$. If $c=0$, one finds that $G(M)$ is the sum of the first and third columns of $M$.
Thus, $c=1$. Then, one finds $\det M=1$, contradicting a previous result. We conclude that $G$ is range-compatible.

Therefore,
$$\calL_\rc(\calH_3)=\calL_\loc(\calH_3) \oplus \K G.$$
Using the Splitting Lemma, the case of spaces of Type 5 in Theorem \ref{specialRCF2} ensues.

\subsection{Spaces of Type 6}

Let $F : \calI_3 \rightarrow \K^3$ be a range-compatible linear map.
Note that $\calI_3 \modu e_3$ is the space of all linear maps from $\K^3$ to $\K^3/\K e_3$, whence
$F \modu e_3$ is local. We deduce that no generality is lost in assuming that $F$ maps every matrix of
$\calI_3$ into $\K e_3$.
Thus, we have scalars $\lambda,\mu,\nu$ such that
$$F : \begin{bmatrix}
a & d & e \\
b & c & f \\
c & a & a+c+e+f \\
\end{bmatrix} \longmapsto \begin{bmatrix}
0 \\
0 \\
\lambda a+\mu c+\nu (e+f)
\end{bmatrix}.$$
Taking $(a,b,c,d,e,f)=(0,0,0,0,0,1)$, we find a matrix whose column space is spanned by
$\begin{bmatrix}
0 \\
1 \\
1
\end{bmatrix}$, whence $\nu=0$. \\
Taking $(a,b,c,d,e,f)=(1,1,1,1,0,0)$, we find  a matrix whose column space is spanned by
$\begin{bmatrix}
1 \\
1 \\
1
\end{bmatrix}$, whence $\lambda+\mu=0$. Therefore, $F=\lambda G$, where
$$G : \begin{bmatrix}
a & d & e \\
b & c & f \\
c & a & a+c+e+f
\end{bmatrix} \longmapsto \begin{bmatrix}
0 \\
0 \\
a+c
\end{bmatrix}.$$

Conversely, let us prove that $G$ is range-compatible and non-local.
If there were a vector $X \in \K^3$ such that $G : M \mapsto MX$,
then $X=0$ by considering the first row, whence $G=0$, which is obviously false. Therefore, $G$ is non-local.

Now, let $M=\begin{bmatrix}
a & d & e \\
b & c & f \\
c & a & a+c+e+f \\
\end{bmatrix}\in \calI_3$ be such that $G(M) \neq 0$. Then, $a+c=1$, whence $(a,c)=(1,0)$ or $(a,c)=(0,1)$. \\
Note that none of the first two columns of $M$ is zero, and that there are different, judging from the last row.
Therefore, they are linearly independent. If $bd=0$, then
we see that
$$\begin{vmatrix}
a & d & 0 \\
b & c & 0 \\
c & a & 1
\end{vmatrix}=ac+bd=0,$$
which yields that $G(M)$ is a linear combination of the first two columns of $M$. Assume now that $bd=1$, so that $b=d=1$.
Then,
$$\det(M)=ac(1+e+f)+cf+ae+ec^2+fa^2+(1+e+f)=f(a+c+1)+e(a+c+1)+1=1,$$
whence $G(M) \in \im M$. Therefore, $G$ is range-compatible.

We conclude that
$$\calL_\rc(\calI_3)=\calL_\loc(\calI_3) \oplus \K G.$$
Using the Splitting Lemma, the case of spaces of Type 6 in Theorem \ref{specialRCF2} ensues.

\subsection{Spaces of Type 7}

Let $F : \calH_4 \rightarrow \K^3$ be a range-compatible linear map.
For $y:=e_1+e_2+e_3$, we compute that $\calH_4^\bot y=\Bigl\{ \begin{bmatrix}
0 & a & b & c
\end{bmatrix}^T \mid (a,b,c)\in \K^3\Bigr\}$ has dimension $3$, whence $\codim (\calH_4 \modu y)=0$.
It follows from Lemma \ref{espacetotal} that $F \modu y$ is local. Thus, no generality is lost in assuming that $F \modu y=0$.
This yields a linear form $\varphi$ on $\calH_4$ such that
$$F : M \mapsto \begin{bmatrix}
\varphi(M) \\
\varphi(M) \\
\varphi(M)
\end{bmatrix}.$$
Then, $\varphi$ is a linear function of the first row of matrices of $G$,
and the same holds for the second and third rows.
Obviously, the only possibility is that there is a triple $(\lambda,\mu,\nu) \in \K^3$ such that
$$\varphi : \begin{bmatrix}
a & b+c & f & h \\
b & d & a+c & i \\
c & e & g & a+b
\end{bmatrix} \mapsto \lambda\,a+\mu\,b+\nu\,c.$$
Applying Lemma \ref{ligneparligne} to the first row, we find $\mu=\nu$. Similarly, we obtain $\lambda=\nu$ by applying it to the second row,
and we conclude that $F=\lambda G$, where
$$G : \begin{bmatrix}
a & b+c & f & h \\
b & d & a+c & i \\
c & e & g & a+b
\end{bmatrix}\mapsto
\begin{bmatrix}
a+b+c \\
a+b+c \\
a+b+c
\end{bmatrix}.$$
If $G$ were local, looking at the first row would yield that $G$ maps every matrix of $\calH_4$ to the sum of
its first two columns, which is obviously false.

We finish by proving that $G$ is range-compatible.
Let $$M=\begin{bmatrix}
a & b+c & f & h \\
b & d & a+c & i \\
c & e & g & a+b
\end{bmatrix} \in \calH_4 \; \text{be such that $G(M) \neq 0$.}$$
If $a=b=c=1$, then
$G(M)$ is the first column of $M$. Assume now that $(a,b,c) \neq (1,1,1)$. Since $G(M) \neq 0$, we deduce that
exactly one of the scalars $a,b,c$ is non-zero. From the symmetry of the situation, we see that no generality
is lost in assuming that $a=1$ and $b=c=0$. In that case, if
$g=0$, then the $3 \times 3$ matrix obtained by deleting the second column of $M$ is seen to be invertible, whence $G(M) \in \im M$. If $g=1$, then $G(M)=(f+1)\,C_1(M)+C_3(M)$,
where $C_j(M)$ denotes the $j$-th column of $M$ for all $j \in \lcro 1,4\rcro$.
In any case, we have seen that $G(M) \in \im M$.

Therefore,
$$\calL_\rc(\calH_4)=\calL_\loc(\calH_4) \oplus \K G.$$
Using the Splitting Lemma, the case of spaces of Type 7 in Theorem \ref{specialRCF2} ensues.

\subsection{On the equivalence between spaces of special type}\label{inequivalentspecialtype}

Let $\calS$ be a linear subspace of $\calL(U,V)$.
We note that the matrix spaces representing $\calS^\bot$ are pairwise equivalent and hence their reduced subspaces are pairwise equivalent.
For each special type, we give such reduced subspaces:

\begin{center}
\begin{tabular}{| c | c |}
\hline
Type of $\calS$ & Reduced matrix subspace associated with $\calS^\bot$ \\
\hline
\hline
$1$, with $n$ rows & $\Mata_2(\F_2) \coprod \Mat_{2,n-2}(\F_2)$ \\
\hline
$2$ & $\Mata_3(\F_2)$ \\
\hline
$3$, with $n$ rows & $\Bigl\{\begin{bmatrix}
0 & a & b \\
a & b & c
\end{bmatrix} \mid (a,b,c)\in (\F_2)^3\Bigr\} \coprod \Mat_{2,n-3}(\F_2)$ \\
\hline
$4$ & $\calG_3^\bot=\Biggl\{
\begin{bmatrix}
0 & a & b+c \\
b & b & 0 \\
c & 0 & 0
\end{bmatrix} \mid (a,b,c)\in (\F_2)^3\Biggr\}$ \\
\hline
$5$ & $\calH_3^\bot=
\Biggl\{
\begin{bmatrix}
0 & a+b & c \\
b & a & 0 \\
a+c &  0  & a
\end{bmatrix} \mid (a,b,c)\in (\F_2)^3 \Biggr\}$
 \\
\hline
$6$ & $\calI_3^\bot=\Biggl\{
\begin{bmatrix}
a+c & 0 & b  \\
0 & b+c & a \\
c & c & c \\
\end{bmatrix} \mid (a,b,c)\in (\F_2)^3 \Biggr\}$ \\
\hline
$7$ & $\calH_4^\bot=\Biggl\{
\begin{bmatrix}
b+c & a+c & a+b \\
a & 0 & 0 \\
0 & b & 0 \\
0 & 0 & c
\end{bmatrix} \mid (a,b,c)\in (\F_2)^3\Biggr\}$ \\
\hline
\end{tabular}
\end{center}

Note that $\calH_3^\bot$ is equivalent to the space denoted by $\calU_3(\F_2)$ in Proposition
\ref{primitivetypeprop1}, which one obtains by performing the column and row operations
$C_2 \leftrightarrow C_3$, $C_3 \leftrightarrow C_1$ and $L_1 \leftrightarrow L_3$.

From there, we can prove the following result:

\begin{prop}
Let $\calS$ be a linear subspace of $\calL(U,V)$. Then, $\calS$ is of at most one of Types 1 to 7.
\end{prop}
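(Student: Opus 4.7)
The plan is to translate the statement via the table of reduced orthogonal spaces given just before it. Since an equivalence between two subspaces of $\calL(U,V)$ induces an equivalence between their orthogonal complements in $\calL(V,U)$, and taking reduced subspaces commutes with equivalence, $\calS$ cannot have two distinct types unless the two matrix spaces listed on the corresponding rows of that table are equivalent. So it suffices to show that the seven listed reduced matrix spaces are pairwise inequivalent.

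Inspecting the sizes partitions these spaces into three groups: Types $1$ and $3$ (two rows), Types $2$, $4$, $5$, $6$ (size $3 \times 3$), and Type $7$ (size $4 \times 3$). Spaces of different sizes cannot be equivalent, so Type $7$ is already isolated and the problem splits into the 2-row case and the $3 \times 3$ case.

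For the $3 \times 3$ group, I would proceed by a sequence of increasingly fine invariants. First, $\calI_3^\bot$ contains a rank-$3$ matrix (take $(a,b,c)=(0,0,1)$, whose determinant computes to $1$), while direct inspection shows that $\Mata_3(\F_2)$, $\calG_3^\bot$ and $\calH_3^\bot$ have upper-rank $2$; this separates Type $6$. Second, $\calG_3^\bot$ contains a rank-$1$ matrix, namely the one with a single $1$ in position $(1,2)$ (take $(a,b,c)=(1,0,0)$), whereas by the converse part of Proposition \ref{primitivetypeprop1} every non-zero matrix of $\Mata_3(\F_2)$ or of $\calU_3(\F_2)$ has rank $2$; this separates Type $4$. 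Finally, Types $2$ and $5$ correspond to spaces equivalent to $\Mata_3(\F_2)$ and to $\calU_3(\F_2)$ respectively (the latter equivalence is already recorded above), and Proposition \ref{primitivetypeprop3} tells us these two spaces are inequivalent.

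For the 2-row case, upper-rank, total dimension, and column count all coincide, so a new invariant is needed. For a subspace $\calV$ of $\Mat_{m,n}(\K)$ and a non-zero vector $v \in \K^m$, set $\calV_v := \{M \in \calV : \im M \subset \K v\}$. A short verification shows that under any equivalence $\calV \mapsto P \calV Q^{-1}$ one has $(P\calV Q^{-1})_{Pv} = P \calV_v Q^{-1}$, so the multiset $\{\dim \calV_v : v \in \K^m \setminus \{0\}\}$ is an equivalence invariant. A direct computation then gives: for $\calV = \Mata_2(\F_2) \coprod \Mat_{2, n-2}(\F_2)$, every non-zero $v$ yields $\dim \calV_v = n-2$; whereas for the reduced dual of Type $3$, taking $v$ equal to $e_1$, $e_2$, and $e_1+e_2$ respectively gives dimensions $n-3$, $n-2$, and $n-3$. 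The resulting multisets differ, so Types $1$ and $3$ are inequivalent.

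The main obstacle is the 2-row case, since none of the classification results in Section \ref{primitivesection} applies directly (they concern the $3 \times 3$ primitive setting). The difficulty is resolved by introducing the invariant $\calV_v$ above, whose invariance under equivalence is immediate and whose computation in both spaces is elementary.
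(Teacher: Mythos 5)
Your proposal is correct and follows essentially the same route as the paper: pass to the reduced matrix spaces representing $\calS^\bot$, separate types by the number of rows, use the rank-$3$ matrix of $\calI_3^\bot$ to isolate Type 6, invoke Proposition \ref{primitivetypeprop3} to distinguish the remaining $3\times 3$ cases, and finish Types 1 versus 3 with an elementary invariant. The only (harmless) variations are that you isolate Type 4 via its rank-$1$ matrix rather than via Proposition \ref{primitivetypeprop3}, and that for Types 1 versus 3 you use the multiset of dimensions of $\{M \in \calV : \im M \subset \K v\}$ where the paper uses the dimension of $\{y \in V : \dim \calS^\bot y < 2\}$ — both computations check out.
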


\begin{proof}
Two matrix spaces which represent the reduced space associated with $\calS^\bot$ must be equivalent.
Therefore, in order to prove the claimed result, it suffices to show that the spaces listed in the above
array are pairwise inequivalent. By considering the number of rows, we deduce that a space of Type 7 can be of none of Types
1 to 6, and that a space of Type 2, 4, 5 or 6 can be of none of Types 1 and 3.
If $\calS$ has Type 1, then we see that the set $\bigl\{y \in V : \dim \calS^\bot y<2\bigr\}$
is a $2$-dimensional linear subspace of $V$, whereas if $\calS$ has Type 3 one checks that
this space has dimension $1$ (for the special case given in the above array, this space is spanned by the first vector of the canonical basis).
Thus, a space of Type 1 cannot be of Type 3.

To conclude the proof, we need to differentiate between spaces of Types 2, 4, 5 and 6.
Noting that $\calG_3^\bot$ is equivalent to a subspace of the space $\calJ_3(\F_2)$ from Proposition \ref{primitivetypeprop3},
we deduce from Proposition \ref{primitivetypeprop3} that a space can have at most one of Types 2, 4 or 5.

Finally, using Propositions \ref{primitivetypeprop1} and \ref{primitivetypeprop2}, we know that, in each space $\Mata_3(\F_2)$, $\calG_3^\bot$ and $\calH_3^\bot$, every matrix has rank at most $2$. However, the space $\calI_3^\bot$ contains the rank $3$ matrix
$\begin{bmatrix}
1 & 0 & 0  \\
0 & 1 & 0 \\
1 & 1 & 1
\end{bmatrix}$. Therefore, a space of Type 6 can have neither Types 2, 4 nor 5.
This concludes the proof.
\end{proof}

\subsection{An additional property of spaces of special type}

The following result will be used later in our proof of Theorem \ref{classRCF2}.

\begin{prop}\label{specialtypelemma}
Let $\calS$ be one of the spaces $\Mata_3(\F_2)$, $\calG_3^\bot$, $\calH_3^\bot$, $\calI_3^\bot$ or $\calH_4^\bot$.
Then, $\dim \calS x\geq 2$ for all $x \in (\F_2)^3 \setminus \{0\}$,
unless $\calS$ equals $\calG_3^\bot$ in which case exactly one vector $x \in (\F_2)^3 \setminus \{0\}$
satisfies $\dim \calS x\leq 1$. \\
Moreover,
$$\dim\bigl(\Vect \{Ny \mid N \in \calS, \; y \in \K^3\}\bigr) \geq 3.$$
\end{prop}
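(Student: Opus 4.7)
The plan is to prove the claim by a direct case-by-case verification on each of the five listed matrix spaces. For a given $\calS$ and a non-zero $x \in (\F_2)^3$, I would evaluate the generic matrix from the parametrization of $\calS$ at $x$ and then read off $\dim \calS x$ from the number of linearly independent expressions in the parameters $(a,b,c)$ appearing among the resulting coordinates.

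For $\Mata_3(\F_2)$, the three natural basis matrices $E_{i,j}+E_{j,i}$ with $i<j$ give $\calS e_k = \Vect\{e_i : i \neq k\}$, so $\dim \calS e_k=2$ for each $k\in\{1,2,3\}$, and the four remaining non-zero vectors are handled by short computations. For $\calH_3^\bot$, $\calI_3^\bot$ and $\calH_4^\bot$, analogous direct computations at each of the seven non-zero vectors of $(\F_2)^3$ confirm $\dim \calS x \geq 2$ throughout. For $\calG_3^\bot$, the sole defect occurs at $x=e_3$: multiplying out yields $(b+c, 0, 0)^T$ and hence $\calS e_3 = \F_2 e_1$ is one-dimensional; at each of the other six non-zero vectors, a short calculation gives $\dim \calS x = 2$, confirming the uniqueness claim.

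For the ``moreover'' part, I would use the identity $\Vect\{Ny \mid N\in\calS,\ y\in\K^3\} = \calS e_1+\calS e_2+\calS e_3$ and inspect the column spans found above. For $\Mata_3(\F_2)$ these are the three coordinate planes of $\K^3$ and sum to $\K^3$. For $\calH_3^\bot$ they work out to $\Vect\{e_2,e_3\}$, $\Vect\{e_1,e_2\}$ and $\Vect\{e_1,e_3\}$, again summing to $\K^3$. For $\calI_3^\bot$ the space $\calS e_3$ alone is already $\K^3$. For $\calH_4^\bot$ the three spans are $\Vect\{e_1,e_2\}$, $\Vect\{e_1,e_3\}$ and $\Vect\{e_1,e_4\}$, summing to $\K^4$. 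Finally, for $\calG_3^\bot$ one has $\calS e_1=\Vect\{e_2,e_3\}$ and $e_1\in\calS e_2$, so the sum equals $\K^3$.

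The main obstacle is purely bookkeeping: no individual verification is deep, but the check must be carried out systematically across all five spaces and, for $\calG_3^\bot$ in particular, across the six non-$e_3$ vectors to rule out further exceptional behavior.
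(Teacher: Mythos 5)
Your proposal is correct: all $35$ column-space computations check out (I verified them), the unique exceptional vector for $\calG_3^\bot$ is indeed $e_3$ with $\calG_3^\bot e_3=\F_2 e_1$, and the sums $\calS e_1+\calS e_2+\calS e_3$ are as you state, which settles the final inequality. However, your route is genuinely different from the paper's. The paper first records (Lemma \ref{selfduality}) that the dual operator space $\widehat{\calS}=\{\widehat{x}\mid x\in\K^3\}$, where $\widehat{x}:N\mapsto Nx$, is equivalent to $\calS$ itself for four of the five spaces, and gives an explicit matrix model for $\widehat{\calI_3^\bot}$. Since $\dim\calS x=\rk\widehat{x}$, the first assertion then reduces to counting the rank~$1$ matrices in a single $3$-dimensional matrix space per case, rather than examining seven vectors per case; and for $\Mata_3(\F_2)$ and $\calH_3^\bot$ even that count is free, because these spaces are (equivalent to) $\Mata_3(\F_2)$ and $\calU_3(\F_2)$, in which every non-zero matrix is already known to have rank $2$ by Proposition \ref{primitivetypeprop1}. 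The paper dismisses the ``moreover'' part as obvious. What the paper's approach buys is economy and reuse of structural facts already in hand; what your approach buys is self-containedness and independence from the classification results of \cite{dSPprimitiveF2} --- at the cost of a longer, purely mechanical verification whose only real risk is a slip in one of the many small computations.
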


To prove this result, we start with an interesting observation, which is obtained by straightforward computations:

\begin{lemma}\label{selfduality}
Let $\calS$ be one of the spaces $\Mata_3(\F_2)$, $\calG_3^\bot$, $\calH_3^\bot$ or $\calH_4^\bot$.
Then, $\widehat{\calS}$ is equivalent to $\calS$. \\
On the other hand, if $\calS=\calI_3^\bot$ then $\widehat{\calS}$ is represented by the matrix space
$$\Biggl\{\begin{bmatrix}
a & b & a \\
b & c & c \\
0 & 0 & a+b+c
\end{bmatrix} \mid (a,b,c)\in (\F_2)^3\Biggr\}.$$
\end{lemma}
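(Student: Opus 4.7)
The plan is to verify the statement by direct computation for each of the five spaces. For a $d$-dimensional matrix subspace $\calS \subset \Mat_{m,n}(\F_2)$, once one fixes a basis $(M_1, \ldots, M_d)$ of $\calS$, the evaluation map $\widehat{x} : \calS \to \F_2^m$ is represented, in that basis of the source and the canonical basis of $\F_2^m$, by the $m \times d$ matrix $\bigl[M_1 x \mid M_2 x \mid \cdots \mid M_d x\bigr]$. As $x$ varies over $\F_2^n$ we thus obtain an explicit matrix description of $\widehat{\calS}$, and it remains to compare it with $\calS$.

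For each of the five spaces $\Mata_3(\F_2)$, $\calG_3^\bot$, $\calH_3^\bot$, $\calI_3^\bot$, $\calH_4^\bot$, the natural choice of basis is furnished by specializing the three free parameters $(a,b,c)$ to the standard basis vectors of $(\F_2)^3$ (and similarly for $\Mata_3(\F_2)$, using the three off-diagonal positions). Substituting a generic $x = (x_1,x_2,x_3)^T$ into each basis matrix and stacking the resulting column vectors produces the matrix form of $\widehat{x}$. For $\calH_4^\bot$, this computation yields a $4 \times 3$ matrix whose entries are an exact copy of $\calH_4^\bot$ after the relabeling $(a,b,c) := (x_1,x_2,x_3)$, so $\widehat{\calH_4^\bot} = \calH_4^\bot$ literally; similarly, for $\calI_3^\bot$ one reads off the displayed space after the substitution $(a,b,c) := (x_1, x_3, x_2)$. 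For $\Mata_3(\F_2)$, the computed $\widehat{x}$ has zeros along an anti-diagonal, and a suitable row (or column) permutation brings it back into the form of $\Mata_3(\F_2)$.

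The two remaining cases, $\calG_3^\bot$ and $\calH_3^\bot$, require a bit more work. For each, one inspects the sparsity pattern of the computed $\widehat{x}$ and exhibits explicit invertible matrices $P$ and $Q$ such that $P\,\widehat{\calS}\,Q^{-1} = \calS$. A convenient heuristic for finding these: identify, in both $\calS$ and $\widehat{\calS}$, the unique (or few) nonzero vector $y$ for which $\dim \calS y \leq 1$ (an equivalence invariant), and then align those distinguished vectors via a column operation before finishing with a row operation. In practice, a column permutation followed by a row permutation suffices in these small cases, as both spaces possess the same pattern of one rank-$1$-image direction and two rank-$2$-image directions.

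The main obstacle is purely bookkeeping, namely verifying the five computations cleanly and, in the cases of $\calG_3^\bot$ and $\calH_3^\bot$, exhibiting the specific invertible matrices $P, Q$ relating $\widehat{\calS}$ to $\calS$. No structural argument is needed: everything reduces to explicit linear algebra over $\F_2$ carried out on $3 \times 3$ (or $4 \times 3$) matrices, so the proof will be a sequence of five independent, short verifications.
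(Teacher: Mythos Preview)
Your proposal is correct and follows exactly the approach the paper takes: the paper's proof consists solely of the phrase ``obtained by straightforward computations,'' and your plan supplies precisely those computations, together with the explicit change-of-basis bookkeeping needed to recognize the resulting spaces. Your verification for $\calH_4^\bot$ and $\calI_3^\bot$ is accurate, and the remaining three cases are handled by the same routine you describe.
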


\begin{proof}[Proof of Proposition \ref{specialtypelemma}]
The second statement is obvious.
For the first one, we use Lemma \ref{selfduality}.
As $\calH_3^\bot$ is equivalent to the space $\calU_3(\F_2)$ from Proposition \ref{primitivetypeprop1},
every non-zero matrix that belongs to $\Mata_3(\F_2)$ or $\calH_3^\bot$ has rank $2$.
Moreover, it is easily seen that $\calH_4^\bot$ contains no rank $1$ matrix, for if the matrix
$$M=\begin{bmatrix}
b+c & a+c & a+b \\
a & 0 & 0 \\
0 & b & 0 \\
0 & 0 & c
\end{bmatrix}$$
has rank $1$ then exactly one of $a,b,c$ equals $1$ (as seen from the last three rows),
and then one sees that $M$ has two linearly independent columns.
Finally, $\begin{bmatrix}
0 & 1 & 0 \\
0 & 0 & 0 \\
0 & 0 & 0
\end{bmatrix}$ is the sole rank $1$ matrix of $\calG_3^\bot$ (indeed, a rank $1$ matrix of the form
$\begin{bmatrix}
0 & a & b+c \\
b & b & 0 \\
c & 0 & 0
\end{bmatrix}$ should have at most one non-zero entry among $b$, $c$ and $b+c$, and hence $b=c=0$).

Thus, by Lemma \ref{selfduality} we find that $\calS$ satisfies the first statement provided that
it is not equivalent to $\calI_3^\bot$.

Finally, if a matrix $\begin{bmatrix}
a & b & a \\
b & c & c \\
0 & 0 & a+b+c
\end{bmatrix}$ has rank $1$, then $(a,b,c) \neq (0,0,0)$ and hence $\begin{bmatrix}
a & b \\
b & c
\end{bmatrix}$ has rank $1$ and $a+b+c=0$. As $\begin{bmatrix}
1 & 0 \\
0 & 0
\end{bmatrix}$, $\begin{bmatrix}
0 & 0 \\
0 & 1
\end{bmatrix}$ and $\begin{bmatrix}
1 & 1 \\
1 & 1
\end{bmatrix}$ are the sole rank $1$ matrices in $\Mats_2(\F_2)$, we conclude that no $3 \times 3$ rank $1$ matrix of the above form exists.
Thus, by using Lemma \ref{selfduality} we conclude that $\dim (\calI_3^\bot y) \geq 2$ for all $y \in (\F_2)^3 \setminus \{0\}$.
\end{proof}

\section{Proof of the main theorem (1)}\label{mainproofsec1}

In this section, we prove Theorem \ref{classRCF2} in the special case when $n = 2$,
and we show that if it holds for $n=3$ then it also holds for all greater values of $n$.

Throughout the section, we set $\K=\F_2$.

\subsection{The case $n=2$}

Here, we assume that $n=2$. Let $\calS$ be a linear subspace of $\Mat_{n,p}(\K)$ with codimension $2n-3$.
Then, $\calS^\bot$ contains exactly one non-zero matrix $B$, and hence:
\begin{itemize}
\item Either $B$ has rank $2$, whence it is equivalent to $\begin{bmatrix}
I_2 \\
[0]_{(p-2) \times 2}
\end{bmatrix}$, which shows that $\calS$ is equivalent to $\Mats_2(\K) \coprod \Mat_{2,p-2}(\K)$, i.e.
$\calS$ has Type 1;
\item Or $B$ has rank $1$, whence $\calS$ is equivalent to $\K \vee \Mat_{1,p-1}(\K)$, and one deduces
from Lemma \ref{vecteurdim1} that every range-compatible linear map on $\calS$ is local.
\end{itemize}

\subsection{General considerations}

In the rest of this section and in the next one, we assume that
$n > 2$. We also assume that Theorem \ref{classRCF2} holds for all matrix spaces with $n-1$ rows.
Let $\calS$ be a linear subspace of $\Mat_{n,p}(\K)$, interpreted as a space of linear maps from $\K^p$ to $\K^n$, and let
$$F : \calS \rightarrow \K^n$$
be a non-local range-compatible linear map.
To simplify the notation, we set
$$U:=\K^p \quad \text{and} \quad V:=\K^n.$$

As a consequence of Lemma \ref{vecteurdim1}, the assumption that $F$ is non-local yields:

\begin{claim}\label{notrans1claim}
There is no non-zero vector $x \in U$ such that $\dim \calS x\leq 1$.
\end{claim}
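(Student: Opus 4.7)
The plan is to prove this claim by a single contrapositive application of Lemma \ref{vecteurdim1}. That lemma is stated precisely for the situation we are in: a linear subspace of $\calL(U,V)$ whose codimension is at most $2\dim V-3$, in which case the existence of a non-zero $x \in U$ with $\dim \calS x \leq 1$ forces every range-compatible linear map on $\calS$ to be local.

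First I would check that the hypotheses of Lemma \ref{vecteurdim1} are satisfied in the ambient setting of this section: here $V = \K^n$ and $\calS$ has codimension exactly $2n-3 = 2\dim V-3$ in $\calL(U,V)$, so the codimension bound holds. By hypothesis of this subsection, $F : \calS \to V$ is a non-local range-compatible linear map. Then I would argue by contradiction: if some non-zero $x \in U$ satisfied $\dim \calS x \leq 1$, Lemma \ref{vecteurdim1} would yield that every range-compatible linear map on $\calS$ is local, in particular $F$ itself, contradicting the standing assumption that $F$ is non-local.

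There is essentially no obstacle: the entire content of the claim is an immediate specialization of Lemma \ref{vecteurdim1} combined with the non-localness of $F$. The only thing worth spelling out is the codimension matching, so that the reader sees why Lemma \ref{vecteurdim1} applies at once.
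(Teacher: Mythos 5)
Your proof is correct and is exactly the argument the paper intends: the claim is stated there as an immediate consequence of Lemma \ref{vecteurdim1} applied contrapositively, using $\codim\calS = 2n-3 = 2\dim V - 3$ and the standing assumption that $F$ is non-local. Nothing is missing.
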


Throughout the proof, it will be necessary to discuss the nature of $\calS$-adapted vectors.
As $\dim \calS =2n-3$, a vector $y \in V$ is $\calS$-adapted if and only if
$$\dim \calS^\bot y \geq 2.$$
We qualify such vectors according to the nature of the map $F \modu y$
that they induce:

\begin{Def}
Let $y$ be an $\calS$-adapted vector of $V$.
We say that $y$ has \textbf{Type $0$ for $F$} whenever $F \modu y$ is local. \\
Given $i \in \lcro 1,7\rcro$, we say that $y$ has \textbf{Type $i$ for $F$} whenever the space
$\calS \modu y$ has Type $i$ and $F \modu y$ is non-local.
\end{Def}

In particular, since $F$ is non-local the following result is a consequence of Lemma \ref{3vectorslemma}:

\begin{claim}\label{3vectorsclaim}
The set of all $\calS$-adapted vectors of Type $0$ is included in a $2$-dimensional subspace of $V$.
\end{claim}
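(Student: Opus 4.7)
The plan is a direct application of Lemma \ref{3vectorslemma} in contrapositive form. Let $T_0$ denote the set of all $\calS$-adapted vectors of $V$ that have Type $0$ for $F$; by definition of Type $0$, for every $y \in T_0$ the induced map $F \modu y$ is local. Note that $\calS$ satisfies $\codim \calS = 2\dim V - 3$, which matches exactly the hypothesis of Lemma \ref{3vectorslemma}, and that $F$ is, by the global standing assumption in this subsection, a non-local range-compatible linear map.

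I would argue by contradiction. Suppose $T_0$ is not contained in any $2$-dimensional linear subspace of $V$. Then the linear span of $T_0$ has dimension at least $3$, so one can extract three linearly independent vectors $y_1, y_2, y_3$ from $T_0$. Each of the projected maps $F \modu y_1$, $F \modu y_2$, $F \modu y_3$ is local, so Lemma \ref{3vectorslemma} forces $F$ itself to be local. This contradicts the hypothesis that $F$ is non-local, and so $T_0$ must lie in some $2$-dimensional subspace of $V$.

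There is essentially no obstacle here: the entire content of the claim has been packaged into Lemma \ref{3vectorslemma}, whose hypothesis on the codimension of $\calS$ is satisfied on the nose. The only verification to make in a formal write-up is that the two notions of ``Type $0$'' used (first, that $y$ is $\calS$-adapted; second, that $F \modu y$ is local) together supply exactly the input needed by Lemma \ref{3vectorslemma}, namely linear independence in $V$ and localness of the three projected maps.
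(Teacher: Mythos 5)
Your proof is correct and is exactly the paper's argument: the claim is stated there as an immediate consequence of Lemma \ref{3vectorslemma}, applied in contrapositive form to three linearly independent Type~$0$ vectors, using that $F$ is non-local and $\codim\calS = 2\dim V - 3$.
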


In addition, we need to introduce another special type of vectors:

\begin{Def}
A vector $z$ of $V$ is called \textbf{super-$\calS$-adapted} when $\dim \calS^\bot z \geq 3$.
\end{Def}

Given a super-$\calS$-adapted vector $z$, we see by duality that
$\dim(\calS \modu z)<2(n-1)-3$, and hence Theorem \ref{maintheolin} yields:

\begin{claim}
For every super-$\calS$-adapted vector $z \in V$, the map $F \modu z$ is local.
\end{claim}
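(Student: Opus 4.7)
The plan is to deduce the claim directly from Theorem \ref{maintheolin} applied to the projected space $\calS \modu z$. The super-$\calS$-adaptedness hypothesis, which strengthens $\calS$-adaptedness by replacing the bound $\dim \calS^\bot y \geq 2$ with $\dim \calS^\bot z \geq 3$, should give exactly the extra unit of codimension needed to move from the threshold $2\dim V - 3$ into the range where range-compatible linear maps are automatically local.

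First, I would invoke the duality identity
$$\codim(\calS \modu z) = \codim \calS - \dim(\calS^\bot z),$$
which was recorded in the preamble to the definition of $\calS$-adapted vectors. Combining this with the standing assumption $\codim \calS = 2n - 3$ and the super-adaptedness inequality $\dim \calS^\bot z \geq 3$ yields
$$\codim(\calS \modu z) \leq (2n - 3) - 3 = 2n - 6.$$
Since $\calS \modu z$ naturally lives in $\calL(U, V/\K z)$ and $\dim(V/\K z) = n - 1$, this bound reads $\codim(\calS \modu z) \leq 2(n-1) - 4$.

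Finally, as $\K = \F_2$, this is precisely the hypothesis of Theorem \ref{maintheolin} for the pair $(U, V/\K z)$, so every range-compatible linear map on $\calS \modu z$ is local. The Projection Lemma guarantees that $F \modu z$ is a range-compatible linear map on $\calS \modu z$, hence it is local, as desired. There is no real obstacle here: the statement is essentially a bookkeeping application of Theorem \ref{maintheolin}, and the genuine difficulty of the paper lies elsewhere, namely in handling those $\calS$-adapted vectors $y$ that are not super-$\calS$-adapted, where only the weaker bound $\codim(\calS \modu y) \leq 2(n-1) - 3$ is available and Theorem \ref{maintheolin} cannot be invoked directly.
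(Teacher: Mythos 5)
Your proof is correct and is essentially identical to the paper's argument: the author likewise uses the duality identity $\codim(\calS \modu z)=\codim\calS-\dim \calS^\bot z$ to get $\codim(\calS \modu z)<2(n-1)-3$, i.e.\ $\leq 2(n-1)-4$, and then invokes Theorem \ref{maintheolin} over $\F_2$. Nothing is missing.
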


\subsection{The case when $n\geq 4$}

In this section, we prove the inductive step in the case when $n \geq 4$.
We start by discarding most types of $\calS$-adapted vectors:

\begin{claim}
Every $\calS$-adapted vector has Type 0, 1 or 3 for $F$.
\end{claim}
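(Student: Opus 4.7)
My plan is to combine the inductive hypothesis of Theorem \ref{classRCF2} with a row-count argument, treating the cases $n\geq 5$ and $n=4$ separately.

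First, I would fix an $\calS$-adapted vector $y$ and dispose of the super-$\calS$-adapted case. From the identity $\codim(\calS \modu y)=(2n-3)-\dim \calS^\bot y$ together with $\calS$-adaptedness, we have $\codim(\calS \modu y)\leq 2(n-1)-3$. If $\dim \calS^\bot y\geq 3$ (so $y$ is super-$\calS$-adapted), then $\codim(\calS \modu y)\leq 2(n-1)-4$ and Theorem \ref{maintheolin} yields that $F \modu y$ is local, giving $y$ Type $0$. I may therefore assume $\dim \calS^\bot y=2$, so $\codim(\calS \modu y)=2(n-1)-3$ hits the critical codimension for $V/\K y$. The inductive hypothesis then provides a dichotomy: either $F \modu y$ is local (Type $0$), or $\calS \modu y$ has one of Types $1$ through $7$.

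Second, I would dispose of $n\geq 5$ by a row count. Since $V/\K y$ has dimension $n-1\geq 4$, the matrix space $\calS \modu y$ has $n-1\geq 4$ rows. The blocks $\Mats_3(\F_2)$, $\calG_3$, $\calH_3$, $\calI_3$, $\calH_4$ all have exactly three rows, and the extension $\coprod \Mat_{3,p}(\F_2)$ preserves the row count, so $\calS \modu y$ cannot be of any of Types $2,4,5,6,7$. This settles the claim whenever $n\geq 5$.

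Third, the case $n=4$ is the main obstacle, since $\calS \modu y$ then has three rows and all seven Types are a priori compatible with the row count. I would tackle each of Types $2,4,5,6,7$ separately, arguing by contradiction. Suppose $\calS \modu y$ has Type $i\in\{2,4,5,6,7\}$ with $F \modu y$ non-local. Through the natural isomorphism $(\calS \modu y)^\bot\simeq\{v\in\calS^\bot : v(y)=0\}$ and the table of reduced dual spaces from Section \ref{inequivalentspecialtype}, the three-dimensional subspace $\calT:=\{v\in\calS^\bot : v(y)=0\}$ of $\calS^\bot$ reduces to one of $\Mata_3(\F_2)$, $\calG_3^\bot$, $\calH_3^\bot$, $\calI_3^\bot$, or $\calH_4^\bot$. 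I would then combine the rank and image properties of Proposition \ref{specialtypelemma} with the explicit non-local range-compatible maps of Theorem \ref{specialRCF2}, use Lemma \ref{ligneparligne} to decompose $F$ row by row, and evaluate $F$ on judiciously chosen matrices of $\calS$ to either force $F$ to be local or to exhibit a non-zero vector $x\in U$ with $\dim \calS x\leq 1$, contradicting Claim \ref{notrans1claim}.

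The hardest part will be the $n=4$ bookkeeping, in particular reconstructing the two additional dimensions of $\calS^\bot$ that lie outside $\calT$ and tracking which rows and columns of $\calS$-matrices they constrain. I expect Type $6$ to require a dedicated argument, since $\calI_3^\bot$ is the only one of the five reduced dual spaces that contains a rank $3$ matrix (as observed in the proof of Proposition \ref{specialtypelemma}), and Type $7$ will need separate care because its reduced dual $\calH_4^\bot$ has four rows rather than three, making the relevant sum of images inside $U$ four-dimensional instead of three-dimensional.
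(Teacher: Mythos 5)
Your reduction to $n=4$ is correct and matches the paper: super-$\calS$-adapted vectors are of Type $0$ by Theorem \ref{maintheolin}, the inductive hypothesis gives the dichotomy for critical-codimension quotients, and the row count kills Types $2,4,5,6,7$ once $n-1\geq 4$. The genuine gap is in the $n=4$ case, which is the entire content of the claim: what you offer there is a plan ("evaluate $F$ on judiciously chosen matrices\dots to either force $F$ to be local or to exhibit a non-zero vector $x$ with $\dim\calS x\leq 1$") rather than an argument, and the proposed endgame is not the one that works. There is no indication that a row-by-row evaluation of $F$ would produce a vector $x$ with $\dim\calS x\leq 1$, and a five-fold case analysis of the special types is not carried out even in outline.

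The paper's actual mechanism is different and uniform across all five types. Writing $y_0=y_4$ for the offending vector and $\calT:=\{N\in\calS^\bot : Ny_4=0\}$, one shows that for every $y\in\Vect(y_1,y_2,y_3)\setminus\{0\}$ with $\dim\calT y\geq 2$, either $y$ or $y+y_4$ is super-$\calS$-adapted, or else $\calT y$ equals the fixed $2$-dimensional space $P=\calS^\bot y_4$ (this last alternative follows because $\calT y=\calS^\bot y$ and $\calT(y+y_4)=\calT y$ force $P\subset\calT y$). Proposition \ref{specialtypelemma} then enters exactly twice: the span of all $Ny$ has dimension at least $3$, so the set $\{y : \calT y=P\}$ lies in a hyperplane $H$ of $\Vect(y_1,y_2,y_3)$; and at most one non-zero $y$ satisfies $\dim\calT y\leq 1$. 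Lemma \ref{coveringlemma} then yields a basis $(z_1,z_2,z_3)$ of $\Vect(y_1,y_2,y_3)$ avoiding both obstructions, whence $z_i+t_iy_4$ is super-$\calS$-adapted for suitable $t_i\in\K$; these three linearly independent vectors are all of Type $0$ for $F$, contradicting Claim \ref{3vectorsclaim}. This is the idea your proposal is missing: the contradiction comes from manufacturing three independent super-$\calS$-adapted vectors, not from forcing $F$ local directly or from Claim \ref{notrans1claim}. Incidentally, your worry that Type $7$ needs separate care because $\calH_4^\bot$ has four rows is unfounded; the argument above only uses the two properties in Proposition \ref{specialtypelemma}, which $\calH_4^\bot$ satisfies like the others.
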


\begin{proof}
Assume on the contrary that some $\calS$-adapted vector $y_0$ has Type 2 or one of Types 4 to 7 for $F$.
Then, $n=4$. We shall prove that there exists a linearly independent triple of
super-$\calS$-adapted vectors, which will contradict Claim \ref{3vectorsclaim}.

Without loss of generality, we may assume that $y_0$ is the last vector of the canonical basis $(y_1,y_2,y_3,y_4)$ of
$V$. Then, no further generality is lost in assuming that every matrix $M$ of $\calS$ splits up as
$$M=\begin{bmatrix}
K(M) \\
[?]_{1 \times p}
\end{bmatrix}$$
and either
$$K(\calS)=\calV \coprod \Mat_{3,p-3}(\F_2),$$
where $\calV$ is one of the spaces $\Mats_3(\F_2)$, $\calG_3$, $\calH_3$ or $\calI_3$,
or
$$K(\calS)=\calH_4 \coprod \Mat_{3,p-4}(\F_2).$$
We denote by $\calT$ the space of all matrices $N$ of $\calS^\bot$ such that $Ny_4=0$.
Then, either $F \modu y_4$ has Type $2$ or one of Types $4$ to $6$, and hence
$\calT$ is the space of all matrices of the form
$$\begin{bmatrix}
A & [0]_{3 \times 1}\\
[0]_{(p-3) \times 3} & [0]_{(p-3) \times 1}
\end{bmatrix} \quad \text{with $A \in \calV^\bot$,}$$
or $\calS \modu y_4$ has Type $7$ and $\calT$ is the space of all matrices of the form
$$\begin{bmatrix}
A & [0]_{4 \times 1}\\
[0]_{(p-4) \times 3} & [0]_{(p-4) \times 1}
\end{bmatrix} \quad \text{with $A \in \calH_4^\bot$.}$$
As $y_4$ is $\calS$-adapted and $\codim (\calS \modu y_4)=3=\codim \calS-2$, the space
$$P:=\calS^\bot y_4$$
has dimension $2$.
Let $y \in \Vect(y_1,y_2,y_3) \setminus \{0\}$ be such that $\dim \calT y \geq 2$.
We claim that one of the following conditions must hold:
\begin{enumerate}[(i)]
\item $y$ is super-$\calS$-adapted;
\item $y+y_4$ is super-$\calS$-adapted;
\item $\calT y=P$.
\end{enumerate}
Assume that none of $y$ and $y+y_4$ is super-$\calS$-adapted.
Then, $\dim \calS^\bot y \leq 2$ and $\dim \calS^\bot (y+y_4) \leq 2$.
However, as $\calT y \subset \calS^\bot y$ and $\dim \calT y \geq 2$, we find $\calT y=\calS^\bot y$.
By the very definition of $\calT$, we see that $\calT(y+y_4)=\calT y$, and hence $\calT y=\calT (y+y_4)=\calS^\bot (y+y_4)$ with the above line of reasoning. In particular, for all $N \in \calS^\bot$, we have $N y_4=N(y+y_4)-Ny  \in \calT y$, whence $P\subset \calT y$. As the dimensions are equal on both sides, we conclude that condition (iii) holds.

Now, we can conclude. By Proposition \ref{specialtypelemma}, the space
$\Vect \{N y \mid y \in \Vect(y_1,y_2,y_3), \; N \in \calT\}$ has dimension greater than $2$,
and hence the set of all vectors $y \in \Vect(y_1,y_2,y_3) \setminus \{0\}$ for which $\calT y=P$
is included in a hyperplane $H$ of $\Vect(y_1,y_2,y_3)$.
On the other hand, denoting by $D$ the set of all non-zero vectors $z \in \Vect(y_1,y_2,y_3) \setminus \{0\}$ for which $\dim \calT z\leq 1$,
we know from Proposition \ref{specialtypelemma} that $D$ has at most one element; by Lemma \ref{coveringlemma}, the
set $\Vect(y_1,y_2,y_3) \setminus (D \cup H)$ is not included in a hyperplane of $\Vect(y_1,y_2,y_3)$.
It follows that we can extract a basis $(z_1,z_2,z_3)$ of $\Vect(y_1,y_2,y_3)$ from this set,
to the effect that, for each $i \in \{1,2,3\}$, we can find a scalar $t_i \in \K$ such that $z_i+t_i y_4$ is
super-$\calS$-adapted. Then, $(z_i+t_i\,y_4)_{1 \leq i \leq 3}$ is obviously a linearly independent triple
of super-$\calS$-adapted vectors, contradicting Claim \ref{3vectorsclaim}.
Therefore, every $\calS$-adapted vector has Type 0, 1 or 3 for $F$.
\end{proof}

In the next step, we reduce the situation to the case $p=2$.

\begin{claim}
There is at least one $\calS$-adapted vector of Type 1 or 3 for $F$, and
there exists a $2$-dimensional subspace $P$ of $U$ which contains the range of every matrix of $\calS^\bot$.
\end{claim}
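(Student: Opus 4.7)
The first part of the claim follows directly from the available lemmas. Since $F$ is non-local, Lemma \ref{adaptedvectorslemma} forces the set of non-$\calS$-adapted vectors to lie in a hyperplane of $V$, and as $\dim V=n\geq 4$ we can extract three linearly independent $\calS$-adapted vectors. Were all three of Type $0$ for $F$, Lemma \ref{3vectorslemma} would make $F$ local, contradicting our assumption. Hence at least one $\calS$-adapted vector has Type different from $0$, and the previous claim then forces its Type to be $1$ or $3$.

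For the second part I would fix such a vector $y_0$ and choose bases of $U$ and $V$, with $y_0$ taken as the last basis vector of $V$, in which $\calS \modu y_0$ is represented by $\Mats_2(\F_2) \vee \Mat_{n-3,p-2}(\F_2)$ (Type $1$) or by $\calV_2 \vee \Mat_{n-4,p-2}(\F_2)$ (Type $3$). A direct orthogonality computation, parallel to the one behind the table in Section \ref{inequivalentspecialtype}, then shows that every matrix of $(\calS \modu y_0)^\bot$, viewed inside $\Mat_{p,n-1}(\F_2)$, has all its rows below the second equal to $0$. Under the identification of $(\calS \modu y_0)^\bot$ with $\calT := \{N \in \calS^\bot : Ny_0 = 0\}$, this means that every $N \in \calT$ has range inside the $2$-dimensional subspace $P_0 := \Vect(u_1,u_2) \subset U$, and a quick inspection of the reduced form of $\calT$ gives $\sum_{N \in \calT}\im N = P_0$.

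The remaining step is to extend the inclusion $\im N \subset P_0$ from $\calT$ to all of $\calS^\bot$. Since $y_0$ is $\calS$-adapted, $\calS^\bot y_0$ is $2$-dimensional, and one can write $\calS^\bot = \calT \oplus \K N_1 \oplus \K N_2$ with $N_1 y_0, N_2 y_0$ a basis of $\calS^\bot y_0$. I would establish both $\calS^\bot y_0 \subset P_0$ and the possibility of adjusting $N_1,N_2$ modulo $\calT$ so that $\im N_i \subset P_0$ by exploiting the non-local map $F$. Applying Lemma \ref{ligneparligne} to $F$, together with Theorem \ref{symmetricF2} (Type $1$) or the explicit description in Section \ref{specialtypesection} (Type $3$), gives, in our bases, a formula of the shape $F(s)=(m_{1,1},m_{2,2},0,\ldots,0,F_n(r))$ with $F_n$ a fixed linear form on $U^*$ and $r$ the last row of $s$. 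Parametrising $\calS$ as the set of pairs $(M,r)$ with $M \in \calS \modu y_0$ and $r$ in a coset of the ``last-row space'' $L := \{r \in U^* : (0,r) \in \calS\}$, and imposing range-compatibility of $F$ on pairs $(M,r) \in \calS$ for which $r$ lies in the row-span of $M$, one extracts enough linear identities to force $L = P_0^\perp$; dually this yields $\calS^\bot y_0 = L^\bot = P_0$ and pins the ranges of $N_1,N_2$ inside $P_0$. Setting $P := P_0$ then proves the claim. The main obstacle is this last step: the bookkeeping needed to verify that the range-compatibility constraints on the rank-deficient elements of $\calS$ are strong enough to pin $L$ down exactly as $P_0^\perp$, and the cases Type $1$ and Type $3$ must be treated with slightly different computations.
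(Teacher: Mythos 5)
The first half of your argument (existence of an $\calS$-adapted vector of Type 1 or 3) is correct and essentially the paper's: combine Lemma \ref{adaptedvectorslemma}, Claim \ref{3vectorsclaim} and Lemma \ref{3vectorslemma}. The problem is in the second half.

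Fixing a single vector $y_0$ of Type 1 or 3 only gives you control of $\calT=\calS^\bot_{y_0}=\{N \in \calS^\bot : Ny_0=0\}$, a subspace of codimension $2$ in $\calS^\bot$, all of whose elements have range in a $2$-dimensional $P_0$. Your mechanism for handling the two remaining directions $N_1,N_2$ — extracting identities from the range-compatibility of $F$ to show $L=P_0^\perp$ and hence $\calS^\bot y_0=L^\bot=P_0$ — at best establishes that the \emph{single column} $N_iy_0$ lies in $P_0$. It says nothing about $N_iy$ for $y \notin \K y_0$, i.e.\ about the other columns of $N_1$ and $N_2$, so it does not ``pin the ranges of $N_1,N_2$ inside $P_0$.'' Moreover, adjusting $N_1,N_2$ modulo $\calT$ cannot repair this: since every $T\in\calT$ already has $\im T\subset P_0$, one has $(N_i+T)y\in P_0$ if and only if $N_iy\in P_0$, so the adjustment changes nothing relevant. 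The conclusion $\im N\subset P_0$ for all $N\in\calS^\bot$ is a genuinely global statement about $\calS^\bot$ that a local analysis at the one vector $y_0$ cannot deliver.

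The missing idea is to produce a \emph{second} vector $z$, of Type 1 or 3 for $F$, such that (a) some rank $2$ matrix of $\calS^\bot$ kills both $y_0$ and $z$, which forces $P_z=P_{y_0}$ because a common rank $2$ element of $\calS^\bot_{y_0}\cap\calS^\bot_z$ has range equal to both $2$-dimensional spaces, and (b) $\dim(\calS^\bot_{y_0}\cap\calS^\bot_z)=2(n-2)-3$, whence $\dim(\calS^\bot_{y_0}+\calS^\bot_z)=2n-3$ and $\calS^\bot_{y_0}+\calS^\bot_z=\calS^\bot$. Every $N\in\calS^\bot$ then splits as a sum of two matrices each with range in the common plane $P:=P_{y_0}$, which yields the claim. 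Producing such a $z$ is itself nontrivial: the paper gets it from Lemma \ref{coveringlemma} applied to the union of the hyperplane of non-adapted vectors, the plane of Type 0 vectors, and $\Vect(y_1,y_2,y_n)$, and then checks that $\calS\modu\Vect(y_0,z)$ still has Type 1 or 3, which is what guarantees the common rank $2$ matrix in (a). None of this is present in your proposal, so as written the proof does not go through.
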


\begin{proof}
Given a vector $y$ of Type 1 or 3 for $F$, we denote by
$\calS^\bot_y$ the space of all matrices $N \in \calS^\bot$ for which $N y=0$.
Then, it is obvious from the definition of spaces of Types 1 and 3 that
$\calS^\bot_y$ has dimension $2n-5$ and that there is a unique
$2$-dimensional subspace $P_y$ of $U$ that contains the image of every matrix of $\calS^\bot_y$.
On top of that, assume that we have a vector $z$ of $V \setminus \K y$ such that:
\begin{enumerate}[(i)]
\item $z$ is $\calS$-adapted of Type 1 or 3;
\item $\overline{z}$ is $(\calS \modu y)$-adapted, where $\overline{z}$ denotes the class of $z$ modulo $\K y$;
\item There is a rank $2$ matrix $N$ in $\calS^\bot$ such that $Ny=Nz=0$.
\end{enumerate}

As $\calS \modu y$ has Type 1 or 3, it is obvious that
$\dim (\calS \modu y)^\bot y' \leq 2$ for every non-zero vector $y' \in V/\K y$.
Thus, as $\overline{z}$ is $(\calS \modu y)$-adapted we have
$\dim (\calS \modu y)^\bot \overline{z}=2$,
whence the space of all matrices $N \in \calS^\bot_y$ for which $N z=0$ has codimension $2$ in $\calS^\bot_y$.
In other words, $\dim(\calS^\bot_y \cap \calS^\bot_z)=2(n-2)-3$,
whence $\dim(\calS^\bot_y +\calS^\bot_z)=2\bigl(2(n-1)-3\bigr)-\bigl(2(n-2)-3\bigr)=2n-3$
and it follows that $\calS^\bot_y +\calS^\bot_z=\calS^\bot$.
As assumption (iii) yields $P_y=P_z$, we deduce from $\calS^\bot_y +\calS^\bot_z=\calS^\bot$
that $P_y$ contains the range of every matrix of $\calS^\bot$.

In the rest of the proof, we demonstrate the existence of a pair $(y,z)$ satisfying conditions (i) to (iii) above, which will complete
the proof.

We know from Claim \ref{3vectorsclaim} that the set of all $\calS$-adapted vectors of Type 0 for $F$ is included in a $2$-dimensional subspace $G_1$ of $V$.
Moreover, as $F$ is non-local, Lemma \ref{adaptedvectorslemma} shows that
the set of all non-$\calS$-adapted vectors is included in a hyperplane $H$ of $V$.
By Lemma \ref{coveringlemma}, $H \cup G_1$ is a proper subset of $V$, which shows that some
$\calS$-adapted vector $y$ has Type 1 or 3 for $F$.
Without loss of generality, we may assume that $y$ is
the last vector of the canonical basis $(y_1,\dots,y_n)$ of $V$.

Therefore, by further reducing the situation, we see that no generality is lost in assuming that
every matrix of $\calS$ splits up as
$$M=\begin{bmatrix}
K(M) \\
[0]_{(n-4) \times p} \\
[?]_{1 \times p}
\end{bmatrix}$$
and that
$$K(\calS)=\Mats_2(\K) \vee \Mat_{1,p-2}(\K) \quad \text{or} \quad K(\calS)=\calV_2 \coprod \Mat_{3,p-2}(\K).$$

Given $y' \in V$, we shall denote by $\overline{y'}$ its class in $V/\K y_n$.
Set $W_1:=\Vect(y_1,y_2,y_n)$.
As $W_1$ is included in a hyperplane of $\K^n$, Lemma \ref{coveringlemma} yields
a vector $z$ in $V \setminus (H \cup G_1 \cup W_1)$, to the effect that
$z \not\in W_1$ and $z$ is an $\calS$-adapted vector of Type 1 or 3 for $F$.

As $z \not\in W_1$, we see that $\overline{z} \not\in \Vect(\overline{y_1},\overline{y_2})$.
Then, we note that $\calS \modu \Vect(y,z)$ has Type 1 or Type 3. Indeed:
\begin{itemize}
\item Either $\calS \modu y$ is represented by $\Mats_2(\K) \vee \Mat_{n-3,p-2}(\K)$, and hence
it is obvious that $\calS \modu \Vect(y,z)$ has Type 1.
\item Or $\calS \modu y$ is represented by $\calV_2 \vee \Mat_{n-4,p-2}(\K)$;
if in addition $z \not\in \Vect(y_1,y_2,y_3)$ then $\calS \modu \Vect(y,z)$ has Type 3;
otherwise $z=\lambda y_1+\mu y_2+y_3$ for some $(\lambda,\mu)\in \K^2$, and
then $\calS \modu \Vect(y,z)$ is represented by the matrix space
$\calW \vee \Mat_{n-4,p-2}(\K)$, where
$$\calW=\Biggl\{\begin{bmatrix}
a+\lambda c & b \\
b+\mu c & c
\end{bmatrix} \mid (a,b,c)\in \K^3\Biggr\}.$$
Then, using the column operation $C_1 \leftarrow C_1+\mu C_2$ yields that $\calW$ is equivalent to $\Mats_2(\K)$,
and we deduce that $\calS \modu \Vect(y,z)$ has Type 1.
\end{itemize}
It follows that $\overline{z}$ is $(\calS \modu y)$-adapted and that
$\calS^\bot$ contains a rank $2$ matrix which vanishes at $z$ and $y$
(as this is equivalent to the existence of a rank $2$ operator in $(\calS \modu \Vect(y,z))^\bot$).
Thus, the pair $(y,z)$ satisfies conditions (i) to (iii) above, which completes our proof.
\end{proof}

From there, no generality is lost in assuming that the range of every matrix of $\calS^\bot$
is included in $\K^2 \times \{0\}$, to the effect that $\calS$ splits up as $\calT \coprod \Mat_{n,p-2}(\K)$
for some $3$-dimensional subspace $\calT$ of $\Mat_{n,2}(\K)$, and
$F$ splits up as $F=G \coprod H$, where $G$ and $H$ are range-compatible linear maps, respectively, on $\calT$
and $\Mat_{n,p-2}(\K)$. As $H$ is local by Theorem \ref{espacetotal}, the map $G$ is non-local.
If we demonstrate that $\calT$ has Type 1 or 3, then we will obtain that $\calS$ has Type 1 or 3, and the proof will be complete.

Thus, from now on we can assume that $p=2$.
Consider the space
$$\calS':=\Bigl\{\begin{bmatrix}
M & F(M)
\end{bmatrix} \mid M \in \calS\Bigr\} \subset \Mat_{n,3}(\K).$$
As $F$ is range-compatible, every matrix in $\calS'$ has rank less than $3$.
We shall complete the proof by using some results from the classification of matrix spaces with upper-rank at most $2$.

Note first that no non-zero vector belongs to the kernel of every matrix of $\calS'$.
Indeed, if such a vector $x$ existed, then $x \in \K^2 \times \{0\}$ otherwise $F$ would be local,
and then we would find $\calS x=\{0\}$, contradicting Claim \ref{notrans1claim}.
Thus, $\calS'$ satisfies condition (i) in the definition of a reduced space.
Without loss of generality, we can assume that the sum of the ranges of the matrices in $\calS'$
equals $\K^m \times \{0\}$ for some $m \in \lcro 2,n\rcro$ (we must have $m \geq 2$ because of Claim \ref{notrans1claim}).

Assume that $n=m$, to the effect that $\calS'$ is reduced.
As $n \geq 2$, we see that $\calS'$ cannot have upper rank $1$, whence $\calS'$ has upper rank $2$.
As $n \geq 4$ and $\calS'$ is reduced, Proposition \ref{primitivedimprop} shows that $\calS'$
cannot be primitive. As $\calS'$ is reduced and $m \geq 3$, we deduce from Proposition \ref{nonprimitiveprop}
that $\calS'$ must be equivalent to a linear subspace of the space of all matrices of the form
$$\begin{bmatrix}
? & [?]_{1 \times 2} \\
[?]_{(n-1) \times 1} & [0]_{(n-1) \times 2}
\end{bmatrix},$$
yielding a $2$-dimensional subspace $P$ of $\K^3$ such that
$$\forall x \in P, \; \dim \calS' x \leq 1.$$
However, we would then find a non-zero vector $x \in P \cap (\K^2 \times \{0\})$, yielding a non-zero vector $x' \in \K^2$
such that $\dim \calS x' \leq 1$, in contradiction with Claim \ref{notrans1claim}.

We deduce that $m<n$. Then, we write every matrix $M$ of $\calS$ as
$M=\begin{bmatrix}
H(M) \\
[0]_{(n-m) \times 2}
\end{bmatrix}$, with $H(M) \in \Mat_{m,2}(\K)$, and we recover a non-local range-compatible linear map $f : H(\calS) \rightarrow \K^m$ such that
$$F : M \mapsto \begin{bmatrix}
f(H(M)) \\
[0]_{(n-m) \times 1}
\end{bmatrix}.$$
Then, by induction, we know that $H(\calS)$ must be of Type 1 or 3, whence $\calS$ has Type 1 or 3.

This completes the proof for $n>3$, assuming that Theorem \ref{classRCF2} holds in the case $n=3$.

\section{Proof of the main theorem (2): The case $n=3$}\label{mainproofsec2}

Our aim in this section is to prove Theorem \ref{classRCF2} in the special case $n=3$.
By the results of the preceding section, we know that doing so will complete the proof of Theorem \ref{classRCF2}.
Throughout the section, we set $\K:=\F_2$.
Let $\calS$ be a linear subspace of $\Mat_{3,p}(\K)$ with codimension $3$, and assume that there is a non-local range-compatible
linear map
$$F : \calS \rightarrow \K^3.$$
Our goal is to prove that $\calS$ has one of Types 1 to 7.
We shall do this by slowly gathering information on the structure of the $\calS^\bot$ space
and of its dual space $\widehat{\calS^\bot}$.
Remember the notation
$$U=\K^p \quad \text{and} \quad V=\K^3.$$
Note that Claims \ref{notrans1claim} and \ref{3vectorsclaim} hold.
As $n=3$, remark also that any $\calS$-adapted vector must have one of Types 0 or 1 for $F$.

Let us quickly explain the structure of the proof.
In Section \ref{n=3prelim}, we gather some general results on $\calS^\bot$.
In Section \ref{n=3rank1}, we obtain information on the possible rank $1$ matrices of $\calS^\bot$.
Afterwards, we shall split the discussion into four cases (Sections \ref{n=3case1}, \ref{n=3case2}, \ref{n=3case3} and \ref{n=3case4}),
whether $\calS^\bot$ contains rank $1$ matrices or not, and whether there is a super-$\calS$-adapted vector or not.

\subsection{Preliminary results}\label{n=3prelim}

We start by stating obvious corollaries of Claim \ref{notrans1claim}, Lemma \ref{adaptedvectorslemma} and Lemma \ref{vecteurdim1}:

\begin{claim}\label{notworank1samerange}
Distinct rank $1$ matrices of $\calS^\bot$ have distinct ranges.
\end{claim}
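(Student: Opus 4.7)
The plan is to argue by contradiction, exploiting the duality between rank $1$ matrices in $\calS^\bot$ sharing a common range and the size of $\calS x$ for the corresponding vector $x$.

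Suppose $N_1 \neq N_2$ are rank $1$ matrices in $\calS^\bot \subset \Mat_{p,3}(\K)$ with common range $\K x$ for some nonzero $x \in U=\K^p$. I would first observe that since we are over $\F_2$, the matrix $N_1+N_2$ lies in $\calS^\bot$ and also has image contained in $\K x$; it is moreover nonzero (else $N_1=N_2$), so $N_1$, $N_2$, $N_1+N_2$ are three distinct nonzero matrices of $\calS^\bot$ all with range exactly $\K x$. Thus the subspace
$$\calN := \{N \in \calS^\bot \mid \im N \subset \K x\}$$
has dimension at least $2$.

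Next I would translate this into a statement about $\calS x$. Any $N \in \calN$ has the form $N = x\,v^T$ for a unique $v \in \K^3$, and the condition $N \in \calS^\bot$ reads $v^T(Mx) = 0$ for every $M \in \calS$, i.e.\ $v \in (\calS x)^{\bot}$. Consequently $\dim (\calS x)^{\bot} \geq \dim \calN \geq 2$, so $\dim \calS x \leq 3 - 2 = 1$. Since $x$ is nonzero, this directly contradicts Claim \ref{notrans1claim}, which asserts that no nonzero $x \in U$ satisfies $\dim \calS x \leq 1$ under the standing assumption that $F$ is non-local.

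There is no serious obstacle here: the argument is a routine duality identification, and the only subtlety is the characteristic-$2$ observation that $N_1+N_2$ is again a nonzero rank $1$ matrix with the same range, which is what lets us produce a $2$-dimensional subspace of $\calN$ from merely two distinct elements.
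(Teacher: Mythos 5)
Your proof is correct and is precisely the justification the paper intends: the claim is stated there as an ``obvious corollary'' of Claim \ref{notrans1claim}, and your duality computation ($N=xv^T\in\calS^\bot$ iff $v\in(\calS x)^\bot$, so two distinct rank $1$ matrices with range $\K x$ force $\dim\calS x\leq 1$) is exactly the omitted argument. The detour through $N_1+N_2$ is harmless but unnecessary, since over $\F_2$ two distinct nonzero vectors $v_1,v_2$ are already linearly independent.
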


\begin{claim}
The set of all non-$\calS$-adapted vectors is included in a $2$-dimensional subspace of $V$.
\end{claim}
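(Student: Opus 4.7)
The plan is to invoke Lemma \ref{adaptedvectorslemma} directly. The hypothesis of that lemma requires $\codim \calS \leq 2 \dim V-3$, which is satisfied here since $\codim \calS = 3 = 2 \cdot 3 -3 = 2\dim V -3$. The lemma then asserts a dichotomy: either the set of all non-$\calS$-adapted vectors is contained in a hyperplane of $V$, or every range-compatible linear map on $\calS$ is local. The second alternative is ruled out by the standing assumption of this section, namely that the given map $F : \calS \to \K^3$ is a non-local range-compatible linear map. Hence the first alternative holds. Finally, since $\dim V = 3$, a hyperplane of $V$ is precisely a $2$-dimensional linear subspace, which is the desired conclusion.

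There is essentially no obstacle: the statement is a specialization of the general adapted vectors lemma to the three-dimensional setting $V = \F_2^3$, combined with the non-localness of $F$. No additional computations on $\calS^\bot$ or its rank structure are needed at this stage; those will become relevant only in the subsequent claims where we refine the description of the non-adapted locus.
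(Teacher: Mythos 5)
Your proof is correct and is exactly the argument the paper intends: the paper presents this claim as an ``obvious corollary'' of Lemma \ref{adaptedvectorslemma}, and your write-up simply makes explicit the codimension check $\codim\calS=3=2\dim V-3$ and the use of the standing non-localness of $F$ to rule out the second branch of the dichotomy. Nothing is missing.
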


Next, we investigate the possible dimensions of $\calS^\bot y$ for $y \in V$.

\begin{claim}\label{no0trans}
There is no vector $y \in V \setminus \{0\}$ such that $\calS^\bot y=\{0\}$.
\end{claim}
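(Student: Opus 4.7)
The plan is to argue by contradiction. Suppose some nonzero $y \in V$ satisfies $\calS^\bot y = \{0\}$, and choose a basis of $V$ in which $y = e_3$. The hypothesis then asserts that every matrix of $\calS^\bot$ has a zero third column, and dually $\calS$ must contain every $3 \times p$ matrix whose first two rows vanish. Denoting by $\calT \subseteq \Mat_{2,p}(\K)$ the image of $\calS$ under projection onto its first two rows, it follows that
\[
\calS = \Bigl\{\begin{bmatrix} M' \\ R \end{bmatrix} : M' \in \calT, \; R \in \K^p\Bigr\},
\]
so that in particular the third row is unconstrained, i.e.\ $R_3(\calS) = \Mat_{1,p}(\K)$.

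Next, I would apply Lemma \ref{ligneparligne} to decompose $F$ as $M \mapsto (F_i(R_i(M)))_{1 \leq i \leq 3}$ for linear forms $F_i$ on $R_i(\calS)$. Since $R_3(\calS) = \Mat_{1,p}(\K)$, the form $F_3$ must be of the shape $R \mapsto R x_3$ for a unique $x_3 \in \K^p$. Because $F$ is non-local, the difference $F - (M \mapsto M x_3)$ is a nonzero range-compatible linear map on $\calS$; replacing $F$ with this difference, I may assume $F_3 = 0$, and it then suffices to show that $F_1$ and $F_2$ must also vanish in order to reach the desired contradiction $F = 0$.

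The main computation is as follows: for any $M' \in \calT$, set $(a, b) := (F_1(R_1(M')), F_2(R_2(M')))$ and exploit range-compatibility at the matrix $\begin{bmatrix} M' \\ R \end{bmatrix} \in \calS$ for carefully chosen rows $R \in \K^p$. The case $M' = 0$ is trivial. If $\rk M' = 2$, taking $R = R_1(M')$ yields a $3 \times p$ matrix whose image in $\K^3$ is the plane $\{(u, v, u) : u, v \in \K\}$; the requirement $(a, b, 0) \in \im\begin{bmatrix} M' \\ R \end{bmatrix}$ then forces $a = 0$, and similarly $R = R_2(M')$ forces $b = 0$. If $\rk M' = 1$, I would write $R_1(M') = c_1 R_0$ and $R_2(M') = c_2 R_0$ for some $R_0 \neq 0$ and $(c_1, c_2) \in \K^2 \setminus \{0\}$, and take $R = R_0$; the image of $\begin{bmatrix} M' \\ R_0 \end{bmatrix}$ is then the line $\K (c_1, c_2, 1)^T$, whose third coordinate is nonzero, which forces $(a, b) = (0, 0)$.

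In every case $(F_1(R_1(M')), F_2(R_2(M'))) = (0, 0)$ for each $M' \in \calT$, and since $R_i(\calS) = R_i(\calT)$ this gives $F_1 \equiv 0$ and $F_2 \equiv 0$, hence $F = 0$, the desired contradiction. The modest technical point is really the rank-$1$ subcase, where only one nonzero direction $R_0$ is available in the row span of $M'$; fortunately the single line $\K (c_1, c_2, 1)^T$ has nonzero third coordinate, so the single constraint $(a, b, 0) \in \K (c_1, c_2, 1)^T$ annihilates $a$ and $b$ simultaneously.
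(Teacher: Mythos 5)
Your proof is correct, and it takes a genuinely different route from the paper's. The paper argues via the adapted-vector machinery: it shows that $F \modu z$ is local for every $\calS$-adapted vector $z$ (because a non-local $F\modu z$ would, by the already-settled two-row case, force $\calS \modu z$ to have Type 1 and hence produce a rank~$2$ matrix $A \in \calS^\bot$ with $\Ker A = \K z$, contradicting $Ay=0$), then invokes Lemma \ref{coveringlemma} to extract three linearly independent $\calS$-adapted vectors and Lemma \ref{3vectorslemma} to conclude that $F$ is local. You instead exploit the hypothesis directly through duality: $\calS^\bot y = \{0\}$ forces $\calS$ to contain every matrix with vanishing first two rows, hence to be the full preimage of its two-row truncation $\calT$, and then Lemma \ref{ligneparligne} plus an explicit computation of $\im\begin{bmatrix} M' \\ R\end{bmatrix}$ for well-chosen third rows $R$ (namely $R_1(M')$, $R_2(M')$ in the rank-$2$ case and the common row direction $R_0$ in the rank-$1$ case) kills $F_1$ and $F_2$ after normalizing $F_3=0$. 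All three case computations check out, including the rank-$1$ case where the nonzero third coordinate of the spanning vector $(c_1,c_2,1)^T$ does the work. Your argument is more elementary and self-contained --- it uses neither the induction hypothesis nor the covering and three-vectors lemmas, and in fact does not even use $\codim\calS = 3$ --- whereas the paper's proof recycles machinery it needs anyway throughout Section \ref{mainproofsec2}.
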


\begin{proof}
Assume that there is such a vector $y$.
Let $z$ be an $\calS$-adapted vector. Then, $z \not\in \K y$.
We contend that $F \modu z$ is local.
Indeed, if this were not the case, then the induction hypothesis would yield that $\calS \modu z$
is represented by $\Mats_2(\K) \coprod \Mat_{2,p-2}(\K)$ in some bases of $U$ and $V/\K z$,
yielding some $A \in \calS^\bot$ such that $\Ker A=\K z$.
Then, $Ay \neq 0$, contradicting our assumptions.

We know that some $2$-dimensional subspace $P$ of $V$ contains every non-$\calS$-adapted vector.
By Lemma \ref{coveringlemma}, the set $V \setminus P$
is not included in a hyperplane of $V$, whence we may find three linearly independent vectors
of $V$ outside of $P$. Then, by Lemma \ref{3vectorslemma}, $F$ is local. This is a contradiction.
\end{proof}

As a consequence, we obtain:

\begin{claim}\label{notsuperclaim}
For every non-zero vector $y \in V$ that is not $\calS$-adapted,
the space $\calS$ contains a $(p-1)$-dimensional subspace in which all the matrices have their image included in $\K y$.
\end{claim}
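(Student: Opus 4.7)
The plan is to prove the claim by a straightforward dimension count, using the duality formula between $\codim(\calS \modu y)$ and $\dim \calS^\bot y$ that was recalled in the definition of adapted vectors, together with the preceding Claim \ref{no0trans}.

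First, I would observe that since $y$ fails to be $\calS$-adapted, we have $\dim \calS^\bot y \leq 1$. Combining this with Claim \ref{no0trans}, which forbids $\calS^\bot y$ from being zero when $y \neq 0$, we get exactly $\dim \calS^\bot y = 1$. Plugging this into the identity
$$\codim (\calS \modu y) = \codim \calS - \dim \calS^\bot y,$$
and using the standing assumption $\codim \calS = 3$ (since $n = 3$), we obtain $\codim(\calS \modu y) = 2$, so that $\dim(\calS \modu y) = 2p-2$.

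Next, I would identify the subspace we seek as the kernel of the natural projection map
$$\Phi : \calS \longrightarrow \calS \modu y, \qquad M \mapsto \pi \circ M,$$
where $\pi : V \twoheadrightarrow V/\K y$ is the canonical projection. By construction $\Phi$ is surjective, and $\ker \Phi$ consists precisely of those $M \in \calS$ with $\im M \subseteq \K y$. The rank-nullity theorem gives
$$\dim \ker \Phi \;=\; \dim \calS - \dim(\calS \modu y) \;=\; (3p-3)-(2p-2) \;=\; p-1,$$
which is exactly the subspace whose existence is asserted. There is no genuine obstacle here; the statement is essentially a translation of the duality identity into concrete matrix terms, once Claim \ref{no0trans} rules out the degenerate case $\dim \calS^\bot y = 0$.
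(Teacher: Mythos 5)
Your proof is correct and follows exactly the route the paper intends: the claim is stated there as an immediate consequence of Claim \ref{no0trans}, with the same computation $\dim\calS^\bot y=1$, hence $\codim(\calS\modu y)=2$, and rank--nullity applied to $M\mapsto \pi\circ M$ giving a kernel of dimension $p-1$. Nothing is missing.
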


Now, we examine the super-$\calS$-adapted vectors more closely.

\begin{claim}
Let $y_1$ and $y_2$ be distinct $\calS$-adapted vectors, with
$y_1$ super-$\calS$-adapted. Then, $y_2$ has Type 1 for $F$.
\end{claim}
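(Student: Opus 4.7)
The plan is to argue by contradiction: assume $F \modu y_2$ is local (in addition to $F \modu y_1$ being local, which is automatic for super-$\calS$-adapted $y_1$), and extract from this a non-zero element of $\calS^\bot$ annihilating $y_1$. This will contradict the key consequence of $y_1$ being super-$\calS$-adapted, namely that $\dim \calS^\bot y_1 = 3 = \dim \calS^\bot$, so that the evaluation map $A \in \calS^\bot \mapsto A y_1$ is injective.

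First I would turn the assumption that $F \modu y_1$ and $F \modu y_2$ are both local into a usable identity. Locality of $F \modu y_1$ yields $x_1 \in U$ and a linear form $\phi : \calS \to \K$ with $F(s) = s(x_1) + \phi(s) y_1$ for all $s \in \calS$; a short check using Claim \ref{notrans1claim} shows that $F$ is local iff $\phi = 0$, so $\phi \neq 0$ by hypothesis. Locality of $F \modu y_2$ gives similarly $F(s) = s(x_2) + c(s) y_2$ for some $x_2 \in U$ and some linear form $c : \calS \to \K$; setting $w := x_1 - x_2$ and equating the two expressions yields
$$s(w) = \phi(s) y_1 + c(s) y_2 \in P := \Vect(y_1, y_2), \quad s \in \calS.$$
The cases $w = 0$ and $\dim \calS w \leq 1$ are easily ruled out (the first forces $\phi = 0$, and the second contradicts Claim \ref{notrans1claim}), so $\dim \calS w = 2$ and $\calS w = P$ (noting that $y_1, y_2$ are linearly independent as distinct non-zero $\F_2$-vectors).

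The decisive step is then to produce the forbidden element of $\calS^\bot$. I would choose bases in which $y_1, y_2$ are the first two vectors of the standard basis of $V$ and $w$ is the first vector of the standard basis of $U$. Then the condition $s(w) \in P$ translates to $s_{3,1} = 0$ for every $s \in \calS$, so the linear form $s \mapsto s_{3,1}$ on $\Mat_{3,p}(\F_2)$ vanishes on $\calS$. Under the trace pairing, this form is represented by the non-zero matrix $A_0 \in \Mat_{p,3}(\F_2)$ with a single $1$ in position $(1,3)$; hence $A_0 \in \calS^\bot$ with $A_0 y_1 = 0$, contradicting the injectivity of evaluation at $y_1$ on $\calS^\bot$. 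Since every $\calS$-adapted vector must have Type 0 or Type 1 for $F$ when $n = 3$, this forces $y_2$ to have Type 1. The only mildly technical step is the coordinate setup; once it is in place the conflict is immediate.
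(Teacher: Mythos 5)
Your proof is correct and follows essentially the same route as the paper: both arguments combine the locality of $F \modu y_1$ and $F \modu y_2$ to produce a non-zero vector ($x$ in the paper, your $w$) that every $s \in \calS$ maps into $\Vect(y_1,y_2)$, and then derive a contradiction from $y_1$ being super-$\calS$-adapted. The only cosmetic difference is that you phrase the final contradiction dually, by exhibiting a non-zero matrix of $\calS^\bot$ annihilating $y_1$, whereas the paper uses the equivalent fact that $\calS \modu y_1=\calL(U,V/\K y_1)$ to find $s$ with $s(x)\notin\Vect(y_1,y_2)$.
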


\begin{proof}
Assume on the contrary that $y_2$ does not have Type 1 for $F$. Then, by induction
the map $F \modu y_2$ is local.

Note that $y_1$ and $y_2$ are linearly independent since the underlying field is $\F_2$.
As $y_1$ is super-$\calS$-adapted, we have $\codim (\calS \modu y_1) \leq 3-3=0$, whence
$\calS \modu y_1=\calL(\calS,V/\K y_1)$.
In particular, $F \modu y_1$ is local.
As $F \modu y_2$ is local, we can subtract a local map from $F$
to reduce the situation to the one where $F \modu y_2=0$.
Then, we have a vector $x \in U$ such that
$$\forall s \in \calS, \quad F(s)=s(x) \modu \K y_1 \quad \text{and} \quad F(s) \in \K y_2.$$
In particular, this yields $s(x) \in \Vect(y_1,y_2)$ for all $s \in \calS$.
If $x=0$ then $F=0$ as $\K y_1 \cap \K y_2=\{0\}$, contradicting the fact that $F$ is non-local.
Thus, $x \neq 0$. Then, as $n=3$ and $\calS \modu y_1=\calL(\calS,V/\K y_1)$, we can choose $s \in \calS$ such that
$s(x)\not\in \Vect(y_1,y_2)$, contradicting the above result.

Therefore, $y_2$ has Type 1 for $F$.
\end{proof}

As a super-$\calS$-adapted vector is always of Type 0 for $F$, we deduce:

\begin{claim}\label{atmost1superadapted}
There is at most one super-$\calS$-adapted vector.
\end{claim}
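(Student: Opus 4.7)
The plan is to deduce this claim as an immediate corollary of the preceding one, by contradiction. First I would record the observation already noted in the excerpt just above the statement: every super-$\calS$-adapted vector $y$ satisfies $\dim \calS^\bot y \geq 3$, hence by duality $\codim(\calS \modu y) \leq \codim \calS - 3 = 0$, so $\calS \modu y = \calL(\calS, V/\K y)$ and Lemma \ref{espacetotal} forces $F \modu y$ to be local. In other words, every super-$\calS$-adapted vector is automatically of Type $0$ for $F$.

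Now suppose, for contradiction, that there exist two distinct super-$\calS$-adapted vectors $y_1$ and $y_2$. Both are in particular $\calS$-adapted, and both are of Type $0$ for $F$ by the previous paragraph. Applying the preceding claim with the super-$\calS$-adapted vector $y_1$ and the distinct $\calS$-adapted vector $y_2$, we are forced to conclude that $y_2$ has Type $1$ for $F$, which by definition demands that $F \modu y_2$ be non-local. This directly contradicts the fact, just obtained, that $y_2$ is of Type $0$ for $F$.

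There is no real obstacle in this argument: all the substantive work has already been done in the preceding claim, whose proof handled the genuine combinatorial case analysis by exploiting the induction hypothesis applied to $F \modu y_2$ together with the structural description of spaces of Type $1$. The present statement is a one-line bookkeeping corollary once one observes that the definitions of Type $0$ and Type $1$ for $F$ are mutually exclusive.
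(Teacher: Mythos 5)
Your argument is exactly the paper's: it derives the claim from the preceding one (distinct $\calS$-adapted $y_1,y_2$ with $y_1$ super-$\calS$-adapted force $y_2$ to have Type 1) combined with the earlier observation that every super-$\calS$-adapted vector is of Type 0 for $F$, these two types being mutually exclusive by definition. The proposal is correct and requires no further comment.
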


We finish with a counting result that will be used in several instances:

\begin{claim}\label{countingclaim}
For every positive integer $i$, denote by $m_i$ the number of rank $i$ matrices in $\calS^\bot$, and by
$n_i$ the number of vectors $y \in V$ for which $\dim \calS^\bot y=i$. Then,
$$3m_1+m_2=3n_1+n_2.$$
\end{claim}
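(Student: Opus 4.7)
The plan is to establish the identity by double counting the set of pairs $\{(v,y) \in \calS^\bot \times V : v(y)=0\}$, and then to pin down a boundary term by invoking Claim~\ref{no0trans}. For fixed $v \in \calS^\bot$, the number of $y \in V$ with $v(y)=0$ equals $|\Ker v|=2^{3-\rk v}$ since $V=(\F_2)^3$. For fixed $y \in V$, the evaluation $v \in \calS^\bot \mapsto v(y) \in U$ is a linear map with image $\calS^\bot y$ and source of dimension $\dim \calS^\bot = \codim \calS = 3$, so the number of $v \in \calS^\bot$ satisfying $v(y)=0$ equals $2^{3-\dim \calS^\bot y}$. Writing $m_0:=1$ for the count of the zero matrix in $\calS^\bot$ and $n_0$ for the number of $y \in V$ with $\calS^\bot y=\{0\}$, grouping matrices by rank and vectors by $\dim \calS^\bot y$ yields
$$8 m_0 + 4 m_1 + 2 m_2 + m_3 \;=\; 8 n_0 + 4 n_1 + 2 n_2 + n_3. \qquad (\star)$$

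The next step is to eliminate the extreme terms using the obvious partitions $m_0+m_1+m_2+m_3=|\calS^\bot|=8$ and $n_0+n_1+n_2+n_3=|V|=8$. Substituting $m_3=7-m_1-m_2$ (recall $m_0=1$) and $n_3=8-n_0-n_1-n_2$ into $(\star)$, a routine simplification reduces the identity to
$$7 + 3 m_1 + m_2 \;=\; 7 n_0 + 3 n_1 + n_2.$$

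To conclude, I would invoke Claim~\ref{no0trans}, already established in this section, which states that $y=0$ is the only vector of $V$ for which $\calS^\bot y=\{0\}$; hence $n_0=1$. Substituting then gives the desired equality $3 m_1 + m_2 = 3 n_1 + n_2$. The argument is genuinely routine: it is a single double count with an algebraic bookkeeping step, and the only non-trivial input is Claim~\ref{no0trans}, so no real obstacle is anticipated.
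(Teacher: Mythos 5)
Your proof is correct and is essentially the same double count as the paper's: the paper counts the pairs $(N,y)$ with $Ny=0$ directly over $(\calS^\bot\setminus\{0\})\times(V\setminus\{0\})$, obtaining $3m_1+m_2=3n_1+n_2$ at once, whereas you count over all of $\calS^\bot\times V$ and then remove the boundary terms via the partition identities; both arguments rest on Claim~\ref{no0trans} to control the vectors $y$ with $\calS^\bot y=\{0\}$ (you invoke it explicitly, the paper uses it implicitly). The bookkeeping in your version checks out, so there is nothing to fix.
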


\begin{proof}
We count the set $\calN:=\{(N,y)\in (\calS^\bot \setminus \{0\}) \times (V \setminus \{0\}) : Ny=0\}$
in two different ways. For each $y \in \calS^\bot \setminus \{0\}$, the linear map
$\widehat{y}: N\in \calS^\bot \mapsto Ny$ has exactly $2^{3-\dim \calS^\bot y}-1$ non-zero vectors in its kernel,
that is, there are as many matrices $N \in \calS^\bot$ for which $(N,y)\in \calN$.
Therefore, $\# \calN=3n_1+n_2$.
On the other hand, for each $N \in \calS^\bot$, there are $2^{3-\rk N}-1$ elements $y$ of $V \setminus \{0\}$
such that $(N,y) \in \calN$. Thus, $\# \calN=3m_1+m_2$, and the claimed result ensues.
\end{proof}

\subsection{General results on the rank $1$ matrices in $\calS^\bot$}\label{n=3rank1}

Here, we consider the existence of rank $1$ matrices in $\calS^\bot$
and we gather additional information on the situation where we can find one or several adapted vectors
in the kernel of such a matrix.

\begin{claim}\label{specrank1claim}
Let $A$ be a rank $1$ matrix of $\calS^\bot$. Let $y$ be an $\calS$-adapted vector in $\Ker A$.
Then, $F \modu y$ is local and $y$ is not super-$\calS$-adapted. Moreover, if there is a $1$-dimensional subspace $D$ of $V/\K y$
such that $(F \modu y)(\overline{s}) \in D$ for all $\overline{s} \in \calS \modu y$, and $D \neq \Ker A/\K y$,
then $F \modu y=0$.
\end{claim}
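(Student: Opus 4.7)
The plan is to deduce parts (1) and (2) simultaneously by analyzing the kernel of the evaluation $\widehat{y}$ on $\calS^\bot$, and to handle the main statement (3) via a short outer-product argument that builds a rank-$1$ element of $\calS^\bot$ annihilating $y$, forcing it to coincide with $A$ and thereby producing a contradiction. For the setup, write $A=wz^T$ with $w\in U\setminus\{0\}$ spanning $\im A$ and $z\in V\setminus\{0\}$ such that $\Ker A=\Ker z^T$; since $y\in\Ker A$, we have $z^T y=0$. As $A$ is a non-zero element of $\calS^\bot$ with $Ay=0$, the map $\widehat{y}:\calS^\bot\to U$ has $A$ in its kernel, so $\dim\calS^\bot y\leq 2$; combined with $\calS$-adaptedness of $y$ this gives $\dim\calS^\bot y=2$, which settles part (2) and identifies $\{N\in\calS^\bot:Ny=0\}$ as the line $\K A$. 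By duality $\codim(\calS\modu y)=1$ and $(\calS\modu y)^\bot=\K\bar A$, whose generator $\bar A$ has rank $1$.

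For part (1), I reduce to the $n=2$ situation treated in Section \ref{mainproofsec1}. The relation $A\in\calS^\bot$ reads $z^T(sw)=\tr((wz^T)s)=0$ for every $s\in\calS$, so $\calS w\subset\Ker A$; Claim \ref{notrans1claim} then upgrades this to $\calS w=\Ker A$ (since $w\neq 0$ and $\dim\Ker A=2$). Passing to the quotient yields $(\calS\modu y)w=\Ker A/\K y$, a $1$-dimensional subspace of $V/\K y$. Since $w\neq 0$ and $\codim(\calS\modu y)=1\leq 2\dim(V/\K y)-3$, Lemma \ref{vecteurdim1} applies to $\calS\modu y$ and shows that every range-compatible linear map on $\calS\modu y$ is local; in particular $F\modu y$ is local.

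For part (3), locality of $F\modu y$ provides $x\in U$ with $(F\modu y)(\bar s)=\overline{s(x)}$ for all $s\in\calS$, and the hypothesis $(F\modu y)(\bar s)\in D$ forces $\calS x\subset E:=\pi^{-1}(D)$, a $2$-dimensional subspace of $V$ containing $y$ with $E\neq\Ker A$. Assume $x\neq 0$ for contradiction; Claim \ref{notrans1claim} gives $\dim\calS x\geq 2$, so $\calS x=E$. Pick $\psi\in V^*\setminus\{0\}$ with $\Ker\psi=E$; then $\psi(y)=0$ and $\psi(sx)=0$ for every $s\in\calS$. Hence the outer product $x\psi\in\Mat_{p,3}(\F_2)$ satisfies $\tr((x\psi)s)=\psi(sx)=0$ for all $s\in\calS$, so $x\psi\in\calS^\bot$; it has rank $1$ and $(x\psi)y=\psi(y)\,x=0$, so it lies in the line $\K A$ from part (2). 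Being non-zero, $x\psi=A=wz^T$, and equality of column spaces yields $\K x=\K w$, hence $x=w$ since $\K=\F_2$. Then $\calS x=\calS w=\Ker A$, contradicting $E\neq\Ker A$; therefore $x=0$ and $F\modu y=0$.

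The crux of the argument is the outer-product construction in part (3): recognizing that the containment $\calS x\subset\Ker\psi$ translates into the matrix identity $x\psi\in\calS^\bot$ is the step that unlocks everything, since uniqueness (up to scalar) of the rank-$1$ line of $\calS^\bot$ annihilating $y$ is exactly the information extracted in passing for part (2). Everything else is routine duality together with one application of Lemma \ref{vecteurdim1}.
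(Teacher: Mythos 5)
Your proof is correct. It rests on the same two structural facts as the paper's argument --- that $\calS w=\Ker A$ for $w$ spanning $\im A$ (via Claim \ref{notrans1claim}), and that $\{N\in\calS^\bot : Ny=0\}$ is exactly the line $\K A$ --- but it exploits them differently at two points. First, to get localness of $F\modu y$, the paper observes that $(\calS\modu y)w$ is $1$-dimensional, rules out Type 1 and super-adaptedness, and then invokes the inductive hypothesis for spaces with $n-1=2$ rows; you instead apply Lemma \ref{vecteurdim1} directly to $\calS\modu y$ (checking $\codim(\calS\modu y)=1=2\dim(V/\K y)-3$ and $\dim(\calS\modu y)w=1$), which is self-contained and avoids the induction. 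Second, for the last assertion the paper identifies $\calS\modu y$ with the full codimension-$1$ space $\calT$ of operators sending $w$ into $\Ker A/\K y$ and then argues that any non-zero evaluation vector $x'$ compatible with $D$ must equal $w$, forcing $D=\Ker A/\K y$; you dualize instead, turning the containment $\calS x\subset\Ker\psi$ into the statement $x\psi\in\calS^\bot$, so that the uniqueness of the rank-$1$ line of $\calS^\bot$ annihilating $y$ forces $x=w$ and hence $\calS x=\Ker A$, contradicting $E\neq\Ker A$. Both mechanisms are valid; your outer-product version trades the explicit determination of $\calS\modu y$ for a slightly slicker appeal to the duality already set up in part (2), while the paper's version yields the extra structural fact $\calS\modu y=\calT$ along the way.
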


\begin{proof}
Set $x \in \im A \setminus \{0\}$.
By Claim \ref{notrans1claim}, we have $\dim \calS x\geq 2$, and obviously $\calS x \subset \Ker A$, whence $\calS x=\Ker A$.
As $y \in \Ker A$, it follows that $(\calS \modu y) x=\Ker A/ \K y$ has dimension $1$,
whence $F \modu y$ cannot have Type 1 and $y$ is not super-$\calS$-adapted.
By induction, $F \modu y$ is local, which yields $x' \in U$ such that
$$\forall s \in \calS, \; F(s)=s(x')\modu \K y.$$
We have seen that $\codim (\calS \modu y)=1$ and $\calS \modu y$ is included in the space $\calT$
of all linear maps $s : U \rightarrow V/\K y$ for which $s(x) \in \Ker A/\K y$, which also has codimension $1$ in
$\calL(U,V/\K y)$. Therefore, $\calS \modu y=\calT$.

Assume now that $x' \neq 0$ and let $D$ be a $1$-dimensional subspace of $V/\K y$
such that $(F \modu y)(\overline{s}) \in D$ for all $\overline{s} \in \calS \modu y$.
If $x' \neq x$, then we use $\calS \modu y=\calT$ to find that $\{\overline{s}(x') \mid \overline{s} \in \calS \modu y\}$ has dimension $2$,
which contradicts our assumption on $D$. Therefore, $x'=x$; choosing $\overline{s} \in \calS \modu y$ such that $\overline{s}(x) \neq 0$,
we deduce that $D=\Ker A/\K y$, which concludes the proof.
\end{proof}

\begin{claim}\label{adaptedoutsidekernel}
Let $A$ be a rank $1$ matrix of $\calS^\bot$.
Let $z$ be an $\calS$-adapted vector of $V \setminus \Ker A$.
Assume that some $y \in \Ker A$ is $\calS$-adapted.
Then, $F \modu z$ is non-local.
\end{claim}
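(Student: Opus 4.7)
The plan is to argue by contradiction. I would assume $F \modu z$ is local and then produce three linearly independent vectors of $V$ whose projected maps under $F$ are all local; by Lemma \ref{3vectorslemma} this would force $F$ itself to be local, contradicting the standing non-locality of $F$. The first of these three vectors is $y$: since $y \in \Ker A$ is $\calS$-adapted, Claim \ref{specrank1claim} gives $F \modu y$ local. The second is $z$, by the contradiction hypothesis.

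The third vector will come from inside $\Ker A$. Because $\K = \F_2$ and $\dim \Ker A = 2$, the set $\Ker A \setminus \{0, y\}$ consists of exactly two vectors $v_1, v_2$ with $v_1 + v_2 = y$. I would invoke Lemma \ref{adaptedvectorslemma} for the non-local map $F$ to obtain that the non-$\calS$-adapted vectors lie in a $2$-dimensional subspace $H$ of $V$; if both $v_1, v_2$ lay in $H$, then $y = v_1 + v_2$ would also lie in $H$, contradicting the $\calS$-adaptedness of $y$. Hence at least one of $v_1, v_2$, call it $v$, is $\calS$-adapted, and Claim \ref{specrank1claim} applied to $v \in \Ker A$ again gives $F \modu v$ local. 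To justify that $\{y, z, v\}$ is linearly independent I would write $A(v') = \phi(v')\, u$ for a non-zero $u \in \im A$ and a linear form $\phi : V \to \K$ with $\Ker \phi = \Ker A$; the hypothesis $z \notin \Ker A$ gives $\phi(z) \neq 0$. A relation $v = a\, y + b\, z$ would then yield $A v = b\, \phi(z)\, u$, which must vanish since $v \in \Ker A$, forcing $b = 0$ and hence $v \in \K y$, contradicting $v \notin \K y$. Thus $(y, z, v)$ is a basis of $V$.

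Lemma \ref{3vectorslemma} applied to this triple then forces $F$ to be local, the sought contradiction. The only step of any substance is the linear independence of $\{y, z, v\}$, which rests on the rank-$1$ decomposition of $A$ together with the assumption $z \notin \Ker A$; the rest of the argument is a clean assembly of Claim \ref{specrank1claim}, Lemma \ref{adaptedvectorslemma}, and Lemma \ref{3vectorslemma}. Note that one never really uses the fine detail of the second part of Claim \ref{specrank1claim}, nor any assumption on whether $z$ is super-$\calS$-adapted: the same short argument handles both cases uniformly, and in particular implicitly forbids $z$ from being super-$\calS$-adapted under the given hypotheses.
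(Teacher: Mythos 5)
Your overall plan (contradiction via Lemma \ref{3vectorslemma} applied to three independent vectors with local projections) is reasonable, and the linear-independence argument for $(y,z,v)$ is fine, but the construction of the third vector $v$ has a genuine gap. You argue that if both vectors $v_1,v_2$ of $\Ker A\setminus\{0,y\}$ were non-$\calS$-adapted, then $y=v_1+v_2$ would lie in the hyperplane $H$ furnished by Lemma \ref{adaptedvectorslemma}, ``contradicting the $\calS$-adaptedness of $y$.'' This is a non sequitur: Lemma \ref{adaptedvectorslemma} only says that the non-$\calS$-adapted vectors are \emph{contained in} some hyperplane $H$; it does not say that every vector of $H$ is non-$\calS$-adapted. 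In particular $H$ could very well equal $\Ker A$ itself, with $v_1,v_2$ non-adapted and $y$ adapted, and nothing available at this stage of the paper rules that configuration out. (The fact that exactly one vector of $\Ker A\setminus\{0\}$ is non-$\calS$-adapted is only established later, in Claim \ref{case2numberofnonSadapted}, whose proof chain passes through the very claim you are proving.) So the existence of a second $\calS$-adapted vector in $\Ker A$ is unjustified, and your proof does not go through as written.

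The paper's proof sidesteps the need for a third vector entirely, and it does so precisely by using the ``fine detail'' of Claim \ref{specrank1claim} that you declare unnecessary. Assuming $F\modu z=0$ after subtracting a local map, one gets $F(s)\in\K z$ for all $s$, hence $(F\modu y)$ takes its values in the line $D:=\K\overline{z}$ of $V/\K y$; since $z\notin\Ker A$, this line differs from $\Ker A/\K y$, so the second statement of Claim \ref{specrank1claim} forces $F\modu y=0$, and then $F=0$ because $\K y\cap\K z=\{0\}$ --- contradicting the non-locality of $F$. If you want to keep your three-vector strategy, you would first have to prove independently that $\Ker A$ contains two distinct $\calS$-adapted vectors, which is not available here.
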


\begin{proof}
Assume on the contrary that $F \modu z$ is local.
Then, we lose no generality in assuming that $F \modu z=0$,
whence $F(s) \in \K z$ for all $s \in \calS$.
Denoting by $\overline{z}$ the class of $z$ in $V/\K y$,
we deduce that $(F \modu y)(\overline{s}) \in \K \overline{z}$ for all $\overline{s} \in \calS \modu y$.
However, it is obvious that $\K \overline{z} \neq \Ker A/\K y$, whence
Claim \ref{specrank1claim} yields $F \modu y=0$. As $\K y \cap \K z=\{0\}$,
we recover $F=0$ from $F \modu y=0$ and $F \modu z=0$, contradicting our assumption that $F$ be non-local.
\end{proof}

\begin{claim}\label{intersectionkernelclaim}
Let $A$ and $B$ be distinct rank $1$ matrices of $\calS^\bot$.
Let $y \in (\Ker A \cap \Ker B) \setminus \{0\}$.
Then, $F \modu y$ is local and $y$ is non-$\calS$-adapted.
\end{claim}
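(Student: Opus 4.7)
The plan is to establish the two conclusions separately. Showing that $y$ is non-$\calS$-adapted is immediate from a dimension count; showing that $F \modu y$ is local is the main work and requires unpacking the structure of $\calS \modu y$ through its orthogonal.

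For the first conclusion, observe that $A$ and $B$ are distinct non-zero elements of $\calS^\bot$ both annihilating $y$, so the $2$-dimensional subspace $\Vect(A,B)$ lies in $\{N \in \calS^\bot : Ny = 0\}$. Duality then forces $\dim \calS^\bot y \leq 3-2 = 1$, so $y$ is non-$\calS$-adapted; combined with Claim \ref{no0trans} this gives $\dim \calS^\bot y = 1$ exactly, hence $\codim(\calS \modu y) = 2$.

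For the second conclusion, both $A$ and $B$ factor through the projection $\pi : V \twoheadrightarrow V/\K y$ as $A = A' \circ \pi$ and $B = B' \circ \pi$, and $A', B'$ are linearly independent rank $1$ elements of the $2$-dimensional space $(\calS \modu y)^\bot$ that they span. By Claim \ref{notworank1samerange}, their images $\im A' = \im A$ and $\im B' = \im B$ are distinct lines of $U$. I would split into cases according to whether $\Ker A' = \Ker B'$ or $\Ker A' \cap \Ker B' = \{0\}$ inside the $2$-dimensional space $V/\K y$, then choose a basis of $V/\K y$ adapted to these kernels and a basis of $U$ beginning with generators of $\im A'$ and $\im B'$. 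After these choices, $A'$ and $B'$ become standard elementary matrices in $\Mat_{p,2}(\F_2)$, and the orthogonality relations force $\calS \modu y$ to split as $\calA \coprod \Mat_{2,p-2}(\F_2)$, where $\calA$ is either the set of matrices of the form $\begin{bmatrix} a & b \\ 0 & 0 \end{bmatrix}$ (in the equal-kernel case) or the set of diagonal matrices $\begin{bmatrix} a & 0 \\ 0 & b \end{bmatrix}$ (in the distinct-kernel case).

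Applying the Splitting Lemma and Lemma \ref{espacetotal}, $F \modu y$ decomposes as $f \coprod g$ with $g$ local. A direct inspection of each version of $\calA$ shows that every range-compatible linear map on $\calA$ is multiplication by a fixed vector of $\F_2^2$, hence local, so $f$ is local too. Thus $F \modu y$ is local, completing the proof. The only real obstacle is the bookkeeping of the two geometric configurations; once bases are chosen, everything reduces to an elementary verification.
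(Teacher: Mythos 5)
Your proposal is correct and follows essentially the same route as the paper: both arguments show that $\calS \modu y$ has codimension exactly $2$ and therefore coincides with the full space cut out by the two rank-one orthogonality conditions coming from $A$ and $B$, which then splits (via distinctness of $\im A$ and $\im B$ from Claim \ref{notworank1samerange}) into column blocks on each of which every range-compatible linear map is local. The only cosmetic difference is that the paper phrases the splitting uniformly as $\calD_1 \coprod \calD_2 \coprod \Mat_{2,p-2}(\K)$ with $\calD_1,\calD_2$ lines of $\K^2$, which makes your case distinction on whether $\Ker A'=\Ker B'$ unnecessary.
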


\begin{proof}
By Claim \ref{notworank1samerange}, the matrices $A$ and $B$ do not have the same image.
Set $x_1\in \im A \setminus \{0\}$ and $x_2 \in \im B \setminus \{0\}$.
Then, we see that
$(\calS \modu y)x_1 \in \Ker A/\K y$ and $(\calS \modu y)x_2 \in \Ker B/\K y$.
On the other hand, the space $\calT$ of all linear maps $u$ from $U$ to $V/\K y$ which
satisfy $u(x_1) \in \Ker A/\K y$ and $u(x_2) \in \Ker B/\K y$ has obviously codimension $2$ in $\calL(U,V/\K y)$,
and, as $\calS^\bot y \neq \{0\}$ by Claim \ref{no0trans}, we see that $\codim (\calS \modu y) \leq 2$, whence
$\calS \modu y=\calT$. Thus, in well-chosen bases, $\calS \modu y$ is represented by
$\calD_1 \coprod \calD_2 \coprod \Mat_{2,p-2}(\K)$, where each $\calD_i$ is a $1$-dimensional subspace of $\K^2$.
As each range-compatible linear map on $\calD_1$ (respectively $\calD_2$, respectively $\Mat_{2,p-2}(\K)$) is local,
we conclude that $F \modu y$ is local. As $\codim (\calS \modu y)=2$, we also see that $y$ is non-$\calS$-adapted.
\end{proof}

\begin{claim}\label{tworank1samekernel}
There do not exist rank $1$ matrices $A$ and $B$ in $\calS^\bot$ with distinct kernels.
\end{claim}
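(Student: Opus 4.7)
Suppose for contradiction that $A,B\in\calS^\bot$ are rank $1$ with $\Ker A\ne\Ker B$, and let $y$ generate the $1$-dimensional line $\Ker A\cap\Ker B$. By Claim~\ref{intersectionkernelclaim}, $y$ is non-$\calS$-adapted and $F\modu y$ is local; hence there are $x\in U$ and a linear form $\phi:\calS\to\K$ with $F(s)=s(x)+\phi(s)\,y$ for all $s\in\calS$, and $\phi\ne 0$ (otherwise $F$ would be local).

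The plan is to exhibit two $\calS$-adapted vectors $z_1,z_2$ inside $\Ker B$ distinct from $y$, apply Claim~\ref{specrank1claim} to each, and combine the resulting localness of $F\modu z_i$ with the explicit form of $F$ above. By Lemma~\ref{adaptedvectorslemma}, the non-$\calS$-adapted vectors are contained in a $2$-dimensional subspace $H$ of $V$. Since $\Ker A\ne\Ker B$, at least one of them differs from $H$; after swapping $A$ and $B$ if necessary, I assume $H\ne\Ker B$. Then $H\cap\Ker B=\K y$ (both have dimension $2$ and the intersection contains $y$), so the two nonzero vectors $z_1,z_2$ of $\Ker B\setminus\K y$ are $\calS$-adapted. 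Applying Claim~\ref{specrank1claim} to $B$ for each of them yields $u_1,u_2\in U$ and linear forms $\psi_1,\psi_2:\calS\to\K$ satisfying $s(u_i-x)=\phi(s)\,y+\psi_i(s)\,z_i$ for all $s\in\calS$ and $i=1,2$.

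The main step is to show that $W_B:=\{u\in U:\calS u\subset\Ker B\}$ equals $\K u_B$, where we write $B=u_Bv_B^T$. Since $\Ker B=v_B^\perp$, the condition $\calS u\subset\Ker B$ is equivalent to $uv_B^T\in\calS^\bot$, so $W_B$ is identified with the space of matrices in $\calS^\bot$ whose kernel contains $\Ker B$. By $y$ non-$\calS$-adapted and Claim~\ref{no0trans}, $\dim\calS^\bot y=1$, hence the subspace of $\calS^\bot$ consisting of matrices vanishing on $y$ has dimension $2$, and since it contains the two distinct nonzero matrices $A$ and $B$ it equals $\Vect(A,B)$. Requiring in addition $Mz_1=0$ eliminates $A$ (because $z_1\notin\Ker A$ forces $Az_1\ne 0$), leaving just $\Vect(B)$. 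Therefore $W_B=\{0,u_B\}$.

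Since $s(u_i-x)\in\Vect(y,z_i)\subset\Ker B$, we have $u_i-x\in W_B$. If $u_i-x=0$ for some $i$, the identity $\phi(s)\,y+\psi_i(s)\,z_i=0$ together with the linear independence of $y$ and $z_i$ forces $\phi\equiv 0$, a contradiction. Hence $u_1-x=u_2-x=u_B$, and equating the two resulting expressions for $s(u_B)$ gives $\psi_1(s)\,z_1=\psi_2(s)\,z_2$ for all $s\in\calS$; as $z_1,z_2$ are linearly independent in $\Ker B$, this forces $\psi_1=\psi_2=0$. Then $s(u_B)=\phi(s)\,y$ for all $s$, so $\calS u_B\subset\K y$ and $\dim\calS u_B\le 1$, contradicting Claim~\ref{notrans1claim} as $u_B\ne 0$. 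The main obstacle is the dimension count for $W_B$, in which both the non-adaptedness of $y$ and the distinctness of $\Ker A$ and $\Ker B$ are crucial.
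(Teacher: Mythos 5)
Your proof is correct, and after the common opening move (taking $y$ to be the nonzero vector of $\Ker A\cap\Ker B$ and applying Claim~\ref{intersectionkernelclaim}) it follows a genuinely different route from the paper's. The paper argues by cases: if both $\Ker A$ and $\Ker B$ contain $\calS$-adapted vectors besides $y$, it assembles three linearly independent vectors with local projections and concludes via Lemma~\ref{3vectorslemma}; otherwise one of the two kernels consists entirely of non-$\calS$-adapted vectors, and the paper then normalizes $A$ and $B$ to explicit matrices, uses Claim~\ref{notsuperclaim} to show that $\calS$ contains enough rank $1$ matrices to split as $\calD\coprod\calT$ with $\codim\calT=2$, and finishes with Theorem~\ref{maintheolin} and the Splitting Lemma. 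You avoid the case split entirely by working only inside whichever kernel differs from the hyperplane $H$ of non-adapted vectors (that kernel meets $H$ in exactly $\K y$, so its two remaining nonzero vectors $z_1,z_2$ are adapted), and you replace the coordinate computation by a duality argument: identifying $W_B=\{u\in U:\calS u\subset\Ker B\}$ with $\{N\in\calS^\bot:\Ker N\supseteq\Ker B\}$ and using $\dim\calS^\bot y=1$ pins $W_B$ down to $\K u_B$, after which comparing the explicit local forms of $F\modu y$, $F\modu z_1$ and $F\modu z_2$ forces $\dim\calS u_B\le 1$, contradicting Claim~\ref{notrans1claim}. Each step checks out -- in particular the linear independence of $z_1,z_2$ over $\F_2$ and the elimination of $A$ from $\Vect(A,B)$ by the condition $Nz_1=0$, which is exactly where the hypothesis $\Ker A\ne\Ker B$ enters. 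Your route is more uniform and dispenses with both Lemma~\ref{3vectorslemma} and the explicit matrix normalization, at the cost of the less obvious $W_B$ dimension count.
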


\begin{proof}
Assume one the contrary that such matrices $A$ and $B$ exist.
Claim \ref{notworank1samerange} yields $\im A \neq \im B$.
On the other hand, $D:=\Ker A \cap \Ker B$ has dimension $1$.
Define $y_1$ as the sole non-zero vector of $D$.
By Claim \ref{intersectionkernelclaim}, the map $F \modu y_1$ is local.

Moreover, $y_1$ is not $\calS$-adapted.
If we could find $\calS$-adapted vectors $y_2 \in \Ker A \setminus \{y_1\}$ and
$y_3 \in \Ker B \setminus \{y_1\}$, then Claim \ref{specrank1claim} would yield that $F \modu y_2$ and $F \modu y_3$ are local,
and obviously $(y_1,y_2,y_3)$ would be a basis of $V$; then Lemma \ref{3vectorslemma}
would yield that $F$ is local, contradicting our assumptions.

It follows that one of the planes $\Ker A$ or $\Ker B$ contains only non-$\calS$-adapted vectors.
Without loss of generality, we may assume that all the vectors of $\Ker A$ are non-$\calS$-adapted.
Replacing $\calS$ with an equivalent space, we may also assume that
$$A=\begin{bmatrix}
0 & 0 & 0 \\
0 & 0 & 1 \\
[0]_{(p-2) \times 1} & [0]_{(p-2) \times 1} & [0]_{(p-2) \times 1}
\end{bmatrix} \quad \text{and} \quad
B=\begin{bmatrix}
0 & 1 & 0 \\
0 & 0 & 0 \\
[0]_{(p-2) \times 1} & [0]_{(p-2) \times 1} & [0]_{(p-2) \times 1}
\end{bmatrix}.$$
Thus, $\Ker A=\K^2 \times \{0\}$ and every matrix of $\calS$ has the form
$$\begin{bmatrix}
? & ? & [?]_{1 \times (p-2)} \\
0 & ? & [?]_{1 \times (p-2)} \\
? & 0 & [?]_{1 \times (p-2)}
\end{bmatrix}.$$
Denote by $(f_1,f_2,f_3)$ the canonical basis of $\K^3$.
From there, every rank $1$ matrix of $\calS$ with image spanned by $f_1+f_2=\begin{bmatrix}
1 \\
1 \\
0
\end{bmatrix}$ must have its first column zero.
The vector $f_1+f_2$ is non-$\calS$-adapted as it belongs to $\Ker A$, whence
$\calS$ contains every matrix of the form
$\begin{bmatrix}
0 & L \\
0 & L \\
0 & [0]_{1 \times (p-1)}
\end{bmatrix}$ with $L \in \Mat_{1,p-1}(\K)$ (this uses Claim \ref{notsuperclaim}).
As $f_2$ is non-$\calS$-adapted, we also obtain that $\calS$ contains every matrix of the form
$\begin{bmatrix}
0 & [0]_{1 \times (p-1)} \\
0 & L \\
0 & [0]_{1 \times (p-1)}
\end{bmatrix}$ with $L \in \Mat_{1,p-1}(\K)$.
In particular, $\calS$ contains every matrix of the form
$$\begin{bmatrix}
0 & x & [0]_{1 \times (p-2)} \\
0 & y & [0]_{1 \times (p-2)} \\
0 & 0 & [0]_{1 \times (p-2)}
\end{bmatrix} \quad \text{with $(x,y)\in \K^2$.}$$
Thus, there is a subspace $\calT$ of $\Mat_{3,p-1}(\K)$ with codimension $2$ such that
$\calS$ is equivalent to $\calD \coprod \calT$, where $\calD$ is the space of all vectors
$\begin{bmatrix}
x \\
y \\
0
\end{bmatrix}$ with $(x,y) \in \K^2$. By Theorem \ref{maintheolin}, every range-compatible linear map on $\calT$ is local.
As this is also the case for $\calD$, we deduce that $F$ is local, contradicting our initial assumption.
This concludes the proof.
\end{proof}

\subsection{Case 1. Several rank $1$ matrices in $\calS^\bot$}\label{n=3case1}

In this section, we make the following assumption:
\begin{itemize}
\item[(A1)] The space $\calS^\bot$ contains distinct rank $1$ matrices $A$ and $B$.
\end{itemize}
We shall prove that $\calS$ has Type 1.

Combining Claims \ref{notworank1samerange} and \ref{tworank1samekernel}, we find
$$\im A \neq \im B \quad \text{and} \quad \Ker A=\Ker B.$$
By Claim \ref{intersectionkernelclaim}, no vector of $\Ker A$ is $\calS$-adapted. As the set of all non-$\calS$-adapted vectors does not span $V$,
we deduce that $\Ker A$ is exactly the set of all non-$\calS$-adapted vectors of $V$.

Let $y_3 \in V \setminus \Ker A$.
Assume that $F \modu y_3$ is local.
Choosing a basis $(y_1,y_2)$ of $\Ker A$, we know from Claim \ref{intersectionkernelclaim} that $F \modu y_1$ and $F \modu y_2$ are local,
whence Lemma \ref{3vectorslemma} would yield that $F$ is local, contradicting our assumptions. Therefore,
$F \modu y_3$ is non-local. As $y_3$ is $\calS$-adapted, it follows that $F \modu y_3$ has Type 1.
In particular, varying $y_3$ shows that there is no super-$\calS$-adapted vector.
Fixing $y_3$ once and for all, we find a matrix $C \in \calS^\bot$ with $\rk C=2$ and $\Ker C=\K y_3$.

From there, we prove that $\im A+\im B=\im C$. Let indeed
$y \in \Ker A \setminus \{0\}$. Then, $A(y+y_3)=Ay_3$, $B(y+y_3)=By_3$ and $C(y+y_3)=Cy$.
However $\dim \calS^\bot (y+y_3) \leq 2$ as there is no super-$\calS$-adapted vector.
As $Ay_3$ and $By_3$ are obviously linearly independent, we deduce that $Cy \in \Vect(Ay_3,By_3)=\im A+\im B$.
Since $V=\Ker A \oplus \Ker C$, varying $y$ yields $\im C\subset \im A+\im B$, and hence $\im C=\im A+\im B$
as the dimensions are equal on both sides.
As on the other hand $(A,B,C)$ is obviously linearly independent,
we obtain $\calS^\bot=\Vect(A,B,C)$.

Replacing $\calS$ with an equivalent space, we can assume that
$\im C=\K^2 \times \{0\}$, $\Ker A=\Ker B=\K^2 \times \{0\}$ and $\Ker C=\{0\} \times \K$.
Then, $\calS^\bot$ contains every matrix of the form
$$\begin{bmatrix}
0 & 0 & ? \\
0 & 0 & ? \\
[0]_{(p-2) \times 1} & [0]_{(p-2) \times 1} & [0]_{(p-2) \times 1}
\end{bmatrix}.$$
Moreover
$$C=\begin{bmatrix}
K & [0]_{2 \times 1} \\
[0]_{(p-2) \times 2} & [0]_{(p-2) \times 1}
\end{bmatrix}$$
for some rank $2$ matrix $K$.
Then, changing the chosen basis of $U$ once more, we can assume that $K=\begin{bmatrix}
0 & 1 \\
1 & 0
\end{bmatrix}$ on top of the previous assumptions. From there, it follows that $\calS^\bot$ is the set of all matrices of the form
$$\begin{bmatrix}
0 & a & b \\
a & 0 & c \\
[0]_{(p-2) \times 1} & [0]_{(p-2) \times 1} & [0]_{(p-2) \times 1}
\end{bmatrix} \quad \text{with $(a,b,c)\in \K^3$,}$$
and we conclude that
$$\calS= \Mats_2(\F_2) \vee \Mat_{1,p-2}(\F_2).$$
Thus, $\calS$ has Type 1, as claimed.

\subsection{Case 2. Exactly one rank $1$ matrix in $\calS^\bot$}\label{n=3case2}

In this section, we make the following extra assumption:

\begin{itemize}
\item[(A2)] There is a sole rank $1$ matrix in $\calS^\bot$, denoted by $A$.
\end{itemize}

Our goal is to prove that $\calS$ has Type 3 or 4.

We start with a lemma:

\begin{claim}
There is no super-$\calS$-adapted vector.
\end{claim}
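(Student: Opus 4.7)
The plan is to assume a super-$\calS$-adapted vector $z$ exists and derive a contradiction from the counting identity of Claim \ref{countingclaim}. The first observations are routine: $z \notin \Ker A$ (since by Claim \ref{specrank1claim} no $\calS$-adapted vector inside $\Ker A$ can be super-$\calS$-adapted), and $F \modu z$ is local (since $\codim(\calS \modu z) = \codim \calS - \dim \calS^\bot z = 0$, so $\calS \modu z = \calL(\calS, V/\K z)$ and Lemma \ref{espacetotal} applies).

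The key step is to show that at most one nonzero vector of $\Ker A$ is $\calS$-adapted. Indeed, if two distinct nonzero vectors $y_1$ and $y_2$ of $\Ker A$ were both $\calS$-adapted, they would form a basis of the $2$-dimensional space $\Ker A$ (as the ground field is $\F_2$), and together with $z \notin \Ker A$ would constitute a basis of $V$; Claim \ref{specrank1claim} would render both $F \modu y_1$ and $F \modu y_2$ local, and Lemma \ref{3vectorslemma} would force $F$ to be local, a contradiction. Hence at least two of the three nonzero vectors of $\Ker A$ are non-$\calS$-adapted. Since the set of non-$\calS$-adapted vectors lies in a $2$-dimensional subspace $H$ of $V$ and $H$ contains the two linearly independent vectors just produced, we have $H = \Ker A$; thus every vector outside $\Ker A$ is $\calS$-adapted.

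It remains to assemble a counting contradiction. Assumption (A2) gives $m_1 = 1$, and since $\calS^\bot$ has exactly seven nonzero matrices, $m_2 \leq 6$, whence $3m_1 + m_2 \leq 9$. On the other hand, Claim \ref{atmost1superadapted} together with the existence of $z$ forces $n_3 = 1$; the (at least) two non-$\calS$-adapted nonzero vectors of $\Ker A$ each satisfy $\dim \calS^\bot y = 1$ by Claim \ref{no0trans}, so $n_1 \geq 2$; and $n_1 + n_2 + n_3 = 7$. Therefore $3n_1 + n_2 = 2n_1 + (n_1 + n_2) = 2n_1 + 6 \geq 10$, contradicting $3m_1 + m_2 = 3n_1 + n_2$.

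I expect the only genuinely non-routine point to be the step pinning down the hyperplane of non-adapted vectors to $\Ker A$; all surrounding pieces are either direct applications of previously established results or elementary dimension arithmetic, and once the identification $H = \Ker A$ is in place the counting identity closes the argument immediately.
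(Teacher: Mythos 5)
Your proof is correct, but it reaches the contradiction by a genuinely different route than the paper. Both arguments begin the same way (a super-adapted vector $z$ must lie outside $\Ker A$ and $F \modu z$ is local), and both then try to control the $\calS$-adapted vectors inside $\Ker A$; after that they diverge. The paper proves the \emph{stronger} statement that \emph{no} vector of $\Ker A$ is $\calS$-adapted (via Claim \ref{adaptedoutsidekernel}: an adapted vector in $\Ker A$ would force $F \modu z$ to be non-local), then views $\bigl\{N \mapsto Ny \mid y \in \Ker A\bigr\}$ as a $2$-dimensional space of operators of rank at most $1$, invokes the classification of such spaces, rules out the common-kernel case using (A2), and in the common-range case deduces that all four vectors outside $\Ker A$ are super-adapted, contradicting Claim \ref{atmost1superadapted}. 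You instead settle for the weaker statement that \emph{at most one} nonzero vector of $\Ker A$ is adapted --- obtained directly from Claim \ref{specrank1claim} and Lemma \ref{3vectorslemma}, since two adapted vectors of $\Ker A$ together with $z$ would give three independent directions with local projections --- and then close with the counting identity of Claim \ref{countingclaim}: $n_1 \geq 2$ and $n_3=1$ force $3n_1+n_2 = 2n_1+6 \geq 10$, while $m_1=1$ gives $3m_1+m_2 \leq 9$. (Your intermediate identification of the hyperplane of non-adapted vectors with $\Ker A$ is actually not needed for this; only $n_1 \geq 2$ is.) Your version is shorter and avoids both Claim \ref{adaptedoutsidekernel} and the classification of rank-one operator spaces, at the cost of leaning on the counting identity --- a tool the paper itself deploys in exactly this style immediately afterwards, in Claim \ref{case2numberofnonSadapted}.
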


\begin{proof}
Assume that the contrary holds. By Claim \ref{atmost1superadapted}, there is a unique super-$\calS$-adapted vector,
and we denote it by $y_3$. Then, we know from Claim \ref{specrank1claim} that $y_3 \not\in \Ker A$.
It follows from Claim \ref{adaptedoutsidekernel} that no vector of $\Ker A$ is $\calS$-adapted,
whence Claim \ref{no0trans} yields $\dim \calS^\bot y=1$ for all $y \in \Ker A$.
Thus,
$$\calW:=\bigl\{N \in \calS^\bot \mapsto Ny \mid y \in \Ker A\bigr\}$$
is a $2$-dimensional space of linear operators
of rank at most $1$. Applying the classification of spaces of linear operators with rank at most $1$,
we deduce that one of the following two situations holds:
\begin{enumerate}[(i)]
\item There is a hyperplane $H$ of $\calS^\bot$ on which all the operators of $\calW$ vanish.
\item There is a $1$-dimensional subspace $D$ of $U$ that contains the range of every operator of $\calW$.
\end{enumerate}
However, if condition (i) were satisfied then we would find some $B \in H \setminus \K A$,
and $B$ would be a rank $1$ matrix of $\calS^\bot$ that is different from $A$, contradicting assumption (A2).

Thus, condition (ii) holds, and we obtain that $Ny \in D$ for all $y \in \Ker A$ and all $N \in \calS^\bot$.
In particular, every matrix of $\calS^\bot$ vanishes at some non-zero vector of $\Ker A$.
It follows that every matrix of $\calS^\bot$ has rank at most $2$, and the kernel of a rank $2$ matrix of $\calS^\bot$
must be included in $\Ker A$. As $A$ is the sole rank $1$ matrix of $\calS^\bot$, it follows that for every $y \in V \setminus \Ker A$,
no matrix of $\calS^\bot$ annihilates $y$, whence $\dim \calS^\bot y=\dim \calS^\bot=3$ and $y$ is super-$\calS$-adapted.
This would yield four super-$\calS$-adapted vectors, contradicting Claim \ref{atmost1superadapted}.
We conclude that there is no super-$\calS$-adapted vector.
\end{proof}

As an immediate consequence of the above result and of Claim \ref{no0trans}, we obtain:

\begin{claim}
For every non-zero vector $y \in V$, either $\dim \calS^\bot y=2$ or
$\dim \calS^\bot y=1$, whether $y$ is $\calS$-adapted or not.
\end{claim}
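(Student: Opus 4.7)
The plan is straightforward, because the author flags this as an immediate corollary of the two preceding results. My approach is simply to unpack the definitions in the specific case $\dim V = 3$ and then conjoin the two named inputs.

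First I would recall the duality identity $\codim(\calS \modu y) = \codim \calS - \dim \calS^\bot y$; since $\codim \calS = 3$ in this section, a nonzero $y \in V$ is $\calS$-adapted iff $\dim \calS^\bot y \geq 2$, while the definition of super-$\calS$-adapted translates directly to $\dim \calS^\bot y \geq 3$. Thus the claim is the single two-sided bound $1 \leq \dim \calS^\bot y \leq 2$.

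Second I would apply the immediately preceding claim, which asserts that there is no super-$\calS$-adapted vector: this rules out $\dim \calS^\bot y \geq 3$ and therefore gives the upper bound $\dim \calS^\bot y \leq 2$ for every nonzero $y$. Then I would invoke Claim \ref{no0trans}, which forbids $\calS^\bot y = \{0\}$ for any nonzero $y$ and hence supplies the lower bound $\dim \calS^\bot y \geq 1$. The conjunction is exactly the stated dichotomy, and since both bounds are uniform over all nonzero $y$, the parenthetical remark that the dichotomy is independent of whether $y$ is $\calS$-adapted comes for free.

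I do not foresee any genuine obstacle: this is a bookkeeping statement whose purpose is to make the two-valued function $y \mapsto \dim \calS^\bot y$ explicit, so that subsequent parts of the case analysis in Section \ref{n=3case2} can partition $V \setminus \{0\}$ into $\calS$-adapted vectors (those with $\dim \calS^\bot y = 2$) and non-$\calS$-adapted vectors (those with $\dim \calS^\bot y = 1$) without further appeal to the definitions.
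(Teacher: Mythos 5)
Your proof is correct and matches the paper exactly: the paper also obtains this as an immediate consequence of the preceding claim (no super-$\calS$-adapted vector, i.e.\ $\dim \calS^\bot y \leq 2$) together with Claim \ref{no0trans} (i.e.\ $\dim \calS^\bot y \geq 1$). Nothing is missing.
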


Now, we investigate the $\calS$-adapted vectors in $\Ker A$.

\begin{claim}\label{nonadaptedinkernel}
At least one non-zero vector $y$ of $\Ker A$ is non-$\calS$-adapted.
If $y$ is the sole non-$\calS$-adapted vector in $\Ker A \setminus \{0\}$, then $\calS^\bot y=\im A$.
\end{claim}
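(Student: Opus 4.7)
My plan is to prove each of the two assertions separately.

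For the first (existence of a non-$\calS$-adapted vector in $\Ker A \setminus \{0\}$), I argue by contradiction. Suppose the three non-zero vectors $z_1, z_2, z_3$ of $\Ker A$ (with $z_1 + z_2 + z_3 = 0$) are all $\calS$-adapted. Claim \ref{specrank1claim} then gives each $F \modu z_i$ local, witnessed by some $x_i' \in U$. Summing the three relations $F(s) - s(x_i') \in \K z_i$ and subtracting the local map $s \mapsto s(x_1' + x_2' + x_3')$ from $F$, I reduce to $F(s) \in \Ker A$ for all $s \in \calS$, so that $F(s) = \gamma_1(s) z_1 + \gamma_2(s) z_2$ for some pair of linear forms on $\calS$, not both zero. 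Since the image of each $F \modu z_i$ now lies inside the $1$-dimensional subspace $\Ker A / \K z_i$ of $V/\K z_i$, the witness $x_i'$ must lie in $\{0, x\}$ (where $x$ is a fixed non-zero vector of $\im A$)---any other choice would make the image $2$-dimensional. A case analysis on the eight choices of $(x_1', x_2', x_3')$ closes the argument: each case forces either $(\gamma_1, \gamma_2) = 0$ (contradicting non-locality) or identifies $\gamma_j$ with the coordinate projection of $s(x)$ onto $z_j$ in the basis $(z_1, z_2)$, yielding $F(s) = s(x)$ (hence $F$ local, a contradiction).

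For the second assertion, the hypothesis combined with Claim \ref{countingclaim} (using $m_1 = 1$ and the absence of super-$\calS$-adapted vectors) forces $n_1 = 1$, $m_2 = 6$, $m_3 = 0$, so every matrix of $\calS^\bot$ has rank at most $2$ and $\{N \in \calS^\bot : Ny = 0\} = \K A + \K M$ for a rank-$2$ matrix $M$ with $\Ker M = \K y$ and $Mz = Mw$. Applying Claim \ref{specrank1claim} to the two $\calS$-adapted vectors $z$ and $w = y + z$ of $\Ker A$ and running a case analysis parallel to Part (1) (with fewer cases), I reduce, after subtracting a local map from $F$, to the situation $F(s) = \alpha(s) z$ with a non-zero linear form $\alpha$, together with the structural identity $s(x) = \alpha(s) z + \beta(s) w$ for another linear form $\beta$.

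The key step for concluding $\calS^\bot y = \im A$ is a coordinate calculation. I fix bases so that $x$ is the first basis vector of $U$ and $(y, z, v_0)$ is a basis of $V$ with $v_0 \in V \setminus \Ker A$ normalized so that $Av_0 = x$; then $A$ encodes the constraint $s_{3,1} = 0$ on $\calS$. Picking any $N_0 \in \calS^\bot \setminus (\K A + \K M)$, I set $v := N_0 y$ (so $\calS^\bot y = \K v$), and $\tr(N_0 s) = 0$ rewrites as $v \cdot R_1(s) + a \cdot R_2(s) = 0$ for some $a \in U$, in the row-notation of Lemma \ref{ligneparligne}. I then restrict to $s \in \calS$ with $R_2(s) = 0$, a non-trivial slice of dimension at least $2p - 3$. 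For such $s$, one computes $s(x) = s_{1,1}\,y$, whence $\alpha(s) = \beta(s) = s_{1,1}$ and $F(s) = s_{1,1}\,z$; range-compatibility of $F$ demands $s_{1,1}\,z \in \im s$, but the image of such an $s$ is contained in $\Vect(y, v_0)$, which excludes $z$, forcing $s_{1,1} = 0$. Thus the condition $e_1 \cdot R_1(s) = 0$ must follow from the sole $\calS$-constraint $v \cdot R_1(s) = 0$ on this slice, forcing $e_1 \in \K v$ and hence $v \in \K x = \im A$. The main technical obstacles will be the final case analysis in Part (1) and the coordinate bookkeeping in Part (2); verifying that enough matrices $s \in \calS$ with $R_2(s) = 0$ exist for range-compatibility to bite is a direct dimension count.
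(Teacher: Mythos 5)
Your proof is correct in outline but follows a genuinely different route from the paper's. The paper handles both assertions in one stroke: assuming two distinct $\calS$-adapted vectors $y_1,y_2$ in $\Ker A$, it uses Lemma \ref{coveringlemma} to produce an adapted $y_3\notin\Ker A$, invokes Claim \ref{adaptedoutsidekernel} to see that $F\modu y_3$ is non-local (hence of Type 1), normalizes $F$ via the explicit description in Theorem \ref{specialRCF2}, and then applies the last part of Claim \ref{specrank1claim} twice to force $m_{3,1}=m_{3,2}$ on $\calS$; identifying the resulting rank $1$ matrix with $A$ via (A2) yields simultaneously that $y_1+y_2$ is non-adapted and that $\calS^\bot(y_1+y_2)=\im A$. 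Your Part 1 replaces all of this by a short pigeonhole argument over $\F_2$: each of the three witnesses lies in $\{0,x\}$, two of them must coincide, and intersecting the corresponding lines $\K z_i$ forces $F=0$ or $F=\widehat{x}$ --- cleaner and independent of Theorem \ref{specialRCF2} and the covering lemma. Your Part 2 imports the counting argument that the paper only deploys later (in Claim \ref{case2numberofnonSadapted}; there is no circularity, since the hypothesis of the second assertion already gives $n_1\geq 1$) and then concludes by a direct range-compatibility computation on the slice $R_2(s)=0$, which checks out. Two points need more care than your sketch admits. First, in both parts you use that a witness $x'$ for a local $F\modu z$ whose image lies in the line $\Ker A/\K z$ must belong to $\{0,x\}$: this is not the \emph{statement} of Claim \ref{specrank1claim} (which only treats lines $D\neq\Ker A/\K z$), so you must extract it from its proof, namely from $\calS\modu z=\calT$; relatedly, in Part 2 the identification of the witness difference $u$ with $x$ requires observing that $\calS u\subset\Ker A$ forces the rank $1$ matrix $u\cdot\pi$ (with $\pi$ a linear form of kernel $\Ker A$) to lie in $\calS^\bot$, hence to equal $A$ by (A2). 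Second, writing $\tr(N_0 s)=v\cdot R_1(s)+a\cdot R_2(s)$ presumes $N_0 v_0=0$, which is not automatic; it can be arranged by replacing $N_0$ with $N_0+\lambda A+\mu M$, using that $Av_0$ and $Mv_0$ are linearly independent and hence span the $2$-dimensional space $\calS^\bot v_0$ (alternatively, the slice argument survives a nonzero $R_3$-coefficient, but then the decoupling of the row constraints must be re-examined). With these details supplied, your argument is complete.
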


\begin{proof}
Assume that there are distinct $\calS$-adapted vectors $y_1$ and $y_2$ in $\Ker A$.
Then, we prove that $y_1+y_2$ is non-$\calS$-adapted and that $\calS^\bot(y_1+y_2)=\im A$,
yielding all the claimed results.

We know that there is a $2$-dimensional subspace $P$ of $V$ that contains all the non-$\calS$-adapted vectors.
By Lemma \ref{coveringlemma}, we can find a vector $y_3 \in V \setminus (P \cup \Ker A)$.
Then, $y_3$ is $\calS$-adapted; as $y_1$ is $\calS$-adapted and belongs to $\Ker A$, Claim \ref{adaptedoutsidekernel}
shows that $F \modu y_3$ is non-local.
In particular, $\calS \modu y_3$ has Type 1.
Thus, we lose no generality in assuming that $(y_1,y_2,y_3)$ is the standard basis of $\K^3$
and that every matrix $M$ of $\calS$ splits up as
$M=\begin{bmatrix}
K(M) \\
[?]_{1 \times p}
\end{bmatrix}$, and $K(\calS)=\Mats_2(\K) \coprod \Mat_{2,p-2}(\K)$.

Then, by Theorem \ref{specialRCF2} we see that, by subtracting a well-chosen local map from $F \modu y_3$,
no generality is lost in assuming that
$$F: M \longmapsto \begin{bmatrix}
m_{1,1} \\
m_{2,2} \\
g(M)
\end{bmatrix} \qquad \text{for some linear form $g : \calS \rightarrow \K$.}$$
Denote by $(x_1,\dots,x_p)$ the standard basis of $U=\K^p$.
Adding $M \mapsto Mx_1$ to $F$, we find that
$F' : M \mapsto
\begin{bmatrix}
0 \\
m_{2,2}+m_{2,1} \\
g(M)+m_{3,1}
\end{bmatrix}$ is range-compatible (and still non-local).
Thus, $F' \modu y_2$ maps every operator into the line $\K \overline{y_3}$.
Applying the last statement of Claim \ref{specrank1claim} to $F'$, we obtain $F' \modu y_2=0$,
whence $g(M)=m_{3,1}$ for all $M \in \calS$. With the same line of reasoning applied
to $y_1$ instead of $y_2$, we find that $g(M)=m_{3,2}$ for all $M \in \calS$, whence
$m_{3,1}=m_{3,2}$ for all $M \in \calS$.
Therefore, $\calS^\bot$ contains the rank $1$ matrix
$$\begin{bmatrix}
0 & 0 & 1 \\
0 & 0 & 1 \\
[0]_{(p-2) \times 1} & [0]_{(p-2) \times 1} & [0]_{(p-2) \times 1}
\end{bmatrix},$$
and by assumption (A2) this matrix equals $A$. It follows that $\im A=\K(x_1+x_2)$.

Assume now that $y_1+y_2$ is also $\calS$-adapted. Then the same line of reasoning yields $g(M)=m_{3,1}+m_{3,2}$ for all
$M \in \calS$, whence $m_{3,1}=m_{3,2}=0$ for all $M \in \calS$, contradicting the fact that
$\calS^\bot$ contains a unique rank $1$ matrix.

Thus, $y_1+y_2$ is not $\calS$-adapted.
Then, from the above shape of $\calS$, it is obvious that every rank $1$ matrix $M$ of $\calS$ with image $\K(y_1+y_2)$
must satisfy $m_{1,1}=m_{2,2}=m_{1,2}=m_{2,1}$, whence $\calS^\bot(y_1+y_2)$ contains $\K(x_1+x_2)$.
As $y_1+y_2$ is not $\calS$-adapted, we conclude that $\calS^\bot(y_1+y_2)=\K(x_1+x_2)=\im A$, as claimed.
\end{proof}

\begin{claim}\label{case2numberofnonSadapted}
Exactly one non-zero vector $y$ of $V$ is non-$\calS$-adapted;
moreover, $y \in \Ker A$ and $\calS^\bot y=\im A$.
Every matrix of $\calS^\bot$ has rank at most $2$.
\end{claim}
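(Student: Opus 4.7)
My plan is to derive all three assertions simultaneously from a clean counting argument based on Claim \ref{countingclaim}. Write $m_i$ for the number of rank-$i$ matrices in $\calS^\bot$ and $n_i$ for the number of non-zero vectors $y \in V$ with $\dim \calS^\bot y = i$. The preceding results in this subsection give $m_1 = 1$ (by assumption (A2)) and, since there are no super-$\calS$-adapted vectors and no vector $y \neq 0$ satisfies $\calS^\bot y = \{0\}$ (Claim \ref{no0trans}), every non-zero $y \in V$ has $\dim \calS^\bot y \in \{1,2\}$; thus $n_1 + n_2 = |V \setminus \{0\}| = 7$. Since $\dim \calS^\bot = 3$ we also have $|\calS^\bot| = 8$, and as the only possible ranks are $0,1,2,3$ with $m_0 = 1$, this yields $m_2 + m_3 = 6$.

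Next, I will substitute $m_1 = 1$ into the identity $3m_1 + m_2 = 3n_1 + n_2$ of Claim \ref{countingclaim}. Combining with $n_1 + n_2 = 7$, I obtain $m_2 = 4 + 2n_1$, and therefore $m_3 = 2 - 2n_1$. The non-negativity of $m_3$ forces $n_1 \leq 1$, while Claim \ref{nonadaptedinkernel} furnishes a non-zero non-$\calS$-adapted vector $y_0$ inside $\Ker A$, giving $n_1 \geq 1$. Hence $n_1 = 1$ and $m_3 = 0$.

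To conclude, the equality $n_1 = 1$ says that $V$ contains exactly one non-zero non-$\calS$-adapted vector, which must therefore coincide with $y_0 \in \Ker A$. Since $y_0$ is then the sole non-$\calS$-adapted vector of $\Ker A \setminus \{0\}$, the second half of Claim \ref{nonadaptedinkernel} delivers $\calS^\bot y_0 = \im A$. Finally, $m_3 = 0$ says exactly that every matrix of $\calS^\bot$ has rank at most $2$. I do not foresee any real obstacle here: the only point worth noting is that the counting identity supplies both the upper bound on $n_1$ (through $m_3 \geq 0$) and, when paired with Claim \ref{nonadaptedinkernel}, the matching lower bound, so the three conclusions fall out by elementary arithmetic.
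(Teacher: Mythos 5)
Your proof is correct and follows essentially the same route as the paper: both establish $m_1=1$, $n_1+n_2=7$, and $n_1\geq 1$ via Claim \ref{nonadaptedinkernel}, then apply the counting identity of Claim \ref{countingclaim} to get $m_2=2n_1+4$ and force $n_1=1$ (your $m_3\geq 0$ is the same constraint as the paper's $m_2\leq 6$), and finally invoke Claim \ref{nonadaptedinkernel} again for $y\in\Ker A$ and $\calS^\bot y=\im A$. No issues.
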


\begin{proof}
With the notation from Claim \ref{countingclaim}, we deduce from our assumptions and from the above results that $m_1=1$, $n_1+n_2=7$, and $n_1 \geq 1$.
Claim \ref{countingclaim} then yields $3+m_2=3n_1+7-n_1$, whence $m_2=2n_1+4$.
As $m_2 \leq 6$, we have $n_1 \leq 1$ whence $n_1=1$ and $m_2=6$.
In other words, every matrix of $\calS^\bot$ has rank at most $2$ and $V \setminus \{0\}$ contains a unique non-$\calS$-adapted vector $y$.
By Claim \ref{nonadaptedinkernel}, we must have $y \in \Ker A$ and $\calS^\bot y=\im A$.
\end{proof}

Now, we consider the reduced space $\overline{\calS^\bot}$ associated with $\calS^\bot$ and we
apply the classification of matrix spaces with upper-rank $2$ (see Section \ref{primitivesection}).
Note that by Claim \ref{no0trans}, the domain of the operators of $\overline{\calS^\bot}$ is the $3$-dimensional space $V$, and hence
by Proposition \ref{nonprimitiveprop} only three possibilities can occur:
\begin{enumerate}[(1)]
\item The sum of the ranges of the operators in $\calS^\bot$ has dimension at most 2.
\item The operator space $\calS^\bot$ is represented, in well-chosen bases, by a space of matrices of the form
$$\begin{bmatrix}
? & [?]_{1 \times 2} \\
[?]_{(p-1) \times 1} & [0]_{(p-1) \times 2}
\end{bmatrix}.$$
\item The space $\overline{\calS^\bot}$ is primitive.
\end{enumerate}

Let us immediately discard option (2).
Indeed, if it held true, then we would have a whole $2$-dimensional subspace $P$ of $V$ in which every non-zero vector is
non-$\calS$-adapted, contradicting Claim \ref{case2numberofnonSadapted}.

Thus, only two possibilities remain. We shall examine them separately in the remainder of this section.

\begin{claim}
Assume that $\overline{\calS^\bot}$ is primitive. Then, $\calS$ has Type 4.
\end{claim}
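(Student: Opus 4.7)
The plan is to pin down the isomorphism type of $\overline{\calS^\bot}$ by invoking the classification of primitive matrix spaces over $\F_2$ with upper-rank $2$, and then to match it with $\calG_3^\bot$. Since $V$ is $3$-dimensional and Claim \ref{no0trans} gives $\bigcap_{N \in \calS^\bot} \Ker N=\{0\}$, the domain of $\overline{\calS^\bot}$ is still $V$, and $\dim \overline{\calS^\bot}=3$. As $\overline{\calS^\bot}$ is primitive with upper-rank at most $2$, Proposition \ref{primitivedimprop} forces its codomain to be $3$-dimensional, so in well-chosen bases $\overline{\calS^\bot}$ is a $3$-dimensional subspace of $\Mat_3(\F_2)$. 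By Claim \ref{case2numberofnonSadapted}, the unique non-$\calS$-adapted vector $y \in V$ satisfies $\dim \calS^\bot y=1$, whence $\dim \overline{\calS^\bot} y=1$. I would then apply Proposition \ref{primitivetypeprop2}, which narrows $\overline{\calS^\bot}$ down to one of the four explicit models $\calM_1, \calM_2, \calM_3, \calM_4$.

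The main step is to eliminate three of the four candidates by direct inspection of the given models. The space $\calM_4$ contains no rank $1$ matrix, contradicting assumption (A2); this discards it immediately. In $\calM_1$ the three non-zero vectors $e_1, e_2, e_1+e_2$ all satisfy $\dim \calM_1 x=1$, and a short computation shows that in $\calM_3$ both $e_1$ and $e_1+e_2$ satisfy $\dim \calM_3 x=1$ (the second because $\calM_3(e_1+e_2)=\F_2\,(1,1,0)^T$), while Claim \ref{case2numberofnonSadapted} guarantees exactly one such vector. This leaves $\calM_2$ as the only option. As a back-up argument for discarding $\calM_3$, I could invoke Claim \ref{nonadaptedinkernel}: the unique rank $1$ element of $\calM_3$ has image $\K e_2$ whereas $\calM_3 e_1=\K e_1$, so the equality $\calS^\bot y=\im A$ would fail.

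To conclude, I would exhibit an explicit equivalence $\calG_3^\bot \sim \calM_2$: starting from the explicit form of $\calG_3^\bot$ in Section \ref{inequivalentspecialtype}, the column swap $C_1 \leftrightarrow C_3$ followed by the row operation $L_2 \leftarrow L_2+L_3$ and an affine renaming of parameters produce exactly the matrix description of $\calM_2$. Once $\overline{\calS^\bot}$ is identified with $\calG_3^\bot$, one picks a basis of $U$ whose first three vectors span $\sum_{N \in \calS^\bot} \im N$, so that $\calS^\bot$ is represented by block matrices of the form $\bigl[\begin{smallmatrix} C \\ 0 \end{smallmatrix}\bigr]$ with $C \in \calG_3^\bot$. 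Taking orthogonals via the trace pairing then yields $\calS=\calG_3 \coprod \Mat_{3,p-3}(\F_2)$, which is precisely Type 4.

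The main obstacle I anticipate is the discrimination between $\calM_2$ and $\calM_3$: both contain exactly one rank $1$ matrix and have the same rank distribution, so coarse numerical invariants are not enough to tell them apart. The decisive discriminator is either the \emph{number} of vectors $x$ with $\dim \calM_i x=1$ (one for $\calM_2$, two for $\calM_3$) or, equivalently, the finer compatibility condition $\calS^\bot y=\im A$ from Claim \ref{nonadaptedinkernel}; making sure that this refined invariant is the one actually used in our setting is the delicate point of the argument.
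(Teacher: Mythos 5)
Your proposal is correct and follows essentially the same route as the paper: both invoke Proposition \ref{primitivetypeprop2} to reduce to the four models $\calM_1,\dots,\calM_4$, eliminate all but $\calM_2$, and then identify $\calM_2$ with $\calG_3^\bot$ by explicit row and column operations to conclude Type 4. The only real difference is the discriminating invariant: the paper rules out $\calM_1$, $\calM_3$ and $\calM_4$ in one stroke by exhibiting a vector $z$ with $\calM_i z=\K^3$, which would be a super-$\calS$-adapted vector (forbidden in Case 2), whereas you use the absence of rank $1$ matrices for $\calM_4$ and the count of vectors $x$ with $\dim\calM_i x=1$ for $\calM_1$ and $\calM_3$; both criteria are legitimate equivalence invariants and both work. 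One small slip: $\calM_1(e_1+e_2)=\{(a,a+b,0)^T\}$ is $2$-dimensional, so $e_1+e_2$ is not a third vector with $\dim\calM_1 x=1$; but $e_1$ and $e_2$ already give two such vectors, which suffices to contradict the uniqueness from Claim \ref{case2numberofnonSadapted}, so your elimination of $\calM_1$ still goes through.
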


\begin{proof}
As there is a non-$\calS$-adapted non-zero vector $y$, we have $\dim(\overline{\calS^\bot}y)=\dim(\calS^\bot y)=1$ and hence
Proposition \ref{primitivetypeprop2} shows that $\overline{\calS^\bot}$ is equivalent to one of the spaces
$\calM_i$, $i=1, \ldots, 4$, listed there.
As $\dim(\overline{\calS^\bot}z) \leq 2$ for all $z \in V$, whereas, with the canonical basis of $\K^3$
denoted by $(e_1,e_2,e_3)$, one checks that $\calM_1(e_2+e_3)=\calM_3(e_1+e_3)=\calM_4(e_1+e_3)=\K^3$,
we deduce that $\overline{\calS^\bot}$ is equivalent to $\calM_2$.
Using the elementary operation $L_3 \leftarrow L_3+L_2$ and then $C_1 \leftrightarrow C_3$, we see that
$\calM_2$ is equivalent to the space
$$\Biggl\{\begin{bmatrix}
0 & c & a \\
a+b & a+b & 0 \\
b & 0 & 0
\end{bmatrix} \mid (a,b,c)\in \K^3\Biggr\}.$$
Thus, $p \geq 3$ and $\calS^\bot$
is equivalent to the space of all matrices of the form
$$\begin{bmatrix}
0 & c & a+b \\
b & b & 0 \\
a & 0 & 0 \\
[0]_{(p-3) \times 1} & [0]_{(p-3) \times 1} & [0]_{(p-3) \times 1}
\end{bmatrix} \quad \text{with $(a,b,c)\in \K^3$.}$$
Using the results from Section \ref{inequivalentspecialtype}, we deduce that $\calS$ is equivalent to
$\calG_3 \coprod \Mat_{3,p-3}(\K)$, i.e.\ it has Type 4.
\end{proof}

\begin{claim}\label{downtotype3}
Assume that there is a $2$-dimensional subspace of $U$ which contains the image of every $N \in \calS^\bot$.
Then, $\calS$ has Type 3.
\end{claim}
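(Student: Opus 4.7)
The plan is to reduce the statement, via the Splitting Lemma, to a rigidity result for a 3-dimensional subspace of $\Mat_{3,2}(\F_2)$, and then to verify that rigidity by a direct calculation. First I choose a basis of $U$ in which the given 2-dimensional subspace becomes $\K^2 \times \{0\}$. Every matrix in $\calS^\bot$ then has zero rows beyond the second, and $\calS^\bot$ is naturally identified with a 3-dimensional subspace $\calR \subset \Mat_{2,3}(\F_2)$. By orthogonality one obtains $\calS = \calT \coprod \Mat_{3,p-2}(\F_2)$, where $\calT := \calR^\bot \subset \Mat_{3,2}(\F_2)$ has dimension 3. The Splitting Lemma then decomposes $F = G \coprod H$; by Lemma \ref{espacetotal} the map $H$ is local, so the non-localness of $F$ forces $G : \calT \to \K^3$ to be non-local. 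Thus if I can show $\calT$ is equivalent to $\calV_2$, then $\calS$ will be equivalent to $\calV_2 \coprod \Mat_{3,p-2}(\F_2)$, which is Type 3 (with its $n$-parameter set to zero).

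Next, I translate the information from Claim \ref{case2numberofnonSadapted} into constraints on $\calR$: it has dimension $3$, contains a unique rank-$1$ matrix $a_0$ (the image of $A$), has a unique non-zero $y \in V$ with $\dim \calR y = 1$ (which lies in $\Ker a_0$ and satisfies $\calR y = \im a_0$), and every other non-zero $z \in V$ satisfies $\dim \calR z = 2$. After an appropriate change of bases in $\K^2$ and $V$, I may assume $a_0 = \begin{bmatrix} 1 & 0 & 0 \\ 0 & 0 & 0 \end{bmatrix}$ and $y = e_2$, which fixes $\im a_0 = \K e_1$ and $\Ker a_0 = \Vect(e_2,e_3)$.

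Then I complete $a_0$ to a basis $(a_0,b,c)$ of $\calR$. Subtracting multiples of $a_0$ arranges $b_{11}=c_{11}=0$; the condition $\calR e_2 \subset \K e_1$ forces $b_{22}=c_{22}=0$; after exchanging $b$ and $c$ and then replacing $c$ by $c+b$ if necessary, I arrange $b_{12}=1$ and $c_{12}=0$. The requirement that no non-zero element of $\calR \setminus \K a_0$ have rank $1$, combined with $\dim \calR e_3 = 2$, will then pin down $c = \begin{bmatrix} 0 & 0 & 1 \\ 1 & 0 & 0 \end{bmatrix}$ and reduce $b$ to a two-parameter family of the form $\begin{bmatrix} 0 & 1 & b_{13} \\ b_{21} & 0 & 1 \end{bmatrix}$ with $(b_{13},b_{21}) \in \F_2^2$. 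Using the residual gauge freedom---row operations on $\K^2$ that fix $\K e_1$, and column operations on $V$ that fix $e_2$ and $\Vect(e_2,e_3)$---I will verify that all four choices of $(b_{13},b_{21})$ yield equivalent spaces, and that an explicit permutation and column swap carries $\calT = \calR^\bot$ onto $\calV_2$.

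The main obstacle is this final rigidity calculation: exhaustively checking the rank-$1$ exclusions on each non-zero element of $\calR$ and exhibiting the explicit basis change realizing $\calT \sim \calV_2$. Since $\K = \F_2$ and the parameter space is finite, this is a bounded combinatorial case analysis rather than a conceptual difficulty, but it is where the bulk of the work will sit.
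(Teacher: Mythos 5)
Your reduction to $p=2$ via the Splitting Lemma is exactly the paper's opening move, but from there the two arguments genuinely diverge. The paper stays on the side of $\calS$ itself: writing $\varphi : M \mapsto Mx_1$ and $\psi : M \mapsto Mx_2$ (where $\im A=\K x_2$), it notes that $\varphi$ and $\varphi+\psi$ are isomorphisms while $\psi$ has rank $2$ with image $\Ker A$, forms $u:=\psi\circ\varphi^{-1}$, and uses that $1$ is not an eigenvalue of $u$, that $\Ker u=\K y$ for the unique non-adapted vector $y$, and that $\Ker u\subset\im u$, to conclude that $u$ is nilpotent of rank $2$, hence a single Jordan block; since $\calS=\bigl\{\begin{bmatrix}v & u(v)\end{bmatrix}\mid v\in V\bigr\}$, this is $\calV_2$ on the nose. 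You instead normalize the dual space $\calS^\bot$, viewed inside $\Mat_{2,3}(\F_2)$, by a finite case analysis, and I have checked that your deferred computations do close up: with your normalization the requirement that the seven non-zero elements of $\calR$ other than $a_0$ all have rank $2$ forces $c=\begin{bmatrix}0&0&1\\1&0&0\end{bmatrix}$ and $b_{23}=1$ (the conditions $\dim\calR z=2$ for $z\neq 0,e_2$ then hold automatically), and the four remaining candidates $(b_{13},b_{21})\in(\F_2)^2$ form a single orbit under the $16$-element residual gauge group (unipotent $P$ fixing $e_1$, and $Q$ fixing $e_2$ and stabilizing $\Vect(e_2,e_3)$), with $(0,0)$ giving precisely the normalized form of $\calV_2^\bot$. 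Both routes rest on the same prior claims (assumption (A2), Claim \ref{case2numberofnonSadapted}, and the absence of super-adapted vectors). What each buys: the paper's argument is coordinate-light, identifies the Jordan structure of the pencil, and is reused almost verbatim in Case 3; yours is a bounded combinatorial verification specific to $\F_2$, more mechanical but entirely self-contained and easy to audit.
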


\begin{proof}
Without loss of generality, we may assume that $\im N \subset \K^2 \times \{0\}$ for all $N \in \calS^\bot$.
In that reduced situation, $\calS$ splits up as $\calT \coprod \Mat_{3,p-2}(\K)$ for some $3$-dimensional subspace $\calT$
of $\Mat_{3,2}(\K)$, and $F$ splits up as $f\coprod g$, where $f$ and $g$
are range-compatible linear maps, respectively, on $\calT$ and $\Mat_{3,p-2}(\K)$.
Then, $g$ is local and hence $f$ is non-local. If we prove that $\calT$ has Type 3,
then it is obvious that $\calS$ will have Type 3 as well. Thus, in the rest of the proof we can simply assume that $p=2$.

Without further loss of generality, we may assume that $\im A=\{0\} \times \K=\K x_2$, where $(x_1,x_2)$ denotes the canonical basis of $U=\K^2$.
As $A$ is the sole rank $1$ matrix of $\calS^\bot$, we see that $M \in \calS \mapsto Mx$ has rank
$3$ for all $x \in \K^2 \setminus \K x_2$, whereas the range of $M \in \calS \mapsto Mx_2$ is $\Ker A$.
Consider the operators
$$\varphi : M \in \calS \mapsto Mx_1 \quad \text{and} \quad \psi : M \in \calS \mapsto Mx_2.$$
Note that
$$\calS=\Bigl\{\begin{bmatrix}
\varphi(M) & \psi(M)
\end{bmatrix} \mid M \in \calS\Bigr\}.$$
Then, $\varphi$ and $\varphi+\psi$ are isomorphisms, whereas $\psi$ has rank $2$ and its image is $\Ker A$.
Thus, the endomorphism
$$u:=\psi \circ \varphi^{-1}\in \calL(V)$$
has rank $2$, whereas $u-\id$ is invertible. It follows that $1$ is not an eigenvalue of $u$.

Let us now consider the sole non-$\calS$-adapted vector $y \in V \setminus \{0\}$.
We know that $\calS^\bot y=\im A=\K x_2$, whence $\calS$ contains a rank $1$ matrix $M$ with image $\K y$
and kernel $\K x_2$. In particular $\psi(M)=0$, whereas $\varphi(M)=M x_1=y$, leading to $u(y)=0$.
Thus, $\Ker u=\K y$. As $\im u=\Ker A$, it follows that $\Ker u \subset \im u$ and hence $0$ is not a semi-simple eigenvalue of $u$.
Therefore, $0$ is a multiple eigenvalue of $u$, and one concludes that $u$ is triangularizable since $\dim V=3$.
As on the other hand $1$ is not an eigenvalue of $u$, we conclude that $u$ is nilpotent.
As $\rk u=2$, we deduce that, in some basis $(e_1,e_2,e_3)$ of $V$, the endomorphism $u$ is represented by
$\begin{bmatrix}
0 & 1 & 0 \\
0 & 0 & 1 \\
0 & 0 & 0
\end{bmatrix}$.
It follows that $\calS$ is the set of all matrices of the form
$$\Bigl\{\begin{bmatrix}
a.e_1+b.e_2+c.e_3 & b.e_1+c.e_2
\end{bmatrix}\mid (a,b,c)\in \K^3 \Bigr\},$$
which is obviously equivalent to $\calV_2$. Therefore, $\calS$ has Type 3.
\end{proof}

The case when $\calS^\bot$ contains a rank $1$ matrix is now settled.

\subsection{Case 3. No rank $1$ matrix in $\calS^\bot$, no super-$\calS$-adapted vector}\label{n=3case3}

In this section, we make the following additional assumption:
\begin{itemize}
\item[(A3)] The space $\calS^\bot$ contains no rank $1$ matrix, and there is no super-$\calS$-adapted vector.
\end{itemize}

From there, our aim is to prove that $\calS$ is of Type 2 or 5.

\begin{claim}
All the non-zero matrices of $\calS^\bot$ have rank $2$, and all the vectors $y \in V \setminus \{0\}$
satisfy $\dim \calS^\bot y=2$.
\end{claim}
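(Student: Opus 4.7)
The plan is a direct application of the counting identity from Claim \ref{countingclaim}, combined with the two standing assumptions and Claim \ref{no0trans}. With the notation of that claim, let $m_i$ denote the number of rank $i$ matrices in $\calS^\bot$ and $n_i$ the number of non-zero vectors $y \in V$ with $\dim \calS^\bot y = i$.

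First I would extract the elementary constraints. Since $\dim \calS^\bot = 3$, the set $\calS^\bot \setminus \{0\}$ has cardinality $2^3-1 = 7$, so $m_1+m_2+m_3 = 7$. Similarly, $V \setminus \{0\}$ has 7 elements, so $n_1+n_2+n_3 = 7$. Assumption (A3) gives $m_1 = 0$ (no rank $1$ matrix) and $n_3 = 0$ (no super-$\calS$-adapted vector), while Claim \ref{no0trans} forces $\dim \calS^\bot y \geq 1$ for every non-zero $y$, so there is no index $n_0$ to account for. Hence $n_1+n_2 = 7$ and $m_2+m_3 = 7$.

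Next I would plug these values into Claim \ref{countingclaim}. The identity $3m_1+m_2 = 3n_1+n_2$ becomes $m_2 = 3n_1+n_2 = 2n_1+(n_1+n_2) = 2n_1+7$. Since $m_2 \leq 7$, this forces $n_1 \leq 0$, hence $n_1 = 0$ and $m_2 = 7$, from which $m_3 = 0$ follows.

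Both conclusions of the claim drop out immediately: $m_3 = 0$ together with $m_1 = 0$ says every non-zero matrix of $\calS^\bot$ has rank exactly $2$, and $n_1 = n_3 = 0$ together with $n_1+n_2+n_3 = 7$ says every non-zero $y \in V$ satisfies $\dim \calS^\bot y = 2$. There is no real obstacle here; the only subtle point is remembering to invoke Claim \ref{no0trans} to rule out vectors $y$ with $\calS^\bot y = 0$, which is what makes the count $n_1+n_2 = 7$ rather than something smaller.
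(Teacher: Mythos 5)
Your proof is correct and follows essentially the same route as the paper: both apply the counting identity of Claim \ref{countingclaim} with $m_1=0$ and $n_1+n_2=7$ (the latter via Claim \ref{no0trans} and the absence of super-$\calS$-adapted vectors) to get $m_2=2n_1+7$, whence $n_1=0$ and $m_2=7$. Your write-up just makes the bookkeeping ($m_2+m_3=7$, so $m_3=0$) slightly more explicit than the paper does.
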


\begin{proof}
With the notation from Claim \ref{countingclaim}, we have $m_1=0$ and $n_1+n_2=7$.
Thus, $m_2=3n_1+(7-n_1)=2n_1+7$. As $m_2 \leq 7$, the only possibility is that $n_1=0$ and $m_2=7$,
which yields the claimed results.
\end{proof}

It follows that the reduced space $\overline{\calS^\bot}$, in which the domain of the operators is $V$,
has upper-rank $2$, dimension $3$, and there is no vector $y \in V$ such that $\dim (\overline{\calS^\bot} y)=1$.
Thus, combining Propositions \ref{nonprimitiveprop} and \ref{primitivetypeprop1}, we see that one of the following
four situations holds:
\begin{enumerate}[(1)]
\item There is a $2$-dimensional subspace $P$ of $U$ which contains the image of every matrix of $\calS^\bot$.

\item The operator space $\calS^\bot$ is represented, in well-chosen bases, by a space of matrices of the form
$$\begin{bmatrix}
? & [?]_{1 \times 2} \\
[?]_{(p-1) \times 1} & [0]_{(p-1) \times 2}
\end{bmatrix}.$$

\item The operator space $\overline{\calS^\bot}$ is represented by the matrix space $\Mata_3(\F_2)$.

\item The operator space $\overline{\calS^\bot}$ is represented by the matrix space $\calU_3(\F_2)$.
\end{enumerate}

However, option (2) can be discarded as it would yield some $y \in V \setminus \{0\}$ such that $\dim (\calS^\bot y) \leq 1$.

If option (3) holds true, then $p \geq 3$ and $\calS^\bot$ is equivalent to the space of all matrices of the form
$$\begin{bmatrix}
A \\
[0]_{3 \times (p-3)}
\end{bmatrix} \quad \text{with $A \in \Mata_3(\F_2)$,}$$
and one concludes that $\calS$ is equivalent to $\Mats_3(\F_2)\coprod \Mat_{3,p-3}(\F_2)$, i.e.\ it has Type 2.

If option (4) holds true then, as $\calH_3^\bot$ is equivalent to $\calU_3(\F_2)$
(see Section \ref{inequivalentspecialtype}) we deduce that
$\calS^\bot$ is equivalent to the space of all matrices of the form
$$\begin{bmatrix}
A \\
[0]_{3 \times (p-3)}
\end{bmatrix} \quad \text{with $A \in \calH_3^\bot$,}$$
and hence $\calS$ is equivalent to $\calH_3 \coprod \Mat_{3,p-3}(\F_2)$, i.e.\ it has Type 5.

In order to conclude under assumption (A3), we assume that outcome (1) holds and we try to find a contradiction.
As in the proof of Claim \ref{downtotype3}, no generality is lost in assuming that $p=2$.
Then, we use the same strategy as in that proof.
As $\calS^\bot$ contains no rank $1$ matrix, we see that $M \in \calS \mapsto M x$ has rank $3$ for all $x \in \K^2 \setminus \{0\}$.
Denote by $(e_1,e_2)$ the standard basis of $\K^2$, and consider the isomorphisms
$$\varphi : M \in \calS \mapsto M e_1 \quad \text{and} \quad \psi :  M \in \calS \mapsto M e_2.$$
Then, $u:=\psi \circ \varphi^{-1}$ is an automorphism of $V$. Moreover, since $M \in \calS \mapsto M (e_1-e_2) \in V$ is an isomorphism,
we also obtain that $u-\id$ is an automorphism of $V$.
It follows that $u$ has no eigenvalue in $\F_2$. As $\dim V=3$, the characteristic polynomial of $u$
must then be irreducible over $\F_2$, whence $u$ is cyclic and its characteristic polynomial $\chi_u(t)$ equals either
$t^3+t+1$ or $t^3+t^2+1$ (these are the sole polynomials of degree $3$ over $\F_2$ with no root in $\F_2$).
However, as no generality is truly lost in replacing $u$ with $u-\id$ (this means that we perform an elementary column operation
on $\calS$), we see that we may assume that $\tr u=0$, in which case $\chi_u(t)=t^3+t+1$.
Thus, in a well-chosen basis of $V$, the companion matrix
$\begin{bmatrix}
0 & 0 & 1 \\
1 & 0 & 1 \\
0 & 1 & 0
\end{bmatrix}$ represents $u$. Without loss of generality, we may assume that this basis is the standard one of $V=\K^3$.
Then, we are reduced to the situation where
$$\calS=\Biggl\{
\begin{bmatrix}
a & c \\
b & a+c \\
c & b
\end{bmatrix} \mid (a,b,c)\in \K^3\Biggr\}.$$
Considering $F \modu y_2$ and noting that $\calS \modu y_2$ has Type 1, we can subtract a local map from $F$ so
as to reduce the situation to the one where
$$F : \begin{bmatrix}
a & c \\
b & a+c \\
c & b
\end{bmatrix} \mapsto \begin{bmatrix}
\alpha (a+c) \\
\beta b+\gamma (a+c) \\
0
\end{bmatrix} \quad \text{for some $(\alpha,\beta,\gamma)\in \K^3.$}$$
From there, using the identity $s^2=s$ for all $s \in \K$, we obtain:
for all $(a,b,c)\in \K^3$,
\begin{multline*}
0=\begin{vmatrix}
a & c & \alpha (a+c) \\
b & a+c & \beta b+\gamma(a+c)\\
c & b & 0
\end{vmatrix} \\
= \gamma\,abc+(\alpha+\beta+\gamma) a b+(\alpha+\beta) bc +(\alpha+\gamma) ac+(\alpha+\gamma) c.
\end{multline*}
As we are dealing with a polynomial of degree at most one in each variable, we deduce that its coefficients are all zero,
and in particular $\gamma=0$, $\alpha+\gamma=0$ and $\alpha+\beta=0$, which yields $\alpha=\beta=\gamma=0$.
Therefore, $F=0$, contradicting the assumption that $F$ should be non-local.

This completes the proof in the case when $\calS^\bot$ contains no rank $1$ matrix and there is no super-$\calS$-adapted vector.

\subsection{Case 4. No rank $1$ matrix in $\calS^\bot$, one super-$\calS$-adapted vector}\label{n=3case4}

In this section, we make the following assumption:
\begin{itemize}
\item[(A4)] The space $\calS^\bot$ contains no rank $1$ matrix, and there is a super-$\calS$-adapted vector.
\end{itemize}

Under this new assumption, we shall prove that $F$ has Type 6 or 7.
By Claim \ref{atmost1superadapted}, there is a unique super-$\calS$-adapted vector, and we denote it by $y_0$.

We can readily describe the various possibilities for the ranks of the operators in $\calS^\bot$
and in $\widehat{\calS^\bot}$:

\begin{claim}\label{possiblerankscase4}
All the non-zero vectors of $V$ are $\calS$-adapted,
and $\calS^\bot$ contains one rank $3$ matrix and six rank $2$ matrices.
\end{claim}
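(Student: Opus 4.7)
The plan is to prove this claim by pure counting, using Claim \ref{countingclaim} together with the structural information we have accumulated under assumption (A4). There are no structural or geometric subtleties to overcome; the main work is in correctly bookkeeping the totals, so I do not expect any genuine obstacle, only a small system of linear relations.

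First I will tally the two populations being counted. On the matrix side, $\calS^\bot$ is a $3$-dimensional space over $\F_2$, so it contains $2^3-1=7$ non-zero matrices. By assumption (A4), none of them has rank $1$, i.e.\ $m_1=0$; hence $m_2+m_3=7$. On the vector side, $V=\F_2^3$ has $7$ non-zero vectors, so $n_0+n_1+n_2+n_3=7$. Claim \ref{no0trans} gives $n_0=0$, while Claim \ref{atmost1superadapted} combined with the existence of the super-$\calS$-adapted vector $y_0$ from (A4) gives $n_3=1$; therefore $n_1+n_2=6$.

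Next I will plug these data into Claim \ref{countingclaim}. Since $m_1=0$ and rank $3$ matrices together with vectors $y$ satisfying $\dim \calS^\bot y=3$ contribute nothing to the common count $\#\calN$, the identity $3m_1+m_2=3n_1+n_2$ reduces to
\[
m_2=3n_1+n_2=3n_1+(6-n_1)=2n_1+6.
\]
Combining with $m_2+m_3=7$ yields $m_3=1-2n_1$. The non-negativity $m_3\geq 0$ forces $n_1=0$, whence
\[
n_1=0,\quad n_2=6,\quad m_2=6,\quad m_3=1.
\]

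Finally I will read off the two conclusions. Since $n_0=n_1=0$, every non-zero vector $y\in V$ satisfies $\dim\calS^\bot y\geq 2$, which is precisely the definition of $\calS$-adapted; and $m_2=6$, $m_3=1$ gives the announced rank distribution in $\calS^\bot$. This will complete the proof of the claim.
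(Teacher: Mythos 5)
Your proof is correct and follows essentially the same route as the paper: both apply the counting identity of Claim \ref{countingclaim} with $m_1=0$ and $n_1+n_2=6$ (from Claims \ref{no0trans} and \ref{atmost1superadapted} under (A4)) to get $m_2=2n_1+6$, and then a trivial non-negativity/upper-bound constraint forces $n_1=0$, $m_2=6$, $m_3=1$. The only cosmetic difference is that you invoke $m_3\geq 0$ where the paper uses $m_2\leq 7$; these are equivalent here.
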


\begin{proof}
With the notation from Claim \ref{countingclaim}, we have $m_1=0$ and $n_1+n_2=6$ from assumption (A4).
Thus $m_2=3n_1+(6-n_1)=2n_1+6$. As $m_2 \leq 7$, the only option is that $n_1=0$ and $m_2=6$,
which is precisely the claimed result.
\end{proof}

\begin{claim}\label{nocommonimage}
The intersection of all the spaces $\calS^\bot y$, with $y \in V \setminus \{0,y_0\}$, is zero.
\end{claim}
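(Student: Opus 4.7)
I will argue by contradiction: suppose some non-zero $x \in U$ lies in $\calS^\bot y$ for every $y \in V \setminus \{0, y_0\}$. Denote by $N_0$ the unique rank-$3$ matrix of $\calS^\bot$ provided by Claim \ref{possiblerankscase4} and by $N_1, \dots, N_6$ its six rank-$2$ matrices, labelled so that $\Ker N_j = \K y_j$ for $y_1, \dots, y_6$ enumerating $V \setminus \{0, y_0\}$. The first step is a double count of the pairs $(N, y) \in \calS^\bot \times (V \setminus \{0, y_0\})$ with $Ny = x$: for each fixed $y$ the fibre is a coset of the $1$-dimensional space $\Ker \widehat{y}$, hence has exactly two elements, giving $12$ pairs in total. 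Summing in the other order, and using that the super-$\calS$-adaptedness of $y_0$ makes $N \mapsto Ny_0$ injective, one obtains the identity $a + 2b - c = 12$, where $a := \mathbf{1}_{x \in \im N_0}$, $b := \#\{j \in \{1, \dots, 6\} : x \in \im N_j\}$ and $c := \mathbf{1}_{x \in \calS^\bot y_0}$. The constraints $a, c \in \{0,1\}$ and $b \leq 6$ leave no option other than $b = 6$ with $a = c$, so $x$ lies in $\im N_j$ for every $j \in \{1, \dots, 6\}$.

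Now set $\pi \colon U \twoheadrightarrow U/\K x$ and $\calT := \pi \circ \calS^\bot$. Since every non-zero $N \in \calS^\bot$ has rank at least $2$ by assumption (A4), its image is not contained in $\K x$, so $N \mapsto \pi \circ N$ is injective and $\dim \calT = 3$. The case $a = 0$ can be eliminated as follows: if $x \notin \im N_0$, then $\pi \circ N_0$ retains rank $3$, yet for any $j \in \{1, \dots, 6\}$ the matrix $N_0 + N_j$ lies in $\calS^\bot \setminus \{0, N_0\}$ and hence equals some $N_\ell$ of rank $2$; since $x$ belongs to $\im N_j$ and $\im N_\ell$, both $\pi \circ N_j$ and $\pi \circ N_\ell$ have rank $1$, so their sum $\pi \circ N_0$ would have rank at most $2$ --- a contradiction. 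Therefore $a = 1$ and $\calT$ consists of zero, one rank-$2$ matrix $T_0 := \pi \circ N_0$, and six rank-$1$ matrices $T_j := \pi \circ N_j$. For each $j$, $T_0 + T_j$ is a non-zero element of $\calT$ distinct from $T_0$ and is therefore of rank $1$, so $T_0 + T_j = T_{\sigma(j)}$ for some fixed-point-free involution $\sigma$ of $\{1, \dots, 6\}$. Writing each $T_j = u_j v_j^\top$ and exploiting, on each of the three pairs $(j, \sigma(j))$, the rank-$2$ identity $T_0 = u_j v_j^\top + u_{\sigma(j)} v_{\sigma(j)}^\top$, one sees that $\Vect(u_j, u_{\sigma(j)}) = \im T_0$ and $\Vect(v_j, v_{\sigma(j)}) = Q$, where $Q \subset V^*$ denotes the row-space of $T_0$; consistency across the three pairs then forces every $u_l$ into $\im T_0$ and every $v_l$ into $Q$. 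Hence every element of $\calT$ vanishes on the $1$-dimensional annihilator $Q^\bot \subset V$ and takes values in $\im T_0$, so after a choice of bases $\calT$ identifies with a $3$-dimensional subspace of $\Mat_{2,2}(\F_2)$ still carrying exactly one rank-$2$ and six rank-$1$ non-zero matrices.

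The final step, which I expect to be the main technical hurdle, is to rule out this rank distribution. Every $3$-dimensional subspace of $\Mat_{2,2}(\F_2)$ is a hyperplane, hence the trace-orthogonal of a non-zero matrix $N$ of rank $1$ or $2$; up to $\GL_2(\F_2) \times \GL_2(\F_2)$-equivalence, which preserves rank distributions, one may assume $N = I_2$ or $N = E_{1,1}$, yielding the two model spaces $\{M \in \Mat_{2,2}(\F_2) : \tr M = 0\}$ and $\{M \in \Mat_{2,2}(\F_2) : m_{1,1} = 0\}$. A direct inspection of the seven non-zero matrices of each gives, respectively, three rank-$1$ plus four rank-$2$ matrices, and five rank-$1$ plus two rank-$2$ matrices --- neither of which matches the required six rank-$1$ plus one rank-$2$. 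This contradiction completes the proof.
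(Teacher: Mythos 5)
Your proof is correct, but it follows a genuinely different route from the paper's. The paper argues on the dual side: it projects the operators $\widehat{y} : N \in \calS^\bot \mapsto Ny$ modulo $\K x$, so that the hypothesis $x \in \calS^\bot y=\im \widehat{y}$ immediately forces the six operators attached to the vectors $y \in V \setminus \{0,y_0\}$ to drop to rank $1$ while the one attached to $y_0$ keeps rank at least $2$, and it then dispatches this configuration in one stroke with the quadratic form lemma (Lemma \ref{quadformlemma}), applied to a $2\times 2$ minor that vanishes on the rank-at-most-$1$ locus but not at the remaining operator. You instead project the matrices $N$ themselves modulo $\K x$ on the target side; this costs you the preliminary double count, which is needed to transfer the hypothesis from the spaces $\calS^\bot y$ to the images $\im N$ (and is carried out correctly, including the elimination of the case $x \notin \im N_0$), and your endgame --- compressing $\calT$ into a hyperplane of $\Mat_2(\F_2)$ via the involution $\sigma$ and comparing the rank distributions of the two equivalence classes of such hyperplanes --- is an elementary, fully explicit substitute for the quadratic form lemma. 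The paper's version is shorter and reuses existing machinery; yours is longer but self-contained beyond Claim \ref{possiblerankscase4}. One cosmetic point: the labelling $\Ker N_j=\K y_j$ tacitly assumes that the six rank-$2$ matrices have pairwise distinct kernels (which is true, since a common kernel $\K y$ would give $\dim \calS^\bot y\leq 1$, contradicting Claim \ref{possiblerankscase4}), but you never actually use this labelling, so nothing is at stake.
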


\begin{proof}
Assume on the contrary that some non-zero vector $x$ belongs to all the spaces $\calS^\bot y$ with $y \in V \setminus \{0,y_0\}$.
Let us consider the operator space $\calT:=\widehat{\calS^\bot} \modu x$, and the canonical projection $\pi : U \rightarrow U /\K x$.
For $y \in V$, denote by $\widehat{y}$ the operator $N \in \calS^\bot \mapsto Ny$.
For every $y \in V \setminus \{0,y_0\}$, we have $\rk(\pi \circ \widehat{y})=\rk \widehat{y}-1$, and we have
$\rk(\pi \circ \widehat{y_0}) \geq 2$,
whence $\calT$ has dimension $8$ and contains exactly six rank $1$ operators.
Using Lemma \ref{quadformlemma}, we see that this is absurd: indeed, we can find a quadratic form on $\calT$
that does not vanish at the sole operator in $\calT$ which has rank greater than $1$, and that vanishes at every rank $1$ operator.
\end{proof}

The next result is the key to the rest of our study:

\begin{claim}\label{commonvectorimage}
Let $\calP$ be a non-linear affine hyperplane of $V$ which contains $y_0$.
Then, some non-zero vector $x$ belongs to all the spaces $\calS^\bot y$ with $y \in \calP \setminus \{y_0\}$.
\end{claim}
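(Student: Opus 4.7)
The plan is to argue by contradiction, in the spirit of the proof of Claim \ref{nocommonimage}. Suppose that $\bigcap_{y \in \calP \setminus \{y_0\}} \calS^\bot y = \{0\}$, and denote $\calP \setminus \{y_0\} = \{z_1, z_2, z_3\}$. A direct computation over $\F_2$ yields $z_1 + z_2 + z_3 = y_0$, and, since $y_0 \not\in H$, the triple $(z_1, z_2, z_3)$ is a basis of $V$. Set $W_i := \calS^\bot z_i$; each $W_i$ is $2$-dimensional by Claim \ref{possiblerankscase4}.

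My first step is to exhibit a non-zero vector in $W_1 \cap W_2$. Because $z_1+z_2 \in V \setminus \{0, y_0\}$, Claim \ref{possiblerankscase4} guarantees that the operator $\widehat{z_1+z_2} = \widehat{z_1}+\widehat{z_2}$ of $\widehat{\calS^\bot}$ has rank $2$, and hence its kernel in $\calS^\bot$ is $1$-dimensional. Taking a non-zero $N$ in this kernel gives $N z_1 = N z_2 =: x \in W_1 \cap W_2$. The matrix $N$ has rank $2$ (the unique rank-$3$ operator $M_0$ of $\calS^\bot$ is injective on $V$ and hence does not belong to the kernel of $\widehat{z_1+z_2}$), so $\ker N = \K(z_1+z_2)$; since $z_1 \not\in \K(z_1+z_2)$, the vector $x = N z_1$ is non-zero. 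The contradiction hypothesis then forces $x \not\in W_3$.

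The next step is to project modulo $\K x$: letting $\pi : U \twoheadrightarrow U/\K x$ and $\calT := \pi(\widehat{\calS^\bot})$, we note that every non-zero element of $\widehat{\calS^\bot}$ has rank at least $2$ and hence image not contained in $\K x$, so $\calT$ is again $3$-dimensional. One then tabulates the ranks of the eight operators $\pi \circ \widehat{y}$ for $y \in V$: $0$ for $y=0$; $1$ for $y \in \{z_1, z_2\}$; $2$ for $y=z_3$; $2$ or $3$ for $y = y_0$, according as $x \in \calS^\bot y_0$ or not; and $1$ or $2$ for $y \in \{z_1+z_2, z_1+z_3, z_2+z_3\}$, according as $x$ belongs to the corresponding $W_y$ or not. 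A contradiction is then to be extracted from Lemma \ref{quadformlemma} by constructing a non-zero quadratic form on $\calT$ (e.g.\ a suitable $2 \times 2$ minor of a matrix representation of operators in $\calT$) whose preimage of $1$ is contained in a $1$-dimensional subspace. The main obstacle lies in this last step: depending on how many of the operators $\pi \circ \widehat{y_0}$ and $\pi \circ \widehat{z_i+z_j}$ retain rank $2$, several operators in $\calT$ may take the value $1$ for a given minor, so the argument has to be refined case by case -- exploiting the explicit form of $x$ in terms of $N$ and, if necessary, running the same construction in parallel with the analogous vectors $x_{13} \in W_1 \cap W_3$ and $x_{23} \in W_2 \cap W_3$ (built the same way from $\widehat{z_1+z_3}$ and $\widehat{z_2+z_3}$), so as to combine the constraints and ultimately pinpoint a common element of $W_1 \cap W_2 \cap W_3$.
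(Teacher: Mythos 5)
Your proposal does not reach a proof: after the (correct) observation that the pairwise intersections $W_i \cap W_j$ are non-trivial, the decisive step --- extracting a contradiction from $W_1 \cap W_2 \cap W_3 = \{0\}$ --- is only announced, and you yourself flag that it splinters into unresolved cases. Moreover the gap is not one of missing details: the strategy cannot be completed with the ingredients you allow yourself. Everything you invoke after the setup (Claim \ref{possiblerankscase4}, the rank profile of $\calS^\bot$ and of the projected operators $\pi\circ\widehat{y}$, Lemma \ref{quadformlemma}) depends only on the rank statistics of $\calS^\bot$, and these do not imply the claim. Concretely, take $\calM:=\Vect(I_3,A,B)\subset \Mat_3(\F_2)$ with
$$A=\begin{bmatrix} 0&0&1\\0&1&1\\0&0&1\end{bmatrix},\qquad B=\begin{bmatrix}1&1&0\\1&1&1\\1&1&0\end{bmatrix}.$$
One checks that $I_3$ is the only invertible element of $\calM$ and that $A$, $A+I_3$, $B$, $B+I_3$, $A+B$, $A+B+I_3$ all have rank $2$, with respective kernels $\K e_1$, $\K e_2$, $\K(e_1+e_2)$, $\K(e_1+e_3)$, $\K(e_2+e_3)$, $\K(e_1+e_2+e_3)$; thus $\calM$ has exactly the rank data that assumption (A4) and Claim \ref{possiblerankscase4} impose on $\calS^\bot$, with $y_0=e_3$. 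Yet for the non-linear affine hyperplane $\calP=\{y\in\K^3 : y_1+y_3=1\}\ni e_3$ one finds $\calM e_1\cap \calM(e_1+e_2)=\K e_1$ while $e_1\notin \calM(e_2+e_3)=\Vect(e_2+e_3,\,e_1+e_3)$, so the triple intersection is zero. (Your projected family is realized here too, with two rank-$1$ and five rank-$2$ members; since the configuration exists, no argument from this rank data --- quadratic forms included --- can produce a contradiction.)

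The missing ingredient is the non-local range-compatible map $F$ itself, which the paper's proof uses head-on and which makes the argument short: choose coordinates with $y_0=e_3$ and translation space $P=\Vect(e_1,e_2)$; since $y_0$ is super-$\calS$-adapted one may normalize $F\modu y_0=0$, so that $F(M)=\varphi(L_3(M))\,e_3$ with $\varphi\neq 0$, and after a change of basis of $U$, $F(M)=m_{3,1}\,e_3$. For a rank-$1$ matrix $M\in\calS$ with image $\K y$ and $y\in\calP\setminus\{y_0\}$, range-compatibility gives $F(M)\in\K y\cap\K y_0=\{0\}$, hence $m_{3,1}=0$ and then $Mx_1\in P\cap\K y=\{0\}$, where $x_1$ is the first vector of the standard basis of $U$. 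By the duality noted at the start of the paper's proof ($x\in\calS^\bot y$ if and only if $x$ lies in the kernel of every rank-$1$ matrix of $\calS$ with image $\K y$), the vector $x_1$ is the required common vector. To salvage your setup you would have to re-inject $F$ (or some consequence of its existence beyond the rank counts) into the endgame.
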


\begin{proof}
This amounts to finding a non-zero vector which belongs to the kernel of every rank $1$ matrix of $\calS$
whose image is spanned by a vector of $\calP \setminus \{y_0\}$.
Denote by $P$ the translation vector space of $\calP$. Then, we may assume that the first two
vectors of the standard basis of $\K^3$ span $P$ and that the third one is $y_0$.
As $y_0$ is super-$\calS$-adapted, $F \modu y_0$ is local, and hence we can actually assume that $F \modu y_0=0$.
As $F$ is non-zero, it follows that there exists a non-zero linear form $\varphi$ on $\Mat_{1,p}(\K)$
such that, for all $M \in \calS$, we have
$$F : M \longmapsto \begin{bmatrix}
0 \\
0 \\
\varphi(L_3(M))
\end{bmatrix},$$
where $L_3(M)$ denotes the last row of $M$.
Changing the basis of $U$ further, we can actually assume that
$$F : M \longmapsto \begin{bmatrix}
0 \\
0 \\
m_{3,1}
\end{bmatrix}.$$
In that situation, we prove that the first vector $x_1$ of the standard basis of $U$ has the required properties.
Let $M \in \calS$ be a rank $1$ matrix whose image is $\K y$ for some $y \in \calP \setminus \{y_0\}$.
Then, $F(M) \in \K y \cap \K y_0$, whence $F(M)=0$. Therefore, $m_{3,1}=0$.
Thus, $Mx_1 \in P$, whence $Mx_1=0$ as $P \cap \K y=\{0\}$.
This concludes our proof.
\end{proof}

As every non-zero vector of $V$ is $\calS$-adapted,
no non-zero vector of $V$ belongs to the kernel of two distinct non-zero matrices of $\calS^\bot$. This yields:

\begin{claim}\label{distinctkernels}
The matrices of $\calS^\bot$ have pairwise distinct kernels.
\end{claim}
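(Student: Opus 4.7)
The plan is to leverage the observation recorded in the paragraph just before the claim: since every non-zero vector of $V$ is $\calS$-adapted (by Claim \ref{possiblerankscase4}), each $y \in V \setminus \{0\}$ satisfies $\dim \calS^\bot y \geq 2$, so the evaluation map $\widehat{y} : N \in \calS^\bot \mapsto Ny$ has kernel of dimension at most $\dim \calS^\bot - 2 = 1$. Over $\F_2$, a $1$-dimensional subspace contains only one non-zero element, hence at most one non-zero matrix of $\calS^\bot$ can annihilate any given non-zero $y$.

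From there, I would argue by contradiction. Suppose two distinct matrices $N_1,N_2 \in \calS^\bot$ share the same kernel $K$. The zero matrix has kernel $V$, while by assumption (A4) together with Claim \ref{possiblerankscase4}, every non-zero matrix of $\calS^\bot$ has rank $2$ or $3$, hence a kernel of dimension at most $1$. It follows that $N_1$ and $N_2$ must both be non-zero, and $K$ has dimension $0$ or $1$.

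If $\dim K = 0$, then both $N_1$ and $N_2$ have rank $3$, contradicting the fact from Claim \ref{possiblerankscase4} that $\calS^\bot$ contains exactly one rank $3$ matrix. If $\dim K = 1$ and $K = \K y$ for some $y \in V \setminus \{0\}$, then the kernel of $\widehat{y}$ contains the two distinct non-zero elements $N_1$ and $N_2$, contradicting the preliminary observation above that $\Ker \widehat{y}$ admits only one non-zero element.

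Either way we reach a contradiction, so the kernels of the matrices in $\calS^\bot$ are pairwise distinct. There is no real obstacle to this argument; it is a short consequence of the $\calS$-adaptedness of all non-zero vectors combined with the rank count from Claim \ref{possiblerankscase4} and the fact that $\F_2$-lines have a unique non-zero vector.
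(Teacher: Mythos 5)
Your argument is correct and is essentially the paper's own: the paper derives the claim in one sentence from the observation that every non-zero $y\in V$ is $\calS$-adapted, so $\dim\calS^\bot y\geq 2$, hence $\Ker\widehat{y}$ is at most a line and over $\F_2$ contains at most one non-zero matrix. Your additional case split (using that $\calS^\bot$ has exactly one rank $3$ matrix to rule out two matrices with trivial kernel) just spells out the details the paper leaves implicit.
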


From now on, we split the discussion into two main cases,
whether the first or the second one of the following two assumptions holds:
\begin{itemize}
\item[(B1)] There are distinct matrices $A$ and $B$ in $\calS^\bot$ such that $\im A=\im B$.
\item[(B2)] The matrices of $\calS^\bot$ have pairwise distinct images.
\end{itemize}

We shall prove that $\calS$ has Type 6 or 7, whether condition (B1) or condition (B2) holds.

\begin{claim}
Assume that condition (B1) holds. Then, $\calS$ has Type 6.
\end{claim}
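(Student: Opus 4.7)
The plan is to first reduce to the case $p=3$ via the Splitting Lemma, then pin down $\calS^\bot$ up to equivalence and identify it with $\calI_3^\bot$. By Claim \ref{possiblerankscase4}, $\calS^\bot$ contains a unique rank-$3$ matrix $R$, so the sum of ranges of operators in $\calS^\bot$ is a $3$-dimensional subspace $U_0$ of $U$. Choosing a complement of $U_0$ in $U$ and applying the Splitting Lemma together with Lemma \ref{espacetotal}, I would write $\calS = \calT \coprod \Mat_{3,p-3}(\F_2)$ with $\calT \subset \Mat_3(\F_2)$ three-dimensional; the map $F$ splits accordingly, with a local component on $\Mat_{3,p-3}(\F_2)$ and a non-local one on $\calT$. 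It therefore suffices to show $\calT$ is equivalent to $\calI_3$, and I may assume $p=3$.

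Under hypothesis (B1), let $\calI := \im A = \im B$. Since $\calS^\bot$ contains no rank-$1$ matrix, the matrix $A+B \neq 0$ has rank $2$ with image $\calI$; the subspace $W := \{N \in \calS^\bot : \im N \subset \calI\}$ contains $\Vect(A,B)$ but not $R$ (as $\im R = U \neq \calI$), so $W = \Vect(A,B)$ and $\calS^\bot = \Vect(A,B) \oplus \K R$, with $R+A$, $R+B$, $R+A+B$ being the remaining three rank-$2$ matrices. Choose a basis of $U$ in which $\calI = \Vect(e_1,e_2)$; then $A$, $B$, $A+B$ have zero third row. By Claim \ref{distinctkernels}, the kernels $\K k_A$, $\K k_B$, $\Ker(A+B)$ are three distinct lines in $V$. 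A short but essential step is to show that they span $V$: if not, then $\Ker(A+B) = \K(k_A+k_B)$, which is equivalent to $Ak_B = Bk_A =: w \in \calI \setminus \{0\}$; a case analysis on the possible values of $(Av, Bv) \in \calI \times \calI$ for $v \notin \Vect(k_A,k_B)$ (using that $\calI \setminus \{0,w\} = \{u_1,u_2\}$ with $u_1+u_2 = w$) then shows that every configuration forces $\rk(A+B) = 1$, contradicting the no-rank-$1$ hypothesis. Taking the three kernels as a basis of $V$ and further normalizing via the residual basis freedom on $U$ preserving $\calI$, one brings $A$ and $B$ into the forms of the $(1,0,0)$ and $(0,1,0)$ elements of $\calI_3^\bot$.

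To finish, I would identify $R$. Since $y_0$ is super-$\calS$-adapted, $F \modu y_0$ is local, and after subtracting a local map from $F$ we have $F(s) = \lambda(s)\,y_0$ for some nonzero linear form $\lambda$ on $\calS$ satisfying $\lambda(s) \neq 0 \Rightarrow y_0 \in \im s$; moreover $y_0$ is pinned down as the unique line of $V$ not appearing as the kernel of any rank-$2$ matrix of $\calS^\bot$. The requirements that $R+A$, $R+B$, $R+A+B$ have three pairwise distinct kernels, distinct from those of $A$, $B$, $A+B$ and from $y_0$, combined with the range-compatibility constraint on $\lambda$, leave essentially one equivalence class for $R$ modulo $\Vect(A,B)$, matching the $(0,0,1)$ element of $\calI_3^\bot$. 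By the table in Section \ref{inequivalentspecialtype}, we then conclude that $\calS$ has Type 6. The main obstacle is this last identification: several rank-$3$ matrices are a priori compatible with the rank/kernel combinatorics alone, and one must carefully invoke the range-compatibility condition to rule out all but the correct one up to equivalence, while keeping track of the residual basis freedom preserving $A$ and $B$ in normal form.
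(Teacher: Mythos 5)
Your proposal has two genuine gaps, one at the very beginning and one at the end. First, the opening reduction to $p=3$ rests on a false inference: the existence of a unique rank-$3$ matrix in $\calS^\bot$ does \emph{not} imply that the sum of the ranges of the operators in $\calS^\bot$ is $3$-dimensional (it only gives a lower bound). The sibling case (B2) of the same section is a direct counterexample to your reasoning: there $\calS^\bot$ also has exactly one rank-$3$ matrix and six rank-$2$ matrices, yet the sum of the ranges turns out to be $4$-dimensional ($\calH_4^\bot$ lives in $\Mat_{4,3}$). Under (B1) the sum of the ranges is indeed $3$-dimensional, but that is a conclusion requiring the bulk of the work; in the paper it emerges from showing that the ``extra rows'' map $\psi$ (recording the components of $Ny$ outside $Q:=\im A$) has rank exactly $1$, which uses a count of invertible matrices in $\Mats_2(\F_2)$ together with the rank-$2$ constraint on $N(y)$ for $y\neq y_0$. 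You cannot assume $p=3$ up front.

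Second, the identification of $R$ modulo $\Vect(A,B)$ --- which you yourself flag as ``the main obstacle'' --- is precisely where the real content lies, and your sketch does not carry it out. The rank/kernel combinatorics alone do not pin $R$ down, and it is not clear that your proposed use of the form $\lambda$ closes the case. The paper's mechanism is different and is the crux of the argument: after normalizing so that $K(V)=\Mats_2(\F_2)$ (ruling out the upper-triangular hyperplane because it would force a rank-$1$ matrix into $\calS^\bot$), one shows that the column map $\varphi : K(y)\mapsto C_1(y)$ is itself a \emph{range-compatible} linear map on $\Mats_2(\F_2)$ --- using Claim \ref{commonvectorimage} to locate the common vector $x$ and deduce $C_1(y)\in\im K(y)$ when $\rk K(y)=1$ --- and then invokes Theorem \ref{symmetricF2} to conclude that $\varphi$ is, up to the allowed column operations, the unique non-local map $\begin{bmatrix} a & b \\ b & c\end{bmatrix}\mapsto\begin{bmatrix} a \\ c\end{bmatrix}$ (the zero case being excluded since $\calS^\bot$ has no rank-$1$ matrix). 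That is the step that forces $\widehat{\calS^\bot}$, and hence $\calS^\bot$, into the $\calI_3^\bot$ shape. Your intermediate analysis of $A$, $B$, $A+B$ (distinct kernels spanning $V$, the little case analysis showing $\rk(A+B)=1$ otherwise) is correct and compatible with the paper, but without the two missing pieces the proof does not go through.
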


\begin{proof}
Choose distinct matrices $A$ and $B$ in $\calS^\bot$ such that $\im A=\im B$.
As $\calS^\bot$ contains exactly one rank $3$ matrix, exactly one rank $0$ matrix, and all the other ones have rank $2$,
we obtain that $\rk A=\rk B=2$.
Then, $\im(A+B) \subset \im A$ with $A+B \in \calS^\bot \setminus \{0\}$, whence $\rk(A+B)=2$.

This means that we have $2$-dimensional subspaces $P$ and $Q$, respectively, of $\calS^\bot$ and $U$, such that
$\im N=Q$ for all $N \in P \setminus \{0\}$.
Without loss of generality, we may assume that $Q=\K^2 \times \{0\}$.
From there, we choose a basis of $P$ and extend it into a basis $\calB$ of $\calS^\bot$.
Now, for all $y \in V$, we denote by $N(y)$ the matrix representing $M \in \calS^\bot \mapsto My$ in the basis
$\calB$ and the canonical basis of $U$. It follows from our assumptions that every $N(y)$
splits up as
$$N(y)=\begin{bmatrix}
K(y) & C_1(y) \\
[0]_{(p-2) \times 2} & C_2(y)
\end{bmatrix} \quad \text{with $K(y) \in \Mat_2(\K)$, $C_1(y) \in \K^2$ and $C_2(y) \in \K^{p-2}$.}$$
Given $y \in V \setminus \{0\}$, we know from Claim \ref{distinctkernels} that $Ay \neq 0$ or $By \neq 0$,
whence $K(y) \neq 0$. Therefore, $K(V)$ is a $3$-dimensional subspace of $\Mat_2(\K)$,
and $C_2(y)=0$ and $C_1(y)=0$ whenever $K(y)=0$. This yields linear maps
$$\varphi : K(V) \rightarrow \K^2 \quad \text{and} \quad \psi : K(V) \rightarrow \K^{p-2}$$
such that
$$\forall y \in V, \quad \varphi(K(y))=C_1(y) \quad \text{and} \quad \psi(K(y))=C_2(y).$$
Moreover, $\psi$ is non-zero because otherwise we would have $\dim \calS^\bot y_0 \leq 2$.

As $K(V)$ is a linear hyperplane of $\Mat_2(\K)$, either it is equivalent to the space of all upper-triangular $2 \times 2$ matrices over $\K$,
or it is equivalent to $\Mats_2(\K)$. The first case is ruled out because it would yield
some vector $x \in \K^2$ such that $\dim K(V)x=1$, contradicting the fact that $\calS^\bot$ contains no rank $1$ matrix.
Thus, no generality is lost in assuming that $K(V)=\Mats_2(\K)$.

For all $y \in V \setminus \{0,y_0\}$, the matrix $N(y)$ has rank $2$, whence $C_2(y)=0$ if
$K(y)$ is invertible. As $\Mats_2(\K)$ contains exactly $4$ invertible matrices, it follows that
$\Ker \psi$ contains at least three non-zero vectors, whence $\rk \psi=1$ and $\Ker \psi$
is a hyperplane of $\Mats_2(\K)$.
Denote by $H$ the linear hyperplane of $V$ consisting of the vectors $y$ for which $\psi(K(y))=0$.
Then, for all $y \in H \setminus \{0\}$, we see that $\calS^\bot y\subset \K^2 \times \{0\}$, whence
$\calS^\bot y=Q$ and $y \neq y_0$. By Claim \ref{commonvectorimage}, we deduce that there is a non-zero vector $x \in \K^p$
which belongs to the range of $N(y)$ for all $y \in V \setminus (H \cup \{y_0\})$.
If $x \in Q$, then $x \in \calS^\bot y$ for all $y \in H \setminus \{0\}$, as well as for all $y \in  V \setminus (H \cup \{y_0\})$,
contradicting Claim \ref{nocommonimage}. Therefore, $x \not\in Q$, whence no generality is lost in assuming that
$x$ is the third vector of the standard basis of $\K^p$.
In particular, it follows that the range of $\psi$ must contain $\begin{bmatrix}
1 & 0 & \cdots & 0
\end{bmatrix}^T$, whence $\im \psi=\K \times \{0\}$.

We have seen that all the non-zero matrices of $\Ker \psi$
are invertible, whence $\psi(M)=\begin{bmatrix}
1 & 0 & \cdots & 0
\end{bmatrix}^T$ for every rank $1$ matrix $M \in \Mats_2(\K)$.
As the rank $1$ matrices $\begin{bmatrix}
1 & 0 \\
0 & 0
\end{bmatrix}$, $\begin{bmatrix}
0 & 0 \\
0 & 1
\end{bmatrix}$ and $\begin{bmatrix}
1 & 1 \\
1 & 1
\end{bmatrix}$ span $\Mats_2(\K)$, we deduce that
$$\psi : \begin{bmatrix}
a & b \\
b & c
\end{bmatrix} \mapsto \begin{bmatrix}
a+b+c \\
[0]_{(p-3) \times 1}
\end{bmatrix}.$$
Next, we analyze $\varphi$.
Let $y \in V$ be with $\rk K(y)=1$. Then, $\psi(K(y))=\begin{bmatrix}
1 & 0 & \cdots & 0
\end{bmatrix}^T$ and $y \neq y_0$ since $N(y_0)$ has rank $3$.
Thus, $y \in V \setminus (H \cup \{y_0\})$, to the effect that $x \in \im N(y)$.
As $x$ is the third vector of the standard basis and $N(y)$ has rank $2$, it follows that
$$\rk \begin{bmatrix}
K(y) & C_1(y)
\end{bmatrix} \leq 1,$$
and hence $C_1(y) \in \im K(y)$. On the other hand, if $\rk K(y)=2$ then it is obvious that $C_1(y) \in \im  K(y)$.
Thus, $\varphi$ is range-compatible!
Note that we alter none of our assumptions by choosing some $(\lambda,\mu)\in \K^2$ and by performing the column operation
$C_3 \leftarrow C_3+\lambda C_1+\mu C_2$ on the matrix space $\bigl\{N(y) \mid y \in V\bigr\}$
(this simply means that we change our choice of last basis vector of
$\calS^\bot$ without modifying the first two). Thus, by Proposition \ref{symmetricF2}, we see that no generality is lost in assuming that
either $\varphi=0$ or $\varphi : \begin{bmatrix}
a & b \\
b & c
\end{bmatrix} \mapsto \begin{bmatrix}
a \\
c
\end{bmatrix}$. However, the first case cannot hold since $\calS^\bot$ contains no rank $1$ matrix.
Therefore, $\widehat{\calS^\bot}$ is represented by the space of all matrices
$$\begin{bmatrix}
a & b & a \\
b & c & c \\
0 & 0 & a+b+c \\
[0]_{(p-3) \times 1} & [0]_{(p-3) \times 1} & [0]_{(p-3) \times 1}
\end{bmatrix} \quad \text{with $(a,b,c)\in \K^3$.}$$

From there, we compute that $\calS^\bot$ is equivalent to the space of all matrices
$$\begin{bmatrix}
x+z & y & 0  \\
0 & x & y+z \\
z & z & z \\
[0]_{(p-3) \times 1} & [0]_{(p-3) \times 1} & [0]_{(p-3) \times 1}
\end{bmatrix} \quad \text{with $(x,y,z)\in \K^3$.}$$
Permuting the last two columns and using the results from Section \ref{inequivalentspecialtype},
we deduce that $\calS^\bot$ is equivalent to the space of all matrices
$$\begin{bmatrix}
A \\
[0]_{(p-3) \times 3}
\end{bmatrix}\quad \text{with $A \in \calI_3^\bot$,}$$
and hence $\calS$ is equivalent to $\calI_3 \coprod \Mat_{3,p-3}(\K)$, i.e.\ it has Type 6.
\end{proof}

From now on and until the end of the section, we assume that condition (B2) holds.
Our goal is to show that $\calS$ has Type 7.

We start by sharpening our knowledge of the situation considered in Claim \ref{commonvectorimage}:
remember that, given $y \in V$, we set
$$\widehat{y} : N \in \calS^\bot \mapsto Ny.$$

\begin{claim}\label{penultiemeclaim}
Let $\calP$ be a non-linear hyperplane of $V$ which contains $y_0$.
Denote by $x$ the (sole) non-vector which belongs to $\calS^\bot y$ for all $y \in \calP \setminus \{y_0\}$,
and by $\pi$ the canonical projection of $U$ onto $U/\K x$.
Then:
\begin{enumerate}[(a)]
\item The three operators $\pi \circ \widehat{y}$, for $y \in \calP \setminus \{y_0\}$
have rank $1$, pairwise distinct images and independent kernels.
\item If $\pi \circ \widehat{y_0}$ is non-injective, then none of the kernels of the operators
$\pi \circ \widehat{y}$, for $y \in \calP \setminus \{y_0\}$, contains the one of $\pi \circ \widehat{y_0}$.
\item If no $3$-dimensional space contains the range of every matrix of $\calS^\bot$,
then $\calS$ has Type 7.
\end{enumerate}
\end{claim}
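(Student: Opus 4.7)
The plan is to prove the three parts in order, using (a) as structural scaffolding, (b) as a complementary rigidity statement, and combining both in (c) to produce the $\calH_4$-normal form.

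For part (a), I would first verify the uniqueness of $x$ asserted parenthetically: two distinct candidates $x_1,x_2$ would force their span to equal each of the $2$-dimensional spaces $\calS^\bot y$ for $y\in\calP\setminus\{y_0\}$, and then the identity $y_1+y_2+y_3=y_0$ (where $\{y_1,y_2,y_3\}=\calP\setminus\{y_0\}$) would give $\calS^\bot y_0\subseteq\langle x_1,x_2\rangle$, contradicting $\dim\calS^\bot y_0=3$. The rank-$1$ statement is then immediate since $\widehat{y_i}$ has rank $2$ with $x$ in its image, so $\pi\circ\widehat{y_i}$ has $1$-dimensional image $\calS^\bot y_i/\K x$. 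For the distinct-images statement I would assume $\calS^\bot y_i=\calS^\bot y_j=W$ and split on whether $x\in\calS^\bot y_0$: when it is not, $\pi\circ\widehat{y_0}$ is injective of rank $3$, yet $\pi\circ\widehat{y_0}=\pi\circ\widehat{y_1}+\pi\circ\widehat{y_2}+\pi\circ\widehat{y_3}$ has image inside $W/\K x+\calS^\bot y_3/\K x$, of dimension at most $2$; when it is, the same relation forces $\calS^\bot y_0=W+\calS^\bot y_3$, so $W\subseteq\calS^\bot y_0$ and $\widehat{y_0}^{-1}(W)$ is a $2$-dimensional subspace of $\calS^\bot$ all of whose non-zero matrices have image equal to $W$, contradicting assumption (B2). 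Finally, for independent kernels I would take $N\in K_1\cap K_2\cap K_3\setminus\{0\}$ and run a case analysis on $Ny_i\in\{0,x\}$: every case forces $\ker N$ to contain two linearly independent vectors among $y_i,y_j,y_i+y_j,y_i+y_0$, giving rank at most $1$, impossible.

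For part (b), the non-injectivity of $\pi\circ\widehat{y_0}$ is equivalent to $x\in\calS^\bot y_0$, giving a unique $N_0$ with $N_0y_0=x$ and $K_0=\K N_0$. If $K_0\subseteq K_i$ for some $i$, then $N_0y_i\in\{0,x\}$: the case $N_0y_i=0$ forces $\ker N_0=\K y_i$ and the case $N_0y_i=x$ forces $\ker N_0=\K(y_i+y_0)$, so in both sub-cases $N_0$ is a specific rank-$2$ matrix of $\calS^\bot$ sending $y_0$ to $x$. I would rule this out by combining the existence of the matrices $M_j\in\calS^\bot$ with $M_jy_j=x$ (for $j\neq i$) with condition (B2) applied to the images $\im N_0$, $\im M_j$, $\im N_{y_j}$: the forced incidences among these six distinguished rank-$2$ matrices will contradict their pairwise distinct images.

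For part (c), I would apply (a) and, where applicable, (b) to each of the four non-linear affine hyperplanes $\calP^{(1)},\dots,\calP^{(4)}$ of $V$ containing $y_0$, producing vectors $x^{(1)},\dots,x^{(4)}\in U$. The hypothesis that no $3$-dimensional subspace of $U$ contains every range of $\calS^\bot$ will imply, via the independence of kernels data supplied by (a), that these four vectors are linearly independent; the embedding and splitting lemmas then let me reduce to $\dim U=4$. With $(x^{(1)},\dots,x^{(4)})$ as a basis of $U$ and a compatible basis of $V$ arranged so that $y_0=(1,1,1)^T$, the constraints on $\widehat{\calS^\bot}$ coming from (a) and (b) applied simultaneously to all four $\calP^{(k)}$ pin down $\calS^\bot$ up to equivalence; comparison with the explicit form of $\calH_4^\bot$ recorded in Section \ref{inequivalentspecialtype} then yields that $\calS$ is of Type $7$. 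The main obstacle I expect is the coordinate bookkeeping in this final identification: keeping the four applications of (a) mutually consistent and excluding any residual ``twist'' beyond $\calH_4^\bot$ is where the argument will be delicate, and (b) will be essential to eliminate the scenarios in which the rank-$3$ matrix $M^\ast$ of $\calS^\bot$ could obstruct the normal form.
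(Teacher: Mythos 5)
Your part (a) is sound: the uniqueness of $x$, the rank-$1$ computation and the kernel-independence are as in the paper, and your two-case argument for the pairwise distinctness of the images (splitting on whether $x\in\calS^\bot y_0$) is a valid, if longer, substitute for the paper's direct appeal to (B2).

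The genuine gaps are in (b) and (c). The paper proves both with one device that your proposal never introduces: using the kernel-independence from (a), pick a basis $(A_1,A_2,A_3)$ of $\calS^\bot$ with $\Ker(\pi\circ\widehat{y_i})=\Vect(A_j,A_k)$, set $z_i:=\pi(A_iy_i)$, and note that $(\pi\circ\widehat{y_0})(A_i)=z_i$ since $y_0=y_1+y_2+y_3$ and $A_iy_j\in\K x$ for $j\neq i$. Hence $\rk(\pi\circ\widehat{y_0})=\rk(z_1,z_2,z_3)$; when this rank is $2$, the three pairwise distinct non-zero vectors $z_i$ of a $2$-dimensional $\F_2$-space satisfy $z_1+z_2+z_3=0$, so $\Ker(\pi\circ\widehat{y_0})=\K(A_1+A_2+A_3)$, which lies in no $\Vect(A_j,A_k)$ --- that is exactly (b). Your route for (b) ends with ``the forced incidences among these six distinguished rank-$2$ matrices will contradict their pairwise distinct images''; that sentence is the entire difficulty, not a step, and I do not see how to extract the contradiction from (B2) and the local data on $N_0$ alone without effectively rebuilding the $z_i$ argument. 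Similarly for (c): the paper works with the \emph{single} hyperplane $\calP$, observes that the hypothesis forces $\rk(z_1,z_2,z_3)=3$, takes $(x,A_1y_1,A_2y_2,A_3y_3)$ as a partial basis of $U$, writes the $A_i$ with six undetermined entries, and pins every one of them to $1$ via rank-$2$ constraints on the sums $A_i+A_j$ and the dimensions of the $\calS^\bot y_k$, landing on $\calH_4^\bot$. Your plan of running all four affine hyperplanes simultaneously leaves both key steps --- the linear independence of the four vectors $x^{(k)}$ and the final identification with $\calH_4^\bot$ --- asserted rather than proved, as you yourself acknowledge. As written, (a) is a proof but (b) and (c) are programmes.
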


\begin{proof}
First of all, we note that $\calP \setminus \{y_0\}$ spans $V$.

Let us write $\calP \setminus \{y_0\}=\{y_1,y_2,y_3\}$, and note that $y_0=y_1+y_2+y_3$.
Let $i \in \{1,2,3\}$. We know that $\widehat{y_i}$ has rank $2$ and image
$\calS^\bot y_i$, which contains $x$. Therefore, the range of $\pi \circ \widehat{y_i}$ is $\calS^\bot y/\K x$,
whence $\pi \circ \widehat{y_i}$ has rank $1$.

By assumption (B2), the ranges of the $\pi \circ \widehat{y_k}$ operators are pairwise distinct,
and we have just shown that their kernels
are $2$-dimensional subspaces of $\calS^\bot$. If the intersection of those kernels contained a non-zero matrix $M$,
then we would have $My_i \in \K x$ for all $i \in \{1,2,3\}$, whence $\im M  \subset \K x$
as $y_1,y_2,y_3$ span $V$. As $\calS^\bot$ contains no rank $1$ matrix, this is impossible,
whence the kernels of the $\pi \circ \widehat{y_i}$ operators
form a system of independent hyperplanes of $\calS^\bot$, and in particular statement (a) is established.

Now, we may find a basis $(A_1,A_2,A_3)$ of $\calS^\bot$
such that $\Ker(\pi \circ \widehat{y_i})=\Vect(A_j,A_k)$ for all distinct $i,j,k$ in $\{1,2,3\}$.
Set $z_1,z_2,z_3$ such that $\im(\pi \circ \widehat{y_i})=\K z_i$ for all $i \in \{1,2,3\}$.
We know that $z_1,z_2,z_3$ are pairwise distinct. Now, set
$$G:=\sum_{N \in \calS^\bot} \im N.$$
We know that $\K x \subset G$, whence $G/\K x$ is the sum of all ranges of the operators $\pi \circ \widehat{y}$ with $y \in V$.
As $(y_1,y_2,y_3)$ is a basis of $V$, it follows that $G/\K x$ is the sum of all ranges of the operators $\pi \circ \widehat{y_i}$
for $i \in \{1,2,3\}$, that is
$$G/\K x=\Vect(z_1,z_2,z_3).$$
Now, note that $y_0=y_1+y_2+y_3$, whence
$(\pi \circ \widehat{y_0})A_i=z_i$ for all $i \in \{1,2,3\}$.
Therefore, $\pi \circ \widehat{y_0}$ has rank $\rk(z_1,z_2,z_3)$.
If $\rk(z_1,z_2,z_3)=3$, then statement (b) is obvious.
Assume now that $\rk(z_1,z_2,z_3)=2$. Then, $z_1,z_2,z_3$ are pairwise distinct non-zero
vectors of this space, whence $z_1+z_2+z_3=0$. It follows that the rank $2$ operator
$\pi \circ \widehat{y_0}$ vanishes at the non-zero vector $A_1+A_2+A_3$, which belongs to none of the kernels of the
operators $\pi \circ \widehat{y_i}$ for $i \in \{1,2,3\}$. This proves statement (b).

Finally, let us assume that $\dim G=4$, that is $\rk(z_1,z_2,z_3)=3$.
Then, we see that $(x,A_1y_1,A_2y_2,A_3y_3)$ is a linearly independent $4$-tuple, which we extend into a basis of $U$.
Without loss of generality, we may assume that this basis is the standard one of $U=\K^p$ and that $(y_1,y_2,y_3)$
is the standard basis of $V=\K^3$.
Note that $A_i y_j \in \K x$ for all distinct $i$ and $j$ in $\{1,2,3\}$.
Hence, for some $(\alpha,\beta,\gamma,\delta,\lambda,\mu)\in \K^6$,
$$A_1=\begin{bmatrix}
0 & \alpha & \beta \\
1 & 0 & 0 \\
0 & 0 & 0 \\
0 & 0 & 0 \\
[0]_{(p-4) \times 1} & [0]_{(p-4) \times 1} & [0]_{(p-4) \times 1}
\end{bmatrix}, \quad
A_2=\begin{bmatrix}
\gamma & 0 & \delta \\
0 & 0 & 0 \\
0 & 1 & 0 \\
0 & 0 & 0 \\
[0]_{(p-4) \times 1} & [0]_{(p-4) \times 1} & [0]_{(p-4) \times 1}
\end{bmatrix}$$
and
$$A_3=\begin{bmatrix}
\lambda & \mu & 0 \\
0 & 0 & 0 \\
0 & 0 & 0 \\
0 & 0 & 1 \\
[0]_{(p-4) \times 1} & [0]_{(p-4) \times 1} & [0]_{(p-4) \times 1}
\end{bmatrix}.$$
Noting that $A_1+A_2+A_3$ has rank $3$, we deduce from Claim \ref{possiblerankscase4} that $A_1+A_2$ has rank $2$, whence
$\beta+\delta=0$. On the other hand, as $\calS^\bot y_3$ must have dimension $2$, we have $(\beta,\delta)\neq (0,0)$,
whence $\beta=\delta=1$. With the same line of reasoning, we find $\gamma=\lambda=1$ and $\alpha=\mu=1$.
As $\Vect(A_1,A_2,A_3)=\calS^\bot$, we deduce from the results of Section \ref{inequivalentspecialtype}
that $\calS^\bot$ is the space of all matrices of the form
$$\begin{bmatrix}
A \\
[0]_{(p-4) \times 3}
\end{bmatrix} \quad \text{with $A \in \calH_4^\bot$.}$$
Therefore, $\calS=\calH_4 \coprod \Mat_{3,p-4}(\K)$, whence $\calS$ has Type 7.
\end{proof}

To conclude the proof, we establish the following result:

\begin{claim}\label{lastclaim}
No $3$-dimensional space contains $\calS^\bot y$ for all $y \in V$.
\end{claim}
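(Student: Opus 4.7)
I would argue by contradiction. Suppose some $3$-dimensional subspace $G \subset U$ contains $\calS^\bot y$ for every $y \in V$. Since $\calS^\bot$ contains the unique rank-$3$ matrix $N_0$ of Claim \ref{possiblerankscase4} and $\im N_0 \subset G$ with $\dim \im N_0 = 3 = \dim G$, we must have $G = \im N_0$. After a change of basis of $U$ making $G = \K^3 \times \{0\} \subset \K^p$, the space $\calS$ decomposes as $\calS = \calW \coprod \Mat_{3,p-3}(\F_2)$ with $\calW$ a $6$-dimensional subspace of $\Mat_3(\F_2)$; by the Splitting Lemma together with Lemma \ref{espacetotal}, $F$ restricts to a non-local range-compatible linear map on $\calW$. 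Writing $\calV \subset \Mat_3(\F_2)$ for the reduced form of $\calS^\bot$, the space $\calV$ has dimension $3$, contains exactly one rank-$3$ matrix and six rank-$2$ matrices whose images are six pairwise distinct $2$-dimensional subspaces of $G$ (by (B2)), and $\dim \calV y \geq 2$ for all non-zero $y \in V$.

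Next, I would apply Claim \ref{commonvectorimage} to each of the four non-linear affine hyperplanes $\calP$ of $V$ containing $y_0$ to obtain a non-zero common vector $x_\calP \in G$. Using Claim \ref{penultiemeclaim}(b) together with the fact that $\widehat{y_0}: \calS^\bot \to G$ is bijective (both sides are $3$-dimensional and $\widehat{y_0}$ has rank $3$), one identifies $x_\calP = w_\calP(y_0)$ for a uniquely determined non-zero $w_\calP \in \calS^\bot$. A combinatorial count shows that each non-zero $y \in V \setminus \{0, y_0\}$ belongs to exactly two of the four $\calP$'s, and any three of the four $\calP$'s cover all of $V \setminus \{0, y_0\}$; hence Claim \ref{nocommonimage} forbids three $\calP$'s from sharing the same $x_\calP$.

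To derive the contradiction, I would combine these constraints with (B2). The six rank-$2$ matrices have six pairwise distinct $2$-dimensional images, so the assignment $M \mapsto \phi_M$ (sending a rank-$2$ matrix to the unique non-zero linear form on $G$ vanishing on $\im M$) is injective; dually, for each $\calP$ the three conormals $\phi_{y_i}$ of the spaces $\calS^\bot y_i$ with $y_i \in \calP \setminus \{y_0\}$ must be linearly dependent, because the intersection $\bigcap_{y \in \calP \setminus \{y_0\}} \calS^\bot y$ contains the non-zero vector $x_\calP$. A careful case analysis on how the $x_\calP$'s distribute among the seven non-zero vectors of $G$, using the $w_\calP$-description and the distinct-images constraint, forces $\calV$ to be equivalent to $\calI_3^\bot$. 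But as verified directly in Section \ref{inequivalentspecialtype}, $\calI_3^\bot$ contains three distinct non-zero matrices (at parameters $(1,0,0)$, $(0,1,0)$, $(1,1,0)$) sharing the image $\langle e_1, e_2 \rangle$, which contradicts (B2).

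The main obstacle is the structural step that forces $\calV \simeq \calI_3^\bot$: one must carefully thread together the $x_\calP$-constraints, the dual conormal-dependence relations, and the distinct-images condition to rule out every alternative $3$-dimensional subspace of $\Mat_3(\F_2)$ with the prescribed rank profile. Once this step is in place, the contradiction with (B2) follows from a direct computation on $\calI_3^\bot$.
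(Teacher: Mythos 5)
Your setup is sound: the reduction to $p=3$, the identification $G=\im N_0$, the count of the four non-linear affine hyperplanes $\calP$ through $y_0$, the fact that each $y\in V\setminus\{0,y_0\}$ lies in exactly two of them, and the consequence (via Claim \ref{nocommonimage}) that no three of the $x_\calP$'s coincide are all correct, as is the observation that $\calI_3^\bot$ violates (B2). But the proof has a genuine gap at its only substantive point: the assertion that ``a careful case analysis \ldots forces $\calV$ to be equivalent to $\calI_3^\bot$'' is exactly the step where all the difficulty of the claim is concentrated, and you do not carry it out -- you even flag it yourself as ``the main obstacle.'' Nothing in what precedes makes it clear that the accumulated constraints (the $w_\calP$-description, the conormal dependences, the distinct-images condition) pin down $\calV$ up to equivalence, nor even that they are mutually inconsistent; as written, the argument is a plan, not a proof. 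Note also a smaller unproved point: you treat $x_\calP$ as uniquely determined by $\calP$, which requires ruling out that the three planes $\calS^\bot y$, $y\in\calP\setminus\{y_0\}$, coincide.

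For comparison, the paper closes this step by working on the dual side: it takes the $3$-dimensional space $\calM\subset\Mat_3(\K)$ of matrices representing the operators $\widehat{y}:N\mapsto Ny$, normalizes $\widehat{y_0}=I_3$, and shows (using Claim \ref{commonvectorimage} applied to the affine hyperplane of trace $1$ matrices, together with Claim \ref{penultiemeclaim}(b)) that the common vector $z_0$ is an eigenvector of no matrix of $\calM\setminus\{0,I_3\}$ while every $x\neq 0,x_0$ is an eigenvector of some trace $1$ matrix; a short analysis of the Jordan structure of such a matrix $A$ then produces the plane $\Ker A^2$, which is stable under all of $\calM$, contradicting (B2). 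If you want to complete your approach, you would either have to execute the omitted case analysis in full over the $35$ three-dimensional subspaces of a candidate ambient space (or find a structural shortcut), or switch to an argument of the paper's kind; as it stands, the proposal does not establish the claim.
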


\begin{proof}
We use a \emph{reductio ad absurdum} and assume that such a $3$-dimensional space exists.
Then, we lose no generality in assuming that this space is $\K^3 \times \{0\}$,
in which case $\calS$ splits as $\calT \coprod \Mat_{3,p-3}(\K)$
for some $6$-dimensional subspace $\calT$ of $\Mat_3(\K)$, and $F$ splits as $G \coprod H$,
where $G$ and $H$ are range-compatible linear maps on $\calT$ and $\Mat_{3,p-3}(\K)$,
respectively. Then $H$ is local, whence $G$ is non-local. From there, we see that the space $\calT$
satisfies conditions (A4) and (B2). Thus, we can simply assume that $p=3$ in order to find a contradiction.

In this reduced situation we have $\calS \subset \Mat_3(\K)$.

Now, as $\widehat{y_0}$ has rank $3$, we can choose respective bases of $\calS^\bot$ and $U$ in which $\widehat{y_0}$
is represented by $I_3$. Denote by $\calM$ the $3$-dimensional subspace of $\Mat_3(\K)$ representing all the operators
$\widehat{y}$ in those bases. Then:
\begin{enumerate}[(a)]
\item $I_3$ is the sole non-singular matrix of $\calM$ and all the other non-zero matrices of $\calM$ have rank $2$.
\item There is a (unique) non-zero vector $x_0$ of $\K^3$ such that $\dim \calM x_0=3$,
while $\dim \calM x=2$ for all $x \in \K^3 \setminus \{0,x_0\}$.
\item No $2$-dimensional subspace $P$ of $\K^3$ is stabilized by all the matrices of $\calM$: this follows from
assumption (B2).
\item There is a (unique) vector $z_0$ of $\K^3 \setminus \{0\}$ which belongs to the image of all the
trace $1$ matrices of $\calM$. Indeed, the set of all trace $1$ matrices in $\calM$ is a non-linear affine hyperplane
that contains $I_3$, and hence the result is a consequence of Claim \ref{commonvectorimage} (noting that $\im(I_3)=\K^3$).
Moreover, $z_0$ is an eigenvector of no trace $1$ matrix of $\calM$ except $I_3$: this is a reformulation of
point (b) of Claim \ref{penultiemeclaim}.
\end{enumerate}
As every trace zero matrix of $\calM$ is the sum of $I_3$ and of a trace $1$ matrix of $\calM$,
point (d) actually shows that $z_0$ is an eigenvector of no matrix of $\calM \setminus \{0,I_3\}$.

Let $x \in \K^3 \setminus \{0,x_0\}$. As $\dim \calM x=2$, we find a non-zero matrix $A$ of $\calM$ such that
$Ax=0$. Then, one of the matrices $A$ or $A+I_3$ belongs to $\calM \setminus \{I_3\}$, has trace $1$ and
$x$ is an eigenvector for it. Thus, each vector of $\K^3 \setminus \{0,x_0\}$
is an eigenvector for some trace $1$ matrix of $\calM \setminus \{I_3\}$. It follows in particular that $x_0=z_0$.

Choose $A \in \calM \setminus \{I_3\}$ with trace $1$. As $A$ and $A-I_3$ are singular,
we see that $A$ is triangularizable and its spectrum is $\{0,1\}$; as $\tr A=1$, we see that $1$ is a single eigenvalue of $A$
and $0$ is a double eigenvalue. Moreover, as $\rk A=2$, the matrix $A$ is not diagonalisable, whence it is similar to
$\begin{bmatrix}
0 & 1 & 0 \\
0 & 0 & 0 \\
0 & 0 & 1
\end{bmatrix}$. Thus, $\Ker A^2$ has dimension $2$, the space
$\im A$ is the sum of the eigenspaces of $A$,
and $\im A \cap \Ker A^2=\Ker A$. As $z_0$ is not an eigenvector of $A$ and as it belongs to $\im A$, we deduce that $z_0 \not\in \Ker A^2$.

Let $z \in \Ker A^2 \setminus \Ker A$.
Then, $z \not\in \{0,z_0\}$ and hence $z$ is an eigenvector of some matrix $B$ of $\calM \setminus \{I_3\}$ with trace $1$;
then $I_3$, $A$ and $B$ are distinct vectors in the affine hyperplane of trace $1$ matrices of $\calM$,
and hence $(I_3,A,B)$ is a basis of $\calM$. As all those matrices map $z$ into
$\Ker A^2$, we conclude that $\calM z \subset \Ker A^2$.

Finally, we can find two distinct vectors $z_1$ and $z_2$ in $\Ker A^2 \setminus \Ker A$,
so that $(z_1,z_2)$ is a basis of $\Ker A^2$. We deduce that the $2$-dimensional space $\Ker A^2$
is stable under all the elements of $\calM$, contradicting point (c) above.
This contradiction concludes the proof.
\end{proof}

Combining Claim \ref{lastclaim} with point (c) of Claim \ref{penultiemeclaim},
we conclude that $\calS$ has Type 7. This completes the proof of Theorem \ref{classRCF2}.

\section{Application to the algebraic reflexivity of $2$-dimensional operator spaces}\label{dim2reflexive}

In \cite{BracicKuzma}, Bra\v ci\v c and Kuzma studied algebraic reflexivity for $2$-dimensional spaces
of linear operators between finite-dimensional spaces. They showed that, if the underlying field has at least $5$ elements,
such an operator space is algebraically reflexive except in a few very special cases.
Here, we shall combine Theorem \ref{classRCF2} with Theorem 1.2 of \cite{dSPRC1} to extend their result to all fields.
Recall that an operator space $\calT \subset \calL(U,V)$ is reduced when the intersection of the kernels
of the operators in $\calT$ is $\{0\}$ and the sum of the ranges of the operators in $\calT$ is $V$.

\begin{theo}[Classification of non-reflexive $2$-dimensional operator spaces]\label{2dimnonRtheorem}
Let $U$ and $V$ be finite-dimensional vector spaces, and $\calS$ be a $2$-dimensional reduced subspace of
$\calL(U,V)$. Set
$$\calE_2:=\Biggl\{\begin{bmatrix}
a & b & 0 \\
0 & a & b
\end{bmatrix} \mid (a,b)\in \K^2\Biggr\}$$
and
$$\calE_3:=\Biggl\{\begin{bmatrix}
a & b & 0 \\
0 & a & b \\
0 & 0 & a
\end{bmatrix} \mid (a,b)\in \K^2\Biggr\}.$$
Then, $\calS$ is algebraically reflexive unless one of the following conditions holds:
\begin{enumerate}
\item[(i)] $\dim U=\dim V=2$ and the set of rank $1$ operators of $\calS$ is included in a $1$-dimensional
linear subspace of $\calS$.
\item[(ii)] $\calS$ is represented by $\calE_2$ in some bases of $U$ and $V$, and $\# \K=2$.
\item[(iii)] $\calS$ is represented by $\calE_3$ in some bases of $U$ and $V$, and $\# \K=2$.
\item[(iv)] $\calS$ is represented by $\calE_2^T$ in some bases of $U$ and $V$, and $\# \K=2$.
\end{enumerate}
Moreover, in cases (ii), (iii) and (iv), the reflexivity defect of $\calS$ equals $1$.
\end{theo}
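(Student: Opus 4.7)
The plan is to use the Proposition's identification $\calR(\calS)/\calS \cong \calL_\rc(\widehat{\calS})/\calL_\loc(\widehat{\calS})$, so that $\calS$ is reflexive iff every range-compatible linear map on $\widehat{\calS}$ is local, with the reflexivity defect equal to $\dim\bigl(\calL_\rc(\widehat{\calS})/\calL_\loc(\widehat{\calS})\bigr)$. Because $\calS$ is reduced, $\widehat{\calS} \subset \calL(\calS,V)$ is itself reduced, of dimension $\dim U$, and has codimension $2\dim V - \dim U$; moreover, the natural map $s \mapsto (\widehat{x} \mapsto s(x))$ realizes $\calS$ as an operator space equivalent to $\widehat{\widehat{\calS}}$.

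The degenerate cases $\dim U \leq 1$ and $\dim V \leq 1$ yield $\calR(\calS)=\calS$ by a direct computation. The classical $\dim U = \dim V = 2$ case is treated via the rank-$1$ characterization $\calR(\calS)^\bot = \Vect\bigl(\calS^\bot \cap \{\text{rank} \leq 1\}\bigr)$, obtained by noting that the linear forms $g \mapsto \varphi(g(x))$ are all rank-$1$ and together cut out $\calR(\calS)$: a short analysis in $\Mat_2(\K)$ reduces the reflexivity of $\calS$ to the existence of two linearly independent rank-$1$ elements in $\calS$, the negation of which is precisely condition (i). For all remaining cases, I would use that reflexivity is preserved under transposition (from the same rank-$1$ characterization, using that transposition preserves rank and exchanges $\calS^\bot$ with $(\calS^T)^\bot$) to reduce to $\dim U \geq 3$, so that $\codim \widehat{\calS} \leq 2\dim V - 3$.

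When $\# \K > 2$, Theorem \ref{maintheolin} directly yields $\calL_\rc(\widehat{\calS}) = \calL_\loc(\widehat{\calS})$, so $\calS$ is reflexive. When $\# \K = 2$ and $\dim U \geq 4$, the same theorem applies. The crux is $\# \K = 2$ with $\dim U = 3$, where $\codim \widehat{\calS} = 2\dim V - 3$ and Theorem \ref{classRCF2} applies. Since $\calS$ has dimension $2$, the source of $\widehat{\calS}$ is $2$-dimensional, so its matrix representation has exactly two columns; this immediately rules out Types 2 and 4--7 (each requires at least three columns in its canonical form), and forces Types 1 and 3 to occur with $p=0$. A dimension count then shows that Type 1 forces $\dim V = 2$ with $\widehat{\calS}$ equivalent to $\Mats_2(\F_2)$, and Type 3 forces $\dim V = 3$ with $\widehat{\calS}$ equivalent to $\calV_2$. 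Short matrix computations verify that $\widehat{\Mats_2(\F_2)}$ is equivalent to $\calE_2$ and $\widehat{\calV_2}$ is equivalent to $\calE_3$; combining this with the equivalence of $\widehat{\widehat{\calS}}$ and $\calS$ yields $\calS$ equivalent to $\calE_2$ (Type 1) or $\calE_3$ (Type 3), giving (ii) and (iii). Case (iv) follows from (ii) by the transposition reduction. The defect equalling $1$ in (ii)--(iv) is immediate from Theorem \ref{specialRCF2}, which asserts that $\calL_\loc$ has codimension $1$ in $\calL_\rc$ for any Type 1--7 space.

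The main obstacle is the double-hat identification step: one must check that an equivalence between $\widehat{\calS}$ and $\Mats_2(\F_2)$ (respectively, $\calV_2$) forces $\calS$ itself to be equivalent to $\calE_2$ (respectively, $\calE_3$). This rests on a short change-of-basis verification that the $\widehat{\cdot}$ construction respects the $\GL \times \GL$-equivalence of operator spaces, together with the explicit matrix calculations showing $\widehat{\Mats_2(\F_2)}$ equivalent to $\calE_2$ and $\widehat{\calV_2}$ equivalent to $\calE_3$.
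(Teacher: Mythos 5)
Your proposal is correct, and its core --- translating reflexivity of $\calS$ into localness of the range-compatible linear maps on $\widehat{\calS}$, applying Theorem \ref{maintheolin} when the codimension $2\dim V-\dim U$ is small enough and Theorem \ref{classRCF2} in the critical case $\dim U=3$ over $\F_2$, ruling out Types 2 and 4--7 by column count, forcing $n=p=0$ in Types 1 and 3 by reducedness, and converting $\widehat{\calS}\simeq\Mats_2(\F_2)$ or $\calV_2$ into conditions (ii)/(iii) via biduality and the computations $\widehat{\Mats_2(\F_2)}\simeq\calE_2$, $\widehat{\calV_2}\simeq\calE_3$ --- is exactly the paper's argument for $\dim U\geq 3$, including the defect-$1$ statement via Theorem \ref{specialRCF2}. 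You genuinely diverge on the low-dimensional cases, and both divergences are sound and arguably tidier. For $\dim U=2<\dim V$ the paper runs a Kronecker--Weierstrass pencil analysis with three explicit matrix computations (its Cases I--III), whereas your transposition reduction --- legitimate because Azoff's identity $\calR(\calS)^\bot=\Vect\bigl(\calS^\bot\cap\{\rk\leq 1\}\bigr)$ (Proposition \ref{Azoffprop}, which the paper itself only exploits, via Lemma \ref{transposelemma}, to transfer case (ii) into case (iv)) shows the reflexivity defect is invariant under transposition --- subsumes all three cases at once, since the transposed space has source dimension at least $3$ and lands back in the main argument. For $\dim U=\dim V=2$ the paper argues constructively (exhibiting a non-local range-compatible map when (i) holds, and splitting $\widehat{\calS}$ as a coproduct of two lines when it fails); your route through the rank-one characterization also works, but be aware that your ``short analysis'' is the one place where real content is hidden: one must prove that, for a reduced $2$-dimensional $\calS\subset\Mat_2(\K)$, the span of the rank-$\leq 1$ elements of $\calS^\bot$ is proper exactly when the rank-$1$ elements of $\calS$ lie on a line, which requires relating the quadratic form $(a,b)\mapsto\det(af+bg)$ on $\calS$ to the corresponding form on $\calS^\bot$ (e.g.\ via the adjugate map, which carries $\calS^\bot$ rank-preservingly onto the $\det$-orthogonal of $\calS$, followed by a Witt-type or direct case argument valid in characteristic $2$). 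That step should be written out, but it is a known fact and not a gap in the strategy.
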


The proof will make use of the following lemma, which follows directly from Proposition \ref{Azoffprop} of the next section.

\begin{lemma}\label{transposelemma}
Let $\calS$ be a linear subspace of $\Mat_{n,p}(\K)$.
Then, the reflexivity defect of $\calS$ equals that of $\calS^T$.
\end{lemma}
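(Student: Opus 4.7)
The plan is to combine the forthcoming Proposition \ref{Azoffprop}---which I anticipate states that $\calR(\calS)^\bot$ is exactly the linear span of the rank-one matrices lying in $\calS^\bot$ (Azoff's classical description)---with the obvious transpose-symmetry of the trace pairing. Granting this characterization, the lemma reduces to showing that transposition identifies the two spans of rank-one matrices, in $\calS^\bot$ and in $(\calS^T)^\bot$.

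First, I would check that the map $B \mapsto B^T$ realizes a linear isomorphism
$$\calS^\bot \subset \Mat_{p,n}(\K) \;\xrightarrow{\ \sim\ }\; (\calS^T)^\bot \subset \Mat_{n,p}(\K).$$
This is immediate from the identity $\tr(BA) = \tr(A^T B^T)$: for $B \in \Mat_{p,n}(\K)$ and $A \in \calS$, the vanishing $\tr(BA)=0$ is equivalent to $\tr(A^T B^T)=0$, which is precisely the condition $B^T \in (\calS^T)^\bot$. Next, since transposition preserves the rank, it induces a bijection between the set of rank-one matrices in $\calS^\bot$ and the set of rank-one matrices in $(\calS^T)^\bot$ (concretely, rank-one matrices $x\varphi^T$ with $x \in \K^p$, $\varphi \in \K^n$ are exchanged with $\varphi x^T$). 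Consequently the linear spans of these two sets have equal dimension.

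Applying Proposition \ref{Azoffprop} to both $\calS$ and $\calS^T$, this yields $\dim \calR(\calS)^\bot = \dim \calR(\calS^T)^\bot$. Taking codimensions in the ambient spaces $\Mat_{n,p}(\K)$ and $\Mat_{p,n}(\K)$, which have the same dimension $np$, we obtain $\dim \calR(\calS) = \dim \calR(\calS^T)$. Since trivially $\dim \calS = \dim \calS^T$, subtraction gives that the reflexivity defects $\dim \calR(\calS)-\dim \calS$ and $\dim \calR(\calS^T)-\dim \calS^T$ are equal, as wanted. There is no real obstacle here once Proposition \ref{Azoffprop} is in hand; the only point deserving care is keeping track of the bilinear pairing conventions when switching between $\calS \subset \Mat_{n,p}(\K) \simeq \calL(\K^p,\K^n)$ and its transpose, which the identity $\tr(BA)=\tr(A^T B^T)$ handles cleanly.
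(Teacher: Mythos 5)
Your proof is correct and follows exactly the route the paper intends: the paper simply asserts that the lemma ``follows directly from Proposition \ref{Azoffprop},'' and your argument supplies the routine details --- transposition is a rank-preserving isomorphism from $\calS^\bot$ onto $(\calS^T)^\bot$, so the spans of their rank-one matrices have equal dimension, and Azoff's characterization then forces the reflexivity defects to coincide. The only cosmetic difference is that you quote Azoff's result in the form $\calR(\calS)^\bot=(\calS^\bot)^{(1)}$ rather than the paper's equivalent formulation (the defect of $V^\bot$ equals $\dim V-\dim V^{(1)}$), and the two lead to the same computation.
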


\begin{proof}[Proof of Theorem \ref{2dimnonRtheorem}]
Assume first that case (i) holds.
Then, $\calS$ contains an isomorphism $f$. Without loss of generality, we may assume that
$U=V$ and $f=\id_U$. Choose $g \in \calS \setminus \K \id_U$.
Then our assumptions show that $g$ has at most one eigenvalue.
If $g$ has no eigenvalue, then $\Vect(f(x),g(x))=U$ for all non-zero vectors $x \in U$, whence $\calR(\calS)=\calL(U)$
has dimension $4$ and $\calS$ is non-reflexive.
If $g$ has exactly one eigenvalue, then no generality is lost in assuming that $g$ is nilpotent (and non-zero).
Then, in a well-chosen basis $\calB$ of $U$, the endomorphism $g$ is represented by $\begin{bmatrix}
0 & 1 \\
0 & 0
\end{bmatrix}$. In the basis $(g,f)$ and in $\calB$, the dual operator space
$\widehat{\calS}$ is represented by the space $\calT$ of all matrices of the form
$\begin{bmatrix}
y & x \\
0 & y
\end{bmatrix}$. It is easily checked that $\begin{bmatrix}
x & y \\
0 & x
\end{bmatrix} \mapsto \begin{bmatrix}
y \\
0
\end{bmatrix}$ is a non-local range-compatible homomorphism on $\calT$, whence $\calS$ is non-reflexive.

If case (ii) holds, then, in well-chosen bases of $\calS$ and $V$, the space $\widehat{\calS}$
is represented by $\Mats_2(\K)$, on which we know that there is, up to addition of a local map, a unique non-local range-compatible linear map.

Ditto for case (iii), where $\widehat{\calS}$ is represented by $\calV_2$ (as we lose no generality in assuming that $\K=\F_2$).

If case (iv) holds, then we note that case (ii) holds for $\calS^T$, and hence Lemma \ref{transposelemma} shows
that the reflexivity defect of $\calS$ equals $1$.

\vskip 4mm
Next, we prove that in any other case the space $\calS$ is reflexive.
To this effect, we assume that none of cases (i) to (iv) holds,
we consider the space $\widehat{\calS}$ and we show that every range-compatible linear map
on it is local.

Assume first that $\# \K>2$. As $\calS$ is reduced, we have $\dim \widehat{\calS}=\dim U$, whence
Theorem \ref{maintheolin} yields that every range-compatible linear map
on $\calS$ is local whenever $\dim U \geq 3$ (as here $\dim \calL(\calS,V)=2\dim V$).
If now $\# \K=2$, as $\dim \calS=2$ we see that $\widehat{\calS}$ is not of Type 2 nor of any of Types 4 to 7.
As cases (ii) and (iii) have been dismissed, Theorem \ref{classRCF2} yields that every range-compatible linear map
on $\widehat{\calS}$ is local whenever $\dim U \geq 3$ and $\# \K=2$.

\vskip 3mm
Thus, it remains to consider the case when $\dim U \leq 2$, with an arbitrary field.

Assume first that $\dim U=1$, and let $h \in \calR(\calS)$. Choosing a non-zero vector
$x_0 \in U$, we find $(\lambda,\mu)\in \K^2$ such that $h(x_0)=\lambda \,f(x_0)+\mu\, g(x_0)$,
whence $h=\lambda f+\mu g \in \calS$ as $h$ and $\lambda f+\mu g$ are linear and $x_0$ spans $U$.

\vskip 3mm
To complete the proof, we consider the case when $\dim U=2$.
If $\dim V=1$, then it is a classical result from duality theory that every linear subspace of $\calL(U,V)$ is reflexive.
Assume that $\dim V=2$. As case (i) has been dismissed, we can find two linearly independent rank $1$ operators $f$ and $g$
in $\calS$. As $\dim V=2$, $\dim U=2$ and $\calS$ is reduced, $f$ and $g$ must have distinct images
and distinct kernels; therefore, in $(f,g)$ and a basis of $V$ adapted to the decomposition $V=\im f\oplus \im g$,
the space $\widehat{\calS}$ is represented by the space $\calT$ of all matrices of the form
$\begin{bmatrix}
a & 0 \\
0 & b
\end{bmatrix}$ with $(a,b)\in \K^2$. Noting that $\calT$ splits as $\calT=\calT_1 \coprod \calT_2$ where $\calT_1$ and $\calT_2$
are linear subspaces of $\K^2$, we deduce from Lemma \ref{dimU=1} and the Splitting Lemma
that every range-compatible linear map on $\calT$ is local.

\newpage
It remains to consider the case when $\dim U=2$ and $\dim V>2$.
We choose a basis $(f,g)$ of $\calS$ and we consider the Kronecker-Weierstrass canonical form
for the matrix pencil $f+t g$ (see Chapter XII of \cite{Gantmacher}; for a proof that the results hold
for arbitrary fields, see also \cite{Dieudonnepencil}).
Remember that the Kronecker theorem for matrix pencils states that, given finite-dimensional vector spaces $E$ and $F$
and linear maps $u : E \rightarrow F$ and $v : E \rightarrow F$, there are bases $\bfB$ and $\bfC$, respectively, of $E$ and $F$
such that $M_{\bfB,\bfC}(u)=A_1 \oplus \cdots \oplus A_N$ and $M_{\bfB,\bfC}(v)=B_1 \oplus \cdots \oplus B_N$, where each
pair of matrices $(A_i,B_i)$ is of one of the following types:
\begin{enumerate}[(i)]
\item $(P,I_n)$ for some positive integer $n$ and some $P \in \GL_n(\K)$;
\item $(I_n,J_n)$ for some positive integer $n$, where $J_n:=(\delta_{i+1,j}) \in \Mat_n(\K)$;
\item $(J_n,I_n)$ for some positive integer $n$;
\item $(L_n,L'_n)$ for some positive integer $n$, where $L_n:=(\delta_{i,j}) \in \Mat_{n,n+1}(\K)$ and
$L'_n:=(\delta_{i+1,j}) \in \Mat_{n,n+1}(\K)$;
\item $(L_n^T,(L'_n)^T)$ for some positive integer $n$.
\end{enumerate}

As $\calS$ is reduced, the canonical form of the pair $(f,g)$ contains no pair of zero blocks.
As $\dim U=2$ and $\dim V>2$, there cannot be any pair of blocks of $2 \times 2$ matrices,
nor any pair of blocks of $1 \times 2$ matrices, nor
two pairs of blocks of $1 \times 1$ matrices. Therefore, only three cases are possible:
\begin{itemize}
\item \textbf{Case I.} In well-chosen bases of $U$ and $V$ and for some $\alpha \in \K$, the operators $f$ and $g$
are represented, respectively, by the matrices
$\begin{bmatrix}
1 & 0 \\
0 & 1 \\
0 & 0
\end{bmatrix}$ and $\begin{bmatrix}
\alpha & 0 \\
0 & 0 \\
0 & 1
\end{bmatrix}$.
Replacing $g$ with $g-\alpha f$ and changing the basis of $V$, we reduce the situation to the one where $\alpha=0$.
Then, $\widehat{\calS}$ is represented by the space $\calT_1$ of all matrices of the form
$$\begin{bmatrix}
a & 0 \\
b & 0 \\
0 & b
\end{bmatrix} \quad \text{with $(a,b)\in \K^2$.}$$
Let $F : \calT_1 \rightarrow \K^3$ be a range-compatible linear map. Working row by row,
we find scalars $\lambda,\mu,\nu$ such that
$$F : \begin{bmatrix}
a & 0 \\
b & 0 \\
0 & b
\end{bmatrix} \mapsto \begin{bmatrix}
\lambda a \\
\mu b \\
\nu b
\end{bmatrix}.$$
Subtracting the local map $M \mapsto M \times \begin{bmatrix}
\mu \\
\nu
\end{bmatrix}$, we may assume that $\mu=\nu=0$.
Then, for $b=1$ and $a=1$, we deduce that
$$0=\begin{vmatrix}
1 & 0 & \lambda \\
1 & 0 & 0 \\
0 & 1 & 0
\end{vmatrix}=\lambda,$$
whence $F$ is local.

\item \textbf{Case II.} In well-chosen bases of $U$ and $V$, the operators $f$ and $g$
are represented, respectively, by the matrices
$\begin{bmatrix}
1 & 0 \\
0 & 0 \\
0 & 1 \\
0 & 0
\end{bmatrix}$ and $\begin{bmatrix}
0 & 0 \\
1 & 0 \\
0 & 0 \\
0 & 1
\end{bmatrix}$.
Then, the space $\widehat{\calS}$ is represented by the space $\calT_2$ of all matrices of the form
$$\begin{bmatrix}
a & 0 \\
0 & a \\
b & 0 \\
0 & b
\end{bmatrix} \quad \text{with $(a,b)\in \K^2$.}$$
Let $F : \calT_2 \rightarrow \K^4$ be a range-compatible linear map.
Like in Case I, we see that no generality is lost in assuming that, for some $(\lambda,\mu)\in \K^2$,
$$F : \begin{bmatrix}
a & 0 \\
0 & a \\
b & 0 \\
0 & b
\end{bmatrix} \longmapsto \begin{bmatrix}
\lambda a \\
\mu a \\
0 \\
0
\end{bmatrix}.$$
Taking $a=b=1$, we find scalars $\alpha$ and $\beta$ such that
$$\begin{bmatrix}
\lambda \\
\mu \\
0 \\
0
\end{bmatrix}=\alpha \begin{bmatrix}
1 \\
0 \\
1 \\
0
\end{bmatrix}+\beta \begin{bmatrix}
0 \\
1 \\
0 \\
1
\end{bmatrix}=\begin{bmatrix}
\alpha \\
\beta \\
\alpha \\
\beta
\end{bmatrix},$$
whence $\alpha=\beta=0$, and finally $\lambda=\mu=0$. Thus, $F$ is local.

\item \textbf{Case III.}
In well-chosen bases of $U$ and $V$, the operators $f$ and $g$
are represented, respectively, by the matrices
$\begin{bmatrix}
1 & 0 \\
0 & 1 \\
0 & 0
\end{bmatrix}$ and $\begin{bmatrix}
0 & 0 \\
1 & 0 \\
0 & 1
\end{bmatrix}$.
In other words, $\calS$ is represented by $\calE_2^T$. As case (iv) has been dismissed, we deduce that $\# \K>2$.
Then, one checks that  $\widehat{\calS}$ is also represented by $\calE_2^T$ in well-chosen bases of
$\calS$ and $V$.
Let $F$ be a range-compatible linear map on $\calE_2^T$.
Then, there are scalars $\lambda,\mu,\nu,\gamma$ such that
$$F : \begin{bmatrix}
a & 0 \\
b & a \\
0 & b
\end{bmatrix} \longmapsto \begin{bmatrix}
\lambda a \\
\mu a+\nu b \\
\gamma b
\end{bmatrix}.$$
By subtracting the local map $M \mapsto M \times \begin{bmatrix}
\lambda \\
\gamma
\end{bmatrix}$ from $F$, we see that no generality is lost in assuming that
$\lambda=\gamma=0$. Then,
$$\forall (a,b)\in \K^2, \quad 0=\begin{vmatrix}
a & 0 & 0\\
b & a & \mu a+\nu b\\
0 & b & 0
\end{vmatrix}=-\mu a^2b-\nu a b^2.$$
As $\# \K>2$, we deduce that $\mu=\nu=0$ and hence $F=0$.
\end{itemize}

In any case, we have shown that every range-compatible linear map on $\widehat{\calS}$ is local, and hence
$\calS$ is reflexive.
This completes the proof of Theorem \ref{2dimnonRtheorem}.
\end{proof}

\section{Application to the classification of large affine spaces of matrices with rank greater than $1$}\label{affinespacesection}

\subsection{The problem}

\begin{Def}
The \textbf{lower-rank} of a non-empty subset $\calV$ of $\Mat_{n,p}(\K)$
is defined as $\min \{\rk M \mid M \in \calV\}$ and denoted by $\lrk \calV$.
\end{Def}

\begin{Not}
Let $n',p',n,p$ be positive integers with $n'\leq n$ and $p'\leq p$.
Given a subset $\calX$ of $\Mat_{n',p'}(\K)$, we denote by
$i_{n,p}(\calX)$ the set of all matrices of $\Mat_{n,p}(\K)$ of the form
$$\begin{bmatrix}
A & [?]_{n' \times (p-p')} \\
[?]_{(n-n') \times p'} & [?]_{(n-n') \times (p-p')}
\end{bmatrix} \quad \text{with $A \in \calX$.}$$
We also denote by $\widetilde{\calX}^{(n,p)}$ the set of all matrices of $\Mat_{n,p}(\K)$ of the form
$$\begin{bmatrix}
A & [0]_{n' \times (p-p')} \\
[0]_{(n-n') \times p'} & [0]_{(n-n') \times (p-p')}
\end{bmatrix} \quad \text{with $A \in \calX$.}$$
\end{Not}

Let $r \in \lcro 1,\min(n,p)\rcro$. In \cite{dSPlargerank}, we have proved that the codimension of an affine subspace $\calV$
of $\Mat_{n,p}(\K)$ with lower-rank $r$ is always greater than or equal to $\dbinom{r+1}{2}$. A basic way to obtain a large affine subspace of $\Mat_{n,p}(\K)$ with lower-rank $r$ is to start from an affine subspace $\calW$ of $\Mat_r(\K)$ which is included in $\GL_r(\K)$ and to
build the space $i_{n,p}(\calW)$: it is an easy observation that $i_{n,p}(\calW)$ has lower-rank $r$ and that its codimension in
$\Mat_{n,p}(\K)$ equals the codimension of $\calW$ in $\Mat_r(\K)$. In particular, if we start from an affine subspace
$\calW$ that is included in $\GL_r(\K)$ and has codimension $\dbinom{r+1}{2}$ - which we call a \textbf{dimension-maximal}
 affine subspace of non-singular matrices of $\Mat_r(\K)$ - then we obtain a subspace with codimension
$\dbinom{r+1}{2}$ in $\Mat_{n,p}(\K)$. In \cite{dSPlargeaffinerank}, it was established that this construction yields,
up to equivalence, all the affine subspaces of $\Mat_{n,p}(\K)$ with lower-rank $r$ and with the minimal codimension $\dbinom{r+1}{2}$
provided that $\# \K>2$. We restate these results here for the sake of clarity:

\begin{theo}[See \cite{dSPlargeaffinerank}]\label{classaffineboundedbelowmorethan2}
Let $n$ and $p$ be positive integers, and $r \in \lcro 2,\min(n,p)\rcro$. Assume that $\# \K > 2$.
Let $\calV$ be an affine subspace of $\Mat_{n,p}(\K)$ with lower-rank $r$
and with codimension $\dbinom{r+1}{2}$.
Then, $\calV$ is equivalent to $i_{n,p}(\calW)$ for some dimension-maximal affine subspace $\calW$ of non-singular matrices of $\Mat_r(\K)$.
\end{theo}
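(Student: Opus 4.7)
The plan is to reduce $\calV$ to a convenient block form by translating a rank-$r$ matrix to a standard normal form, and then to show via a dimension-matching argument that only the top-left $r \times r$ block of the translation space is genuinely restricted. First I would use the lower-rank hypothesis to pick some $M_0 \in \calV$ with $\rk M_0 = r$ and apply an equivalence to bring it to $M_0 = \begin{bmatrix} I_r & 0 \\ 0 & 0 \end{bmatrix}$; then the translation subspace $\calS := \calV - M_0$ has codimension $\binom{r+1}{2}$ in $\Mat_{n,p}(\K)$ and every $s \in \calS$ decomposes into four blocks $A(s), B(s), C(s), D(s)$ of sizes $r\times r$, $r\times(p-r)$, $(n-r)\times r$ and $(n-r)\times(p-r)$. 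I view $A, B, C, D$ as the obvious linear projections of $\Mat_{n,p}(\K)$ onto these four block positions.

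Next I would single out $\calS_0 := \{s \in \calS : B(s) = C(s) = D(s) = 0\}$, the kernel of the projection $\pi : \calS \to \Mat_{r,p-r}(\K) \oplus \Mat_{n-r,r}(\K) \oplus \Mat_{n-r,p-r}(\K)$. For any $s \in \calS_0$, the matrix $M_0 + s$ is block-diagonal, so the condition $\rk(M_0 + s) \geq r$ forces $I_r + A(s) \in \GL_r(\K)$; hence $\calW_0 := I_r + A(\calS_0)$ is an affine subspace of $\GL_r(\K)$. The minimal codimension result for affine subspaces of non-singular matrices (the case $n = p = r$ of the bound from \cite{dSPlargerank} recalled at the start of Section \ref{affinespacesection}) gives $\dim \calS_0 = \dim A(\calS_0) \leq r^2 - \binom{r+1}{2}$, and combined with the trivial bound $\dim \im \pi \leq np - r^2$ this yields
$$np - \binom{r+1}{2} = \dim \calS = \dim \calS_0 + \dim \im \pi \leq \Bigl(r^2 - \binom{r+1}{2}\Bigr) + (np - r^2) = np - \binom{r+1}{2}.$$
Equality throughout forces $\pi$ to be surjective, $\calS_0$ to attain the maximal dimension $r^2 - \binom{r+1}{2}$, and $\calW_0$ to be a dimension-maximal affine subspace of $\GL_r(\K)$.

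The main obstacle is the final step: showing that $\calS$ contains every matrix with $A$-block equal to zero, so that $\calS = \calS_0 \oplus \{s \in \Mat_{n,p}(\K) : A(s) = 0\}$. Equivalently, I would prove that the well-defined linear map $\alpha : \im \pi \to \Mat_r(\K)/A(\calS_0)$ sending $(B,C,D)$ to the class of $A(s)$ modulo $A(\calS_0)$, for any preimage $s \in \pi^{-1}(B,C,D)$, vanishes identically. Assuming $\alpha(B,C,D) \neq 0$ for some triple, the rank condition
$$\rk \begin{bmatrix} I_r + A' & B \\ C & D \end{bmatrix} \geq r$$
must hold for every $A'$ in the affine translate $A(s) + A(\calS_0)$, which is distinct from $A(\calS_0)$. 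The idea is to combine the Schur-complement identity $\rk \bigl[\begin{smallmatrix} X & Y \\ Z & W \end{smallmatrix}\bigr] = \rk X + \rk(W - ZX^{-1}Y)$, valid whenever $X$ is invertible, with the hypothesis $\#\K > 2$, which provides enough non-zero scalars to rescale $s$ and produce a one-parameter family of constraints; the resulting polynomial identities in the rescaling parameter force $(B,C,D)$ to satisfy relations that are incompatible with surjectivity of $\pi$, unless $\alpha(B,C,D) = 0$. Once $\alpha \equiv 0$ is established, $\calV$ coincides with $i_{n,p}(\calW_0)$ and the theorem follows. The hypothesis $\#\K > 2$ is essential at this last step: over $\F_2$, parallel affine translates of $\calW_0$ may also sit in $\GL_r(\K)$, so the polynomial argument collapses and exceptional subspaces genuinely arise, in line with the classification carried out in the main body of the present paper.
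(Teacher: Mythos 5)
This theorem is not proved in the present paper: it is quoted verbatim from \cite{dSPlargeaffinerank}, so there is no internal proof to compare your argument against. Judged on its own merits, your proposal is half of a proof. The first half is correct and clean: normalizing a rank-$r$ element of $\calV$ to $\begin{bmatrix} I_r & 0 \\ 0 & 0\end{bmatrix}$, observing that $\calS_0=\Ker \pi$ yields an affine subspace $\calW_0=I_r+A(\calS_0)$ of $\GL_r(\K)$, and running the equality chain $np-\binom{r+1}{2}=\dim\calS_0+\dim\im\pi\leq\bigl(r^2-\binom{r+1}{2}\bigr)+(np-r^2)$ correctly forces $\pi$ to be surjective and $\calW_0$ to be dimension-maximal. (A sanity check on coordinates also confirms that if the theorem is true, then in your normalization $\calS$ really must contain every matrix with zero $A$-block, so the target $\alpha\equiv 0$ is the right one.)

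The gap is the entire second half, which is where all the difficulty of the theorem lives. You assert that Schur complements plus rescaling by the extra scalars available when $\#\K>2$ will "force relations incompatible with surjectivity of $\pi$", but no such computation is carried out, and the mechanism as described cannot work as stated: the identity $\rk\bigl[\begin{smallmatrix} X & Y \\ Z & W\end{smallmatrix}\bigr]=\rk X+\rk(W-ZX^{-1}Y)$ applies only when $X$ is invertible, and in that case the rank is automatically at least $r$, so the lower-rank hypothesis imposes \emph{no} constraint. The only informative elements of $\calV$ are those whose top-left block $I_r+A(s)$ is singular, so any real proof must analyze the singular locus of the affine set $I_r+A(\calS)$ and show that the lower blocks cannot always compensate --- precisely the geometric content that distinguishes $\#\K>2$ from $\F_2$, where the counterexamples $i_{n,p}(\calF_2)$, $i_{n,p}(\calF_2^T)$ and $i_{n,p}(\calF_3)$ of Theorem \ref{classaffineboundedbelow2} show that $\alpha$ can genuinely be non-zero even though your dimension count goes through unchanged. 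Since your argument never isolates a step that fails over $\F_2$, it cannot be complete. To finish along these lines you would need to prove, concretely, that $A(\calS)=A(\calS_0)$, e.g.\ by exhibiting, for any $s$ with $A(s)\notin A(\calS_0)$, a singular matrix in $I_r+A(s)+A(\calS_0)$ together with choices of the free blocks violating the rank bound; that is a substantial argument, not a routine verification.
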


Moreover, we have proved the following (much easier) result, which examines to what extent
the equivalence class of $\calW$ is determined by that of $\calV$:

\begin{prop}\label{uniquenessprop}
Let $n$ and $p$ be positive integers. Let $\calW$ and $\calW'$ be dimension-maximal affine subspaces of
non-singular matrices of $\Mat_r(\K)$, where $r \in \lcro 1,\min(n,p)\rcro$. Then,
$i_{n,p}(\calW)$ and $i_{n,p}(\calW')$ are equivalent if and only if $\calW$ and $\calW'$ are equivalent.
\end{prop}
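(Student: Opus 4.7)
The plan is to exhibit two canonical subspaces of $\K^n$ and $\K^p$ attached to $\calV := i_{n,p}(\calW)$ that any equivalence must respect; this will force the block-triangular form of a pair $(P,Q) \in \GL_n(\K) \times \GL_p(\K)$ realising the equivalence, and a direct computation of the top-left $r \times r$ block will yield the desired equivalence between $\calW$ and $\calW'$. The ``if'' direction is immediate: if $\calW' = P_0 \calW Q_0^{-1}$ with $(P_0,Q_0) \in \GL_r(\K)^2$, then the block-diagonal pair $P := P_0 \oplus I_{n-r}$ and $Q := Q_0 \oplus I_{p-r}$ satisfies $P\,i_{n,p}(\calW)\,Q^{-1} = i_{n,p}(\calW')$, as one checks by direct block multiplication.

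For the non-trivial direction, I first parametrise the rank-$r$ matrices in $\calV$. Every $A \in \calW$ is invertible, so a Schur-complement argument shows that the rank-$r$ elements of $\calV$ are exactly the matrices
$$M = \begin{bmatrix} A & B \\ C & CA^{-1}B \end{bmatrix}, \quad A \in \calW,\; B \in \Mat_{r,p-r}(\K),\; C \in \Mat_{n-r,r}(\K);$$
their images and kernels are respectively the graphs of $CA^{-1} \in \Mat_{n-r,r}(\K)$ and of $-A^{-1}B \in \Mat_{r,p-r}(\K)$ (over the first $r$ and the last $p-r$ coordinates). Fixing any $A_0 \in \calW$ and letting $B,C$ vary, those slopes range over all of $\Mat_{n-r,r}(\K)$ and $\Mat_{r,p-r}(\K)$ respectively. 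Therefore the union of the images of rank-$r$ matrices of $\calV$ equals $\{0\} \cup (\K^r \setminus \{0\}) \times \K^{n-r}$, and the union of their kernels equals $\{0\} \cup \K^r \times (\K^{p-r} \setminus \{0\})$. Taking complements and adjoining the zero vector yields the canonical subspaces
$$K^{(n)}_\calV = \{0\} \times \K^{n-r}, \qquad K^{(p)}_\calV = \K^r \times \{0\},$$
each characterised \emph{intrinsically} as the set of vectors that lie in no image (resp.\ no kernel) of a rank-$r$ matrix in $\calV$, together with the zero vector.

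Now suppose $\phi(M) := PMQ^{-1}$ realises an equivalence from $\calV$ to $\calV' := i_{n,p}(\calW')$. Equivalences preserve rank, and $\im \phi(M) = P\,\im M$ while $\Ker \phi(M) = Q\,\Ker M$, so $P \cdot K^{(n)}_\calV = K^{(n)}_{\calV'}$ and $Q \cdot K^{(p)}_\calV = K^{(p)}_{\calV'}$. Since both members of each equation coincide with the same subspace computed above, $P$ stabilises $\{0\} \times \K^{n-r}$ and $Q$ stabilises $\K^r \times \{0\}$, forcing
$$P = \begin{bmatrix} P_{11} & 0 \\ P_{21} & P_{22} \end{bmatrix}, \qquad Q = \begin{bmatrix} Q_{11} & Q_{12} \\ 0 & Q_{22} \end{bmatrix},$$
with $P_{11}, Q_{11} \in \GL_r(\K)$. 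Then $Q^{-1}$ has upper-left block $Q_{11}^{-1}$, and the upper-left $r \times r$ block of $PMQ^{-1}$ reads $P_{11}\,A\,Q_{11}^{-1}$ where $A$ is the upper-left block of $M$; since $\phi$ maps $\calV$ onto $\calV'$ bijectively, this yields $P_{11}\,\calW\,Q_{11}^{-1} = \calW'$, exactly the equivalence we wanted. The main technical step is the intrinsic identification of $K^{(n)}_\calV$ and $K^{(p)}_\calV$ via the graph shapes of images and kernels; once this is in place, the remainder is routine block-matrix algebra and requires no further appeal to the structure theory of dimension-maximal spaces of non-singular matrices.
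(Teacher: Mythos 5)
Your argument is correct and complete. Note that the paper does not actually reprove this proposition: it only remarks that the proof given in Section 2 of \cite{dSPlargeaffinerank} goes through over any field, so your self-contained argument is, if anything, more than what the text supplies, and it is in the same spirit as the cited proof (extracting canonical subspaces of $\K^n$ and $\K^p$ from the minimal-rank elements and deducing the block-triangular shape of $(P,Q)$). All the delicate points check out: the rank-$r$ elements of $i_{n,p}(\calW)$ are exactly those with $D=CA^{-1}B$, their images and kernels are the graphs you describe, and since $D$ is a free block one may indeed fix $A_0\in\calW$ and let $C$, $B$ sweep out all slopes, so the set of vectors lying in no image (respectively no kernel) of a rank-$r$ element, augmented by $0$, is exactly $\{0\}\times\K^{n-r}$ (respectively $\K^r\times\{0\}$); this set is manifestly transported by $P$ (respectively $Q$) under an equivalence, forcing $P_{1,2}=0$ and $Q_{2,1}=0$, whence $P_{1,1}$ and $Q_{1,1}$ are invertible and the upper-left block of $PMQ^{-1}$ is $P_{1,1}AQ_{1,1}^{-1}$. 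The degenerate cases $n=r$ or $p=r$ cause no trouble since the corresponding constraint becomes vacuous while the block decomposition trivializes. It is worth observing that your proof never uses dimension-maximality of $\calW$ and $\calW'$, only that they are nonempty subsets of $\GL_r(\K)$, so you in fact prove a slightly more general statement.
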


In this statement, note that we make no specific assumption on the field $\K$: one easily checks that the proof, given in Section 2 of \cite{dSPlargeaffinerank}, does not require that $\# \K>2$.

Dimension-maximal affine subspaces of non-singular matrices of $\Mat_r(\K)$ were entirely classified in \cite{dSPlargeaffine}
for fields with more than $2$ elements. For fields with $2$ elements, no classification is known yet
for general values of $r$: for $r=2$, it is known that, up to equivalence, there are exactly two such spaces, namely
$I_2+\K\,\begin{bmatrix}
0 & 1 \\
0 & 0
\end{bmatrix}$ and
$I_2+\K\,\begin{bmatrix}
0 & 1 \\
1 & 1
\end{bmatrix}$; for $r=3$, the classification was achieved in \cite{dSPprimitiveF2} (see Theorem 5.7)
but we suspect that a generalization to greater values of $r$ might be hopeless.
Over $\F_2$, there is an additional difficulty in classifying affine spaces with lower-rank $r$ and codimension $\dbinom{r+1}{2}$,
and that is the failure of Theorem \ref{classaffineboundedbelowmorethan2} in that situation (see the examples below)!

Our aim is to solve the case $r=2$ for fields with $2$ elements by using a connection with the theory
of non-reflexive operator spaces.

\subsection{The classification}

\begin{theo}\label{classaffineboundedbelow2}
Assume that $n \geq 2$ and $p \geq 2$.
Set $C:=\begin{bmatrix}
0 & 1 \\
1 & 1
\end{bmatrix}$ and $J:=\begin{bmatrix}
0 & 1 \\
0 & 0
\end{bmatrix}$.
We define two affine spaces as follows:
\[\calF_2:=\Biggl\{
\begin{bmatrix}
a+1 & a & c \\
d & a+1 & a
\end{bmatrix} \mid (a,c,d)\in (\F_2)^3
\Biggl\}\]
and
\[\calF_3:=\Biggl\{
\begin{bmatrix}
a & d & e \\
a+b+1 & a+b & f \\
c & a+b+1 & b
\end{bmatrix} \mid (a,b,c,d,e,f)\in (\F_2)^6
\Biggl\}.
\]
\begin{enumerate}[(i)]
\item If $n \geq 3$ and $p \geq 3$, then up to equivalence
there are exactly five affine subspaces of $\Mat_{n,p}(\F_2)$ with codimension $3$ and lower-rank $2$:
$i_{n,p}\bigl(I_2+\F_2 C\bigr)$, $i_{n,p}\bigl(I_2+\F_2 J\bigr)$,
$i_{n,p}\bigl(\calF_2)$, $i_{n,p}(\calF_2^T)$ and $i_{n,p}(\calF_3)$.

\item If $n \geq 3$ and $p=2$, then up to equivalence there are exactly three affine subspaces of
$\Mat_{n,p}(\F_2)$ with codimension $3$ and lower-rank $2$:
$i_{n,p}\bigl(I_2+\F_2 C\bigr)$, $i_{n,p}\bigl(I_2+\F_2 J\bigr)$ and
$i_{n,p}\bigl(\calF_2^T)$.

\item If $n=2$ and $p \geq 3$, then up to equivalence there are exactly three affine subspaces of
$\Mat_{n,p}(\F_2)$ with codimension $3$ and lower-rank $2$:
$i_{n,p}\bigl(I_2+\F_2 C\bigr)$, $i_{n,p}\bigl(I_2+\F_2 J\bigr)$ and
$i_{n,p}\bigl(\calF_2)$.

\item Up to equivalence, $ I_2+\F_2 C$ and $I_2+\F_2 J$ are the sole affine subspaces of $\Mat_2(\F_2)$ with codimension $3$
and lower-rank $2$.
\end{enumerate}
\end{theo}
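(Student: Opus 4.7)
The plan is to prove the theorem in three stages: verify that the five listed spaces have the stated properties, establish their pairwise inequivalence, and then classify all affine subspaces of $\Mat_{n,p}(\F_2)$ of codimension $3$ and lower-rank $2$. The verification stage is direct: each listed space has $8$ or $64$ matrices, and one checks by enumeration or a determinantal argument that each has rank at least $2$, while the parameter count gives codimension $3$. Pairwise inequivalence follows from a combination of invariants of the direction space $\calS$ of $\calV = A_0 + \calS$ (notably the equivalence class of the reduced part of $\calS^{\perp}$, classified via Propositions \ref{primitivetypeprop1}, \ref{primitivetypeprop2} and \ref{primitivetypeprop3}) together with, where needed, the rank distribution of the matrices in $\calV$ itself.

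For the classification, I would write $\calV = A_0 + \calS$ with $\calS$ a linear subspace of codimension $3$, and perform an iterative reduction: if $\calV$ admits, up to $\GL_n(\F_2) \times \GL_p(\F_2)$-equivalence, a row or column that is ``free'' (i.e., can be absorbed into the unconstrained rectangle of an $i_{n,p}$ construction), factor it out and induct on $n+p$. The dimension bounds for primitive matrix spaces from Section \ref{primitivesection}, applied to $\calS^{\perp}$, imply that after maximal reduction the primitive ambient dimensions are $(n,p) \in \{(2,2), (2,3), (3,2), (3,3)\}$. The base case $n = p = 2$ reduces to enumerating $\GL_2(\F_2) \times \GL_2(\F_2)$-orbits of $2$-element subsets of $\GL_2(\F_2)$, yielding by a short count the two classes $I_2 + \F_2 C$ and $I_2 + \F_2 J$. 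For the primitive cases in dimensions $(2,3)$, $(3,2)$ and $(3,3)$, one enumerates the possible codimension-$3$ direction spaces $\calS$ and, for each, determines the admissible translates $A_0 + \calS$ that avoid the rank-$\leq 1$ locus, identifying orbits under the stabilizer of $\calS$ in $\GL_n(\F_2) \times \GL_p(\F_2)$. The bridge to Theorem \ref{2dimnonRtheorem} is that the direction spaces underlying the exceptional affine spaces $\calF_2$, $\calF_2^T$, $\calF_3$ contain $2$-dimensional operator subspaces equivalent to the non-reflexive cases $\calE_2$, $\calE_2^T$ or $\calE_3$ of that theorem; the reflexive-closure elements of these small operator spaces provide admissible translates $A_0$ that are not of the ``standard'' $i_{n,p}(\calW)$ form.

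The main obstacle will be the final primitive-case analysis. Over fields with more than $2$ elements, the analogous enumeration (carried out in \cite{dSPlargeaffinerank}) gives Theorem \ref{classaffineboundedbelowmorethan2} with a single orbit per direction space, namely the standard form. Over $\F_2$, however, the small field size creates additional stabilizer orbits of admissible translates which do not reduce to the standard $i_{n,p}(\calW)$ form, and these produce the new exceptional classes $\calF_2$, $\calF_2^T$ and $\calF_3$. Making this correspondence precise, verifying that no further exceptional classes appear, and ruling out further primitive dimensions beyond $(3,3)$ will require careful case-by-case computations driven by the structural results of Section \ref{primitivesection} and by Theorem \ref{2dimnonRtheorem}, together with Proposition \ref{uniquenessprop} to ensure that inequivalent $\calW$'s give inequivalent extended spaces $i_{n,p}(\calW)$.
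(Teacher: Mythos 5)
Your proposal misses the one idea that makes this theorem a short corollary rather than a large enumeration. The paper does not work with the direction space of $\calV$ at all: it passes to the \emph{linear span} $V:=\Vect(\calV)$, which has codimension $2$ (one less than the direction space, since $0\notin\calV$). Because every rank $\leq 1$ matrix of $V$ lies in the translation space of $\calV$ (a hyperplane of $V$), Azoff's duality (Proposition \ref{Azoffprop}) says exactly that $V^\bot$ is a $2$-dimensional \emph{non-reflexive} operator space with reflexivity defect $1$. Theorem \ref{2dimnonRtheorem} then lists the four possible equivalence classes of $V^\bot$, hence of $V$. The decisive point you do not have is that $\calV$ is then \emph{uniquely} recovered from $V$: since the defect is $1$, the span of the rank $1$ matrices of $V$ is a hyperplane $H$ of $V$, the translation space of $\calV$ must contain $H$ and therefore equals $H$, and $\calV$ is the unique affine hyperplane of $V$ with translation space $H$ not containing $0$. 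So there is no ``orbit of admissible translates under the stabilizer'' to compute --- the obstacle you flag as the main difficulty simply does not occur. A short explicit computation of $H$ in each of the four cases produces $i_{n,p}(I_2+\F_2 C)$ or $i_{n,p}(I_2+\F_2 J)$ (case of a $2\times 2$ core, using part (iv)), $i_{n,p}(\calF_2^T)$, $i_{n,p}(\calF_2)$ and $i_{n,p}(\calF_3)$ respectively, and inequivalence is immediate because the equivalence class of $\Vect(\calS)^\bot$ is an invariant separating the cases (plus Proposition \ref{uniquenessprop} within case (i)).

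As written, your own route also contains a step that would fail: you propose to apply the dimension bounds of Section \ref{primitivesection} to $\calS^\bot$, where $\calS$ is the codimension-$3$ direction space. That machinery (Propositions \ref{nonprimitiveprop} and \ref{primitivedimprop}) applies only to spaces of \emph{upper-rank at most} $2$, and a $3$-dimensional orthogonal space has no reason to satisfy this --- indeed $\calI_3^\bot$ in Section \ref{inequivalentspecialtype} contains a rank $3$ matrix. So the claimed reduction to ambient dimensions in $\{(2,2),(2,3),(3,2),(3,3)\}$ is unjustified on the object you apply it to, and the subsequent enumeration of direction spaces and translates is left entirely unexecuted. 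Your ``bridge'' to Theorem \ref{2dimnonRtheorem} gestures at the right connection (the exceptional spaces do come from $\calE_2$, $\calE_2^T$, $\calE_3$), but the connection runs through the span and its $2$-dimensional orthogonal, not through $2$-dimensional subspaces of the direction space.
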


Note that statement (iv) is already known since affine subspaces of $\Mat_2(\F_2)$ with codimension $3$
and lower-rank $2$ are simply dimension-maximal affine subspaces of non-singular matrices of $\Mat_2(\F_2)$.
It is an easy exercise to show that the spaces $i_{n,p}\bigl(\calF_2)$, $i_{n,p}(\calF_2^T)$ and $i_{n,p}(\calF_3)$
are counter-examples to Theorem \ref{classaffineboundedbelowmorethan2}.

\subsection{Proof of the classification theorem}

It is time to explain the connection between affine spaces of matrices with lower-rank greater than $1$ and
non-reflexive operator spaces.
Let us first recall the following result of Azoff \cite{Azoff}:

\begin{prop}\label{Azoffprop}
Let $V$ be a linear subspace of $\Mat_{n,p}(\K)$. Then,
$V$ is spanned by its rank $1$ matrices if and only if $V^\bot$ is reflexive.
Moreover, if we denote by $V^{(1)}$ the linear subspace of $V$ spanned by its rank $1$ matrices, then
$\dim V-\dim V^{(1)}$ equals the reflexivity defect of $V^\bot$.
\end{prop}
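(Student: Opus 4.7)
My plan is to reduce everything to a single duality identity, namely $\calR(V^\bot) = (V^{(1)})^\bot$, and then extract both assertions from it by biduality and dimension counting.

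First I would unpack the trace pairing at rank $1$ matrices. Every rank $1$ element of $\Mat_{n,p}(\K)$ can be written $yx^T$ with $y \in \K^n \setminus \{0\}$ and $x \in \K^p \setminus \{0\}$, and a direct computation gives $\tr(B \cdot yx^T) = x^T B y$ for every $B \in \Mat_{p,n}(\K) = \calL(\K^n,\K^p)$. Hence $yx^T$ lies in $V = (V^\bot)^\bot$ if and only if $x^T B y = 0$ for all $B \in V^\bot$, which is to say the linear form $x^T$ on $\K^p$ vanishes identically on the subspace $V^\bot y \subset \K^p$.

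Next I would rewrite the reflexive closure using this. By definition, $g \in \calR(V^\bot)$ means $g(y) \in V^\bot y$ for every $y \in \K^n$, and this membership is equivalent to asking that every linear form $x^T$ annihilating $V^\bot y$ also annihilates $g(y)$. Combining with the previous paragraph, $g \in \calR(V^\bot)$ if and only if $\tr(g \cdot yx^T) = x^T g y = 0$ for every rank $1$ matrix $yx^T$ belonging to $V$. Since $V^{(1)}$ is by definition spanned by its rank $1$ matrices, this condition says exactly that $g \in (V^{(1)})^\bot$, so $\calR(V^\bot) = (V^{(1)})^\bot$.

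From this identity both conclusions are essentially immediate. On one hand, $V^\bot$ is reflexive iff $V^\bot = (V^{(1)})^\bot$, and since the inclusion $V^{(1)} \subseteq V$ automatically yields $V^\bot \subseteq (V^{(1)})^\bot$, biduality turns this into the condition $V^{(1)} = V$, i.e.\ $V$ is spanned by its rank $1$ matrices. On the other hand, the reflexivity defect reads
\[\dim \calR(V^\bot) - \dim V^\bot = \dim (V^{(1)})^\bot - \dim V^\bot = (np - \dim V^{(1)}) - (np - \dim V) = \dim V - \dim V^{(1)}.\]
The only substantive step is the identification $\calR(V^\bot) = (V^{(1)})^\bot$; the rest is routine book-keeping with the non-degenerate trace pairing, so I do not expect any real obstacle.
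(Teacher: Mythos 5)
Your argument is correct and complete. Note that the paper does not prove this proposition at all -- it is quoted from Azoff's memoir -- so there is no in-text proof to compare against; your derivation of the key identity $\calR(V^\bot)=(V^{(1)})^\bot$ from the computation $\tr\bigl(B\cdot yx^T\bigr)=x^TBy$, followed by biduality and the dimension count, is exactly the standard duality argument underlying Azoff's result, and every step (including the reduction of $g(y)\in V^\bot y$ to annihilation by the forms $x^T$ vanishing on $V^\bot y$) is sound over an arbitrary field.
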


Now, let $\calV$ be an affine subspace of $\Mat_{n,p}(\K)$ which does not contain $0$.

We can see $\calV$ as an affine hyperplane of the vector space $\Vect(\calV)$,
and denote by $V$ its translation vector space, which is a linear hyperplane of $\Vect(\calV)$.
Then, $\calV$ has lower-rank at least $r$ if and only if the span of the matrices of $\Vect(\calV)$ with rank less than $r$
is included in $V$. Conversely, if we start from a linear subspace $W$ of $\Mat_{n,p}(\K)$ such that $W^{(1)} \subsetneq W$,
then, every affine hyperplane $\calH$ of $W$ which does not contain $0$ and whose translation vector space contains $W^{(1)}$
contains no matrix with rank $0$ or $1$, and hence $\lrk \calH \geq 2$.

Thus, with the connection outlined in Proposition \ref{Azoffprop}, we can derive Theorem \ref{classaffineboundedbelow2} from
Theorem \ref{2dimnonRtheorem}.
Assume from now on that $\K=\F_2$.
Let $\calS$ be an affine subspace of $\Mat_{n,p}(\K)$ with lower-rank $2$ and codimension $3$.
Then, $V:=\Vect(\calS)$ has codimension $2$, and hence $V^\bot$ is a $2$-dimensional non-reflexive subspace of $\Mat_{p,n}(\K)$.
Applying Theorem \ref{2dimnonRtheorem} to the reduced operator space $\overline{V^\bot}$, we deduce that \emph{one and only one}
of the following situations holds:
\begin{enumerate}[(i)]
\item $V^\bot$ is equivalent to $\widetilde{\calX}^{(p,n)}$ for some linear subspace $\calX$ of $\Mat_2(\F_2)$.
\item $V^\bot$ is equivalent to $\widetilde{\calE_2}^{(p,n)}$.
\item $V^\bot$ is equivalent to $\widetilde{(\calE_2^T)}^{(p,n)}$.
\item $V^\bot$ is equivalent to $\widetilde{\calE_3}^{(p,n)}$.
\end{enumerate}
Now, we tackle each case separately.

Assume first that case (i) holds. Then, we see that $V$ is equivalent to $i_{n,p}(\calX^\bot)$.
Without loss of generality, we may then assume that $V=i_{n,p}(\calX^\bot)$.
Noting that the span of the rank $1$ matrices of $V$ then contains every matrix of $i_{n,p}(\{0\})$,
we deduce that $\calS=i_{n,p}(\calH)$ for some affine subspace $\calH$ of $\Mat_2(\F_2)$
with codimension $3$ and that contains only matrices with rank greater than $1$. In other words, $\calH$ must be a dimension-maximal affine
subspace of non-singular matrices of $\Mat_2(\F_2)$. Then, by statement (iv) of Theorem \ref{classaffineboundedbelow2} (which we have already proved),
we deduce that $\calS$ is equivalent to one and only one of the spaces $i_{n,p}(I_2+\F_2 C)$ and $i_{n,p}(I_2+\F_2 J)$.

Assume now that case (ii) holds (so that $n \geq 3$). Then, $V$ is equivalent to $i_{n,p}(\calE_2^\bot)$.
Without loss of generality, we may assume that $V=i_{n,p}(\calE_2^\bot)$.
Note that $\calE_2^\bot=\Bigl\{
\begin{bmatrix}
a  & d \\
b & a \\
c & b
\end{bmatrix}
\mid (a,b,c,d)\in (\F_2)^4\Bigr\}$. Moreover,
$E_{3,1}$, $E_{1,2}$ and $A:=\begin{bmatrix}
1 & 1 \\
1 & 1 \\
1 & 1
\end{bmatrix}$ are linearly independent rank $1$ matrices of $\calE_2^\bot$.
One deduces that the translation vector space of $\calS$ must contain $i_{n,p}(\Vect(E_{1,3},E_{2,1},A))$,
which is a hyperplane of $V$. Thus, $\calS$ is the affine subspace of $i_{n,p}(\calE_2^\bot)$
which does not contain $0$ and whose translation vector space equals $i_{n,p}(\Vect(E_{1,3},E_{2,1},A))$, that
is $\calS=i_{n,p}\bigl(\calF_2^T)$.

If case (iii) holds, a similar line of reasoning as in case (ii) yields that $\calS$ is equivalent to $i_{n,p}(\calF_2)$
(and hence $p \geq 3$).

Assume finally that case (iv) holds (so that $n \geq 3$ and $p \geq 3$). Then, $V$ is equivalent to $i_{n,p}(\calE_3^\bot)$,
and no generality is lost in assuming that $V=i_{n,p}(\calE_3^\bot)$.
Note that
$$\calE_3^\bot=\Biggl\{
\begin{bmatrix}
a & e & f \\
c & a+b & g \\
d & c & b
\end{bmatrix}
\mid (a,b,c,d,e,f,g)\in (\F_2)^7\Biggr\}.$$
One checks that the rank $1$ matrices $E_{1,3}$, $E_{1,2}$, $E_{3,1}$, $E_{2,3}$, $B_1:=\begin{bmatrix}
1 & 1 & 0 \\
1 & 1 & 0 \\
1 & 1 & 0
\end{bmatrix}$ and $B_2:=\begin{bmatrix}
0 & 0 & 0 \\
1 & 1 & 1 \\
1 & 1 & 1
\end{bmatrix}$ are linearly independent and belong to $\calE_3^\bot$. Setting $H:=\Vect(E_{1,3},E_{1,2},E_{3,1},E_{2,3},B_1,B_2)$,
we deduce that the translation vector space of $\calS$ contains $i_{n,p}(H)$, and from the equality of dimensions
we conclude that $i_{n,p}(H)$ is exactly the translation vector space of $\calS$. Thus,
$\calS$ is the affine hyperplane of $i_{n,p}(\calE_3^\bot)$ which does not contain $0$ and with
translation vector space $i_{n,p}(H)$.
Noting that $H=\Biggl\{
\begin{bmatrix}
a & e & f \\
a+b & a+b & g \\
d & a+b & b
\end{bmatrix}
\mid (a,b,d,e,f,g)\in (\F_2)^6\Biggr\}$, we deduce that $\calS=i_{n,p}(\calF_3)$.

\vskip 4mm
Conversely:
\begin{itemize}
\item We already know that $i_{n,p}(I_2+\F_2 C)$ and $i_{n,p}(I_2+\F_2 J)$ are inequivalent
affine subspaces of $\Mat_{n,p}(\F_2)$ with codimension $3$ and lower-rank $2$.
Obviously, they both fall into case (i) above.
\item Assume that $n \geq 3$ and $p \geq 3$, and set $\calS:=i_{n,p}(\calF_3)$, which is an affine subspace of $\Mat_{n,p}(\F_2)$
that does not contain $0$ and has codimension $3$. Then, we see that $\Vect(\calS)=i_{n,p}(\calE_3^\bot)$
and hence $\Vect(\calS)^\bot=\widetilde{\calE_3}^{(p,n)}$, which is non-reflexive.
Thus, the span of the rank $1$ matrices of $i_{n,p}(\calF_3)$ is included in a linear hyperplane of $i_{n,p}(\calF_3)$
and hence it equals the space $i_{n,p}\bigl(\Vect(E_{1,3},E_{1,2},E_{3,1},E_{2,3},B_1,B_2)\bigr)$.
As this space is the translation vector space of $\calS$ and as $\calS$ does not contain $0$,
we conclude that the lower-rank of $\calS$ is greater than $1$. Obviously $\lrk(\calS) \leq 2$ and hence
$\lrk(\calS)=2$. Note that $\calS$ falls into case (iv) above.
\item Assume that $p \geq 3$. Using the same line of reasoning as in the preceding point, one shows that
$i_{n,p}(\calF_2)$ is an affine subspace of $\Mat_{n,p}(\F_2)$ with codimension $3$ and lower-rank $2$,
and that it falls into case (ii) above.
\item By transposing, one deduces that if $n \geq 3$ then
$i_{n,p}(\calF_2^T)$ is an affine subspace of $\Mat_{n,p}(\F_2)$ with codimension $3$ and lower-rank $2$
and that it falls into case (iii) above.
\end{itemize}
As the equivalence class of $\Vect(\calS)^\bot$ is uniquely determined by that of $\calS$,
we conclude that the various affine spaces cited in Theorem \ref{classaffineboundedbelow2} are pairwise inequivalent.
This completes the proof of Theorem \ref{classaffineboundedbelow2}.

\begin{Rem}
With the above strategy, we can give an alternative proof of Bra\v ci\v c and Kuzma's Theorem 3.10 of \cite{BracicKuzma}.
Indeed, instead of using a classification of non-reflexive operator spaces in order to classify affine spaces of matrices with upper-rank at most $2$,
we can do the opposite! Thus, let $\K$ be a field with more than $2$ elements, and
$\calS$ be a $2$-dimensional non-reflexive subspace of $\Mat_{n,p}(\K)$. Then,
$\calS^\bot$ contains an affine hyperplane $\calH$ such that $\lrk \calH \geq 2$.
By Theorem 3 of \cite{dSPlargeaffinerank}, no generality is lost in assuming that $\calH=i_{p,n}(I_2+\K M)$,
where $M \in \Mat_2(\K)$ either equals $\begin{bmatrix}
0 & 1 \\
0 & 0
\end{bmatrix}$ or has no eigenvalue in $\K$.
Thus, $\calS^\bot=i_{p,n}(\Vect(I_2,M))$ and hence $\calS=\widetilde{V}^{(n,p)}$, where $V=\Vect(I_2,M)^\bot$.
If $M=\begin{bmatrix}
0 & 1 \\
0 & 0
\end{bmatrix}$, then one sees that $V$ is equivalent to $\Vect(I_2,M)$.
If $M$ has no eigenvalue in $\K$, then every non-zero matrix of $\Vect(I_2,M)$ is non-singular, and it easily follows that
this is also the case of every matrix of $V$ (indeed, if some matrix of $V$ had rank $1$ then we would find
a non-zero vector $X \in \K^2$ such that $N \in V \mapsto NX$ has rank at most $1$, yielding a rank $1$ matrix in $V^\bot$),
and one concludes that  $\Vect(I_2,M)^\bot$ is equivalent to $\Vect(I_2,M')$ for some $M' \in \Mat_2(\K)$ with no eigenvalue in $\K$.

Conversely, let $V$ be a linear subspace of $\Mat_2(\K)$ which equals $\Vect(I_2,M)$, where $M$ is either
$\begin{bmatrix}
0 & 1 \\
0 & 0
\end{bmatrix}$ or a matrix with no eigenvalue in $\K$. In each case, one checks that
$i_{p,n}(V^\bot)$ is not spanned by its rank $1$ matrices (in the second case, $V^\bot$
contains no rank $1$ matrix, so that the span of the rank $1$ matrices of $i_{p,n}(V^\bot)$ is included in $i_{p,n}(\{0\})$),
and hence $\widetilde{V}^{(n,p)}$ is non-reflexive.
\end{Rem}

\end{document}